\newtheorem{thm}{Theorem}[section]
\newtheorem{prop}[thm]{Proposition}
\newtheorem{cor}[thm]{Corollary}
\newtheorem{lem}[thm]{Lemma}
\theoremstyle{definition}
\newtheorem{define}[thm]{Definition}
\theoremstyle{remark}
\newtheorem{rem}[thm]{Remark}
\newcommand{\ve}[1]{\boldsymbol{\mathbf{#1}}}
\newcommand{\ul}[1]{\underline{#1}}
\newcommand{\Z}{\mathbb{Z}}
\newcommand{\N}{\mathbb{N}}
\newcommand{\Q}{\mathbb{Q}}
\renewcommand{\d}{\partial}
\renewcommand{\subset}{\subseteq}
\renewcommand{\tilde}{\widetilde}
\renewcommand{\bar}{\overline}
\newcommand{\iso}{\cong}
\DeclareMathOperator{\can}{{can}}
\DeclareMathOperator{\gr}{{gr}}
\DeclareMathOperator{\id}{{id}}
\DeclareMathOperator{\Mor}{{Mor}}
\DeclareMathOperator{\rank}{{rank}}
\DeclareMathOperator{\Spin}{{Spin}}
\newcommand{\bA}{\mathbb{A}}
\newcommand{\bB}{\mathbb{B}}
\newcommand{\bE}{\mathbb{E}}
\newcommand{\bF}{\mathbb{F}}
\newcommand{\bT}{\mathbb{T}}
\newcommand{\bU}{\mathbb{U}}
\newcommand{\bX}{\mathbb{X}}
\newcommand{\bXI}{\mathbb{XI}}
\newcommand{\cA}{\mathcal{A}}
\newcommand{\cB}{\mathcal{B}}
\newcommand{\cC}{\mathcal{C}}
\newcommand{\cF}{\mathcal{F}}
\newcommand{\cH}{\mathcal{H}}
\newcommand{\cK}{\mathcal{K}}
\newcommand{\cP}{\mathcal{P}}
\newcommand{\frF}{\mathfrak{F}}
\newcommand{\frG}{\mathfrak{G}}
\newcommand{\frs}{\mathfrak{s}}
\newcommand{\fru}{\mathfrak{u}}
\newcommand{\frw}{\mathfrak{w}}
\newcommand{\frx}{\mathfrak{x}}
\newcommand{\fry}{\mathfrak{y}}
\newcommand{\scA}{\mathscr{A}}
\newcommand{\scC}{\mathscr{C}}
\newcommand{\scF}{\mathscr{F}}
\newcommand{\scG}{\mathscr{G}}
\newcommand{\cCFL}{\mathcal{C\!F\!L}}
\newcommand{\cCFK}{\mathcal{C\hspace{-.5mm}F\hspace{-.3mm}K}}
\newcommand{\CF}{\mathit{CF}}
\newcommand{\HF}{\mathit{HF}}
\newcommand{\HFK}{\mathit{HFK}}
\newcommand{\CFI}{\mathit{CFI}}
\newcommand{\XI}{\mathbb{XI}}
\newcommand{\PD}{\mathit{PD}}
\newcommand{\xs}{\ve{x}}
\newcommand{\ys}{\ve{y}}
\newcommand{\as}{\ve{\alpha}}
\newcommand{\bs}{\ve{\beta}}
\renewcommand{\a}{\alpha}
\renewcommand{\b}{\beta}
\newcommand{\veps}{\varepsilon}
\DeclareMathOperator{\Cone}{{Cone}}
\numberwithin{equation}{section}
\newcommand{\ar}{\mathrm{a.r.}}
\newcommand\co{\colon}
\newcommand{\lab}[1]{$\scriptstyle #1$}
\newcommand{\cen}{\mathrm{cen}}
\newcommand{\vopp}{{\tilde{v}}}
\newcommand{\scU}{\mathscr{U}}
\newcommand{\scV}{\mathscr{V}}
\newcommand\CZhat{\widehat{\mathcal{C}}_\Z}
\title{An involutive dual knot surgery formula}
\author{Kristen Hendricks}
\thanks{KH was partially supported by NSF grant DMS-2019396 and a Sloan Research Fellowship.}
\address{Department of Mathematics, Rutgers University, New Brunswick, NJ, USA}
\email{kristen.hendricks@rutgers.edu}
\author{Jennifer Hom}
\thanks{JH was partially supported by NSF grants DMS-1552285 and DMS-2104144.}
\address{School of Mathematics, Georgia Institute of Technology, Atlanta, GA, USA}
\email{hom@math.gatech.edu}
\author{Matthew Stoffregen}
\address{Department of Mathematics, Michigan State University, East Lansing, MI, USA}
\email{stoffre1@msu.edu}
\thanks{MS was partially supported by NSF grant DMS-1702532.}
\author{Ian Zemke}
\address{Department of Mathematics\\Princeton University\\  Princeton, NJ, USA}
\email{izemke@math.princeton.edu}
\thanks{IZ was partially supported by NSF grant DMS-1703685.}
\begin{document}
\maketitle

\begin{abstract}
We prove an involutive analog of the dual knot surgery formula of Eftekhary and Hedden-Levine. We also compute a small model for the local equivalence class of the involutive dual knot complex.
\end{abstract}

\tableofcontents

\section{Introduction}

In the early 2000s, Ozsv\'{a}th and Szab\'{o} defined a collection of invariants for 3-manifolds called \emph{Heegaard Floer homology} \cite{OSDisks, OSProperties}. Their construction associates to a 3-manifold $Y$ equipped with a $\Spin^c$ structure $\frs\in \Spin^c(Y)$ a finitely-generated $\bF[U]$ chain complex $\CF^-(Y, \frs)$, whose homology is one version $\HF^-(Y,\frs)$ of the Heegaard Floer homology of $(Y,\frs)$. The Heegaard Floer homology of the 3-manifold is then $\bigoplus_{\frs \in \Spin^c(Y)} \HF^-(Y,\frs)$. Shortly thereafter, Ozsv\'{a}th and Szab\'{o} \cite{OSKnots} and independently Rasmussen \cite{RasmussenKnots} introduced a refinement of the theory, called \emph{knot Floer homology}, for a pair $(Y,K)$ consisting of a knot $K$ inside of a 3-manifold $Y$. In its modern formulation, knot Floer homology associates to $(Y,K)$ a finitely-generated, free chain complex $\cCFK(Y,K)$ over the ring $\bF[\scU,\scV]$.

By the standards of invariants of 3-manifolds arising from Floer and gauge theory, Heegaard Floer homology has proved unusually friendly to computations. One of the several reasons for this is the existence of a \emph{surgery formula} for the theory, as follows. Given $K\subset S^3$ a knot and $n\in \Z$, let $S^3_n(K)$ be the manifold obtained by Dehn surgery with coefficient $n$ on $K$. Then Ozsv\'{a}th and Szab\'{o} construct from $\cCFK(S^3,K)$ and the coefficient $n$ a chain complex $\bX_n(K)$ which is chain homotopy equivalent to $\CF^-(S^3_n(K))$, and which splits along $\Spin^c$ structures into complexes chain homotopy equivalent to $\CF^-(S^3_n(K), \frs)$. In particular, this means that the Heegaard Floer homology $\HF^-(S^3_n(K))$ is determined by the knot Floer complex $\cCFK(S^3,K)$ of $K$ and the coefficient $n$ \cite{OSIntegerSurgeries}.

Recall that given a knot $K \subset S^3$, there is a \emph{dual knot} $\mu\subset S^3_n(K)$, consisting of the core of the solid torus $D^2 \times S^1$ which is glued into $S^3 \setminus \nu(K)$ to produce the surgery $S^3_n(K)$. In 2019, Hedden and Levine \cite{HeddenLevineSurgery} described a refinement of Ozsv\'{a}th and Szab\'{o}'s surgery formula which computes the knot Floer homology $\cCFK(S_n^3(K),\mu)$ from $\cCFK(S^3,K)$ and the coefficient $n$. The work of Hedden and Levine follows on work of Eftekhary \cite{EftekharyDuals}, who described the formula for a simpler version $\widehat{\HFK}(S^3_n(K),\mu)$ of the knot Floer homology of the dual knot, and made progress on the formula for the full knot Floer complex $\cCFK(S_n^3(K),\mu)$.

In 2015, Hendricks and Manolescu introduced a variant theory of Heegaard Floer homology, called \emph{involutive Heegaard Floer homology}. This theory again has versions for both 3-manifolds and knots. In the 3-manifold case, given $Y$ equipped with a conjugation-invariant $\Spin^c$ structure $\frs$, there is a grading-preserving homotopy involution $\iota$ from $\CF^-(Y,\frs)$ to itself, called the conjugation symmetry. Involutive Heegaard Floer homology associates to $Y$ either the pair $(\CF^-(Y,\frs),\iota)$, called an \emph{$\iota$-complex}, or equivalently a chain complex $\CFI^-(Y, \frs)$ over $\bF[U,Q]/(Q^2)$ constructed from this data. The knot formulation considers, for a knot $K$ in $Y$, a homotopy automorphism
\[\iota_K : \cCFK(Y,K) \rightarrow \cCFK(Y, K).\]
\noindent This data is packaged as an \emph{$\iota_K$-complex} $(\cCFK(Y,K), \iota_K)$. (For more on the algebraic formalisms of $\iota$-complexes and $\iota_K$-complexes, see Section~\ref{sec:iota-complexes}.)

In earlier work \cite{HHSZ}, we proved an involutive analog of Ozsv\'{a}th and Szab\'{o}'s original knot surgery formula. More precisely, we constructed a chain complex $\XI_n(S^3,K)$ over $\bF[U,Q]/Q^2$, which we proved was homotopy equivalent to $\CFI^-(S^3_n(K))$. The complex $\XI_n(S^3,K)$ was completely determined by the $\iota_K$-complex $(\cCFK(S^3,K),\iota_K)$ and the coefficient $n$ (\cite[Theorem 1.5, Section 3.5, Section 21]{HHSZ}).

In the present paper, we build on the work of \cite{HHSZ} to prove an involutive analog of Hedden and Levine's dual knot surgery formula.  In particular, we construct an $\iota_K$-complex $\XI_n^\mu(K)$, which contains equivalent data to the Hedden-Levine surgery formula equipped with a homotopy automorphism. Our main result is as follows.

\begin{thm}\label{thm:main}
 If $K\subset S^3$ is a knot and $n\neq 0$, then the $\iota_K$-complex $\XI_n^\mu(K)$ is uniquely determined by, and is computable from, the $\iota_K$-complex $(\cCFK(S^3,K),\iota_K)$ and the coefficient $n$. Furthermore,
 \[
 \bXI^\mu_n(K)\simeq (\cCFK(S^3_n(K)),\iota_\mu).
 \]
\end{thm}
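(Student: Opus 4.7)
The plan is to combine the involutive mapping cone of \cite{HHSZ} with the dual-knot refinement of Hedden and Levine \cite{HeddenLevineSurgery}. Recall that the latter upgrades the Ozsv\'ath--Szab\'o mapping cone $\bX_n(K)$ to a bifiltered complex over $\bF[\scU,\scV]$ whose two filtrations track the basepoints of $\mu \subset S^3_n(K)$, and which is homotopy equivalent to $\cCFK(S^3_n(K),\mu)$. The task is to equip this refinement with the homotopy involution provided by the construction of \cite[Sections~3.5, 21]{HHSZ}, and then to prove that the resulting $\iota_K$-complex $\bXI_n^\mu(K)$ is (a) well defined and computable from the input $(\cCFK(S^3,K),\iota_K)$ and $n$ alone, and (b) homotopy equivalent to the geometric $\iota_K$-complex $(\cCFK(S^3_n(K)),\iota_\mu)$.

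For the algebraic part, I would build $\bXI_n^\mu(K)$ as a mapping cone whose vertices are appropriate copies of $\cCFK(S^3,K)$ joined by the Hedden--Levine surgery maps, together with the involution data of \cite{HHSZ}: the action of $\iota_K$ on each vertex, conjugation-equivariant null-homotopies of the projection and flip maps appearing in the cone, and a higher coherence homotopy analogous to the map $H$ of \cite{HHSZ}. The space of choices for these homotopies is contractible in the relevant sense, so the homotopy class of the resulting $\iota_K$-complex is determined by $(\cCFK(S^3,K),\iota_K)$ and $n$, and each of its ingredients is algorithmically accessible from this input. This yields the computability and uniqueness half of the theorem.

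For the geometric identification, I would extend the argument of \cite[Section~21]{HHSZ} to the bifiltered setting. The underlying homotopy equivalence $\bX_n^\mu(K) \simeq \cCFK(S^3_n(K),\mu)$ is already supplied by \cite{HeddenLevineSurgery}; what remains is to match the constructed involution with $\iota_\mu$. I would do this by modeling the dual knot surgery formula using a link surgery formula or appropriate large-$n$ Heegaard multi-diagram in which both basepoints of $\mu$ are present, and tracking the conjugation symmetry of the Heegaard data through each of the triangle, quadrilateral, and pentagon maps that assemble the cone. The main obstacle will be showing that the coherence homotopy $H$ of \cite{HHSZ} can be refined to respect the additional $\scV$-filtration introduced by the second basepoint of $\mu$; equivalently, that conjugation-equivariant choices of almost complex structures and auxiliary homotopies can be made compatibly with both basepoints simultaneously. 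Once this naturality is in hand, the hypothesis $n \neq 0$ enters only through the requirement that the mapping cone formulations of the surgery formula (both the classical and the involutive versions) are set up in this range.
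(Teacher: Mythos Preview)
Your approach differs substantially from the paper's. You propose to work directly with the Hedden--Levine bifiltered mapping cone and track the conjugation symmetry through the holomorphic polygon counts of \cite{HHSZ}, refining all coherence homotopies to respect the second basepoint of $\mu$. The paper instead exploits the observation (flagged as an alternate strategy in \cite[Section~1.2]{HeddenLevineSurgery}) that $\mu \subset S^3_n(K)$ arises by forming $L = K \# H$ with a Hopf link and surgering on the $K$-component. One then builds an \emph{expanded} model by applying a variant of the integer surgery formula to $L$, obtaining a mapping cone $\bA(L) \to \bB(L)$ of infinitely generated $\bF[\scU_2,\scV_2]$-complexes, with involution computed via the connected sum formula $\iota_L \simeq (\id|\id + \Phi_{K,1}|\Psi_{H,1})(\iota_K|\iota_H)$ of \cite{ZemConnectedSums}. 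A homological perturbation lemma for hypercubes transfers everything to a small model, recovering the Hedden--Levine complex with an explicit $\iota_K$-structure. The payoff is that after the expanded model is set up, the argument is essentially algebraic: the link involution is determined by $\iota_K$ and the known Hopf link involution, and perturbation yields closed formulas for every map.

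Your proposal, by contrast, does not resolve the obstacle it names. The assertion that ``the space of choices for these homotopies is contractible in the relevant sense'' is precisely where the work lies: the key simplification in \cite{HHSZ} was that $B_s \simeq \bF[U]$, so that any $(+1)$-graded endomorphism is null-homotopic. You would need the bifiltered analogue (that the relevant $\cB$-complexes are homotopy equivalent to $\bF[\scU,\scV]$ over $\bF[\scU,\scV]$) together with a skew-equivariant filling argument, and you would need to verify that every polygon count and almost complex structure choice in \cite{HHSZ} can be made compatible with the extra basepoint. The paper sidesteps this entirely; in its framework the analogue of your obstacle becomes the elementary Lemma that any map on a complex homotopy equivalent to $\bF[\scU,\scV]$ which is trivial on homology is equivariantly null-homotopic.
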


We note that for knots $K$ in 3-manifolds $Y\neq S^3$ (or, more specifically, if $Y$ is not an L-space), the techniques of the present paper are not sufficient to construct a similar $\iota_K$-complex $\XI^\mu_n(Y,K)$. In general one expects the $\iota_K$-complex $\XI_n^\mu(K)$ to require more holomorphic curve counting maps than are used to construct the $\iota_K$-complex $(\cCFK(Y,K),\iota_K)$.

Our proof of Theorem~\ref{thm:main} differs from the proof of Hedden and Levine in the non-involutive case \cite{HeddenLevineSurgery} in several ways. The proof given by Hedden and Levine involved adding basepoints to a Heegaard diagram for $S_n^3(K)$ and understanding how the induced knot filtration interacted with Ozsv\'{a}th and Szab\'{o}'s original proof of the mapping cone formula \cite{OSIntegerSurgeries}. Our proof follows a different line of reasoning, mentioned as an alternate strategy by Hedden and Levine \cite[Section 1.2]{HeddenLevineSurgery}, which unlike their strategy works only for surgeries on knots in $L$-spaces. We view the dual knot as the result of taking the connected sum of $K$ with a Hopf link $H$, and then performing surgery on the component of $K\# H$ corresponding to $K$. The remaining component is the dual knot $\mu$. We then apply a refinement of the Manolescu-Ozsv\'{a}th link surgery formula \cite{MOIntegerSurgery} to the link $K\# H$ to compute the knot Floer homology after performing surgery on a single link component of $K\# H$. The resulting model of $\cCFK(S_n^3(K),\mu)$ is infinitely-generated over $\bF[\scU,\scV]$. By using homological perturbation theory, we reduce the complex to a finitely-generated complex over $\bF[\scU,\scV]$ and recover the Hedden-Levine formula for surgeries on knots in $S^3$, or more generally knots in $L$-spaces.

\subsection{Local equivalence classes}

The knot surgery formula of Ozsv\'{a}th and Szab\'{o} takes the form of an infinitely-generated chain complex $\bX_n(K)$ over $\bF[U]$, which is homotopy equivalent to the finitely-generated free chain complex $\CF^-(S^3_n(K))$ over $\bF[U]$. Ozsv\'{a}th and Szab\'{o} describe a natural procedure for constructing finitely-generated models of $\bX_n(K)$, called \emph{truncation}. Nonetheless, the finitely-generated truncations of $\bX_n(K)$ usually require many generators. For example, $\bX_{+1}(K)$  \emph{a priori} requires at least $(4g_3(K)-3)\cdot \rank_{\bF[\scU,\scV]} \cCFK(K)$ free-generators. 

There is an additional notion of equivalence between two $\iota$-complexes or between two $\iota_K$-complexes weaker than equivariant chain homotopy equivalence, known as \emph{local equivalence}, which we describe in Section~\ref{sec:iota-complexes}. Relevantly, for applications to the homology cobordism and knot concordance groups, one is usually content to work up to local equivalence.

In our previous work on the involutive surgery formula \cite{HHSZ}, we proved that the 3-manifold local equivalence class of $\XI_n(K)$ coincided with the local equivalence class of its subcomplex $(A_0(K),\iota_K)$, which has rank equal to exactly $\rank_{\bF[\scU,\scV]} \cCFK(K)$ generators over $\bF[U]$. This result was critical to our subsequent applications to the homology cobordism group in \cite{HHSZSeifert}, since it substantially reduces the complexity of computations. In our present work, we prove an analog for the involutive dual knot formula:

\begin{thm} \label{thm:local-classes-intro}
Suppose that $K$ is a knot in $S^3$ and $n\neq 0$. The local equivalence class of $(\cCFK(S^3_{n}(K),\mu),\iota_\mu)$ is determined by the local equivalence class of $(\cCFK(S^3,K),\iota_K)$. Furthermore, the local class of $(\cCFK(S^3_{+1}(K),\mu),\iota_\mu)$ admits a model with $3\cdot \rank_{\bF[\scU,\scV]} \cCFK(S^3,K)$ generators, and $(\cCFK(S^3_{-1}(K),\mu),\iota_\mu)$ admits a model with $5\cdot \rank_{\bF[\scU,\scV]} \cCFK(S^3,K)$ generators.
\end{thm}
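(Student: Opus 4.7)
\medskip

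\noindent\textbf{Proof proposal.} The starting point is Theorem~\ref{thm:main}, which identifies $\bXI^\mu_n(K)$ with $(\cCFK(S^3_n(K)),\iota_\mu)$, so I would work entirely on the model $\bXI^\mu_n(K)$ and try to extract a finitely generated representative of its local equivalence class over $\bF[\scU,\scV]$. The template is the analogous result for the 3-manifold complex $\XI_n(K)$ from \cite{HHSZ}, where the local class reduces to the involutive pair $(A_0(K),\iota_K)$ with $\rank_{\bF[\scU,\scV]}\cCFK(S^3,K)$ generators. Here however the surgered object is a knot complex rather than a 3-manifold complex, so the target rank will come from a small collection of $A$-type summands (three of them for $n=+1$, five for $n=-1$) rather than a single one.

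\smallskip

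The first step is to set up $\bXI^\mu_n(K)$ explicitly as a cone-like bicomplex built from the $(\scU,\scV)$-filtered pieces of $\cCFK(S^3,K)$, with $\iota_\mu$ realized as a homotopy-involution that mixes these pieces along the lines used in \cite{HHSZ}. Next I would truncate: since $\bXI^\mu_n(K)$ is homotopy equivalent to the finitely generated complex $\cCFK(S^3_n(K),\mu)$, standard truncation arguments (as in \cite{OSIntegerSurgeries, HeddenLevineSurgery, HHSZ}) give a finitely generated equivariant model. The key algebraic input is then that, after truncation, all but finitely many $A_s$-summands fit into acyclic ``stair'' subcomplexes that can be cancelled equivariantly using homological perturbation; the $\iota_\mu$-homotopies produced in \cite{HHSZ} can be run through this cancellation because the stair pieces come in conjugation-matched pairs.

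\smallskip

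For the explicit generator counts, I expect the argument to track which summands of the mapping cone survive cancellation once one restricts attention to the Spin$^c$ structures on $S^3_n(K)$ and the filtration levels carrying the dual knot grading. For $n=+1$ the surgery is a rational homology sphere with one Spin$^c$ structure and the dual knot adds a $\Z$-filtration, leaving three interacting copies of $\cCFK(S^3,K)$ (heuristically, one ``central'' $A_0$ together with two neighbouring pieces forced in by the extra knot filtration), which matches the $3\cdot \rank \cCFK(S^3,K)$ bound. For $n=-1$ the same analysis is more delicate because the orientation-reversal effects in the Hedden-Levine formula force two extra interacting $A$-summands on the opposite side of the cone, yielding the $5\cdot \rank \cCFK(S^3,K)$ bound; finally I would check that the cancellations preserve $\iota_\mu$ up to equivariant homotopy so the reduction is a local equivalence of $\iota_K$-complexes, not merely of underlying complexes.

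\smallskip

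The main obstacle I anticipate is the last point: showing that the chosen small models admit \emph{equivariant} chain maps in both directions that are local equivalences. Cancelling the stair subcomplexes non-equivariantly is routine, but the $\iota_\mu$-homotopies produced by the involutive link surgery formula of \cite{HHSZ} are not diagonal with respect to this cancellation, so one has to carefully choose the homological perturbation data so that the perturbed involution on the small model is again a strict homotopy involution. This is exactly the step that was most technical in \cite{HHSZ} for the 3-manifold case, and I expect it to be the bulk of the work here, particularly for $n=-1$ where the larger number of surviving summands means more conjugation-compatibility relations to verify simultaneously.
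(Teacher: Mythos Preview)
Your outline has the right skeleton, and you correctly identify the equivariance of the reduction as the crux. However, the paper's argument differs from your proposed mechanism in two ways worth noting.

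First, a correction on the small models: for $n=+1$ the truncation consists of $\cA^\mu_{-1}$, $\cA^\mu_0$, and $\cB^\mu_0$ (two $A$-pieces and one $B$-piece, not three $A$-type summands); for $n=-1$ it is $\cA^\mu_{-1}$, $\cA^\mu_0$ together with $\cB^\mu_{-2}$, $\cB^\mu_{-1}$, $\cB^\mu_0$. Each piece has rank $\rank_{\bF[\scU,\scV]}\cCFK(K)$ over $\bF[\scU_2,\scV_2]$, which is where the factors $3$ and $5$ come from.

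Second, and more substantively, the paper does not attempt an equivariant cancellation of the outer stair pieces. It constructs the two local maps asymmetrically. Projection gives the map from the full complex to the truncation. For the reverse direction the paper works on the \emph{expanded} model $\bXI^H_1(K)$ and writes down, by hand, $\bF[\scU_1,\scV_1]$-skew-equivariant maps $F,G$ on the four-generator Hopf link complex $\cH^+$; tensoring these with $\iota_K$ and shifting by suitable powers of $\scU_1,\scV_1$ produces grading-preserving chain maps $\scF_s\colon \cA_{[s]}(L)\to \cA_{[s+1]}(L)$ for $s\ge 0$ and $\scG_s\colon \cA_{[s]}(L)\to \cA_{[s-1]}(L)$ for $s<0$, which assemble into a local map from the small truncation into any larger one (the components landing in the $\cB$-pieces are filled by factoring through $v_L$ or $\tilde v_L$ and invoking the $L$-space hypothesis). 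Compatibility with $\iota_L$ then reduces to the single identity $\iota_H G' = F'\iota_H$ on $\cH^+$, a direct check. This sidesteps exactly the difficulty you anticipated: there is no need to perturb $\iota_\mu$ through a chain of cancellations, because the backward local map is engineered to intertwine $\iota_L$ from the outset. Your cancellation approach is not obviously doomed, but making the cancelled stair pieces genuinely $\iota$-invariant (as opposed to merely swapped in pairs by $\iota_L$, which also involves the flip map on the $\cB$-side) would in practice require the same kind of explicit Hopf-link input.

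Finally, the first clause of the theorem --- that the local class of $(\cCFK(S^3_n(K),\mu),\iota_\mu)$ depends only on that of $(\cCFK(S^3,K),\iota_K)$ --- is not part of the truncation argument at all; it follows directly from the functoriality statement in Proposition~\ref{prop:well-defined}, which you do not mention.
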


In particular, the small models for $\pm 1$ surgeries are essentially the smallest non-trivial truncations of $\XI_{\pm 1}^\mu(K)$. See Theorem~\ref{thm:local-class} for precise statements.

The above result is also novel in the non-involutive setting, and gives a small model of the non-involutive local equivalence class of the dual knot complex of Hedden and Levine. We note that the local class of $\cCFK(S_{\pm 1}^3(K),\mu)$ has been successfully applied to understand questions related to the concordance group; see for example \cite{DHSThomcobknot, HLL, ZhouHomologyConcordance}.

Theorem \ref{thm:local-classes-intro} has implications for the homology concordance group $\CZhat$, generated by manifold-knot pairs $(Y,K)$, where $Y$ is an integer homology sphere bounding an acyclic smooth 4-manifold and $K$ is a knot in $Y$. Two pairs $(Y_0, K_0)$ and $(Y_1, K_1)$ are equivalent in $\CZhat$ if there is a smooth homology cobordism from $Y_0$ to $Y_1$ in which $-K_0 \sqcup K_1$ bounds a smoothly embedded annulus.

Recall from \cite[Theorem 1.1]{DHSThomcobknot} that there are homomorphisms
\[ \varphi_{i,j} \colon \CZhat \to \Z \]
for $(i,j) \in (\Z \times \Z^{ \geq 0}) - (\Z^{\leq 0} \times \{ 0\})$. When $j \neq 0$, these homomorphisms vanish for knots that are homology concordant to knots in $S^3$. Zhou \cite{ZhouHomologyConcordance} used \cite{HeddenLevineSurgery} to compute examples of pairs $(Y, K)$ with $\varphi_{n, n-1}(Y,K) \neq 0$ for each positive integer $n$. Furthermore, note that the $\varphi_{i,j}$ are defined for any knot $K$ in an integer homology sphere $Y$ (i.e., $Y$ need not bound a smooth acyclic 4-manifold).

\begin{cor}\label{cor:phiij}
Let $K$ be a knot in $S^3$ and let $\mu$ denote the core of surgery in $S^3_1(K)$. Then $\varphi_{i,j}(S^3_1(K), \mu) = 0$ if $|i-j|> 2$.
\end{cor}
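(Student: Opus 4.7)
The plan is to derive Corollary~\ref{cor:phiij} directly from Theorem~\ref{thm:local-classes-intro}. The homomorphisms $\varphi_{i,j}$ of \cite{DHSThomcobknot} are, by construction, invariants of the local equivalence class of the $\iota_K$-complex of a manifold--knot pair; in particular, $\varphi_{i,j}$ takes the same value on any two chain-level models of a given local class. Consequently it suffices to compute $\varphi_{i,j}(S^3_1(K),\mu)$ on any convenient representative of the local class of $(\cCFK(S^3_1(K),\mu),\iota_\mu)$.

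By Theorem~\ref{thm:local-classes-intro} (and the more precise Theorem~\ref{thm:local-class}), this local class is represented by a small model $C$ with only $3\cdot \rank_{\bF[\scU,\scV]} \cCFK(S^3,K)$ generators, obtained as a minimal non-trivial truncation of $\bXI^\mu_{+1}(K)$. Unpacking the truncation, $C$ partitions into three ``layers'', each of which is a bigrading-shifted copy of $\cCFK(S^3,K)$; the layers are linked by structural maps induced by the surgery cobordism. I would then track the Alexander- and Maslov-grading shifts between these layers through the construction of Theorem~\ref{thm:local-classes-intro}. The outcome of this bookkeeping is that consecutive layers differ by unit Alexander shift, so that the total Alexander-grading spread of $C$ relative to any fixed layer is at most $2$.

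The next step is to apply the definition of $\varphi_{i,j}$ from \cite{DHSThomcobknot} as a count of distinguished summands in a standard form of the $\iota_K$-complex, whose presence forces the complex to contain generators whose bidegrees span the parameters $(i,j)$. Since the Alexander spread of $C$ is bounded by $2$, no summand contributing to $\varphi_{i,j}$ with $|i-j|>2$ can appear in $C$, giving $\varphi_{i,j}(S^3_1(K),\mu)=0$ in that range, as claimed.

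The main obstacle will be in the bookkeeping of the second step: one has to read off from the proof of Theorem~\ref{thm:local-classes-intro} the precise Alexander- and Maslov-grading shifts of the three layers of the small model, and match them against the definition of $\varphi_{i,j}$ in \cite{DHSThomcobknot} to confirm the bound $|i-j|\le 2$. Conceptually, however, the corollary is essentially immediate from the existence of the small three-layer model, and Zhou's non-vanishing examples at $|i-j|=1$ show that the bound cannot in general be improved.
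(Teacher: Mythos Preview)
Your proposal is correct and follows essentially the same approach as the paper: both use the small local model of Theorem~\ref{thm:local-class}, observe that its generators are concentrated in Alexander gradings $-1,0,1$ (spread at most $2$), and then invoke the definition of $\varphi_{i,j}$ from \cite{DHSThomcobknot} to conclude. The paper phrases the final step in terms of the standard complex $C(b_1,\dots,b_n)$ and the constraint $|i_k-j_k|\le 2$ on its odd parameters, but this is exactly the ``bookkeeping'' you anticipate.
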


\subsection{Organization} This paper is organized as follows. In Section~\ref{sec:background} we discuss some necessary background material. Specifically, in Section~\ref{sec:hfi-background} we briefly review involutive Heegaard Floer homology and Section~\ref{sec:iota-complexes} we recall the algebraic framework of $\iota$-complexes and $\iota_K$-complexes. The remainder of the section is devoted to setting up the algebra necessary for the paper, culminating in the proof of a homological perturbation lemma for hypercubes of chain complexes in Section~\ref{sec:homological-perturbation-hypercubes}. In Section \ref{sec:large-surgery-expanded} we set up the framework for our dual knot surgery formula and build an ``expanded'' model of the non-involutive dual knot surgery formula; this initial formula is a mapping cone between complexes which are infinitely generated over $\bF[\scU,\scV]$. In Section \ref{sec:small-model} we use the homological perturbation lemma for hypercubes to reduce this chain complex to a mapping cone between finitely-generated complexes, building a more computable small model for the mapping cone formula and recovering the results of \cite{HeddenLevineSurgery}. In Section~\ref{sec:involution} we compute the involution on the expanded model of the surgery formula and transfer it to the small model, completing the proof of Theorem~\ref{thm:main}. In Section~\ref{sec:formulas} we give explicit formulas for all of the maps appearing in the construction of the small model, and in Section~\ref{sec:example} we use these formulas to compute the example of the $\iota_K$-complex of $(Y, \mu)$, where $Y=S^3_{1}(4_1)$ and $\mu$ is the image of a meridian of $4_1$. Finally, in Section~\ref{sec:local-models} we compute the local equivalence class of the $\iota_K$ complex associated to a dual knot, and prove Theorem~\ref{thm:local-classes-intro}.

\subsection{Acknowledgments} The authors are grateful to Matt Hedden and Adam Levine for helpful conversations and the inspiration of \cite{HeddenLevineSurgery}, and also grateful to the anonymous referee for thoughtful comments and corrections.

\section{Background} \label{sec:background}

\subsection{Involutive Heegaard Floer theory} \label{sec:hfi-background}

We now recall the basics of involutive Heegaard Floer homology, due to the first author and Manolescu \cite{HMInvolutive}. Suppose that $\cH=(\Sigma,\as,\bs,w)$ is a weakly admissible Heegaard diagram for $Y$. Suppose that $\frs$ is a self-conjugate $\Spin^c$ structure. Write $\bar \cH=(\bar \Sigma, \bar \bs, \bar \as, w)$ for the diagram obtained from $\cH$ by reversing the orientation of $\Sigma$ and reversing the roles of $\as$ and $\bs$. (The curves $\bar \as$ are the images of $\as$ on $\bar \Sigma$, and similarly for $\bar \bs$.) There is a canonical chain isomorphism
\[
\eta\colon \CF^-(\bar{\cH},\frs)\to \CF^-(\cH,\frs).
\]
The first author and Manolescu consider the map
\[
\iota \colon \CF^-(\cH,\frs)\to \CF^-(\cH,\frs)
\]
given by the formula
\[
\iota := \eta \circ \Psi_{\cH\to \bar {\cH}},
\]
where $\Psi_{\cH\to \bar \cH}$ is the map from naturality as in \cite{JTNaturality}. They define the involutive Heegaard Floer complex $\CFI(Y,\frs)$ to be the mapping cone
\[
\Cone(Q(\id+\iota)\colon \CF^-(Y,\frs)\to Q\cdot\CF^-(Y,\frs)).
\]
Here $Q$ is a formal variable, which allows us to view $\CFI(Y,\frs)$ naturally as a module over $\bF[U,Q]/Q^2$.

The first author and Manolescu define a natural refinement for knots \cite{HMInvolutive}. If $K\subset Y$ is a knot and $\cH=(\Sigma,\as,\bs,w,z)$ is a doubly pointed Heegaard knot diagram, then we may consider the diagram $\bar \cH=(\bar \Sigma, \bar\bs, \bar\as, z, w)$. There is a canonical chain isomorphism
\[
\eta_K\colon \cCFK(\bar\cH)\to \cCFK( \cH)
\]
which sends $\scU^i\scV^j \xs$ to $\scU^{j} \scV^i \xs$, where we identify $\bT_{\a}\cap \bT_{\b}$ and $\bT_{\bar \b} \cap \bT_{\bar \a}$.  There is also a naturality map $\Psi_{\cH\to \bar \cH}$ which is the composition of a diffeomorphism map for a positive half-twist along $K$, together with the naturality map from \cite{JTNaturality}. The knot involution $\iota_K$ is defined as the composition
\[
\iota_K:=\eta\circ \Psi_{\cH\to \bar \cH}.
\]

One may similarly define a set of maps $\iota_L$, one for each component, for a link $L$ with a fixed choice of orientation for each component. Where it occasions no confusion we also refer to the sum of these maps as $\iota_L$.

\begin{rem}
Note that knot Floer homology naturally decomposes over $\Spin^c$ structures. The interaction with the involution is slightly subtle. Indeed, $\iota_K$ maps $\cCFK(Y,K,\frs)$ to $\cCFK(Y,K,\bar \frs+\PD[K])$. In our present paper, the dual knot $\mu$ is a generator of $H_1(S^3_n(K))$.
\end{rem}

\subsection{$\iota$-complexes, $\iota_K$-complexes, and knot-like complexes}
\label{sec:iota-complexes}

In this section, we recall the algebraic notions of $\iota$-complexes and $\iota_K$-complexes from \cite{HMInvolutive} and \cite{HMZConnectedSum}, along with the noninvolutive notion of a knot-like complex of \cite{DHSThomcobknot}.

\begin{define}
An \emph{$\iota$-complex} is a pair $(C,\iota)$ such that the following are satisfied:
\begin{enumerate}
\item $C$ is a chain complex which is homotopy equivalent to a free, finitely-generated chain complex over $\bF[U]$.
\item $C$ is equipped with an absolute $\Q$-valued grading such that $\d$ is $-1$ graded and $U$ is $-2$ graded.
\item $\iota$ is a grading preserving, $\bF[U]$-equivariant chain map such that $\iota^2\simeq \id$. 
\item $U^{-1} H_*(C)\iso \bF[U,U^{-1}]$ as relatively graded $\bF[U]$-modules.
\end{enumerate}
\end{define}
\begin{rem} 
We could naturally generalize the above definition to allow instead that $U^{-1} H_*(C)\iso \oplus^b \bF[U,U^{-1}]$ as $\bF[U]$-modules.
\end{rem}

For the purposes of studying the homology cobordism group, it is useful to consider the following notion of equivalence between $\iota$-complexes:
\begin{define}
 Two $\iota$-complexes $(C,\iota)$ and $(C',\iota')$ are \emph{locally equivalent} if there are grading preserving, $\bF[U]$-equivariant chain maps $F\colon C\to C'$ and $G\colon C'\to C$ such that $F \iota+\iota' F\simeq 0$ and $G\iota' +\iota G\simeq 0$ and such that $F$ and $G$ are induce isomorphisms between $U^{-1} H_*(C)$ and $U^{-1} H_*(C')$. 
\end{define}

We now discuss $\iota_K$-complexes. If $C$ is a free, finitely-generated chain complex over $\bF[\scU,\scV]$, there are two distinguished endomorphisms $\Phi$ and $\Psi$ of $C$. To define them, we write the differential $\d$ as a matrix with entries in $\bF[\scU,\scV]$. We differentiate the entries of this matrix with respect to $\scU$ to obtain $\Phi$, and we differentiate the entries of the matrix for $\d$ with respect to $\scV$ to obtain $\Psi$. See \cite{SarkarMovingBasepoints} and \cite{ZemQuasi} for appearances of these maps in the context of diffeomorphism maps on knot Floer homology.

\begin{define}\label{def:iota-K-complex} An \emph{$\iota_K$-complex} consists of a pair $(C,\iota_K)$ satisfying the the following:
\begin{enumerate}
\item $C$ is a chain complex over $\bF[\scU,\scV]$ which is homotopy equivalent to a free, finitely-generated chain complex over $\bF[\scU,\scV]$. Furthermore, $C$ is a equipped with a $\Q$-valued bigrading $(\gr_w,\gr_z)$, such that $\scU$ has bigrading $(-2,0)$ and $\scV$ has bigrading $(0,-2)$.
\item $\iota_K\colon C\to C$ is a homotopy automorphism such that $\iota_K$ is \emph{skew-graded} and \emph{skew-equivariant} (that is, $(\gr_w,\gr_z)(\iota_K(\xs))=(\gr_z,\gr_w)(\xs)$, $\iota_K\circ \scU=\scV\circ \iota_K$ and $\iota_K\circ \scV=\scU\circ \iota_K$).
\item $\scU^{-1} H_*(C/(\scV-1))\iso \oplus_{i=1}^b\bF[\scU,\scU^{-1}]$, for some $b\ge 1$.
\item $\iota_K^2\simeq \id+\Phi \Psi$, where $\Phi$ and $\Psi$ are the basepoint actions of $C$.
\end{enumerate}
\end{define}

Normally one restricts to the case that $b=1$ (i.e. there is a single tower). This restriction is not suitable for our purposes since we are interested in the case of knots in \emph{rational} homology 3-spheres. If $K\subset Y$ and $Y$ is a rational homology 3-sphere, then the integer $b$ in Definition~\ref{def:iota-K-complex} is the number of elements in $H_1(Y)$. If additionally $H_*(C/(\scV-1))\iso \oplus_{i=1}^b\bF[\scU]$, then we say that $C$ is of \emph{L-space type}. In particular, if $K\subset Y$ is a knot in a rational homology 3-sphere, then the tuple $(\cCFK(Y,K),\iota_K)$ is an $\iota_K$-complex, which is of L-space type if and only if $Y$ is a Heegaard Floer $L$-space.

\begin{define} We say that two $\iota_K$-complexes $(C,\iota_K)$ and $(C',\iota_K')$ are \emph{locally equivalent} if there exist maps
\[
F\colon C\to C'\quad \text{and} \quad G\colon C'\to C
\]
satisfying the following:
\begin{enumerate}
\item $F$ and $G$ are $\bF[\scU,\scV]$-equivariant and grading preserving.
\item $F\iota_K+\iota_K' F$ and $G\iota_K'+\iota_K G$ are $\bF[\scU,\scV]$-skew equivariantly chain homotopic to 0.
\item $F$ and $G$ induce isomorphisms between $\scU^{-1} H_*(\cC/(\scV-1))$ and $\scU^{-1}H_*(\cC/(\scV-1))$.
\end{enumerate}
\end{define}

Finally, we will also want the following non-involutive definition for complexes similar to those of knots in $S^3$, roughly following \cite{DHSThomcobknot}.

\begin{define} A \emph{knot-like} complex $\scC$ is a free finitely-generated bigraded chain complex over $\bF[\scU, \scV]$ with the property that $\scV^{-1} H_*(\scC/(\scU-1)) \simeq \bF[\scV, \scV^{-1}]$  and $\scU^{-1}H_*(\scC/(\scV-1))\simeq \bF[\scU, \scU^{-1}]$. If additionally $H_*(\scC/(\scV-1)) \simeq \bF[\scU]$, we say that $\scC$ is of \emph{$L$-space type} or \emph{$S^3$-type}. \end{define}

\subsection{Hypercubes and hyperboxes}

We begin with some preliminary definitions of hypercubes of chain complexes.

We write $\bE_n=\{0,1\}^n$. If $\veps,\veps'\in \bE_n$, we write $\veps\le \veps'$ if $\veps_j\le \veps_j'$ for each $j\in \{1,\dots, n\}$. We write $\veps<\veps'$ if $\veps\le \veps'$ and strict inequality holds for at least one index. We begin with the following definition, due to Manolescu and Ozsv\'{a}th \cite{MOIntegerSurgery}:
\begin{define}
An $n$-dimensional \emph{hypercube of chain complexes} consists of a collection of chain complexes $(C_{\veps},D_{\veps,\veps})$, as well as a collection of maps $D_{\veps,\veps'}\colon C_{\veps}\to C_{\veps'}$ whenever $\veps<\veps'$. We furthermore assume that the following compatibility condition is satisfied for each pair $(\veps,\veps'')$ satisfying $\veps<\veps''$:
\begin{equation}
\sum_{\veps\le \veps'\le \veps''} D_{\veps',\veps''}\circ D_{\veps,\veps'}=0.\label{eq:def-hypercube-chain-complex}
\end{equation}
\end{define}

Manolescu and Ozsv\'{a}th also define a notion of a \emph{hyperbox} of chain complexes, as follow. If $\ve{d}\in \Z_{\ge 0}^n$, then we write $\bE(\ve{d})=\{0,\dots, d_1\}\times \cdots \times \{0,\dots, d_n\}$. A \emph{hyperbox} of chain complexes $(C_{\veps},D_{\veps,\veps'})_{\veps\in \bE(\ve{d})}$ consists of a collection of chain complexes $(C_{\veps},D_{\veps,\veps})$ ranging over all $\veps\in \bE(\ve{d})$, together with a map $D_{\veps,\veps'}\colon C_{\veps}\to C_{\veps'}$ whenever $\veps<\veps'$ and $|\veps'-\veps|_{L^\infty}=1$. Furthermore, the compatibility condition in Equation~\eqref{eq:def-hypercube-chain-complex} is satisfied whenever $\veps<\veps''$ and $|\veps''-\veps|_{L^\infty}=1$.

Manolescu and Ozsv\'{a}th defined an important operation called \emph{compression}, which takes an $n$-dimensional hyperbox and returns an $n$-dimensional \emph{hypercube}. We refer the reader to \cite{MOIntegerSurgery}*{Section~5.2} for more background. We note that the description therein is also equivalent to a ``function-composition'' approach, described by Liu \cite{Liu2Bridge}*{Section~4.1.2}. See also \cite{HHSZNaturality}*{Section~2.1}.

\subsection{$A_\infty$-modules}

We now recall the standard notion of an $A_\infty$-module. See \cite{KellerNotes} for more background. We suppose that $A$ is an associative algebra over a ground ring $\ve{k}$, which is of characteristic 2, and let $\mu_2$ denote multiplication in $A$. A \emph{left $A_\infty$-module} over $A$ consists of left $\ve{k}$-module $M$, equipped with a $\ve{k}$-linear map for each $j\ge 0$ 
\[
m_{j+1}\colon A^{\otimes j}\otimes_{\ve{k}} M\to M
\]
such that if $a_n,\dots, a_1\in \cA$ and $\xs\in M$, then
\[
\sum_{j=0}^n m_{n-j+1}(a_n,\dots,a_{j+1}, m_{j+1}(a_{j},\dots, a_{1},\xs))+\sum_{k=1}^{ n-1} m_{n}(a_n,\dots, a_{k+1}a_{k},\dots, a_1, \xs)=0.
\]

We will additionally need to use the notion of a type-$D$ module, due to Lipshitz, Ozsv\'{a}th and Thurston \cite{LOTBordered}. If $A$ is an associative algebra, then a \emph{right type-$D$ module} over $A$ consists of a pair $(N,\delta^1)$ where $N$ is a right $\ve{k}$ module, and 
\[
\delta^1\colon N\to N\otimes_{\ve{k}} A
\]
is a $\ve{k}$-linear map. Furthermore, the following relation is satisfied
\[
(\id\otimes \mu_2)\circ (\delta^1\otimes \id)\circ \delta^1=0.
\]

If $N^{A}$ is a right type-$D$ module, ${}_A M$ is a left $A_\infty$-module, and one of $N^A$ and ${}_A M$ satisfies a boundedness assumption \cite{LOTBordered}*{Section~2}, then Lipshitz, Ozsv\'{a}th and Thurston define a chain complex $N^A\boxtimes {}_A M$. The underlying vector space is $N\otimes_{\ve{k}} M$. The differential is indicated by the diagram
\[
\d=
\begin{tikzcd}[row sep=.2cm] \ve{x}\ar[d]& \ve{y}\ar[dd]\\
\delta\ar[dr,Rightarrow]\ar[dd]&\,\\
\,& m\ar[d]\\
\,&\,
\end{tikzcd}
\]
In the above, $\delta$ indicates the (infinite) sum $\sum_{i=0}^\infty \delta^j$, where $\delta^j$ is obtained by composing $\delta^1$ $j$-times (with $\delta^0=\id$). See \cite{LOTBordered}*{Chapter~2} for more detailed treatment.

\subsection{The snake splitting lemma}

We begin with a basic result in homological algebra:

\begin{lem}\label{lem:snake} (Snake splitting lemma) Suppose that $A$, $B$ and $C$ are chain complexes of $\bF$-vector spaces and $i$ and $p$ are chain maps which make the following sequence exact
\[
\begin{tikzcd}[labels=description]
0\ar[r] &A \ar[r,"i"]& B\ar[r, "p"] \ar[l, "\sigma", bend right, dashed, swap] & C\ar[r] \ar[l, "s", bend right, dashed,swap]& 0.
\end{tikzcd}
\]
Furthermore, suppose that $s\colon C\to B$ is a splitting of $p$ as an $\bF$-linear map (not necessarily a chain map).
\begin{enumerate}
\item A splitting $\sigma\colon B\to A$ of $i$ is uniquely determined by the property that $\sigma \circ s=0$.
\item $sp+i\sigma=\id_B$.
\item The map $\sigma \d s\colon C\to A$ is a chain map.
\item The map $\Pi\colon \Cone(i\colon A\to B)\to C$ given by $p$ is a chain map.
\item The map $F\colon C\to \Cone(i\colon A\to B)$ given by $F=(\sigma \d s, s)$ is a chain map.
\item The maps $\Pi$ and $F$ are homotopy inverses. 
\end{enumerate}
\end{lem}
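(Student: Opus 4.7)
The plan is to work through the six claims in order, since each depends on those before it. The key structural observation is that the $\bF$-linear splitting $s$ produces an internal direct sum decomposition $B = i(A) \oplus s(C)$ of $\bF$-vector spaces (though emphatically not of chain complexes), which gives $\sigma$ explicitly and also makes (2) transparent.

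First I would prove (1) by defining $\sigma$ to be $i^{-1}$ on $i(A)$ and $0$ on $s(C)$; this is well-defined since $i$ is injective, and it is clearly the unique $\bF$-linear splitting of $i$ that kills $s$, because any such splitting is determined on each summand. Then (2) is immediate: $sp + i\sigma$ restricts to the identity on both $i(A)$ and $s(C)$, using $pi = 0$, $ps = \id_C$, $\sigma i = \id_A$, and $\sigma s = 0$.

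The central step is (3), and it is the main obstacle: the map $\sigma$ is not a chain map, so $\d_A\sigma \neq \sigma \d_B$ in general. Rather than computing that commutator directly, I would first study $\d_B s - s\d_C\colon C\to B$. Because $p$ is a chain map with $ps = \id_C$, this composite kills with $p$, hence factors through $\ker p = \im i$. Applying $\sigma$ and using $\sigma s = 0$ identifies the resulting map $C\to A$ as $\sigma\d_B s$, yielding the key identity
\[
\d_B s \;=\; s\d_C + i\,\sigma\d_B s. \tag{$\ast$}
\]
Now apply $\d_B$ to $(\ast)$ and use $\d_B^2 s = 0$, $\d_B i = i\d_A$, and $(\ast)$ again to rewrite $\d_B s \d_C$; injectivity of $i$ together with characteristic two then forces $\d_A(\sigma\d_B s) = (\sigma\d_B s)\d_C$, so $\sigma\d_B s$ is a chain map.

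The remaining claims are verifications. For (4), chain-mapness of $\Pi$ follows from $p\d_B = \d_C p$ and $pi = 0$. For (5), chain-mapness of $F = (\sigma\d_B s,\, s)$ into $\Cone(i)$ splits into two components: the first component uses (3), and the second is precisely $(\ast)$. For (6), $\Pi F = \id_C$ is immediate from $ps = \id_C$. For the other direction, I would propose the explicit homotopy $H\colon \Cone(i)\to \Cone(i)$ given by $H(a,b) = (\sigma b, 0)$ and verify $\d H + H\d = \id + F\Pi$; the right-hand side can be simplified by using (2) to rewrite $sp = \id_B + i\sigma$ (in characteristic two), after which the two sides match term-by-term. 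Once the identity $(\ast)$ from step (3) is in hand, every subsequent step is bookkeeping.
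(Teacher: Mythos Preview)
Your proof is correct and follows essentially the same approach as the paper: the same splitting $\sigma$, the same identity $sp+i\sigma=\id_B$, and the same homotopy $H(a,b)=(\sigma b,0)$. The one organizational difference is in part (3): you first isolate the identity $(\ast)$ $\d_B s = s\d_C + i\,\sigma\d_B s$ and then differentiate it, whereas the paper obtains (3) by expanding $(sp+i\sigma)\d_B(sp+i\sigma)\d_B + \d_B(sp+i\sigma)\d_B(sp+i\sigma)=0$ and only later (in part (5)) derives $(\ast)$; your ordering is arguably cleaner since $(\ast)$ is reused, but the underlying content is the same.
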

\begin{proof} (1) If $b\in B$, we note that $b+sp(b)$ is in the kernel of $p$, and hence factors uniquely as $i(a)$, for some $a\in A$. Let $\sigma(b)=a$. Clearly $\sigma s=0$. Uniqueness is an easy exercise.

(2) Note that $p(\id+sp+i\sigma)(b)=0$, for any $b$, so $(\id+sp+i\sigma)(b)$ is in the image of $i$ for all $b\in B$. To show something in the image of $i$ is zero, it is sufficient to show it is zero after composing with $\sigma$. We easily compute that $\sigma(\id+sp+i\sigma)(b)=0$, so $\id+sp+i \sigma=0$.

(3) To see that $\sigma \d s$ is a chain map, we note that since $p$ is surjective and $i$ is injective, it is sufficient to show that $[i\sigma \d_B s p,\d_B]=0$. We note that
\[
0=\d_{B}^2+\d_B^2=(sp+i\sigma)\d_B(sp+ i\sigma)\d_B+\d_B(sp+i\sigma)\d_B(sp+ i\sigma).
\]
Expanding the above expression out, most terms cancel, except for the relation $[i\sigma \d_B s p,\d_B]=0$.

(4) This follows from the relation $p\circ i=0$.

(5) The property of $F$ being a chain map is equivalent to $\sigma \d s$ being a chain map, and $s$ being a null-homotopy of $i \sigma \d s$. To wit:
\[
i\sigma \d s=(\id+sp)\d s=\d s+s p \d s= \d s+s \d p s=[\d, s].
\]

(6) Note that $\Pi F=\id$, so we need only construct a homotopy $H\colon \Cone(A\to B)\to \Cone(A\to B)$ so that $F \Pi+\id=[\d, H]$. We compute 
\[
F \Pi(a,b)=(\sigma \d s p b, sp b).
\]
The homotopy is actually just $H(a,b)=(\sigma b,0)$. To see that this works,
we compute that
\[
[\d_{\Cone}, H](a,b)=(\sigma i a+[\d, \sigma](b),i\sigma b).
\]
The relation $\id_{\Cone}=F\Pi+[\d, H]$ amounts to verifying that
\[
[\d, \sigma]=\id+\sigma i+\sigma \d s p.
\]
To this end,
\[
\id+\sigma i+\sigma \d s p= \sigma \d (\id + i \sigma)=[\d, \sigma].
\]
\end{proof}

\subsection{The homological perturbation lemma for $A_\infty$-modules}

The homological perturbation lemma is an important technique in homological algebra which as many variations. The first instance of this result is usually attributed to Kadeishvili's homotopy transfer theorem for $A_\infty$-algebras \cite{Kadeishvili_Ainfinity}. A reinterpretation in terms of trees is due to Kontsevich and Soibelman \cite{KontsevichSoibelman}. We recall the version for $A_\infty$-modules.

\begin{lem}\label{lem:homological-perturbation-modules}
Suppose that $( M,m_{j}^M)$ is a left $A_\infty$-module over a $dg$-algebra $A$. We assume $A$ is an algebra over a ring $\ve{k}$ (for our purposes, $\ve{k}=\bF_2$ is sufficient). Suppose that $(Z,m_1^Z)$ is a chain complex which is a left $\ve{k}$-module. Suppose also that we have maps
\[
i\colon Z\to M\quad \pi\colon M\to Z\quad h\colon M\to M
\]
which satisfy the following:
\begin{enumerate}
\item $\d_{\Mor}(i)=0$ and $\d_{\Mor}(\pi)=0$.
\item $\pi\circ i=\id_{Z}$.
\item $i\circ \pi=\id_M+\d_{\Mor}(h)$.
\item $h\circ h=0$.
\item $h\circ i=0$.
\item $\pi\circ h=0$.
\end{enumerate}
In the above, $\d_{\Mor}(f)$ means $[f,m_1]$.  Then there is an $A_\infty$-module structure $(Z, m_{j}^Z)$, extending the differential $m_1^Z$, which is homotopy equivalent to $(M,m_{j}^M)$. In fact, there are morphisms of $A_\infty$-modules  $I\colon {}_{A} Z\to {}_{A} M$, $\Pi\colon {}_{A} M\to {}_{A} Z$ and $H\colon {}_{A} M\to {}_{A} M$, extending $\pi$, $i$ and $h$, which satisfy identical relations to the above (as long as we interpret $\d_{\Mor}$ and composition in the sense of $A_\infty$-module morphisms). 
\end{lem}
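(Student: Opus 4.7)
The plan is to apply the classical homological perturbation lemma for $A_\infty$-modules, proved via an explicit tree-summation formula originating with Kadeishvili's homotopy transfer theorem. The side conditions $h^2=0$, $hi=0$, $\pi h=0$ are precisely what make the tree sums well-behaved, so the content of the lemma is largely combinatorial.

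First, I would write down explicit formulas for the higher operations $m_j^Z$ and the higher components of $I, \Pi, H$. Introduce an auxiliary family of maps
\[
\Psi_j \colon A^{\otimes(j-1)} \otimes_{\ve{k}} M \to M
\]
defined recursively by $\Psi_1 = \id_M$ and, for $j \geq 2$,
\[
\Psi_j(a_{j-1}, \ldots, a_1, x) = \sum_{k=1}^{j-1} h \circ m_{k+1}^M\bigl(a_{j-1}, \ldots, a_{j-k}, \Psi_{j-k}(a_{j-k-1}, \ldots, a_1, x)\bigr),
\]
using that $A$ is a $dg$-algebra, so $\mu_k^A = 0$ for $k \geq 3$. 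Each $\Psi_j$ is a sum over planar rooted trees with vertices labeled by operations $m_k^M$, internal edges labeled by $h$, and one further $h$ on the root-adjacent edge. Writing $\bar\Psi_j$ for the same formula with the outermost $h$ removed, set, for $j \geq 2$,
\[
m_j^Z = \pi \circ \bar\Psi_j \circ (\id \otimes i), \qquad I_j = \Psi_j \circ (\id \otimes i), \qquad \Pi_j = \pi \circ \bar\Psi_j, \qquad H_j = \Psi_j.
\]

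Second, I would verify the required $A_\infty$-relations. Each relation, after expansion, becomes a finite sum of tree-like compositions. The vanishing of this sum follows from three ingredients: (a) the $A_\infty$-relations for $M$ applied at each internal vertex, which collapse adjacent pairs of $m^M$-vertices; (b) the identity $i\pi + \id_M = [m_1^M, h]$, which interprets each internal $h$ either as $i\pi$ (producing a factorization through $Z$) or as a boundary (canceling with a neighboring term); and (c) the side conditions $h^2=0$, $hi=0$, $\pi h=0$, which kill all degenerate trees. This simultaneously shows that $Z$ carries an $A_\infty$-module structure extending $m_1^Z$, that $I$ and $\Pi$ are $A_\infty$-morphisms, that $\Pi \circ I = \id_Z$ holds strictly, and that $I \circ \Pi + \id_M = \d_{\Mor}(H)$. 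The higher analogues of the side conditions $H \circ H = 0$, $H \circ I = 0$, $\Pi \circ H = 0$ hold strictly as well: in the tree-sum picture, every nonzero $j$-th component of these compositions factors through $h^2$, $hi$, or $\pi h$ at the junction between the two trees.

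The main obstacle is purely combinatorial bookkeeping. In characteristic $2$ signs disappear, but one must still track which trees appear with which multiplicity in each expansion and verify that the pairings cancel as claimed. This analysis is standard and parallels Kadeishvili's original proof for $A_\infty$-algebras, specialized to trees with a single distinguished ``module leaf''. Accordingly, I would present the explicit formulas above and refer to the standard tree-combinatorial verification for the cancellations.
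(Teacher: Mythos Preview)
The paper does not actually prove this lemma; it records it as a standard result (citing Kadeishvili and Kontsevich--Soibelman) and simply writes down the explicit tree formulas for $m^Z$, $I$, $\Pi$, and $H$ in Figure~\ref{fig:Z-structure-rels}. So your plan to give a tree-summation argument is the right one, and your formulas for $m_j^Z$ and $I_j$ agree with the paper's.

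However, your formulas for $\Pi_j$ and $H_j$ are wrong. In the paper's diagrams (and in the standard literature), the higher components of $\Pi$ and $H$ begin by applying $h$ to the module input \emph{before} the first $m_{>1}^M$, whereas your $\bar\Psi_j$ and $\Psi_j$ apply $m_{>1}^M$ directly to $x$. Concretely, your $\Pi_2(a_1,x)=\pi\, m_2^M(a_1,x)$, while the correct formula is $\pi\, m_2^M(a_1,h(x))$. With your definitions the side conditions you claim fail already at level $2$: for instance
\[
(\Pi\circ I)_2(a_1,z)=\pi\,h\,m_2^M(a_1,i(z))+\pi\,m_2^M(a_1,i(z))=0+m_2^Z(a_1,z),
\]
which is generally nonzero, so $\Pi\circ I\neq \id_Z$. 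Similarly $(H\circ I)_2=I_2\neq 0$ and $(\Pi\circ H)_2(a_1,x)=\pi\,m_2^M(a_1,h(x))\neq 0$. The fix is easy: for $\Pi$ and $H$ use the same recursion but with base case $h$ rather than $\id_M$ (equivalently, replace $\bar\Psi_j$ and $\Psi_j$ by $\bar\Psi_j\circ(\id\otimes h)$ and $\Psi_j\circ(\id\otimes h)$ in those two definitions). With that correction, every higher component of $\Pi\circ I$, $H\circ I$, $\Pi\circ H$, and $H\circ H$ visibly factors through $\pi h$, $h i$, or $h h$ at the junction, and your combinatorial outline goes through.
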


In the homological perturbation lemma, the maps $I$, $\Pi$ and $H$ are canonically determined. See Figure~\ref{fig:Z-structure-rels}. Therein, the map
\[
\Delta\colon A^{\otimes n}\to \bigoplus_{k=0}^n A^{\otimes k}\otimes A^{\otimes n-k}
\]
is the canonical co-multiplication, given by
\begin{equation}
\Delta(a_n\otimes \cdots \otimes a_1)=\sum_{k=0}^n (a_n\otimes \cdots\otimes  a_{k+1})\otimes (a_{k}\otimes \cdots \otimes a_1).
\label{eq:Delta-def}
\end{equation}
\begin{figure}[ht]
\[
 m^Z(\ve{a},x)=\hspace{-.5cm}\begin{tikzcd}[column sep=0cm, row sep=.3cm]
\ve{a} \ar[d,Rightarrow]& \, & x \ar[d,dashed]\\
\Delta
	\ar[drr,Rightarrow, bend right =3]
	\ar[dddrr,Rightarrow, bend right=5]
	\ar[dddddrr,Rightarrow, bend right =8]
	\ar[dddddddrr,Rightarrow, bend right=11]
&&i \ar[d,dashed]\\
&&m_{>0}^M \ar[d,dashed]\\
&& h\ar[d,dashed]\\
&&m_{>0}^M \ar[d,dashed]\\
&& h\ar[d,dashed]\\
&& \vdots \ar[d,dashed]\\
&& h\ar[d,dashed]\\
&& m_{>0}^M\ar[d,dashed]\\
&& \pi\ar[d,dashed]\\
&& \,
\end{tikzcd}
\quad\Pi(\ve{a},x):=\hspace{-.5cm}
\begin{tikzcd}[column sep=0cm, row sep=.3cm]
 \ve{a} \ar[d,Rightarrow]& \, & x \ar[d,dashed]\\
\Delta
	\ar[drr,Rightarrow, bend right =3]
	\ar[dddrr,Rightarrow, bend right=5]
	\ar[dddddrr,Rightarrow, bend right =8]
	\ar[dddddddrr,Rightarrow, bend right=11]
&&h \ar[d,dashed]\\
&&m_{>0}^M \ar[d,dashed]\\
&& h\ar[d,dashed]\\
&&m_{>0}^M \ar[d,dashed]\\
&& h\ar[d,dashed]\\
&& \vdots \ar[d,dashed]\\
&& h\ar[d,dashed]\\
&& m_{>0}^M\ar[d,dashed]\\
&& \pi\ar[d,dashed]\\
&& \,
\end{tikzcd}
 \quad 
I(\ve{a},x):=\hspace{-.5cm}
\begin{tikzcd}[column sep=0cm, row sep=.3cm]
 \ve{a} \ar[d,Rightarrow]& \, & x \ar[d,dashed]\\
\Delta
	\ar[drr,Rightarrow, bend right =3]
	\ar[dddrr,Rightarrow, bend right=5]
	\ar[dddddrr,Rightarrow, bend right =8]
	\ar[dddddddrr,Rightarrow, bend right=11]
&&i \ar[d,dashed]\\
&&m_{>0}^M \ar[d,dashed]\\
&& h\ar[d,dashed]\\
&&m_{>0}^M \ar[d,dashed]\\
&& h\ar[d,dashed]\\
&& \vdots \ar[d,dashed]\\
&& h\ar[d,dashed]\\
&& m_{>0}^M\ar[d,dashed]\\
&& h\ar[d,dashed]\\
&& \,
\end{tikzcd}
\quad
H(\ve{a},x):=\hspace{-.5cm}
\begin{tikzcd}[column sep=0cm, row sep=.3cm]
 \ve{a} \ar[d,Rightarrow]& \, & x \ar[d,dashed]\\
\Delta
	\ar[drr,Rightarrow, bend right =3]
	\ar[dddrr,Rightarrow, bend right=5]
	\ar[dddddrr,Rightarrow, bend right =8]
	\ar[dddddddrr,Rightarrow, bend right=11]
&&h \ar[d,dashed]\\
&&m_{>0}^M \ar[d,dashed]\\
&& h\ar[d,dashed]\\
&&m_{>0}^M \ar[d,dashed]\\
&& h\ar[d,dashed]\\
&& \vdots \ar[d,dashed]\\
&& h\ar[d,dashed]\\
&& m_{>0}^M\ar[d,dashed]\\
&& h\ar[d,dashed]\\
&& \,
\end{tikzcd}
\]
\caption{The $A_\infty$-module structure maps on ${}_{A}Z$, and the morphisms $\Pi$, $I$ and $H$. In the above, $m_{>0}^M$ denotes any $m_j^M$ for $j>1$. Also, we write $\Delta$ for a repeated application of the map defined in Equation~\ref{eq:Delta-def}.}
\label{fig:Z-structure-rels}
\end{figure}

\subsection{A homological perturbation lemma for hypercubes}
\label{sec:homological-perturbation-hypercubes}

In this section, we describe an algorithm for constructing hypercubes. It is a natural adaptation of the homological perturbation lemma for $A_\infty$-algebras and $A_\infty$-modules to the setting of hypercubes. This result is similar to work of Huebschmann and Kadeishvili \cite{HK_Homological_Perturbation}.

\begin{lem}\label{lem:homological-perturbation-cubes}
Suppose that $(A_\veps,f_{\veps,\veps'})_{\veps\in \bE_n}$ is a hypercube of chain complexes, and $(B_\veps)_{\veps\in \bE_n}$ is a collection of chain complexes. Furthermore, suppose there are maps 
\[
\pi_{\veps}\colon A_\veps\to B_\veps\qquad i_{\veps}\colon B_\veps\to A_\veps \qquad h_\veps\colon A_\veps\to A_\veps,
\]
satisfying
\begin{equation}
\pi_{\veps}\circ i_{\veps}=\id_{B_\veps}, \quad i_{\veps}\circ \pi_{\veps}=\id_{A_\veps}+[\d, h_\veps], \quad \pi_\veps\circ h_\veps=0,\quad h_\veps\circ i_{\veps}=0\quad \text{and} \quad h_\veps\circ h_\veps=0.\label{eq:homological-perturbation-assumptions}
\end{equation}
With the above data chosen, there are canonical structure maps $g_{\veps,\veps'}\colon B_{\veps}\to B_{\veps'}$ so that $(B_\veps,g_{\veps,\veps'})$ is a hypercube of chain complexes. Furthermore, there are homomorphisms of hypercubes
\[
\Pi\colon (A_\veps,f_{\veps,\veps'})\to (B_\veps,g_{\veps,\veps'})\quad \text{and} \quad I\colon  (B_\veps,g_{\veps,\veps'})\to (A_\veps,f_{\veps,\veps'})
\]
as well as a morphism $H\colon (A_\veps,f_{\veps,\veps'})\to (A_\veps,f_{\veps,\veps'})$
such that Equation~\eqref{eq:homological-perturbation-assumptions} is also satisfied with $\Pi$, $I$ and $H$ replacing $\pi_\veps$, $i_\veps$ and $h_\veps$.
\end{lem}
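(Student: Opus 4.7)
The plan is to produce the structure maps and morphisms by writing down explicit ``chain-sum'' formulas that mimic the tree diagrams of Figure~\ref{fig:Z-structure-rels} for $A_\infty$-modules. For each pair $\veps<\veps'$ in $\bE_n$ we set
\[
g_{\veps,\veps'} \;=\; \sum \pi_{\veps_k}\circ f_{\veps_{k-1},\veps_k}\circ h_{\veps_{k-1}}\circ f_{\veps_{k-2},\veps_{k-1}}\circ h_{\veps_{k-2}}\circ\cdots\circ h_{\veps_1}\circ f_{\veps_0,\veps_1}\circ i_{\veps_0},
\]
where the sum runs over all strictly increasing chains $\veps=\veps_0<\veps_1<\cdots<\veps_k=\veps'$ in $\bE_n$. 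The components $\Pi_{\veps,\veps'}$, $I_{\veps,\veps'}$, $H_{\veps,\veps'}$ of the morphisms of hypercubes are defined by the same alternating $f$-$h$ sandwich pattern, except that the outer $i_{\veps_0}$ is replaced by $h_{\veps_0}$ for $\Pi$ and $H$, and the outer $\pi_{\veps_k}$ is replaced by $h_{\veps_k}$ for $I$ and $H$; on the diagonal we take $\Pi_{\veps,\veps}=\pi_\veps$, $I_{\veps,\veps}=i_\veps$, and $H_{\veps,\veps}=h_\veps$.

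Next, I would verify that $(B_\veps,g_{\veps,\veps'})$ is a hypercube by expanding
\[
\sum_{\veps\le\veps'\le\veps''} g_{\veps',\veps''}\circ g_{\veps,\veps'}
\]
as a sum of decorated chains of the form $\pi\circ f\circ h\circ f\circ\cdots\circ h\circ f\circ i$. At each concatenation point there appears the factor $i_{\veps'}\circ\pi_{\veps'}=\id+[\d,h_{\veps'}]$. The $\id$ piece merges two adjacent $f$'s into a single junction, which is canceled by the original hypercube relation for $(A_\veps,f_{\veps,\veps'})$; the $[\d,h]$ piece promotes the junction into a longer chain carrying a new ``$h$'' slot and cancels against the contribution of that longer chain when the differential moves across the new $h$. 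The boundary identities $\pi\circ i=\id$, $\pi\circ h=0$, $h\circ i=0$, and $h\circ h=0$ annihilate the ``illegal'' decorated chains (two adjacent $h$'s, or an $h$ abutting the outermost $i$ or $\pi$), so only the terms that cancel in pairs survive. The compatibility relations for $\Pi$, $I$, and the equations $\Pi\circ I=\id$, $I\circ\Pi=\id+[\d,H]$, $\Pi\circ H=0$, $H\circ I=0$, and $H\circ H=0$ are proved by the same type of pairing argument, using the fact that the outer choice of $i$ versus $h$ or $\pi$ versus $h$ controls which boundary simplification applies.

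The hard part is the combinatorial bookkeeping of which decorated chains appear and how they pair up; organizing the argument by the underlying chain of cube vertices makes the cancellations transparent. A conceptually cleaner alternative would be to observe that a hypercube of chain complexes is equivalent to a type-$D$ structure (or, dually, an $A_\infty$-module) over the exterior algebra on $n$ generators, and then to appeal directly to Lemma~\ref{lem:homological-perturbation-modules}. However, I would present the direct proof here, because the explicit chain-sum formulas for $g_{\veps,\veps'}$, $\Pi$, $I$, and $H$ are needed in Sections~\ref{sec:small-model} and~\ref{sec:involution} to reduce the expanded dual surgery model to its small model and to transfer the involution across this reduction.
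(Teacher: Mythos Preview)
Your proposal is correct and matches the paper's approach: the paper gives exactly the same chain-sum formulas for $g_{\veps,\veps'}$, $\Pi$, $I$, and $H$, and verifies the relations by the same kind of term-by-term cancellation over increasing chains in $\bE_n$. The only cosmetic difference is that the paper organizes the verification by writing $[\d,g_{\veps,\veps''}]$ and commuting $\d$ through the string $\pi\circ f\circ h\circ\cdots\circ f\circ i$, whereas you propose to expand $\sum g\circ g$ and replace the internal $i_{\veps'}\pi_{\veps'}$ by $\id+[\d,h_{\veps'}]$; these are two presentations of the same cancellation.
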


The structure maps $g_{\veps,\veps'}$ and the morphisms $\Pi$, $I$ and $H$ have conceptually simple formulas. The map $g_{\veps,\veps'}$ is given by the formula
\[
g_{\veps,\veps'}=\sum_{\veps=\veps_1<\cdots<\veps_n=\veps'} \pi_{\veps_n}\circ f_{\veps_{n-1},\veps_{n}}\circ h_{\veps_{n-1}}\circ f_{\veps_{n-2},\veps_{n-1}}\circ \cdots \circ h_{\veps_2}\circ f_{\veps_1,\veps_2}\circ i_{\veps_1}.
\]
We define $\Pi_{\veps,\veps}=\pi_{\veps}$, $I_{\veps,\veps}=i_{\veps}$ and $H_{\veps,\veps}=h_{\veps}$. Additionally, we set
\[
\begin{split}
\Pi_{\veps,\veps'}&=\sum_{\veps=\veps_1<\cdots<\veps_n=\veps'} \pi_{\veps_n}\circ f_{\veps_{n-1},\veps_n}\circ h_{\veps_{n-1}}\circ \cdots \circ f_{\veps_1,\veps_2}\circ h_{\veps_1}\\
I_{\veps,\veps'}&=\sum_{\veps=\veps_1<\cdots<\veps_n=\veps'}  h_{\veps_{n}}\circ f_{\veps_{n-1},\veps_n}\circ \cdots \circ f_{\veps_1,\veps_2} \circ i_{\veps_1}\\
H_{\veps,\veps'}&=\sum_{\veps=\veps_1<\cdots<\veps_n=\veps'}  h_{\veps_{n}}\circ f_{\veps_{n-1},\veps_n}\circ \cdots \circ f_{\veps_1,\veps_2} \circ h_{\veps_1}.
\end{split}
\]

\begin{rem}
 The above lemma may also be applied to hyper\emph{boxes} of chain complexes. For hyperboxes, we apply the above statement to each constituent hypercube.
\end{rem}

We break the proof into pieces.

\begin{lem} \label{lem:(B,g)-hypercube}
 $(B_{\veps},g_{\veps,\veps'})_{\veps\in \bE_n}$ is a hypercube of chain complexes.
\end{lem}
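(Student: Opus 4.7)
The plan is to reformulate the claim as a single quadratic identity on a total complex, and derive it from the classical homological perturbation lemma for chain complexes.

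First I would form the total complexes $\Tot(A) := \bigoplus_{\veps \in \bE_n} A_\veps$ and $\Tot(B) := \bigoplus_{\veps \in \bE_n} B_\veps$, and assemble the hypercube data into $\mathcal{D}_A := \sum_{\veps \le \veps'} f_{\veps, \veps'}$ and the desired map $\mathcal{D}_B := \sum_{\veps \le \veps'} g_{\veps, \veps'}$, together with $I := \bigoplus_\veps i_\veps$, $\Pi := \bigoplus_\veps \pi_\veps$, $H := \bigoplus_\veps h_\veps$. The collection of hypercube identities \eqref{eq:def-hypercube-chain-complex} for $(A_\veps, f_{\veps,\veps'})$ is precisely equivalent to $\mathcal{D}_A^2 = 0$, and the identity I want to prove is equivalent to $\mathcal{D}_B^2 = 0$; after proving the latter, separating by $\veps$-grading recovers the pointwise relations.

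Next I would split $\mathcal{D}_A = \mathcal{D}_A^{\mathrm{int}} + \mathcal{F}$ into its diagonal part $\mathcal{D}_A^{\mathrm{int}} := \bigoplus_\veps f_{\veps,\veps}$ and strictly off-diagonal part $\mathcal{F} := \sum_{\veps < \veps'} f_{\veps, \veps'}$; since $\mathcal{F}$ raises cube degree, $\mathcal{F}^{n+1} = 0$. The assumptions \eqref{eq:homological-perturbation-assumptions} become the standard deformation retract identities between $(\Tot(A), \mathcal{D}_A^{\mathrm{int}})$ and $(\Tot(B), \mathcal{D}_B^{\mathrm{int}})$, and direct inspection of the prescribed formula identifies the off-diagonal part of $\mathcal{D}_B$ with the classical perturbed differential
\[
\mathcal{D}_B - \mathcal{D}_B^{\mathrm{int}} = \Pi \, \mathcal{F} \Bigl(\sum_{k \ge 0} (H\mathcal{F})^k\Bigr) I,
\]
since each factor of $H\mathcal{F}$ forces a strict jump to a higher cube vertex and so the geometric series is finite.

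Finally, I would invoke the classical homological perturbation lemma to conclude $\mathcal{D}_B^2 = 0$. The underlying telescoping uses $I \Pi = \id + [\mathcal{D}_A^{\mathrm{int}}, H]$ to collapse each $\Pi \cdots I$ seam in $\mathcal{D}_B \circ \mathcal{D}_B$, together with $H^2 = 0$, $\Pi H = 0$, $HI = 0$, and the identity $(\mathcal{D}_A^{\mathrm{int}})^2 + [\mathcal{D}_A^{\mathrm{int}}, \mathcal{F}] + \mathcal{F}^2 = 0$ extracted from $\mathcal{D}_A^2 = 0$. This computation is essentially identical to the one carried out for $A_\infty$-modules in Lemma \ref{lem:homological-perturbation-modules}; the main (though purely formal) obstacle is the bookkeeping in this telescoping step, verifying that every term produced by the $\id$-part of $I\Pi$ is cancelled against one coming either from the $A$-hypercube relation or from the $[\mathcal{D}_A^{\mathrm{int}}, H]$ correction.
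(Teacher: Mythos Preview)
Your proposal is correct and takes a genuinely different route from the paper. The paper proves the hypercube relations directly: for each pair $\veps<\veps''$ it computes $[\d,g_{\veps,\veps''}]$ by pushing the internal differential through the string $\pi_{\veps_m}\circ f_{\veps_{m-1},\veps_m}\circ h_{\veps_{m-1}}\circ\cdots\circ f_{\veps_1,\veps_2}\circ i_{\veps_1}$, and classifies the resulting terms into three types, two of which cancel in pairs (from commuting $\d$ past an $f$ versus absorbing an $h$) while the third assembles into $\sum g_{\veps',\veps''}g_{\veps,\veps'}$. Your argument instead packages everything into the total complexes, recognizes the off-diagonal part of $\mathcal D_B$ as exactly $\Pi\mathcal F(1-H\mathcal F)^{-1}I$ with $\mathcal F$ nilpotent by cube-degree considerations, and then invokes the basic perturbation lemma.

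What each approach buys: the paper's computation is entirely self-contained and makes the term-by-term cancellation mechanism explicit, which is useful since the same bookkeeping is reused verbatim in the subsequent lemma to verify $\d_{\Mor}(\Pi)=0$, $\d_{\Mor}(I)=0$, and $I\Pi=\id+\d_{\Mor}(H)$. Your approach is more conceptual and, once the total-complex reformulation is in place, yields not only $\mathcal D_B^2=0$ but the full package of perturbed maps $I'$, $\Pi'$, $H'$ and their deformation-retract identities in one stroke, i.e.\ it proves all of Lemma~\ref{lem:homological-perturbation-cubes} at once rather than just this sublemma. The only cost is that you are citing the basic perturbation lemma as a black box; if that is acceptable in context, your route is shorter.
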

\begin{proof}

  The desired hypercube relations read
  \begin{equation}
  [\d, g_{\veps,\veps''}]=\sum_{\veps<\veps'<\veps''} g_{\veps',\veps''}\circ g_{\veps,\veps'}.
  \label{eq:g-veps-hypercube-relations}
  \end{equation}
  We now verify the above relation. Take a sequence $\veps=\veps_1<\cdots<\veps_m=\veps''$ and consider the commutator
\[
[\d,  \pi_{\veps_m}\circ f_{\veps_{m-1},\veps_m}\circ h_{\veps_{m-1}}\circ f_{\veps_{m-2},\veps_{m-1}}\circ h_{\veps_{m-2}}\circ \cdots \circ h_{\veps_2}\circ f_{\veps_1,\veps_2}\circ i_{\veps_1}].
\]
To compute the commutator, we take the expression
\[
\d \circ   \pi_{\veps_m}\circ f_{\veps_{m-1},\veps_m}\circ h_{\veps_{m-1}}\circ f_{\veps_{m-2},\veps_{m-1}}\circ h_{\veps_{m-2}}\circ \cdots \circ h_{\veps_2}\circ f_{\veps_1,\veps_2}\circ i_{\veps_1},
\]
and move $\d$ from the left to the right, using the hypercube relations of $(A_{\veps},f_{\veps,\veps'})$ and Equation~\eqref{eq:homological-perturbation-assumptions}. We obtain the following extra terms:
\begin{enumerate}
\item $\pi_{\veps_m}\circ f_{\veps_m,\veps_{m-1}}\circ h_{\veps_{m-1}}\circ \cdots \circ  [\d, f_{\veps_j,\veps_{j-1}}]\circ h_{\veps_{j-1}}\circ \cdots \circ h_{\veps_2}\circ f_{\veps_2,\veps_1}\circ i_{\veps_1}$. These appear when we commute $\d$ past $f_{\veps_j,\veps_{j-1}}$.
\item $\pi_{\veps_m}\circ f_{\veps_m,\veps_{m-1}}\circ h_{\veps_{m-1}}\circ \cdots \circ  f_{\veps_{j},\veps_{j+1}}\circ i_{\veps_j} \circ \pi_{\veps_j} \circ f_{\veps_{j-1},\veps_j} \cdots \circ h_{\veps_2}\circ f_{\veps_1,\veps_2}\circ i_{\veps_1}$. These appear when we commute $\d$ past $h_{\veps_j}$.
\item  $\pi_{\veps_m}\circ f_{\veps_{m-1},\veps_m}\circ h_{\veps_{m-1}}\circ \cdots \circ  f_{\veps_j,\veps_{j+1}}\circ f_{\veps_{j-1},\veps_j} \cdots \circ h_{\veps_2}\circ f_{\veps_1,\veps_2}\circ i_{\veps_1}$. These appear when we commute $\d$ past $h_{\veps_j}$.
\end{enumerate}
  When summing over all sequences $\veps=\veps_1<\cdots<\veps_m=\veps''$, terms of the first and third types cancel, and we are left with the right hand side of Equation~\eqref{eq:g-veps-hypercube-relations}, which completes the proof.
\end{proof}

We now prove the remaining subclaims of Lemma~\ref{lem:homological-perturbation-cubes}:

\begin{lem}\,
\begin{enumerate}
\item $\d_{\Mor}(\Pi)=0$ and $\d_{\Mor}(I)=0$,
\item $\Pi\circ I=\id_B$,
\item $\Pi\circ I=\id_A+\d_{\Mor}(H)$,
\item $\Pi\circ H=0$,
\item $H\circ I=0$. 
\end{enumerate}
\end{lem}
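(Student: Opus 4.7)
The plan is to verify the five identities by direct expansion, exploiting the common ``zigzag'' form of the formulas for $\Pi, I, H$ and the structure maps $g_{\veps, \veps'}$. All of these are alternating compositions along chains $\veps = \veps_1 < \cdots < \veps_n = \veps'$, with edge maps $f_{\veps_j, \veps_{j+1}}$ from $(A_\veps, f_{\veps,\veps'})$ interspersed with homotopies $h_{\veps_j}$ at each interior vertex, and capped at each end by one of $\pi, i, h$ (yielding $\pi\cdots i$ for $g$, $\pi\cdots h$ for $\Pi$, $h\cdots i$ for $I$, and $h\cdots h$ for $H$). The core computations are variants of the argument already used to prove Lemma~\ref{lem:(B,g)-hypercube}.

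First I would verify (1), that $\d_{\Mor}(\Pi)=0$ and $\d_{\Mor}(I)=0$. To show $\d_{\Mor}(\Pi)=0$, compute $[\d, \Pi_{\veps, \veps'}]$ by moving $\d$ through each zigzag one symbol at a time. Moving $\d$ past an interior $h_{\veps_j}$ replaces it with $[\d, h_{\veps_j}] = \id + i_{\veps_j}\pi_{\veps_j}$ by Equation~\eqref{eq:homological-perturbation-assumptions}; the ``$\id$'' contributions cancel by telescoping against the terms produced when $\d$ is moved past an adjacent edge map $f$, using the hypercube relations of $(A_\veps, f_{\veps,\veps'})$, just as in the proof of Lemma~\ref{lem:(B,g)-hypercube}. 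The surviving $i_{\veps_j}\pi_{\veps_j}$ substitutions factorize each zigzag at the vertex $\veps_j$: the right half is exactly $\Pi_{\veps, \veps_j}$ and the left half is exactly $g_{\veps_j, \veps'}$, so summing over all splitting points and chains yields the morphism relation
\[
[\d, \Pi_{\veps,\veps'}] = \sum_{\veps \le \veps'' < \veps'} g_{\veps'', \veps'} \circ \Pi_{\veps, \veps''} + \sum_{\veps < \veps'' \le \veps'} \Pi_{\veps'', \veps'} \circ f_{\veps, \veps''},
\]
where the second sum picks up the endpoint terms in which $\d$ is commuted past the innermost $h_\veps$ or past $\pi_{\veps'}$ (which, being a chain map, contributes nothing). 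The case of $I$ is symmetric.

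Claims (2), (4), and (5) — that $\Pi \circ I = \id$, $\Pi \circ H = 0$, and $H \circ I = 0$ — share a uniform proof. On the diagonal, $\Pi_{\veps,\veps} \circ I_{\veps, \veps} = \pi_\veps \circ i_\veps = \id$, while $\pi_\veps \circ h_\veps = 0$ and $h_\veps \circ i_\veps = 0$. For $\veps < \veps'$, every summand in $\sum_{\veps \le \veps'' \le \veps'} (\bullet)_{\veps'', \veps'} \circ (\bullet)_{\veps, \veps''}$ contains, at the junction vertex $\veps''$, one of the three zero products from Equation~\eqref{eq:homological-perturbation-assumptions}: either $h_{\veps''} \circ h_{\veps''} = 0$ at an interior splitting point, or $\pi_{\veps'} \circ h_{\veps'} = 0$ when $\veps'' = \veps'$, or $h_\veps \circ i_\veps = 0$ when $\veps'' = \veps$. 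In each composition one reads off which of the three applies from which cap appears on which side. All summands vanish.

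For (3), $I \circ \Pi = \id + \d_{\Mor}(H)$: on the diagonal, $i_\veps \pi_\veps = \id + [\d, h_\veps] = \id + \d_{\Mor}(H)_{\veps, \veps}$ as required. For $\veps < \veps'$, the junction vertex in $I_{\veps'', \veps'} \circ \Pi_{\veps, \veps''}$ gives $i_{\veps''} \pi_{\veps''} = \id + [\d, h_{\veps''}]$. The ``$\id$'' contributions amalgamate the two zigzags into a single longer zigzag of $H$ form (with an $h_{\veps''}$ effectively inserted at the joined vertex via the same telescoping step used in the proof of $\d_{\Mor}(\Pi)=0$, which turns adjacent $f \cdot f$ into $f \cdot h \cdot f$ modulo the hypercube relations of $(A_\veps, f_{\veps,\veps'})$), producing exactly $H_{\veps, \veps'}$. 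The $[\d, h_{\veps''}]$ contributions, combined with the terms produced by commuting $\d$ past the edge maps and endpoint caps, reassemble into $[\d, H_{\veps, \veps'}]$ and the remaining cross-terms $\sum g_{\veps'', \veps'} H_{\veps, \veps''} + \sum H_{\veps'', \veps'} f_{\veps, \veps''}$, giving the full $\d_{\Mor}(H)_{\veps, \veps'}$. The main obstacle is this combinatorial bookkeeping in (3): one must simultaneously track telescoping cancellations from the $(A_\veps, f_{\veps,\veps'})$-hypercube relations, the ``$\id$'' contributions repackaging short zigzags into long ones, and the homotopy-differential contributions, and check that the remainder is precisely $\d_{\Mor}(H)$ with no leftover terms.
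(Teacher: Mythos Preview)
Your approach for parts (1), (2), (4), (5) is correct and essentially identical to the paper's: move $\d$ through each zigzag and use the relations \eqref{eq:homological-perturbation-assumptions} together with the hypercube relations for $(A_\veps,f_{\veps,\veps'})$.

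Your argument for (3), however, contains concrete errors. First, since $H$ is a morphism from $(A_\veps,f_{\veps,\veps'})$ to itself, the cross terms in $\d_{\Mor}(H)$ are $\sum f_{\veps'',\veps'} H_{\veps,\veps''}+\sum H_{\veps'',\veps'} f_{\veps,\veps''}$, with $f$'s on both sides; your $g_{\veps'',\veps'}$ is left over from the computation for $\Pi$. Second, and more seriously, the ``$\id$'' contributions from $i_{\veps''}\pi_{\veps''}=\id+[\d,h_{\veps''}]$ do \emph{not} produce $H_{\veps,\veps'}$. When $\veps''$ is an endpoint ($\veps''=\veps$ or $\veps''=\veps'$), the ``$\id$'' piece gives a zigzag capped by $f$ rather than $h$ at one end, and summing over chains yields precisely the cross terms $\sum H_{\veps'',\veps'} f_{\veps,\veps''}$ and $\sum f_{\veps'',\veps'} H_{\veps,\veps''}$. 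When $\veps''$ is interior, the ``$\id$'' piece gives a zigzag with two adjacent $f$'s and no $h$ between them; these terms cancel against the $[\d,f]$ commutators that arise when you assemble $[\d,H_{\veps,\veps'}]$ from the $[\d,h_{\veps''}]$ pieces (via the hypercube relations). There is no mechanism that ``inserts an $h$'' into an adjacent-$f$ zigzag to produce $H_{\veps,\veps'}$, and indeed $H_{\veps,\veps'}$ itself is not a summand of the identity you are proving.

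The paper avoids this bookkeeping by running the computation in the other direction: it computes $[\d,H_{\veps,\veps'}]$ by moving $\d$ through the $H$-zigzag $h_{\veps_m} f\cdots f h_{\veps_1}$. The $i\pi$ contributions at each $h_{\veps_j}$ sum directly to $(I\circ\Pi)_{\veps,\veps'}$, the ``$\id$'' contributions at the two endpoint $h$'s give the cross terms $\sum fH+\sum Hf$, and the interior ``$\id$'' contributions cancel against the $[\d,f]$ terms. This is the same ingredients as your approach, but organized so that each piece is identified cleanly on first appearance.
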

\begin{proof} The proof follows from the similar reasoning to Lemma~\ref{lem:(B,g)-hypercube}. Consider first the claim that $\Pi$ and $I$ are cycles. The claim for $\veps'=\veps$ is clear. Suppose $\veps< \veps''$ and $\veps=\veps_1<\cdots<\veps_m=\veps''$. Consider the expression
\[
\d\circ  \pi_{\veps_m}\circ f_{\veps_m,\veps_{m-1}}\circ h_{\veps_{m-1}}\circ \cdots \circ f_{\veps_1,\veps_2}\circ h_{\veps_1}
\]
and consider the effect of moving $\d$ from the left to right. The difference from $
f_{\veps_m,\veps_{m-1}}\circ h_{\veps_{m-1}}\circ \cdots \circ f_{\veps_1,\veps_2}\circ h_{\veps_1}\circ \d$ consists of the following terms:
\begin{enumerate}
\item\label{Phi-hyperbox-mor-1} $\pi_{\veps_m}\circ f_{\veps_{m-1},\veps_m}\circ h_{\veps_{m-1}}\circ \cdots \circ f_{\veps',\veps_{j+1}}\circ f_{\veps_j,\veps'}
\circ h_{\veps_{j-1}}\circ \cdots \circ f_{\veps_{1},\veps_2}\circ h_{\veps_1}$, for $\veps_j<\veps'<\veps_{j+1}$. (These appear when we commute $\d$  past $f_{\veps_{j+1},\veps_j}$).
\item\label{Phi-hyperbox-mor-2} $\pi_{\veps_m} \circ f_{\veps_m,\veps_{m-1}}\circ \cdots f_{\veps_{j+1},\veps_j}\circ f_{\veps_{j-1},\veps_j}\circ \cdots \circ f_{\veps_1,\veps_2}\circ h_{\veps_1}$. (These appear when we commute $\d$ past an $h_{\veps_j}$).
\item\label{Phi-hyperbox-mor-3} $\pi_{\veps_m}\circ f_{\veps_{m-1},\veps_m}\circ \cdots f_{\veps_{j},\veps_{j+1}}\circ  i_{\veps_j}\circ \pi_{\veps_j}\circ f_{\veps_{j-1},\veps_j}\circ h_{\veps_{j-1}}\circ \cdots \circ f_{\veps_1,\veps_2} \circ h_{\veps_1}$. (These appear when we commute $\d$ past $h_{\veps_j}$ for $j\neq 1$).
\item\label{Phi-hyperbox-mor-4} $\pi_{\veps_m} \circ f_{\veps_m,\veps_{m-1}}\circ h_{\veps_{m-1}} \circ \cdots \circ h_{\veps_2}\circ  f_{\veps_1,\veps_2}$. (These appear when we commute $\d$ past $h_{\veps_1}$).
\item\label{Phi-hyperbox-mor-5} $\pi_{\veps_m}\circ f_{\veps_{m-1},\veps_m}\circ h_{\veps_{m-1}}\circ \cdots \circ h_{\veps_2}\circ  f_{\veps_1,\veps_2} \circ  i_{\veps_1}\circ \pi_{\veps_1}$. (These also appear when we commute $\d$ past $h_{\veps_1}$).
\end{enumerate}
Terms \eqref{Phi-hyperbox-mor-1} and \eqref{Phi-hyperbox-mor-2} cancel when summed over all increasing sequences. Terms~\eqref{Phi-hyperbox-mor-3} and ~\eqref{Phi-hyperbox-mor-5} sum to $\d_{B}\circ \Pi$. Terms \eqref{Phi-hyperbox-mor-4} sum to $\Pi\circ \d_A$. The relation so-obtained corresponds exactly to $\d_{\Mor}(\Pi)=0$.

The proof that $\d_{\Mor}(I)=0$ is similar, and we leave the manipulation to the reader.

Next, the claim that $\Pi\circ I=\id_B$ follows from the facts that $h_\veps\circ h_\veps=0$ and $\pi_\veps\circ i_\veps=\id_{B_\veps}$. The claims that $H\circ I=0$ and $\Pi\circ H=0$ are similar.

Finally, we consider the claim that $I\circ \Pi=\id_A+\d_{\Mor}(H)$. Similar to before, we consider moving $\d$ from left to right in the expression
\[
\d\circ h_{\veps_m}\circ f_{\veps_{m-1},\veps_m}\circ h_{\veps_{m-1}}\circ \cdots \circ h_{\veps_2}\circ f_{\veps_1,\veps_2}\circ h_{\veps_1}. 
\]
In addition to $h_{\veps_m}\circ f_{\veps_{m-1},\veps_m}\circ h_{\veps_{m-1}}\circ \cdots \circ h_{\veps_2}\circ f_{\veps_1,\veps_2}\circ h_{\veps_1}\circ \d$, we obtain the following terms:
\begin{enumerate}
\item\label{list:[d,H]-summand-1} $h_{\veps_m}\circ f_{\veps_{m-1},\veps_m}\circ \cdots \circ f_{\veps',\veps_{j+1}}\circ f_{\veps_j,\veps'}\circ \cdots \circ f_{\veps_1,\veps_2} \circ h_{\veps_1}$. These appear when we commute $\d$ past an $f_{\veps_j,\veps_{j+1}}$.
\item\label{list:[d,H]-summand-2}  $h_{\veps_m}\circ f_{\veps_{m-1},\veps_m} \circ \cdots\circ f_{\veps_{j},\veps_{j+1}}\circ f_{\veps_{j-1},\veps_j} \circ \cdots \circ f_{\veps_1,\veps_2}\circ h_{\veps_1}$. These appear when we commute $\d$ past an $h_{\veps_j}$ for $j\neq 1,m$.
\item \label{list:[d,H]-summand-3} $h_{\veps_m}\circ f_{\veps_{m-1},\veps_m}\circ \cdots \circ f_{\veps_{j},\veps_{j+1}} \circ i_{\veps_j}\circ \pi_{\veps_j} \circ f_{\veps_{j-1},\veps_j}\circ h_{\veps_{j-1}} \circ \dots \circ f_{\veps_1,\veps_2}\circ h_{\veps_1}$. These appear when we commute $\d$ past an $h_{\veps_j}$.
\item \label{list:[d,H]-summand-4} $f_{\veps_{m-1},\veps_m}\circ h_{\veps_{m-1}}\circ \cdots \circ f_{\veps_1,\veps_2} \circ h_{\veps_1}$ and $h_{\veps_m}\circ f_{\veps_{m-1},\veps_m}\circ \cdots \circ h_{\veps_2}\circ f_{\veps_1,\veps_2}$. These appear when we commute $\d$ past $h_{\veps_m}$ or $h_{\veps_1}$, respectively.
\end{enumerate}
When summed over all sequences, summands \eqref{list:[d,H]-summand-1} and~\eqref{list:[d,H]-summand-2} cancel. Terms of type~\eqref{list:[d,H]-summand-3} sum to $I\circ \Pi$. Terms of type~\eqref{list:[d,H]-summand-4}, together with the initial terms involving $\d$ on their ends, sum to $\d_{\Mor}(H)$. The proof is now complete.
\end{proof}

\section{An expanded model for the dual knot complex}
\label{sec:large-surgery-expanded}

In this section, we write down an expanded model of the dual knot formula by taking the connected sum of $K$ with a Hopf link, and then applying a variation on the normal surgery formula to surger on $K$.

\subsection{Preliminaries}

Let $K$ be a knot in $S^3$ and $\mu$ be a meridian. Let $L$ denote $K\cup \mu$, a link in $S^3$.  We orient $\mu$ so that $K\cup \mu$ is the connected sum of a \emph{positive} Hopf link and $K$. Let $n>0$ be an integral, positive surgery coefficient. We have that $[\mu]\in H_1( S^3_n(K))$ is a generator of $H_1(S^3_n(K))\iso \Z/n$. Let $W_n(K)$ be the surgery cobordism from $S^3$ to $S^3_n(K)$, and write $W_n':=W_n'(K)$ for $-W_n(K)$, viewed as a cobordism from $S^3_n(K)$ to $S^3$. 

Write $S_K$ for the core of the 2-handle in $W_n(K)$, and write $S_\mu$ for $[0,1]\times \mu$. We may naturally view $(W_n',S_K\cup S_\mu)$ as a link cobordism from $(S^3_n(K),\mu)$ to $(S^3,L)$. The class $[S_K]\in H_2(W_n',\d W_n';\Z)$ is the preimage of a class in $H_2(W_n';\Z)$, for which we also write $[S_K]$. This class is obtained by capping $K=\d S_K$ with a Seifert surface.

 The natural map
\[
H_2(W'_n;\Q)\to H_2(W'_n,\d W'_n;\Q)
\]
is an isomorphism. We write $[\widehat{S}_\mu]\in H_2(W_n';\Q)$ for a preimage of $[S_\mu]$ under this map.

 It is easy to see that
\begin{equation}
[S_K]\cdot [\widehat{S}_\mu]=- 1.\label{eq:Sigma-mu-Sigma-K}
\end{equation}
We can view the intersection as occurring between $K$ and a disk $D$ in $S^3$ such that $\d D=-\mu$, used to cap $S_\mu$. In particular, as elements of $H_2(W_n',\d W'_n;\Q)$, with respect to the $W_n'$ orientation we have 
\[
[S_\mu]= \frac{1}{n} [S_K].
\]
Furthermore, note that
\[
[\widehat{S}_\mu]^2=-\frac{1}{n}.
\]

\subsection{Alexander grading changes}

In this section, we recall and expand on the grading change formulas associated to the link cobordism maps of the fourth author \cite{ZemCFLTQFT}. The gradings associated to link cobordisms between integrally null-homologous links are described in  \cite{ZemAbsoluteGradings}. In this section, we describe the grading change formulas for cobordisms between \emph{rationally} null-homologous links.

Suppose that $L_1\subset Y_1$ and $L_2\subset Y_2$ are two $\ell$-component links. Write $L_j^1,\dots, L_j^\ell$ for the components of $L_j$. Suppose that each component is rationally null-homologous. We suppose further that we have choices $T_1=(T_1^i)_{1\le i\le \ell}$ and $T_2=(T_2^i)_{1\le i\le \ell}$ of collections of rational 2-chains, such that $\d T_j^i= -L_j^i$; that is, $T_j^i$ is a \emph{rational Seifert surface} for $L_j^i$.

Suppose that $(W,S)\colon (Y_1,L_1)\to (Y_2,L_2)$ is an oriented link cobordism. The choices of rational Seifert surfaces give a unique lift of $[S_i]\in H_2(W,\d W, \Q)$ to $H_2(W,\Q)$. Write $[\widehat{S}_i]$ for this class. If $S$ has components $S_1,\dots, S_\ell$, such that $\d S_i=-L_1^i\cup L_2^i$, then $[\widehat S]$ decomposes as a sum $[\widehat{S}_1]+\dots +[\widehat{S}_\ell]$. 

We now discuss decorations and link cobordisms. The cobordism maps from \cite{ZemCFLTQFT} require a choice of decoration. In this section, we restrict to the case where $S$ is a collection of annuli, and we pick a decoration $\cA$ on $S$ consisting of a pair of arcs on each component of $S$, which connect $L_1$ to $L_2$.

\begin{prop}\,\label{prop:grading-changes}
\begin{enumerate}
\item Suppose that $L\subset Y$ is an $\ell$-component link, each of whose components is rationally null-homologous. Let $T=(T^i)_{1\le i\le \ell}$ denote a collection of rational Seifert surfaces for the components of $L$. Then there is a well-defined $\ell$-component Alexander grading $A_T=(A_{T,1},\dots, A_{T,\ell})$ on $\cCFL(Y,L,\frs)$. If $\frs$ is torsion, the grading is independent of the choice of $S$.
\item Suppose $(W,S)\colon (Y_1,L_2)\to (Y_2,L_2)$ is a link cobordism, consisting of $\ell$ disjoint annuli, and let $\cF=(S,\cA)$ denote the decoration of $S$ where we decorate each component with a pair of longitudinal arcs. Suppose that $T_1$ and $T_2$ are collections of rational Seifert surfaces, as above. Then
\[
A_{T_2,i}(F_{W,\cF,\frs}(\xs))-A_{T_1,i}(\xs)=\frac{\langle c_1(\frs),[\widehat S_i] \rangle-[S]\cdot [\widehat S_i]}{2}.
\]
\end{enumerate}
\end{prop}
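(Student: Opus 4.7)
My plan is to handle the two parts of Proposition~\ref{prop:grading-changes} separately, with most of the work going into part~(2).

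For part~(1), I would define $A_{S,i}$ using the decomposition of $\cCFL(Y,L)$ along relative $\Spin^c$ structures $\underline{\frs}_{w,z}\in \Spin^c(Y,L)$. Given a generator $\xs$, one has $c_1(\underline{\frs}_{w,z}(\xs))\in H^2(Y,L;\Z)$. Since $S^i$ is a rational $2$-chain with \emph{integral} boundary $-L^i$, it determines a well-defined class $[\widehat{S}^i]\in H_2(Y,L;\Q)$, and I would set
\[
A_{S,i}(\xs)=\tfrac{1}{2}\langle c_1(\underline{\frs}_{w,z}(\xs)),[\widehat{S}^i]\rangle+ c_i,
\]
where the shift $c_i$ is fixed by requiring symmetry under the standard $J$-involution on link Floer homology. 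To verify independence of $S^i$ when $\frs$ is torsion, I would note that the difference $[\widehat{S}^i]-[\widehat{S}'^i]$ lies in the image of $H_2(Y;\Q)\to H_2(Y,L;\Q)$, and that $c_1(\underline{\frs}_{w,z})$ restricts to $c_1(\frs)$ on $H^2(Y;\Z)$; rationally this vanishes for torsion $\frs$.

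For part~(2), I would mimic the strategy of \cite{ZemCFLTQFT} and \cite{ZemAbsoluteGradings}. Because $\Sigma$ is a disjoint union of annuli with longitudinal decoration $\cA$, no band moves occur, and $F_{W,\cF,\frs}$ decomposes as a composition of $4$-dimensional handle-attachment maps, along with the elementary decoration-arc maps (the latter act trivially on the Alexander grading for longitudinal arcs). I would verify the formula separately for each handle type: $0$- and $4$-handles act by the unit, and $1$- and $3$-handles contribute zero by the standard formulas, so the substance is the $2$-handle case. For a single $2$-handle attached along a framed knot disjoint from $L$, I would track the behavior of the relative $\Spin^c$ structure along the trace cobordism and a rational $2$-chain for the pushed-off component, pairing $c_1$ with $[\widehat{\Sigma}_i]$ to obtain the leading term $\tfrac{1}{2}\langle c_1(\frs),[\widehat{\Sigma}_i]\rangle$.

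The main obstacle will be isolating the correction term $-\tfrac{1}{2}[\Sigma]\cdot[\widehat{\Sigma}_i]$ in the rational setting. For integrally null-homologous links this correction equals $\tfrac{1}{2}[\widehat{\Sigma}_i]^2$, which arises because the framed Seifert surface caps off with a $(-1)$-shift coming from the handle's framing; in the rational setting $[\widehat{\Sigma}_i]$ need not be integral, so the self-intersection must be reinterpreted. The key observation I would exploit is that $[\Sigma]\in H_2(W,\d W;\Z)$ is integral while $[\widehat{\Sigma}_i]\in H_2(W;\Q)$ is rational, making the Lefschetz intersection pairing $[\Sigma]\cdot[\widehat{\Sigma}_i]$ well-defined rationally; this is precisely the rational analog obtained by tracking how the $S_1^i,S_2^i$ caps meet $\Sigma_j$ for $j\ne i$ in addition to $\Sigma_i$. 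Once the $2$-handle case is established, concatenation and additivity of the pairings under composition of cobordisms yields the general formula.
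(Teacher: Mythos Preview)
Your approach is viable but takes a genuinely different route from the paper. The paper does \emph{not} define $A_{S,i}$ intrinsically via relative $\Spin^c$ structures and then verify the cobordism formula handle-by-handle. Instead, it follows the Ozsv\'{a}th--Szab\'{o}/\cite{ZemAbsoluteGradings} strategy for absolute Maslov gradings: express $Y\setminus N(L)$ as integral surgery on the complement of an unlink in $S^3$, define the Alexander grading by an explicit formula in that presentation, and then prove independence of the presentation by checking invariance under Kirby moves (blow-ups/downs in balls, blow-ups/downs along meridians of $L$, and handleslides). Once the grading is defined this way, the cobordism grading-change formula is automatic, because any link cobordism can be absorbed into a surgery presentation of the target and the formula is built into the definition. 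The only modification from the integral case is that one works with $H_2(W;\Q)$ rather than $H_2(W;\Z)$ throughout.

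Your approach front-loads the intrinsic definition and back-loads the work into the $2$-handle verification; the paper's approach front-loads the Kirby-calculus invariance and gets the formula for free. Your method has the advantage of giving a cleaner conceptual definition of $A_{S,i}$, but the disadvantage that isolating the correction term $-\tfrac{1}{2}[\Sigma]\cdot[\widehat{\Sigma}_i]$ for a single $2$-handle in the rational setting (which you correctly flag as the main obstacle) requires a careful local computation you have only sketched. The paper's approach sidesteps this entirely: the correction term simply appears in the defining formula for the surgery-presentation grading, and one never has to interpret it as arising from a particular handle. If you pursue your route, you should also check that your $\Spin^c$-based definition agrees with the paper's surgery-presentation definition (or at least differs by a constant depending only on $(Y,L,\frs)$), since otherwise the normalization $c_i$ you impose via $J$-symmetry may not match.
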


The proof is essentially identical to the proof in the integrally null-homologous case \cite{ZemAbsoluteGradings}, which is itself adapted from the original strategy of Ozsv\'{a}th and Szab\'{o} for defining an absolute grading on $\CF^-(Y,\frs)$ \cite{OSIntersectionForms}. The only difference is that at each step we work with classes in $H_2(W,\Q)$ instead of $H_2(W,\Z)$. We outline the argument briefly. One first expresses $Y\setminus N(L)$ as integral surgery on the complement of an unlink in $S^3$. Using such a presentation, the absolute grading on $\cCFL(Y,L,\frs)$ is defined via a concrete formula, as in \cite{ZemAbsoluteGradings}*{Section~5.5}. From here, one proves that the absolute grading is independent of the surgery presentation using Kirby calculus for manifolds with boundary.  By the argument in \cite{ZemAbsoluteGradings}*{Section~4}, it is sufficient to show that the grading is independent of blow-ups/downs contained in 3-balls, as well as blow-ups/downs along meridians of $L$, and handleslides. The verification of independence of these moves is no different than in the integrally null-homologous case, as in \cite{ZemAbsoluteGradings}*{Section~6}. With this in place, the grading change formula is automatic from the definition of the grading.

\subsection{An expanded large surgery formula for dual knots}

We now consider the 2-handle map
\[
\Gamma_s\colon \cCFK(S_n^3(K),\mu,p)\to \cCFL(S^3,L),
\]
which counts holomorphic triangles with $\frs_w(\psi)=\frx_s$. A few comments are in order. Firstly, we are thinking of $\cCFL(S^3,L)$ as being a four basepoint link Floer complex, over the ring $\bF[\scU_1,\scV_1,\scU_2,\scV_2]$, wherein $\scU_1$ and $\scV_1$ correspond to $K$, while $\scU_2$ and $\scV_2$ correspond to $\mu$. Moreover, the point $p$ is an extra free basepoint, so that the complex $\cCFK(S_n^3(K),\mu,p)$ is a module over $\bF[U,\scU_2,\scV_2]$. The variable $U$ corresponds to $\scU_1\scV_1$. We regard $\cCFK(S^3_n(K),\mu,p)$ as having a two component Alexander grading, but being concentrated in Alexander grading $\{0\}\times (r+\Z)$, for some $r\in \Q$.

Observe that there is a chain homotopy $U\simeq \scU_2\scV_2$. Such a homotopy is obtained by picking an arc $\lambda$ on the Heegaard surface which connects $p$ to a basepoint and using the relative homology map $A_{\lambda}$ (cf. \cite{ZemGraphTQFT}*{Lemma~5.1}). In fact, the normal proof of independence of adding basepoints \cite{OSLinks}*{Proposition~6.5} adapts to show that there is a chain isomorphism (for appropriate diagrams and almost complex structures)
\begin{equation}
\cCFK(S^3_n(K),\mu,p)\simeq \Cone\bigg(U+\scU_2\scV_2\colon \cCFK(S_n^3(K),\mu)[U]\to \cCFK(S_n^3(K),\mu)[U]\bigg).
\label{eq:stabilize-add-basepoint}
\end{equation}

Recall that $\frx_s$ is the $\Spin^c$ structure on $W_n'$ such that
\begin{equation}
\langle c_1(\frx_s),[S_K]\rangle +n=2s.\label{eq:x_s-def}
\end{equation}
One easily computes from this that
\begin{equation}
c_1^2(\frx_s)=-\frac{(2s-n)^2}{n}.
\label{eq:c1^2-x-s}
\end{equation}

Let $[s]$ denote the restriction of $\frx_s$ to $S^3_n(K)$.

\begin{lem}\label{lem:grading-change}
 The Alexander grading change of the cobordism map $\Gamma_s$ is 
 \[
A(\Gamma_s)=\left(s+ \frac{1}{2}, \frac{ 2s+1}{2n}\right). 
 \]
 (Recall that the first component corresponds to $K$, while the second corresponds to $\mu$).
\end{lem}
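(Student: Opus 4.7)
The plan is to apply Proposition~\ref{prop:grading-changes} directly to the link cobordism $(W_n', \Sigma_K\sqcup \Sigma_\mu)$, reinterpreting the free basepoint $p$ on the source side as a local unknot $U_p$ bounding a small disk in $S^3_n(K)$. Under this reformulation, $\Sigma_K$ becomes an annulus joining $U_p$ to $K$ and $\Sigma_\mu=[0,1]\times \mu$ is already an annulus, so the hypotheses of Proposition~\ref{prop:grading-changes} are satisfied. Because $U_p$ bounds a disk, the corresponding Alexander grading $A_1$ on the source is identically $0$, so the stated grading change is exactly what Proposition~\ref{prop:grading-changes} computes.

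The calculation then has three ingredients. First, I would take $[\widehat\Sigma_K]=[\Sigma_K]$ (capped by a Seifert surface of $K$ in $S^3$) and use $[\widehat\Sigma_\mu]=\tfrac{1}{n}[\Sigma_K]$ from the discussion preceding Equation~\eqref{eq:Sigma-mu-Sigma-K}. By the defining Equation~\eqref{eq:x_s-def},
\[
\langle c_1(\frx_s),[\widehat\Sigma_K]\rangle=2s-n,\qquad \langle c_1(\frx_s),[\widehat\Sigma_\mu]\rangle=\frac{2s-n}{n}.
\]
Second, since $[\Sigma]=[\widehat\Sigma_K]+[\widehat\Sigma_\mu]$ in $H_2(W_n';\Q)$, and using $[\widehat\Sigma_K]^2=-n$ (the framing, negated because $W_n'=-W_n$), $[\widehat\Sigma_K]\cdot [\widehat\Sigma_\mu]=-1$ (Equation~\eqref{eq:Sigma-mu-Sigma-K}), and $[\widehat\Sigma_\mu]^2=-1/n$, one obtains
\[
[\Sigma]\cdot[\widehat\Sigma_K]=-(n+1),\qquad [\Sigma]\cdot[\widehat\Sigma_\mu]=-\frac{n+1}{n}.
\]

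Plugging these into Proposition~\ref{prop:grading-changes} and simplifying yields
\[
A_1(\Gamma_s)=\frac{(2s-n)+(n+1)}{2}=s+\frac{1}{2},\qquad A_2(\Gamma_s)=\frac{(2s-n)/n+(n+1)/n}{2}=\frac{2s+1}{2n},
\]
as claimed. The main obstacle I anticipate is bookkeeping rather than conceptual: one must verify that treating $p$ as a local unknot $U_p$ genuinely realizes $\Gamma_s$ as the link cobordism map for $(W_n',\Sigma_K\sqcup \Sigma_\mu)$ with the two-arc decoration, and check that the Alexander grading on the source in this picture matches the grading on $\cCFK(S^3_n(K),\mu,p)$ referenced in the statement. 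Both should follow from the four-basepoint Heegaard diagram interpretation together with the chain isomorphism in Equation~\eqref{eq:stabilize-add-basepoint}.
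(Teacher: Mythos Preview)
Your proposal is correct and follows essentially the same approach as the paper: both apply Proposition~\ref{prop:grading-changes} to the link cobordism $(W_n',\Sigma_K\cup\Sigma_\mu)$ and compute the same intersection numbers. The paper simply writes the numerator as $\langle c_1(\frx_s),[\Sigma_i]\rangle-\Sigma_K\cdot\Sigma_i-\Sigma_\mu\cdot\Sigma_i$ and plugs in directly, whereas you group the last two terms as $[\Sigma]\cdot[\widehat\Sigma_i]$; your extra care about reinterpreting the free basepoint $p$ as a local unknot is a reasonable elaboration, but the paper handles this implicitly by declaring $\cCFK(S^3_n(K),\mu,p)$ to be concentrated in $A_1$-grading zero.
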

\begin{proof}
Using Proposition~\ref{prop:grading-changes} and Equation~\eqref{eq:Sigma-mu-Sigma-K} we see that the change of $A_1$ is
\[
\frac{\langle c_1(\frx_s), [S_K]\rangle -S_K^2-S_\mu\cdot S_K}{2}=s+ \frac{1}{2}.
\]
Similarly, the change of $A_2$ is
\[
\frac{\langle c_1(\frx_s),[S_\mu]\rangle -S_\mu^2-S_\mu\cdot S_K}{2}= \frac{2s+1}{2n}.
\]
\end{proof}

\begin{cor} 
If $[s]\in \Spin^c(S^3_n(K))$, then the Alexander grading on $\cCFK(S_n^3(K),\mu,[s])$ takes values in the set 
\[
\frac{1}{2}-\frac{ 2s+1}{2n}+\Z=\frac{n- 2s-1}{2n}+\Z.
\]
\end{cor}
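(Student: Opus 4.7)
The statement is intended as a direct consequence of Lemma~\ref{lem:grading-change}, using one structural input about the target $\cCFL(S^3,L)$. My plan is to identify the coset of the $\mu$-Alexander grading on $\cCFL(S^3,L)$, then pull it back along $\Gamma_s$.

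First, I would show that the second Alexander grading $A_2$ on $\cCFL(S^3,L)$ (the one corresponding to $\mu$) takes values in $\tfrac{1}{2}+\Z$. Since $L$ is the connected sum of $K$ with a \emph{positive} Hopf link, one has $\lk(K,\mu)=+1$. Under the standard normalization for the multi-variable Alexander grading on links in $S^3$---equivalently, by imposing the conjugation symmetry $(\scU_i,\scV_i)\leftrightarrow(\scV_i,\scU_i)$ that sends $A_i\mapsto -A_i$ on $\cCFL(S^3,L)$---the $\mu$-component of the Alexander grading lies in $\tfrac{\lk(K,\mu)}{2}+\Z = \tfrac{1}{2}+\Z$.

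Next, I apply Lemma~\ref{lem:grading-change}, which says that $\Gamma_s$ shifts $A_2$ by $\tfrac{2s+1}{2n}$. For any $\xs\in \cCFK(S^3_n(K),\mu,[s])$ with $\Gamma_s(\xs)\neq 0$,
\[
A_2(\xs) \;=\; A_2(\Gamma_s(\xs)) - \frac{2s+1}{2n} \;\in\; \frac{1}{2} - \frac{2s+1}{2n} + \Z \;=\; \frac{n-2s-1}{2n} + \Z.
\]
To upgrade this from ``some generator'' to ``every generator in class $[s]$,'' I would invoke Proposition~\ref{prop:grading-changes}(1): because $[s]\in\Spin^c(S^3_n(K))$ is torsion (as $S^3_n(K)$ is a rational homology sphere), the Alexander grading is canonically defined modulo $\Z$, so any two generators in the class $[s]$ differ by an integer. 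Existence of one generator with $\Gamma_s(\xs)\neq 0$ in each fixed $\Spin^c$-class $[s]$ is supplied by the large surgery theorem, after choosing $|s|$ sufficiently large within the class $[s]$.

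The main (minor) obstacle is bookkeeping: checking that the rational Seifert surface $\widehat{\Sigma}_\mu\in H_2(W'_n;\Q)$ used in Lemma~\ref{lem:grading-change} is compatible with the choice of rational Seifert surface implicit in defining $A_2$ on $\cCFK(S^3_n(K),\mu,[s])$, and verifying the stated $\tfrac{1}{2}+\Z$ normalization on $\cCFL(S^3,L)$ under the conventions of Proposition~\ref{prop:grading-changes}. Neither requires new ideas beyond tracking Poincar\'e duals and linking numbers.
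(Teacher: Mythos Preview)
Your argument is correct and is exactly the intended derivation: the paper states this corollary without proof, as an immediate consequence of Lemma~\ref{lem:grading-change} together with the fact that the $\mu$-component of the Alexander grading on $\cCFL(S^3,L)$ lies in $\tfrac{1}{2}+\Z$ (which the paper makes explicit shortly afterward when listing the gradings on the Hopf link generators $\ve{a},\ve{b},\ve{c},\ve{d}$). One small simplification: you do not actually need to produce an $\xs$ with $\Gamma_s(\xs)\neq 0$ or appeal to large surgery---by Proposition~\ref{prop:grading-changes}(1) the Alexander grading on $\cCFK(S^3_n(K),\mu,[s])$ already lies in a single coset of $\Z$, and the homogeneous degree of $\Gamma_s$ (a formal statement about the map, not about any particular element) forces that coset to be $\tfrac{1}{2}-\tfrac{2s+1}{2n}+\Z$.
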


\begin{prop}\label{prop:large-surgery}
 Suppose $s\in \Z$ is fixed, and $n$ is sufficiently large, relative to $s$. Then the map
 \[
\Gamma_s\colon \cCFK_*(S_n^3(K), \mu,p,[s])\to \cCFL_{(s+ 1/2,*+h)}(S^3,L)
 \]
is an isomorphism, where $*$ denotes the Alexander grading, and $h= (2s+1)/2n$ is the Alexander grading shift from Lemma~\ref{lem:grading-change}.
\end{prop}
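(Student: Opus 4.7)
The statement is a large surgery theorem adapted to the two-component link $L=K\cup\mu$, asserting (implicitly) that $\Gamma_s$ is a quasi-isomorphism onto the indicated Alexander-graded subcomplex. My plan is to follow the strategy of Ozsv\'ath--Szab\'o's large surgery theorem \cite{OSKnots, OSIntegerSurgeries}, as refined for links by Manolescu--Ozsv\'ath \cite{MOIntegerSurgery} and for dual knots by Hedden--Levine \cite{HeddenLevineSurgery}, keeping careful track of the extra link component $\mu$ and the free basepoint $p$.

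First, I would set up a model Heegaard triple. Choose a diagram $(\Sigma,\as,\bs,w_1,z_1,w_2,z_2)$ for $(S^3,L)$ in which one $\b$-curve, say $\beta_0$, is a small meridian of $K$ with the pair $(w_2,z_2)$ straddling $\beta_0$. Let $\gs$ be obtained from $\bs$ by replacing $\beta_0$ with a copy of the $n$-framed longitude of $K$, winding $n$ times around the meridional annulus, and taking Hamiltonian translates of the remaining $\beta_i$. Then $(\Sigma,\as,\gs,w_1,z_1,w_2,z_2,p)$ (with $p$ placed in the winding region) represents $(S^3_n(K),\mu,p)$, and the triangle-counting map in the triple $(\as,\bs,\gs)$ with $\frs_w(\psi)=\frx_s$ is precisely $\Gamma_s$.

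Second, I would analyse generators and small triangles. The $n$ intersection points of $\gamma_0$ with $\beta_0$ are in bijection with $\Spin^c(S^3_n(K))\cong \Z/n$; fix the one $p_s$ corresponding to $[s]$. The remaining $\bs\cap\gs$ intersections admit canonical "top-dimensional" theta-generators $\Theta^+$. Each generator $\ys\in\cCFK(S^3_n(K),\mu,p,[s])$ then has a unique "small triangle" $\psi_0$ with $\frs_w(\psi_0)=\frx_s$ whose domain is supported in a thin neighborhood of $\as\cap\bs$ and of $p_s$; this gives a bijection to generators $\xs\in\cCFL(S^3,L)$ and the contribution of $\psi_0$ to $\Gamma_s(\ys)$ is exactly $\scU_1^a\scV_1^b\scU_2^c\scV_2^d\cdot\xs$ for multiplicities $(a,b,c,d)$ determined by the winding region. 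The variable $U=\scU_1\scV_1$ (coming from combining the two $K$-basepoints with $p$ via \eqref{eq:stabilize-add-basepoint}) is compatible with the $\scU_1,\scV_1$-action downstairs.

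Third, an area/energy filtration argument shows that for $n\gg|s|$, every non-small triangle contributing to $\Gamma_s$ has strictly larger multiplicity at some basepoint, hence lies in a strictly lower Alexander bigrading level, after the shift computed in Lemma~\ref{lem:grading-change}. Filtering by total Alexander grading, the associated graded of $\Gamma_s$ is the identification of generators, so $\Gamma_s$ is a quasi-isomorphism onto the sub-quotient $\cCFL_{(s+1/2,\,*+h)}(S^3,L)$. The main obstacle is the simultaneous bookkeeping of the two Alexander gradings together with the mixed formal variables: one must verify that the multiplicities of $\psi_0$ at $(w_1,z_1,w_2,z_2)$ yield exactly the shifts recorded in Lemma~\ref{lem:grading-change}, and one must use Proposition~\ref{prop:grading-changes} to pin down the absolute value of $A_2$ since $\mu$ is only rationally null-homologous in $S^3_n(K)$ and the usual symmetry axiom is unavailable.
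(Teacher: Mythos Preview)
Your proposal is correct and follows essentially the same approach as the paper, which simply points to the standard Ozsv\'ath--Szab\'o argument of \cite{OSKnots}*{Theorem~4.1} (winding region plus area filtration) and defers the grading shift to Lemma~\ref{lem:grading-change}. One small sharpening: since the associated graded of $\Gamma_s$ is the bijection on generators, the filtered map is in fact a chain \emph{isomorphism}, not merely a quasi-isomorphism, which is what the paper actually asserts.
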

\begin{proof}
The proof is essentially identical to \cite{OSKnots}*{Theorem~4.1}. One picks a diagram with a special winding region. For sufficiently large framing $n$, one may use a standard area filtration argument to show that the triangle map $\Gamma_s$ is a chain isomorphism. The stated grading change follows from Lemma~\ref{lem:grading-change}.
\end{proof}

We now recall the link Floer complexes of the Hopf links. The positive and negative Hopf link complexes $\cH^+$ and $\cH^-$ are shown below:
\[
\cH^+=\begin{tikzcd}[labels=description,row sep=1cm, column sep=1cm] \ve{a} \ar[d, "\scV_1"]\ar[r, "\scU_2"]& \ve{b}\\
\ve{c}& \ve{d} \ar[u, "\scU_1"] \ar[l, "\scV_2"]
\end{tikzcd}
 \quad
  \text{and} 
\quad 
\cH^-=\begin{tikzcd}[labels=description,row sep=1cm, column sep=1cm] \ve{a}^\vee& \ve{b}^\vee\ar[l, "\scU_2"] \ar[d, "\scU_1"]\\
\ve{c}^\vee\ar[r, "\scV_2"] \ar[u, "\scV_1"]& \ve{d}^\vee
\end{tikzcd}
\]
The Alexander gradings are
\[
\begin{split} A(\ve{a})&=(\tfrac{1}{2}, -\tfrac{1}{2}),\\
A(\ve{b})&=(\tfrac{1}{2}, \tfrac{1}{2}),\\
A(\ve{c})&=(-\tfrac{1}{2}, -\tfrac{1}{2}),\\
A(\ve{d})&= (-\tfrac{1}{2}, \tfrac{1}{2}).
\end{split}
\]
The Maslov gradings $\gr=(\gr_w,\gr_z)$ are given by
\[
\begin{split}
\gr(\ve{a})&=(-\tfrac{1}{2},-\tfrac{1}{2}),\\
\gr(\ve{b})&=(\tfrac{1}{2},-\tfrac{3}{2}),\\
\gr(\ve{c})&=(-\tfrac{3}{2},\tfrac{1}{2}),\\
\gr(\ve{d})&=(-\tfrac{1}{2},-\tfrac{1}{2})
\end{split}
\]

\noindent The dual generators have gradings multiplied by $-1$.

For large $n$, we may compute the complex $\cCFK(S_n^3(K), \mu,p,[s])$ using the tensor product formula and the large surgery formula:
\begin{equation}
\cCFK(S_n^3(K), \mu,p,[s])\iso \begin{tikzcd}[labels=description,row sep=1cm, column sep=1cm] \cA_s(K)[\scU_2,\scV_2] \ar[d, "\scV_1"]\ar[r, "\scU_2"]& \cA_s(K)[\scU_2,\scV_2]\\
\cA_{s+1}(K)[\scU_2,\scV_2]& \cA_{s+1}(K)[\scU_2,\scV_2] \ar[u, "\scU_1"] \ar[l, "\scV_2"]
\end{tikzcd}
\label{eq:large-surgery}
\end{equation}

We will write $\cA_{s}(L)$ for the right-hand side of the above equation, which we may identify with the subcomplex of $\cCFL(K\# H)$ generated over $\bF$ by $x$ such that $A_1(x)=s+1/2$, where $A_1$ denotes the component of the Alexander grading associated to $K$.

\begin{rem}
Throughout the paper, we focus on the positive Hopf link $\cH^+$. The same analysis may be performed using the negative Hopf link. Note that we recover a slightly different version of the dual knot formula than Hedden and Levine \cite{HeddenLevineSurgery}. To recover their version, one should use the negative Hopf link complex.
\end{rem}

\subsection{ An expanded model of the dual knot mapping cone formula}

In this section, we obtain an expanded model for the dual knot formula, by taking the tensor product with a Hopf link, and using a variation of Ozsv\'{a}th and Szab\'{o}'s  integer surgery formula \cite{OSIntegerSurgeries}.

For integers $s\in \Z$, we consider complexes $\cB_{s}(L)$ and $\tilde{\cB}_{s}(L)$. We define $\cB_{s}(L)$ to be the subcomplex of $\scV_1^{-1} \cCFL(K\# H)$  generated over $\bF$ by elements $x\in \scV_1^{-1}\cCFL(K\# H)$ such that $A_1(x)=s+1/2$. Here, $A_1$ denotes the component of the Alexander grading associated to $K$. Similarly, we define $\tilde{\cB}_{s}(L)$ to be the subcomplex of $\scU_1^{-1} \cCFL(K\# H)$ in the same Alexander grading.

  The complex $\cB_{s}(L)$ is conveniently described by:

\begin{equation}
\cB_{s}(L)\simeq \begin{tikzcd}[labels=description,row sep=1cm, column sep=1cm] \cB_s(K)[\scU_2,\scV_2] \ar[d, "\scV_1"]\ar[r, "\scU_2"]& \cB_s(K)[\scU_2,\scV_2]\\
\cB_{s+1}(K)[\scU_2,\scV_2]& \cB_{s+1}(K)[\scU_2,\scV_2] \ar[u, "\scU_1"] \ar[l, "\scV_2"]
\end{tikzcd}
\label{eq:large-surgery-B-s}
\end{equation}
The complex $\tilde{\cB}_{s}(L)$ admits a similar description.

There are inclusions $v\colon \cA_{s}(L)\to \cB_{s}(L)$ and $\vopp\colon \cA_{s}(L)\to \tilde{\cB}_{s}(L)$.  There is furthermore a flip map $\frF_n\colon \tilde{\cB}_{s}(L)\to \cB_{s+n}(L)$ constructed by forgetting about the $w_1$-basepoints, moving $z_1$ to $w_1$, and then adding $z_1$ back. (This construction is essentially the same as the Ozsv\'{a}th and Szab\'{o}'s definition of the map $h$ in the original mapping cone formula \cite{OSIntegerSurgeries}).

We now discuss completions. Suppose that $K\subset Y$ is a knot.  We write $\ve{\cCFK}(Y,K)$ for the free $\bF[[\scU,\scV]]$-module spanned by intersection points of a Heegaard diagram. In our present context, we also wish to consider completed versions of $\cA_{s}(L)$ and $\cB_{s}(L)$. We recall that these were free modules over $\bF[U,\scU_2,\scV_2]$. We define $\ve{\cA}_{s}(L)$ and $\ve{\cB}_{s}(L)$ to be the free $\bF[[U,\scU_2,\scV_2]]$-modules with the same generators. We will also consider
\[
\prod_{s\in \Z} \ve{\cA}_{s}(L)\quad \text{and} \quad \prod_{s\in \Z} \ve{\cB}_{s}(L).
\]

The main result of this section is the following:

\begin{thm}\label{thm:expanded-model} There is a relatively graded homotopy equivalence of chain complexes over the ring $\bF[[U,\scU_2,\scV_2]]$
\begin{equation}\label{eq:expanded-model}
\ve{\cCFK}(S^3_n(K),\mu)\simeq \Cone\left(v+\frF_n \vopp\colon  \prod_{s\in \Z}\ve{\cA}_{s}(L)\to  \prod_{s\in \Z} \ve{\cB}_{s}(L)\right).
\end{equation}
The maps $v$ and $\tilde{v}$ are the canonical inclusions, and $\frF_n\colon \tilde{\cB}_{s}(L) \to \cB_{[s+n]}(L)$ is a homotopy equivalence of $\bF[U,\scU_2,\scV_2]$-modules.
\end{thm}

We call the right-hand side of Equation~\eqref{eq:expanded-model} the \emph{expanded dual knot mapping cone formula for} $\ve{\cCFK}(S_n^3(K),\mu)$. The proof of Theorem~\ref{thm:expanded-model} follows from a modification of Ozsv\'{a}th and Szab\'{o}'s knot surgery formula \cite{OSIntegerSurgeries}, as we now sketch.

If $m\ge 1$ is an integer, there is a  twisted complex $\ul{\cCFL}(K\# H)$ which is freely generated over $\bF[U,T,\scU_2,\scV_2]/(1-T^m)$ by tuples $\xs\cdot  U^i T^j \scU_2^n \scV_2^m$, where $\xs\in \bT_{\a}\cap \bT_{\b}$, with $i,n,m\ge 0$ and $j\in \Z$. The differential counts holomorphic disks weighted by $U^{n_{w_1}(\phi)}T^{n_{z_1}(\phi)-n_{w_1}(\phi)}\scU_2^{n_{w_2}(\phi)}\scV_2^{n_{z_2}(\phi)}$. The proof of the surgery exact sequence of \cite{OSIntegerSurgeries}*{Theorem~3.1} gives the following homotopy equivalence of chain complexes over $\bF[[U,\scU_2,\scV_2]]$:
\begin{equation}
\ve{\cCFK}(S_n^3(K),\mu)\simeq \Cone\left(F_{W'_{n+m},S_\mu}\colon  \ve{\cCFK}(S_{n+m}^3(K),\mu)\to \underline{\ve{\cCFL}}(K\# H)\right)
\label{eq:mapping-cone-isomorphism}
\end{equation}
In the above, $W'_{n+m}$ denotes the 2-handle cobordism from $S^3_{n+m}(K)$ to $S^3$. The map $F_{W'_{n+m},S_\mu}$ in Equation~\eqref{eq:mapping-cone-isomorphism} is the link cobordism map for the natural link cobordism from $(S_{n+m}^3(K),\mu)$ to $(S^3,K\# H)$, summed over all $\Spin^c$ structures. Note that $\underline{\ve{\cCFL}}(K\# H)$ is chain isomorphic to $  \ve{\cCFK}(\bU,p)\otimes \bF[\Z/m]$, where $\bU$ denotes an unknot in $S^3$ (corresponding to the component $\mu$). Compare \cite{OSIntegerSurgeries}*{Equation~(7)}.

If $C$ is a chain complex over $\bF[U,\scU_2,\scV_2]$, and $\delta > 0$, we define
\[
C^\delta=C/(U^\delta, \scU_2^\delta,\scV_2^\delta).
\]

\begin{lem}\label{lem:vanishing-terms}
 Suppose $\delta>0$. If $m$ is suitably large relative to $\delta$, then the natural cobordism map 
\[
F_{W'_{n+m},S_\mu,\frs}^\delta\colon \cCFK^\delta(S_{n+m}^3(K),\mu)\to \cCFK^\delta(\bU,p)
\]
 is trivial unless $\frs$ is one of $\frx_s$ or $\fry_s$, for $-(n+m)/2\le s\le (n+m)/2$. Here, $\frx_s$ is the $\Spin^c$ structure defined in Equation~\eqref{eq:x_s-def}, and $\fry_s=\frx_{s+m+n}$.
\end{lem}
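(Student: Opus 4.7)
The plan is to combine the grading-shift formulas for link cobordism maps with a $U$-divisibility argument, applied to the 2-handle cobordism $W'_{n+m}$. By Equation~\eqref{eq:c1^2-x-s} applied to $W'_{n+m}$, we have $c_1^2(\frx_t)=-(2t-(n+m))^2/(n+m)$, so the Maslov-grading shift of $F_{W'_{n+m},\Sigma_K\cup\Sigma_\mu,\frx_t}$ tends to $-\infty$ quadratically in $|2t-(n+m)|$.

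I would proceed as follows. First, I would use Proposition~\ref{prop:grading-changes} together with the Maslov shift formula $(c_1^2(\frs)-2\chi-3\sigma)/4$ (plus the link-cobordism correction terms from the surface) to obtain explicit expressions for $\Delta\gr_w$ and $\Delta\gr_z$ of the cobordism map for $\frs=\frx_t$ as functions of $t$. The $\mu$-component Alexander-grading shift, computed analogously to Lemma~\ref{lem:grading-change} with $n$ replaced by $n+m$, is uniformly bounded on the relevant range of $t$, so $\Delta\gr_w$ and $\Delta\gr_z$ differ by a bounded amount and both diverge to $-\infty$ at the same quadratic rate as the Maslov shift. Second, I would run the divisibility argument: any nonzero contribution to $F^\delta_{W'_{n+m},\Sigma_\mu,\frx_t}$ must be realizable as a weighted count $U^i\scU_2^j\scV_2^k$ with $i,j,k<\delta$ whose grading shifts match, namely $2i+2j=-\Delta\gr_w$ and $2i+2k=-\Delta\gr_z$. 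If $-\Delta\gr_w\ge 4\delta$, this forces $i+j\ge 2\delta$, so either $i\ge\delta$ or $j\ge\delta$, contradicting survival in the quotient. For $t\notin[-(n+m)/2,\,3(n+m)/2]$ one has $|2t-(n+m)|>2(n+m)$ and hence $|c_1^2(\frx_t)|>4(n+m)$, so for $m$ sufficiently large relative to $\delta$ and $n$ the bound $-\Delta\gr_w\ge 4\delta$ holds, and the map vanishes. The remaining allowed range $t\in[-(n+m)/2,\,3(n+m)/2]$ coincides with $\{\frx_s,\fry_s:-(n+m)/2\le s\le(n+m)/2\}$, since $\fry_s=\frx_{s+n+m}$.

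The main technical obstacle will be carrying out the grading-shift computation carefully for the link cobordism $(W'_{n+m},\Sigma_K\cup\Sigma_\mu)$, given that $\mu$ is only rationally null-homologous in $S^3_{n+m}(K)$ and $\Sigma_\mu$ must be handled via its rational lift $[\widehat\Sigma_\mu]\in H_2(W'_{n+m};\Q)$. This introduces rational terms into the constant part of the shift but does not affect the quadratic-in-$t$ behavior that drives the divisibility argument. The lemma is the link-theoretic analogue, for rationally null-homologous boundary components, of the standard vanishing step in the proof of Ozsv\'{a}th and Szab\'{o}'s integer surgery formula~\cite{OSIntegerSurgeries}.
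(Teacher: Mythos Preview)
Your approach---grading shift plus $U$-divisibility---is the same strategy the paper uses, but your divisibility step contains a gap that the paper's per-residue-class comparison is designed to avoid.

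The relation ``$2i+2j=-\Delta\gr_w$'' is not correct even up to uniformly bounded error. If $F_{\frx_t}(\xs)$ has a summand $U^i\scU_2^j\scV_2^k\ys$, then $2i+2j=\gr_w(\ys)-\gr_w(\xs)-\Delta\gr_w(\frx_t)$. The codomain gradings $\gr_w(\ys)$ are bounded independently of $m$, but the domain gradings $\gr_w(\xs)$ are \emph{not}: they depend on $[t]\in\Spin^c(S^3_{n+m}(K))$ through (essentially) the $d$-invariant of the large surgery, which ranges over an interval of width about $(n+m)/4$ as $[t]$ varies. Via the large-surgery isomorphism one has $\gr_w(\xs)\approx -\Delta\gr_w(\frx_{[t]})$ up to constants depending only on $K$, where $[t]$ denotes the central representative, so the correct relation is $2i+2j\approx \Delta\gr_w(\frx_{[t]})-\Delta\gr_w(\frx_t)$, a \emph{difference} of grading shifts within a fixed residue class. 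This is precisely what the paper bounds in Equation~\eqref{eq:difference-gradings}: for the third-to-maximal $\frx_t$ in each class, the difference is at least $n+m$. Your absolute bound $-\Delta\gr_w(\frx_t)\ge n+m$ for excluded $t$ does happen to suffice, but only because the omitted term $\Delta\gr_w(\frx_{[t]})$ lies in $[-(n+m-1)/4,\,1/4]$; this needs to be made explicit rather than absorbed into a ``bounded constant''.

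Separately, the claim that the $\mu$-Alexander shift is ``uniformly bounded on the relevant range of $t$'' is false---it grows linearly in $t/(n+m)$---but this does not actually matter: once $i+j\ge 2\delta$ one already has $i\ge\delta$ or $j\ge\delta$, so the $\gr_w$ constraint alone forces vanishing in the quotient by $(U^\delta,\scU_2^\delta,\scV_2^\delta)$, and no control of $\gr_z$ is strictly necessary.
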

\begin{proof} Since $\fry_s=\frx_{s+m+n}$, it is sufficient to show that the $\delta$-truncated cobordism map is only non-trivial on $\frx_s$ for $-(n+m)/2\le s\le  3(n+m)/2$. Among $\Spin^c$ structures on $W_{n+m}'(K)$ which restrict to $[s]\in \Spin^c(S^3_{n+m}(K))$, the $\Spin^c$ structures  $\frx_s$ and $\fry_s$ have Chern classes with maximal and next to maximal square. See \cite{OSIntegerSurgeries}*{Lemma~4.4}. We consider the $\gr_w$ and Alexander grading changes of the cobordism map for the $\Spin^c$ structure $\frx_s$, for an arbitrary $s\in \Z$. We focus on the case that $n>0$. The $\gr_w$-grading change of the cobordism in $\Spin^c$-structure $\frx_s$ is given by the formula
\begin{equation}
\gr_w(\frx_s)=\frac{c_1(\frx_s)^2+1}{4}=\frac{-(n+m-2s)^2/(n+m)+1}{4}. \label{eq:gr_w-reduced-cobordism}
\end{equation}

The Alexander grading change is given by
\begin{equation}
A(\frx_s)=\frac{\langle c_1(\frx_s),[S_\mu]\rangle-S_\mu^2}{2}=-\frac{n+m-2s-1}{2(n+m)}. \label{eq:A-F-W-mu}
\end{equation}

Suppose that $\frx_s$ is the $\Spin^c$ structure of maximal square for a given class $[s]\in \Spin^c(S^3_{n+m}(K))$, where $-(n+m)/2\le s\le 3(n+m)/2$.
 The third to maximal $\Spin^c$ structure restricting to $[s]$ will be one of $\frx_{s\pm (n+m)}$. Using Equation~\eqref{eq:gr_w-reduced-cobordism}, we compute that
 \begin{equation}
\begin{split}\gr_w(\frx_s)-\gr_w(\frx_{s\pm(n+m)})&=-\frac{(n+m-2s)^2-(n+m-2s\mp 2(n+m))^2}{4(n+m)}\\
&=\mp (n+m-2s)+(n+m).
 \end{split}
 \label{eq:difference-gradings}
 \end{equation}
If $2s<n+m$, then $\frx_{s+n+m}$ is the second to maximal square, and $\frx_{s-(n+m)}$ is the third to maximal square. If $2s>n+m$, then $\frx_{s-(n+m)}$ is the second to maximal square and $\frx_{s+n+m}$ is the third to maximal. In either case, we see that Equation~\eqref{eq:difference-gradings} is at least $n+m$. 

On the other hand, Equation~\eqref{eq:A-F-W-mu} implies that the difference in the Alexander grading  between the maximal and third to maximal $\Spin^c$ structures is always $\pm 1$. Since $A=\tfrac{1}{2}(\gr_w-\gr_z)$, we conclude that $|\gr_w(\frx_s)-\gr_z(\frx_s)|\le 2$ for all $s$. A generalization of the truncation arguments of \cite{OSIntegerSurgeries} and \cite{HeddenLevineSurgery} shows that if $m$ is sufficiently large, then the image of the third to maximal $\Spin^c$ structure must lie in the submodule generated by the ideal $(U^\delta, \scU_2^\delta,\scV_2^\delta)\subset \bF[U,\scU_2,\scV_2]$. An extension of this argument then shows that all of the $\Spin^c$ structures with even lower square also have image in this submodule.
\end{proof}

Furthermore, we have the following:
\begin{lem}\label{lem:vanishing-maps-in-cone} If $\delta> 0$ is fixed, then there is an integer $b>0$ such that for all sufficiently large $m$, the map
\[
F_{W'_{n+m},S_\mu,\frx_s}^\delta\colon \cCFK^\delta(S^3_{n+m}(K),\mu,[s])\to \cCFK^\delta(\bU,p)
\]
is 0 if $s>b$ and is a homotopy equivalence if $s<-b$. Similarly the map $F_{W'_{n+m},S_\mu,\fry_s}^\delta$ vanishes if $s<-b$ and is a homotopy equivalence if $s>b$.
\end{lem}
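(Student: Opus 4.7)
The plan is to sharpen the grading-shift analysis of Lemma~\ref{lem:vanishing-terms} by locating the top $\gr_w$-generator of the source $\cCFK(S_{n+m}^3(K),\mu,[s])$. This top grading is the Ozsv\'{a}th--Szab\'{o} correction term $d(S_{n+m}^3(K),[s])$. A direct calculation using Equation~\eqref{eq:gr_w-reduced-cobordism} shows that $d(L(n+m,1),[s])=-\gr_w(\frx_s)$ (under the standard $\Spin^c$-labeling), and the surgery formula of \cite{OSIntegerSurgeries} expresses $d(S_{n+m}^3(K),[s])$ in terms of this lens-space correction together with the correction invariants $V_s(K)$ and $H_s(K)=V_{-s}(K)$. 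Combining this with the cobordism $\gr_w$-shift, the image of the top generator of the source under $F_{\frx_s}$ (respectively $F_{\fry_s}$) lies at $\gr_w=-2H_s(K)$ (respectively $\gr_w=-2V_s(K)$), a bound which is independent of $n+m$.

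Since $V_s(K)=0$ for $s\ge g_3(K)$ and $V_s(K)\to\infty$ as $s\to-\infty$, choosing $b$ large enough in terms of $K$ and $\delta$ gives the following: for $s>b$, $H_s(K)=V_{-s}(K)\ge\delta$, so the image of $F_{\frx_s}$ on its top generator sits at $\gr_w\le-2\delta$, forcing the whole map to land in $(U^\delta,\scU_2^\delta,\scV_2^\delta)\cdot\cCFK(\bU,p)$ and hence $F^\delta_{\frx_s}=0$. A symmetric argument yields $F^\delta_{\fry_s}=0$ for $s<-b$. For the complementary ranges ($s<-b$ for $F_{\frx_s}$ and $s>b$ for $F_{\fry_s}$), the image of the top generator sits at $\gr_w=0$, matching the top of $\cCFK^\delta(\bU,p)$, which suggests a homotopy equivalence.

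To upgrade this grading match to an honest homotopy equivalence we invoke Proposition~\ref{prop:large-surgery}: for $n+m$ sufficiently large relative to $b$, the triangle map $\Gamma_s$ is a chain isomorphism onto $\cA_{[s]}(L)\subset\cCFL(S^3,L)$ for all $|s|\le b$. Factoring $W'_{n+m}$ through $W'_n$ and applying the composition law for the link Floer TQFT identifies $F^\delta_{\frx_s}$ (for $s<-b$) with the composition of $\Gamma_s$ and the projection onto the $\Spin^c$-summand of $\ul{\cCFL}(K\# H)\simeq\cCFK(\bU,p)\otimes\bF[\Z/m]$ selected by $\frx_s$, and the resulting composition is a homotopy equivalence modulo $(U^\delta,\scU_2^\delta,\scV_2^\delta)$. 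The main obstacle is precisely this identification with $\Gamma_s$: it requires careful tracking of the Alexander grading shifts of Lemma~\ref{lem:grading-change} together with the gluing formula for link Floer cobordism maps across the factorization of $W'_{n+m}$.
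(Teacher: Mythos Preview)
Your vanishing argument has the right shape---the paper also handles this half by a grading estimate, saying only that it ``follows from a grading argument similar to Lemma~\ref{lem:vanishing-terms}.'' One caution: the $d$-invariant locates the top of the $U$-nontorsion tower in \emph{homology}, whereas you need a bound on the top $\gr_w$ of the \emph{chain} complex. These differ by a constant depending only on $K$ (via the large-surgery identification with $\cA_{[s]}(L)$), so enlarging $b$ absorbs this and your argument survives.

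The homotopy-equivalence half has a genuine gap. The cobordism $W'_{n+m}$ is a single $2$-handle from $S^3_{n+m}(K)$ to $S^3$ and does not factor through $W'_n$, so no TQFT composition law is available here. You also invoke Proposition~\ref{prop:large-surgery} for $|s|\le b$ and then use it at $s$ outside that range. More fundamentally, matching the $\gr_w$ of top generators is only necessary, not sufficient, for a homotopy equivalence. The paper's route is different and avoids all of this: it appeals to the commutative square of \cite{OSIntegerSurgeries}*{Theorem~2.3}. The large-surgery isomorphism $\Gamma_s$ identifies the source with $\cA_{[s]}(L)$ and the trivialization $\theta_w$ identifies the target with $\cB_{[s]}(L)$; under these identifications the map $F_{W'_{n+m},\Sigma_\mu,\frx_s}$ becomes the canonical inclusion $\cA_{[s]}(L)\hookrightarrow \cB_{[s]}(L)$ (and $F_{\fry_s}$ becomes the inclusion into $\tilde{\cB}_{[s]}(L)$ followed by the flip). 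These inclusions are literal chain isomorphisms once $|s|$ exceeds the genus of $K$, so the homotopy equivalence is immediate---no factorization, gluing formula, or $d$-invariant computation is needed.
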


In Lemma~\ref{lem:vanishing-maps-in-cone}, the statement about the vanishing of certain cobordism maps follows from a grading argument similar to Lemma~\ref{lem:vanishing-terms}. The fact that the map $F_{W'_{n+m},S_\mu,\frx_s}^\delta$ is a homotopy equivalence for $s\gg 0$ follows from the large surgeries formula in Proposition~\ref{prop:large-surgery} and the fact that the inclusion $\ve{\cA}_{s}(L)\to \ve{\cB}_{s}(L)$ is a chain isomorphism for $s\gg 0$. Compare the commutative diagrams in \cite{OSIntegerSurgeries}*{Theorem~2.3}. The statement about $F_{W'_{n+m},S_\mu,\fry_s}^\delta$ is similar.

By Proposition~\ref{prop:large-surgery}, if $m$ is sufficiently large we may identify each $\ve{\cCFK}(S^3_{n+m}(K),\mu,[s])$ with the subcomplex $\ve{\cA}_{s}(L)$ of $\ve{\cCFL}(K\# H)$. Similarly, $\underline{\ve{\cCFL}}(K\# H)$ may be identified with $\ve{\cCFL}(K\# H)\otimes \bF[\Z/m]$, or as the direct sum of $\ve{\cB}_{s}(L)$ for $-m/2<s\le m/2$. We write
\begin{equation}
\theta_w\colon \underline{\ve{\cCFL}}(K\# H)\to \bigoplus_{-m/2<s\le m/2} \ve{\cB}_{s}(L)
\label{eq:def-theta-w}
\end{equation}
for the trivializing map. The map $\theta_w$ may be defined by the formula
\begin{equation}
\theta_w(\xs\cdot U^i T^j \scU_2^p\scV_2^q)=\xs\cdot \scU_1^i \scV_1^{j+i+N} \scU_2^p \scV_2^q\label{eq:def-thetw-w-map}
\end{equation}
where $N\in m\cdot\Z$ is the unique number such that $-m/2<A_1(\xs)+1/2+i+j+N \le m/2$. 

 The same argument as in the mapping cone formula identifies $F_{W'_{n+m},S_\mu,\frx_s}$ with the inclusion of $\ve{\cA}_{s}(L)$ into $\ve{\cB}_{s}(L)$, and also identifies $F_{W'_{n+m},S_\mu,\fry_s}$ as the inclusion of $\ve{\cA}_{s}(L)$ into $\tilde{\ve{\cB}}_{s}(L)$ followed by a homotopy equivalence of $\tilde{\ve{\cB}}_{s}(L)$ with $\ve{\cB}_{s}(L)$. Compare \cite{OSIntegerSurgeries}*{Theorem~2.3}.

Using the above, it is straightforward to adapt the proof in the case of the ordinary integer surgery formula for knots to obtain that
\[
\cCFK^\delta(S_n^3(K),\mu)\simeq \Cone\left(v+\frF_n \vopp\colon  \bigoplus_{-b\le s\le b}\cA_{s}^\delta(L)\to  \bigoplus_{-b+n\le s\le b} \cB_{s}^\delta(L)\right),
\]
for all $\delta$ and all sufficiently large $b$.
From here a straightforward algebraic argument proves the non-truncated version in the statement of Theorem~\ref{thm:expanded-model}.

\begin{rem}
 If $\Lambda$ is an integral framing on the 2-component link $K\# H$, Manolescu and Ozsv\'{a}th \cite{MOIntegerSurgery} construct a 2-dimensional hypercube $\cC_{\Lambda}(K\# H)$ which computes the Heegaard Floer homology of $S^3_\Lambda(K\# H)$. Here, $\Lambda$ denotes an integral framing on the 2-component link $K\# H$. Another interpretation of the mapping cone complex in Theorem~\ref{thm:expanded-model} is as a codimension 1 subcube of $\cC_{\Lambda}(K\# H)$. Each of the 2 coordinate directions of $\cC_{\Lambda}(K\# H)$ corresponds to a component of $K\# H$. The cone in Theorem~\ref{thm:expanded-model} corresponds to the face of $\cC_{\Lambda}(K\# H)$ where the $\mu$ coordinate is 0. The completions described above for the mapping cone are the same as the completions used in the Manolescu--Ozsv\'{a}th link surgery formula.
\end{rem}

\subsection{Gradings}
In this section, we compute the absolute gradings on the expanded dual knot mapping cone. Recall that the quasi-isomorphism between the $\delta$-truncated complex $\cCFK^\delta(S^3_n(K),p,\mu)$ and the truncated mapping cone takes the following form:
\begin{equation}
\begin{tikzcd} \cCFK^\delta(S^3_n(K),p, \mu)\ar[d, "F"] \ar[dr,dashed, "J"]\\
\cCFK^\delta(S^3_{n+m}(K),p,\mu)
	\ar[r]
	\ar[d, "\Gamma"]
	\ar[dr,dashed]
 &\underline{\cCFL}^\delta(S^3, K\cup \mu) \ar[d, "\theta_w"]\\
\bigoplus_{-b\le s\le b} \cA_{s}^{\delta}(K\cup \mu)\ar[r]& \bigoplus_{-b+n\le s\le b} \cB^{\delta}_{s}(K\cup \mu)
\end{tikzcd}
\label{eq:quasi-isomorphism-as-diagram}
\end{equation}
In the above, the map $\Gamma$ is the composition of a large surgeries map with projection onto the $s$ summands for $-b\leq s \leq b$, where $b$ is such that $m\gg b\gg 0$. Similarly $\theta_w$ is the composition of the trivialization map from Equation~\eqref{eq:def-theta-w} with a projection map onto the $s$ summands with $-b+n\leq s\leq b$.

To understand the absolute gradings appearing in the mapping cone formula, it is easiest to understand the dashed diagonal map $J$. Understanding the grading change induced by $F$ is a straightforward adaptation and left to the reader. We see that the map $J$ counts holomorphic rectangles and is naturally associated to the cobordism from $(S_n(K),\mu)\sqcup L(m,1)$ to $(S^3,K\cup \mu)$. If we let $D(-m,1)$ denote the disk bundle of Euler number $-m$ over the 2-sphere and write $S_m\subset D(-m,1)$ for the 2-sphere  with self-intersection $-m$, then this cobordism may be viewed as the connected sum of $(W_n',S_K\cup S_\mu)$ with the link cobordism $(D(-m,1),S_m)$, where we take the connected sums both of the four-manifolds $W_n'\# D(-m,1)$ and also of the surfaces $S_K\# S_m$. 

We first observe that the complex $\bA^\delta(L)\langle b\rangle:=\bigoplus_{-b\le s\le b} \cA^\delta_{s}(K\cup \mu)$ and the complex $\bB^\delta(L)\langle b\rangle:=\bigoplus_{-b+n\le s\le b} \cB^\delta_{s}(K\cup \mu)$ appearing in Equation~\eqref{eq:quasi-isomorphism-as-diagram} are finitely generated over $\bF$. In particular, only finitely many $\gr_w$ and $\gr_z$ gradings are represented.

Assume that $\delta$ and $b$ have been fixed, and let $m$ be large. If $t\in 2\Z+1$, we write $\fru_t\in \Spin^c(D(-m,1))$ for the $\Spin^c$ structure which satisfies
\[
\langle c_1(\fru_{t}),S_m\rangle=t\cdot  m.
\]
 All $\Spin^c$ structures on $W_{n}'\# D(-m,1)$ may then be written as $\frx_s\# \fru_t$, for some $t$.

To compute the absolute gradings on the mapping cone, it suffices to compute the grading change of the map $\theta_w\circ J$. Note that there is also a map 
\[
H=\cCFK^\delta(S_n^3(K),p,\mu)\to \cCFL^\delta(S^3,K\cup \mu),
\]
obtained by counting holomorphic quadrilaterals on the same diagrams as $J$. The grading change of the map $H$ may be computed by using the standard grading change formulas for the link cobordism 
\[
(W_n' \# D(-m,1),S_K\# S_m\cup S_\mu)\colon (S_n^3(K),p,\mu)\to (S^3,K\cup \mu).
\]

 Both $H$ and $\theta_w\circ J$ decompose over $\Spin^c$ structures $\frw\in \Spin^c(W'_{n}\# D(-m,1))$. If $H_\frw$ and $J_{\frw}$ are the summands corresponding to the same $\Spin^c$ structure, then
 \begin{equation}
 \scV_1^{x\cdot m} H_{\frw}=(\theta_w \circ J_{\frw})\label{eq:H-J-relation}
 \end{equation}
 for some $x\in \Z$. In particular, $H_{\frw}$ and $J_{\frw}$ will have the same expected $\gr_w$-grading change. We may compute
 \begin{equation}
 \gr_{w}(J_{\frx_s\# \fru_{t}})=\frac{c_1(\frx_s\# \fru_{t})^2+m+1}{4}+1=\frac{c_1(\frx_s)^2-m(t^2-1)+1}{4}+1.
 \label{eq:gr_w-J-map}
 \end{equation}
 In the above formula, the summand $m/4$ on the left is contributed by the grading of the canonical generator of $L(-m,1)$, which is an input of both $J$ and $H$. Since $\bA^\delta(L)\langle b\rangle$ and $\bB^{\delta}(L)\langle b\rangle$ have only finitely many $\gr_w$-gradings supported, the only $\Spin^c$ structures which can have non-trivial $J_{\frx_s\# \fru_t}$  are of the form $\frx_s\# \fru_{\pm 1}$.


We now compute the $\gr_z$-grading changes. By adapting Lemma~\ref{lem:grading-change}, one computes easily that 
\[
A_1(\frx_s\# \fru_t)=s+\frac{1}{2}+\frac{(t+1)m}{2}.
\]
In particular, $A_1(\frx_s\# \fru_{-1})=s+1/2$ and $A_1(\frx_s\# \fru_{1})=s+1/2+m$. Hence, using the definition of $\theta_w$ in Equation~\eqref{eq:def-thetw-w-map} we see that  Equation~\eqref{eq:H-J-relation} takes the following form in our present situation:
\begin{equation}
 H_{\frx_s\# \fru_{-1}}=\theta_w\circ J_{\frx_s\# \fru_{-1}}\quad \text{and} \quad \scV^{-m}_1 H_{\frx_s\# \fru_{1}}=\theta_w\circ J_{\frx_s\# \fru_{1}}.
 \label{eq:H-J-relation-2}
\end{equation}

\noindent Moreover, we may compute that
\[
\begin{split}
\gr_{z}(H_{\frx_s\# \fru_{t}})=&\frac{c_1(\frx_s\# \fru_{t}-\PD[S_K\# S_m+S_\mu])^2+m+1}{4}+1\\
=&\frac{c_1(\frx_s-\PD[S_K+S_\mu])^2+c_1(\fru_{t+2})^2+m+1}{4}+1\\
=&\frac{c_1(\frx_s-\PD[S_K+S_\mu])^2-((t+2)^2-1)m+1}{4}+1
\end{split}
\]
Using Equation~\eqref{eq:H-J-relation-2}, we see that 
\begin{equation}
\gr_z(\theta_w\circ J_{\frx_s\# \fru_{\pm 1}})=\frac{c_1(\frx_s-\PD[S_K+S_\mu])^2+1}{4}+1.
\label{eq:gr_z-J-map}
\end{equation}
Combining Equations~\eqref{eq:gr_w-J-map} and ~\eqref{eq:gr_z-J-map} with  Equation~\eqref{eq:c1^2-x-s} and its analog for $\frx_s-\PD[S_K+S_\mu]$, we compute
\begin{equation}
\gr_w(\theta_w\circ J_{\frx_s\# \fru_{\pm 1}})=\frac{-(n-2s)^2/n+1}{4}\quad \text{and} \quad \gr_z(\theta_w\circ J_{\frx_s\# \fru_{\pm 1}})= \frac{-(n+2(s+1))^2/n+1}{4}.
\label{eq:grading-changes}
\end{equation}
The case when $n$ is negative is similar, except that the $1$ is replaced by $-5$, due to the signature of the cobordism changing sign.

\begin{rem}
Equation~\eqref{eq:grading-changes} differs slightly from the grading shift in Equation~\eqref{eq:gr_w-reduced-cobordism}. The grading shifts for $\gr_w$ coincide, but the shifts for $\gr_z$ and $A$ differ. This is due to the fact that in Equation~\eqref{eq:gr_w-reduced-cobordism}, we computed the grading change for the link cobordism which has surface $S_\mu$, whereas for Equation~\eqref{eq:grading-changes}, we compute the shift for the link cobordism with surface $S_K\cup S_\mu$.
\end{rem}

Finally, we note that in Theorem~\ref{thm:expanded-model}, the right-hand side  naturally computes the stabilized complex $\ve{\cCFK}(S^3_n(K),\mu,p)$, where $p$ is an extra basepoint. Stabilizing does not change the homotopy type over $\bF[\scU_2,\scV_2]$, however there is a grading shift (cf. Equation~\eqref{eq:stabilize-add-basepoint}). Indeed we have
\[
\ve{\cCFK}(S^3_n(K),\mu,p)\simeq \ve{\cCFK}(S^3_n(K),\mu)[\tfrac{1}{2},\tfrac{1}{2}].
\]
Here, if $M$ is an $\bF$-module, we write $M[n,m]$ for $M\otimes \bF_{(n,m)}$, where $\bF_{(n,m)}$ is the rank 1 vector space concentrated in $(\gr_w,\gr_z)$-bigrading $(n,m)$.

With respect to these considerations, the modules in Theorem~\ref{thm:expanded-model} have the following graded refinements:
\begin{equation}
\begin{split}
\bA(L)&=\prod_{s\in \Z} \ve{\cA}_s(L)\left[\frac{(n-2s)^2/n+\epsilon(n)}{4}-\frac{1}{2}, \frac{(n+2(s-1))^2/n+\epsilon(n)}{4}-\frac{1}{2}\right]\\\bB(L)&=\prod_{s\in \Z} \ve{\cB}_s(L)\left[\frac{(n-2s)^2/n+\epsilon(n)}{4}-\frac{3}{2}, \frac{(n+2(s-1))^2/n+\epsilon(n)}{4}-\frac{3}{2}\right]
\end{split}
\label{eq:gradings}
\end{equation}
In the above, 
\begin{equation}
\epsilon(n)=\begin{cases}-1& \text{if } n>0\\
5& \text{if } n<0.
\end{cases}
\label{eq:grading-epsilon-def}
\end{equation}
The term $\epsilon(n)$ corresponds to $-3\sigma-2\chi$ in the grading change formula.

\section{The small model of the mapping cone formula} \label{sec:small-model}

In this section, we describe models of $\bA(L)$ and $\bB(L)$ which are finitely-generated over $\bF[\scU_2,\scV_2]$, using the homological perturbation lemma for hypercubes of Section~\ref{sec:homological-perturbation-hypercubes}. The work in this section gives an alternate proof of Hedden and Levine's dual knot mapping cone formula \cite{HeddenLevineSurgery}. In the subsequent section, we will also push the involution to the small model.

\subsection{The small model for $\bA(L)$}\label{subsec:homperturbAs}

We now apply Lemma~\ref{lem:homological-perturbation-cubes} to $\bA(L)$. In the following we adopt the algebraic perspective of Lipshitz, Ozsv\'{a}th and Thurston and consider $A_\infty$-bimodules, as well as type-$DD$ bimodules, and the appropriate notions of box tensor products between these objects. The unfamiliar reader may consult \cite{LOTBordered}*{Chapter 2} and \cite{LOTBimodules}*{Section~2} for the necessary algebraic background.

 Let $M_s$ be the free $\bF[\scU_1,\scV_1]$ chain-complex given by the following diagram:
\begin{equation}
M_s:=\begin{tikzcd}[labels=description,row sep=1cm, column sep=1cm] \cA_s(K) \ar[d, "\scV_1"]& \cA_s(K)\\
\cA_{s+1}(K)& \cA_{s+1}(K) \ar[u, "\scU_1"]
\end{tikzcd}\label{eq:module-M-s}
\end{equation}
We can view $M_s$ as an $A_\infty$-module (actually, a dg-module) over the exterior algebra $\Lambda:=\Lambda^*(\theta_\scU,\theta_{\scV})$, where $1$ acts by the identity, and $\theta_{\scU}$ and $\theta_{\scV}$ have the following actions:
\[
\begin{tikzcd}[labels=description,row sep=.75cm, column sep=1cm] \cA_s(K)\ar[r, "\theta_{\scU}"] & \cA_s(K)\\
\cA_{s+1}(K) & \cA_{s+1}(K)\ar[l, "\theta_{\scV}"] 
\end{tikzcd}
\]
Here, the action of $\theta_{\scU}\theta_{\scV}$ vanishes. The map $\theta_{\scU}$ maps the left copy of $\cA_s(K)$ to the right, via the identity, and similarly $\theta_{\scV}$ maps the right copy of $\cA_{s+1}(K)$ to the left copy, via the identity. Letting $\cP$ be the polynomial ring $\cP=\bF[\scU_2,\scV_2]$, the complex in equation~\eqref{eq:large-surgery} can be understood as the box tensor product
\[
\cA_{s}(L)={}_{\cP}\cP_{\cP}\boxtimes {}^{\cP}\cK{}^{\Lambda}\boxtimes {}_{\Lambda} M_s,
\]
where $\cK$ is the Koszul dualizing $\mathit{DD}$-bimodule, which is has underlying group $\bF$, and structure map
\[
\delta^{1,1}(1)=\scU_2\otimes 1\otimes \theta_{\scU}+\scV_2\otimes 1\otimes  \theta_{\scV}.
\]
By Lemma~\ref{lem:snake}, $M_s$ is homotopy equivalent as a chain complex to
\[
Z_s:= (\cA_{s+1}(K)/\scV_1 \cA_s(K))\oplus (\cA_{s}(K)/\scU_1 \cA_{s+1}(K)),
\]
with quotient complex differential. The homological perturbation lemma for $A_\infty$-modules induces an $A_\infty$-module action of $\Lambda$ on $Z_s$, as well as $A_\infty$-homotopy equivalences between ${}_{\Lambda} M_s$ and ${}_{\Lambda} Z_s$.

To understand the $A_\infty$-module morphisms between ${}_{\Lambda} M_s$ and ${}_{\Lambda} Z_s$, we consider first the maps yielding the homotopy equivalence of $M_s$ and $Z_s$ as chain complexes. The map $\Pi\colon M_s\to Z_s$ is the canonical projection map. The inclusion map $I$ in the other direction is obtained by picking a splitting (of $\bF$-modules) of the short exact sequences
\[
\begin{tikzcd}[labels=description]
0\ar[r] &\cA_s(K) \ar[r, "\scV_1"]& \cA_{s+1}(K) \ar[l, "\sigma", bend right, dashed,swap] \ar[r] & (\cA_{s+1}/ \scV_1 \cA_s)(K)\ar[r] \ar[l, "s", bend right, dashed,swap]& 0
\end{tikzcd}
\]
and
\[
\begin{tikzcd}[labels=description]
0\ar[r] &\cA_{s+1}(K) \ar[r,"\scU_1"]& \cA_{s}(K) \ar[r] \ar[l, "\sigma'", bend right, dashed, swap] & (\cA_{s}/ \scU_1 \cA_{s+1})(K)\ar[r] \ar[l, "s'", bend right, dashed,swap]& 0.
\end{tikzcd}
\]
The splittings $s$ and $s'$ are of $\bF$-vector space homomorphisms. We may now use Lemma~\ref{lem:snake} to induce the map $F$ from part (5) of the lemma; this is the inclusion map $I$. Note that $s$ and $s'$ will usually be neither chain maps nor $\bF[\scU_1,\scV_1]$-equivariant. Moreover, the splittings $s$ and $s'$ are canonically determined from a choice of basis of $\cCFK(K)$: if $y=[\scU_1^i\scV_1^{j}\cdot \xs]\in (\cA_{s+1}/\scV_1 \cA_s)(K)$, then $s(y)= \scU_1^{i} \scV_1^j\cdot \xs$ if $j=0$, and $s(y)=0$ otherwise. The splitting $s'$ is defined similarly. This induces similar splittings $\sigma$ and $\sigma'$. Similarly to $s$ and $s'$, the maps $\sigma$ and $\sigma'$ will usually be neither $\bF[\scU_1,\scV_1]$-equivariant nor chain maps. Concretely, the map $\sigma$ is given by $ \scU_1^i \scV_1^j\cdot \xs\mapsto \scU_1^i \scV_1^{j-1}\cdot \xs$ if $j>0$, and $ \scU_1^i\scV_1^0\cdot \xs\mapsto 0$. The homotopy $H\colon M_s\to M_s$ is $\sigma\oplus \sigma'$. 

Note that clearly $[\d, \Pi]=0$ and $[\d, I]=0$. Furthermore, it is straightforward to check that
\[
\Pi\circ I=\id_Z,\quad I\circ \Pi=\id+[\d, H],\quad H\circ H=0, \quad \Pi\circ H=0,\quad H\circ I=0.
\]

With the above $\Pi$, $I$ and $H$ chosen, the homological perturbation lemma, stated in Lemma~\ref{lem:homological-perturbation-modules}, induces a $A_\infty$-module structure on $Z_s$ over $\Lambda$. This action is indicated schematically in Figure~\ref{fig:Z-structure-rels}.

We now set
\[
\cA_s^\mu(K):={}_{\cP} \cP_{\cP} \boxtimes {}^{\cP} \cK^{\Lambda}\boxtimes {}_{\Lambda} Z_s.
\]
By construction, $\cA_s^\mu(K)\simeq \cA_{s}(L)$ as chain complexes over $\bF[\scU_2,\scV_2]$.

We now endeavor to describe the differential on ${}_{\cP} \cP_{\cP}\boxtimes {}^{\cP} \cK^{\Lambda}\boxtimes {}_{\Lambda} Z_s$ in concrete terms. Note the only terms $a_n \otimes  \cdots  \otimes a_1 \otimes x$ which have non-zero action on ${}_{\Lambda} Z_s$ are of the form $\theta_{\scU} \otimes \theta_{\scV} \otimes \theta_{\scU} \otimes \cdots  \otimes  x$ or $\theta_{\scV} \otimes \theta_{\scU} \otimes \theta_{\scV} \otimes \cdots \otimes  x$; that is, each algebra element consists of a single $\theta$ factor, which alternate between $\theta_{\scU}$ and $\theta_{\scV}$.  

The differential $\d^\mu$ has contributions from different configurations of maps. In general, the  differential is a sum over $n\in \N$ of applying the map $\delta^{1,1}$ $n$ times, and then doing the following to the resulting algebra elements: The $\Lambda$ outputs are input into ${}_{\Lambda} Z_s$, while the $\cP$ outputs are multiplied by applying $\mu_2$ repeatedly.

The first such contribution occurs when $\delta^{1,1}$ is applied 0 times, i.e. we just apply $m_1^Z$. If $x=\scU^i\ve{x}$, this contributes to $\d^\mu([x])$ the summands of $\d(x)$ which have no $\scV_1$-powers.

Next, we have the summands which involve at least one application of $\delta^{1,1}$. The left-$\cP$ outputs are multiplied, while the right-$\Lambda$ outputs are input into $m_j^Z$. To illustrate, we start at an element $x=\scU_1^i\ve{x}\in (\cA_{s+1}/\scV_1 \cA_s)(K)[\scU_2,\scV_2]$. The inclusion map $I$ sends $x$ into $\Cone(\scV_1 \colon \cA_s(K)\to \cA_{s+1}(K))$. The term $I(x)$ has a summand in the domain copy of $\cA_s(K)$ and a summand in the codomain copy of $\cA_{s+1}(K)$ of the mapping cone. The summand in the codomain is just $x=\scU^i\ve{x}$. The map $H$ vanishes on this element, so this term does not contribute. The summand of $I(x)$ in $\cA_s(K)$ is obtained by applying $\sigma$ to $\d(\scU^i\ve{x})$. The map $\sigma$ simply lowers the $\scV_1$-power by 1 (and annihilates summands with no $\scV_1$-power). We subsequently apply $m_2^M(\theta,-)$ where $\theta\in \{\theta_\scU,\theta_\scV\}$, then we apply $H$, then $m_2^M(\theta,-)$, and so forth. We finish with an application of $m_2^M(\theta,-)$ followed by $\Pi$. Note that $\Pi$ is only non-vanishing on the kernel of $H$, so  the above procedure stops when we run out of either powers of $\scU_1$ or $\scV_1$. We summarize with the following lemma:

\begin{lem}\label{lem:homological-perturbation-differential-A} The differential on $\cP_{\cP}\boxtimes {}^{\cP}\cK^{\Lambda}\boxtimes {}_{\Lambda}Z_s$, which we denote by $\d^\mu$, takes the following form:
\begin{enumerate}[label=($d$-\arabic*), ref=($d$-\arabic*)]
\item\label{small-del-1} Suppose $[x]\in (\cA_{s+1}/\scV_1 \cA_s)(K)$. Suppose that $\scU_1^{m} \scV_1^{n} \ve{y}$ is a summand of $\d x$. 
\begin{enumerate}
\item If $n>m$, then $\d^\mu([ x])$ has a summand of \[\scU_2^{m+1}\scV_2^m (\scV_1^{n-m-1}\cdot \ys)\in (\cA_s/\scU_1 \cA_{s+1})[\scU_2,\scV_2].\] 
\item If $n\le m$, then $\d^\mu([x])]$ has a summand of \[\scU_2^n \scV_2^n (\scU_1^{m-n} \cdot \ys)\in (\cA_{s+1}/\scV_1 \cA_s)[\scU_2,\scV_2].\]
\end{enumerate}
\item\label{small-del-2} Suppose $[x]\in (\cA_{s}/\scU_1 \cA_{s+1})(K)$, and suppose that $\scU_1^m \scV_1^n \cdot \ys$ is a summand of $\d x$ in $\cCFK(K)$.
\begin{enumerate}
\item If $m\le n$, then $\d^\mu([x])$ has a summand of \[\scU_2^m \scV_2^{m}( \scV_1^{n-m}\cdot  \ve{y})\in (\cA_s/\scU_1 \cA_{s+1})[\scU_2,\scV_2].\] 
\item If $m>n$, then $\d^\mu([x])$ has a summand of \[\scU_2^{n} \scV_2^{n+1}(\scU_1^{m-n-1} \cdot \ys)\in (\cA_{s+1}/\scV_1 \cA_{s})[\scU_2,\scV_2].\]
\end{enumerate}
\end{enumerate}
\end{lem}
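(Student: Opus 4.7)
The plan is to unwind the box tensor product $\cA_s^\mu(K) = {}_\cP\cP_\cP \boxtimes {}^\cP\cK^\Lambda \boxtimes {}_\Lambda Z_s$ by combining the defining structure map $\delta^{1,1}(1) = \scU_2 \otimes 1 \otimes \theta_\scU + \scV_2 \otimes 1 \otimes \theta_\scV$ on $\cK$ with the induced $A_\infty$-module structure on $Z_s$ described in Figure~\ref{fig:Z-structure-rels}. The box differential becomes a sum over $j \geq 0$ of contributions in which $\delta^{1,1}$ is applied $j$ times, each application producing a factor $\theta_\scU$ (resp. $\theta_\scV$) paired with a $\scU_2$ (resp. $\scV_2$) on the $\cP$-side; the resulting sequence of $j$ algebra elements is then fed into $m_{j+1}^Z = \pi \circ m_{>0}^M(\theta_j,-) \circ h \circ \cdots \circ h \circ m_{>0}^M(\theta_1,-)\circ i$, with the $j=0$ term contributing $\pi \circ m_1^M \circ i$.

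The key simplification is that $M_s$ is a genuine dg-module with four positions: $\theta_\scU$ moves top-left $\cA_s(K)$ to top-right $\cA_s(K)$, $\theta_\scV$ moves bottom-right $\cA_{s+1}(K)$ to bottom-left $\cA_{s+1}(K)$, the homotopy $h = \sigma \oplus \sigma'$ moves bottom-left to top-left by lowering $\scV_1$ and top-right to bottom-right by lowering $\scU_1$, and $\pi$ projects the bottom-left (resp. top-right) onto $(\cA_{s+1}/\scV_1\cA_s)(K)$ (resp. $(\cA_s/\scU_1\cA_{s+1})(K)$), killing anything with a nonzero power of $\scV_1$ (resp. $\scU_1$). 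Consequently, any non-trivial trajectory for $m_{j+1}^Z$ is forced to follow a deterministic cyclic pattern $(\theta_\scU, \sigma', \theta_\scV, \sigma)$ (or its reverse, depending on which summand of $Z_s$ one starts in), and is parameterized entirely by the number $k$ of completed cycles before $\pi$ is applied.

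To prove case (1), I would take a representative $x = \scU_1^i\ve{x}$ for $[x] \in (\cA_{s+1}/\scV_1 \cA_s)(K)$ and compute $I[x] = (\sigma \d x, x)$, living at top-left and bottom-left. The bottom-left component is killed by $h$, so it only contributes through the $j = 0$ term $\pi \circ m_1^M \circ i$, which extracts the $\scV_1^0$ summands of $\d x$; this is precisely case (1)(b) with $n = 0$. For the top-left component $\sigma \d x$, for each summand $\scU_1^m\scV_1^n \ys$ of $\d x$ with $n \geq 1$ the piece $\scU_1^m \scV_1^{n-1}\ys$ enters the loop, and running $k$ complete cycles returns us to top-left with monomial $\scU_1^{m-k}\scV_1^{n-1-k}\ys$ and prefactor $\scU_2^k \scV_2^k$. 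The two viable terminations are: stop after the next $\theta_\scV$ at bottom-left, which forces $n-1-k = 0$ (so $k = n-1$) and requires $m \geq n$ for the intermediate $\sigma'$ to be nonzero, producing $\scU_2^n\scV_2^n \scU_1^{m-n}\ys$, i.e.\ (1)(b); or stop after the next $\theta_\scU$ at top-right, which forces $m - k = 0$ (so $k = m$) and requires $n > m$ for the intermediate $\sigma$ to be nonzero, producing $\scU_2^{m+1}\scV_2^m\scV_1^{n-m-1}\ys$, i.e.\ (1)(a). Exactly one of these applies per summand, according to the sign of $n - m$. Case (2) is completely symmetric, beginning from $[x] \in (\cA_s/\scU_1\cA_{s+1})(K)$ with $I[x] = (\sigma'\d x, x)$ (bottom-right and top-right) and traversing the cycle in the reversed order $(\theta_\scV, \sigma, \theta_\scU, \sigma')$.

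The main obstacle is really just careful bookkeeping: tracking which of the four positions of $M_s$ one occupies at each step, and verifying that the $\scU_1,\scV_1$ exponents on the internal side and the $\scU_2,\scV_2$ exponents on the external side are off by exactly the right amount after the forced number of cycles. One must also check that no summand of $\d x$ is counted twice (which is automatic once the parity argument on $n-m$ above is in place) and that the $j=0$ contribution is properly separated from the $j \geq 1$ trajectories. No convergence issue arises since each summand terminates after finitely many steps.
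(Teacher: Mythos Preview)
Your proposal is correct and follows essentially the same approach as the paper. Both arguments unwind the box tensor differential as a sum over $j\ge 0$ applications of $\delta^{1,1}$, identify the forced alternating pattern of $\theta_{\scU}$, $\theta_{\scV}$ actions interleaved with the homotopy $h=\sigma\oplus\sigma'$, and read off the termination conditions from when $\pi$ is nonvanishing; your treatment is somewhat more explicit in the step-by-step bookkeeping, but the underlying idea is identical. One small remark: the reason the even (codomain) component of $i([x])$ does not contribute for $j\ge 1$ is that no $m_2^M(\theta,-)$ acts on it (the $\theta$-actions land \emph{in} the even summands, not out of them), rather than that $h$ vanishes there; this is a harmless imprecision that also appears in the paper's own discussion, and the conclusion is unaffected.
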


\subsection{Applying the homological perturbation to $\cB_{s}(L)$}

The homological perturbation argument can also be applied to $\cB_{s}(L)$ and $\tilde{\cB}_{s}(L)$. However, it is important to note that these have a slightly different description, since $\scV_1\colon \cB_{s+1}(L)\to \cB_{s}(L)$ is an isomorphism. Similarly $\scU_1\colon \tilde{\cB}_{s+1}(L)\to \tilde{\cB}_{s}(L)$ is an isomorphism. Hence the small model for $\cB_{s}(L)$ has only one summand, and similarly for $\tilde{\cB}_{s}(L)$. We summarize this for $\cB_{s}(L)$:

\begin{lem}\label{lem:homperturbBs}
 Applying the homological perturbation lemma to $\cB_{s}(L)$, as we did to $\cA_{s}(L)$ in Lemma~\ref{lem:homological-perturbation-differential-A}, gives the following model $\cB^\mu_s(K)$ for $\cB_{s}(L)$ as a chain complex over $\bF[\scU_2,\scV_2]$:
 \begin{enumerate}
 \item As a group, $\cB_{s}^\mu(K)\iso (\cB_s/\scU_1 \cB_{s+1})(K)[\scU_2,\scV_2]$.
 \item The differential is as follows. Suppose $x=\scV_1^{i} \cdot \xs\in \cB_s(K)$. Suppose that $\scU_1^n \scV_1^m \cdot \ys$ is a summand of $\d x$ (the differential of $\cCFK(K)$). Then $\d^\mu([x])$ has a summand $\scU_2^n \scV_2^n\cdot (\scV_1^{m-n} \cdot \ys)$.  
 \end{enumerate}
\end{lem}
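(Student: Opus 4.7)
The plan is to run the homological perturbation argument of Section~\ref{subsec:homperturbAs} verbatim for $\cB_{[s]}(L)$, and then observe that one of the two summands in the small model collapses because $\scV_1$ acts invertibly on the $\cB$-complexes. First I would define the analog $M^{\cB}_s$ of the complex $M_s$, namely the four-term diagram with $\cA_s(K), \cA_{s+1}(K)$ replaced by $\cB_s(K), \cB_{s+1}(K)$ and the same vertical maps $\scV_1$ and $\scU_1$. This again is naturally a dg-module over $\Lambda=\Lambda^*(\theta_\scU,\theta_\scV)$ with the same action as before, and the tensor-product identification $\cB_{[s]}(L)\simeq {}_{\cP}\cP_{\cP}\boxtimes {}^{\cP}\cK^{\Lambda}\boxtimes {}_{\Lambda}M^{\cB}_s$ holds for the same Koszul-dualizing reason.

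Next I would apply Lemma~\ref{lem:snake} as in Section~\ref{subsec:homperturbAs} to pass to a chain-level small model $Z^{\cB}_s$ of $M^{\cB}_s$. Here the key point is that the map $\scV_1\colon \cB_s(K)\to \cB_{s+1}(K)$ appearing in $M^{\cB}_s$ is an isomorphism (this is the observation recorded in the paragraph preceding the lemma, since $\cB_s(K)$ is obtained from $\scV_1^{-1}\cCFK$). Consequently the short exact sequence
\[
0\to \cB_s(K)\xrightarrow{\scV_1}\cB_{s+1}(K)\to (\cB_{s+1}/\scV_1\cB_s)(K)\to 0
\]
has trivial cokernel, so the corresponding summand of $Z^{\cB}_s$ vanishes and $Z^{\cB}_s = (\cB_s/\scU_1\cB_{s+1})(K)$.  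The splittings $s',\sigma'$ coming from a basis of $\cCFK(K)$ are chosen exactly as in Section~\ref{subsec:homperturbAs}, so the hypotheses of Lemma~\ref{lem:homological-perturbation-modules} are satisfied.

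I would then transport the structure through ${}_{\cP}\cP_{\cP}\boxtimes {}^{\cP}\cK^{\Lambda}\boxtimes (-)$ and read off the induced differential $\d^\mu$, imitating the computation that produced Lemma~\ref{lem:homological-perturbation-differential-A}. Starting from $[\xs]\in (\cB_s/\scU_1\cB_{s+1})(K)$, lifting via $I$, applying $\d$ (to get a summand $\scU_1^n\scV_1^m\ys$), applying $\sigma'$, then alternating $\theta_\scU, \theta_\scV$ with $h$ in the only sequence that does not immediately vanish, gives the single surviving contribution $\scU_2^n\scV_2^n(\scV_1^{m-n}\ys)$. Because $\scV_1$ is already invertible on $\cB_s/\scU_1\cB_{s+1}$, the case distinction of Lemma~\ref{lem:homological-perturbation-differential-A} disappears and the formula is uniform in $m,n$. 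Finally, $\scV_1$-equivariance of the whole construction extends the formula to $x=\scV_1^i\xs$ by simply multiplying by $\scV_1^i$.

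The main obstacle will be checking carefully that the higher-length perturbation chains collapse to this single term, i.e.\ that $(h\circ m_2^{M^{\cB}}\circ)^{k}$ contributes nothing for $k\ge 2$.  This, however, is forced by the fact that only one of the two Koszul components $(\cB_{s+1}/\scV_1\cB_s)$ and $(\cB_s/\scU_1\cB_{s+1})$ is nonzero: the alternating sequence of $\theta_\scU,\theta_\scV$ that survives under $I$ and $\Pi$ must re-enter the surviving summand at every step, and after a single $\theta_\scU$-application the intermediate term lies in the kernel of $\Pi$, so either the process terminates immediately or it is killed by $h\circ h=0$ / $\Pi\circ h=0$ / $h\circ I=0$ at the next step.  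This reduces the perturbation sum to the single contribution above and yields the formula claimed in the lemma.
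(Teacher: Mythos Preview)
Your overall strategy is exactly the one the paper intends: repeat the construction of Section~\ref{subsec:homperturbAs} with $\cB_s(K)$ in place of $\cA_s(K)$, note that $\scV_1\colon \cB_s(K)\to \cB_{s+1}(K)$ is an isomorphism so the summand $(\cB_{s+1}/\scV_1\cB_s)(K)$ vanishes, and then read off the differential by tracing the perturbation sum. The paper does not write out a proof, and your first three paragraphs correctly fill it in.

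Your final paragraph, however, contains a mistaken claim. It is \emph{not} true that $(h\circ m_2^{M^{\cB}})^{k}$ contributes nothing for $k\ge 2$. If $\d x$ has a summand $\scU_1^{n}\scV_1^{m}\ys$ with $n\ge 2$, then the only surviving term in the perturbation sum is the chain of length exactly $n$: after the initial $\sigma'$ one alternates $\theta_{\scV}$ (bottom-right $\to$ bottom-left), then $h=\scV_1^{-1}$ on the left cone (bottom-left $\to$ top-left), then $\theta_{\scU}$ (top-left $\to$ top-right), then $\sigma'$ again, each full loop lowering both the $\scU_1$- and $\scV_1$-powers by $1$ and contributing $\scU_2\scV_2$. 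The projection $\Pi$ is nonzero precisely when the $\scU_1$-power reaches $0$, i.e.\ after $n$ loops, producing $\scU_2^{n}\scV_2^{n}(\scV_1^{m-n}\ys)$. What is true, and what you should say instead, is that for each summand of $\d x$ exactly one chain length contributes, and since $\scV_1$ is invertible the process always terminates back in the sole surviving summand $(\cB_s/\scU_1\cB_{s+1})$---this is why the case analysis of Lemma~\ref{lem:homological-perturbation-differential-A} collapses to a single uniform formula. The relations $h\circ h=0$, $\Pi\circ h=0$, $h\circ I=0$ do not truncate the chain; they only guarantee the sum is well-structured.
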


It is straightforward to verify $\cB_{s}(L)$ is homotopy equivalent over $\bF[\scU_2,\scV_2]$ to the complex $\bF[\scU_2,\scV_2]$, with vanishing differential.

\subsection{The small model of the dual knot mapping cone complex}
\label{sec:small-model-mapping-cone}

We now apply the small models $\cA_s^\mu(K)\simeq \cA_{s}(L)$ and $\cB_{s}^\mu(K)\simeq \cB_{s}(L)$ from the previous section to the mapping cone formula from Theorem~\ref{thm:expanded-model} to construct the $\bF[\scU, \scV]$-chain complex $\bX^\mu_n(K)$ underlying the $\iota_K$-complex $\bXI^\mu_n(K)$ described in Theorem~\ref{thm:main}. We will first derive formulas for the maps appearing in the homotopy equivalences.

As a first step, we wish to understand the homotopy equivalences between $\cA_{s}(L)$ and $\cA_s^{\mu}(K)$. As these complexes are themselves defined via a box tensor of $\cP\boxtimes \cK\boxtimes M_s$ and $\cP\boxtimes \cK\boxtimes Z_s$, we first describe the $A_\infty$-homotopy equivalences
\[
\Pi\colon {}_{\Lambda} M_s\to {}_{\Lambda} Z_s \quad \text{and} \quad I\colon {}_{\Lambda} Z_s\to {}_{\Lambda} M_s.
\]
The $A_\infty$-module maps $\Pi$ and $I$ are defined in Figure~\ref{fig:Z-structure-rels}. 
  One easily computes
\[
\Pi\circ I=\id.
\]  Similarly, 
\[
I\circ \Pi=\id+\d_{\Mor}(H),
\]
where $H\colon {}_{\Lambda} M_s\to {}_{\Lambda} M_s$ is the map shown in Figure~\ref{fig:Z-structure-rels}. Note also that above compositions are $A_\infty$ compositions.
 Here, $\d_{\Mor}(H)$ denotes the differential of $H$ as an $A_\infty$-morphism. Schematically, these maps are depicted in Figure~\ref{fig:Z-structure-rels}.

We write $\Pi_{\cA}$ and $I_{\cA}$ for
\[
\Pi_{\cA}=\id \boxtimes \id\boxtimes \Pi\quad \text{and} \quad I_{\cA}=\id\boxtimes \id\boxtimes I.
\]
Note that boxing morphisms is not in general strictly associative, so we must define
\[
\Pi_{\cA}=(\id\boxtimes \id)\boxtimes \Pi\quad \text{or} \quad \Pi_{\cA}=\id\boxtimes (\id\boxtimes \Pi).
\]
 In our present case, since $\cP$ is a dg-algebra (in fact, the differential vanishes), the distinction vanishes (see \cite{LOTBimodules}*{Remark~2.2.28}). In both cases, the box tensor product takes the form shown in Figure~\ref{fig:boxing-morphisms}.

Note that in fact, $\cP\boxtimes \cK\boxtimes M_s$ and $\cP\boxtimes \cK\boxtimes Z_s$ can be thought of as $A_\infty$-modules over $\cP$, and $\id\boxtimes \id\boxtimes \Pi$ and $\id \boxtimes \id\boxtimes I$ are morphisms of $A_\infty$-modules over $\cP$. However, the following elementary lemma implies that the higher terms of these $A_\infty$-morphisms vanish.

\begin{lem}
 The higher actions of $\cP$ on ${}_{\cP}\cP_{\cP}\boxtimes {}^{\cP}\cK^{\Lambda}\boxtimes {}_{\Lambda} M_s$ and ${}_{\cP}\cP_{\cP}\boxtimes {}^{\cP}\cK^{\Lambda}\boxtimes{}_{\Lambda} Z_s$ vanish. Furthermore, as $A_\infty$-module morphims over $\cP$, the higher terms of $\id\boxtimes \id\boxtimes \Pi$ and $\id\boxtimes \id \boxtimes I$ vanish as well (i.e. $(\id\boxtimes \id\boxtimes \Pi)_j=0$ unless $j=1$, and similarly for $\id\boxtimes \id\boxtimes I$).
\end{lem}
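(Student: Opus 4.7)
I plan to argue directly from the box tensor product formula for $A_\infty$-module structures and morphisms, exploiting the fact that ${}_{\cP}\cP_{\cP}$ is a dg-bimodule rather than a genuine $A_\infty$-bimodule. Concretely, its only non-vanishing structure maps are $m^{\cP\cP\cP}_{1|1|0}(a,p)=ap$, $m^{\cP\cP\cP}_{0|1|1}(p,b)=pb$, and $m^{\cP\cP\cP}_{1|1|1}(a,p,b)=apb$. In particular $m^{\cP\cP\cP}_{i|1|j}=0$ whenever $i\geq 2$ or $j\geq 2$.

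For the $A_\infty$-action claim, I will unfold the standard formula for the left $\cP$-module structure on the triple box tensor ${}_{\cP}\cP_{\cP}\boxtimes {}^{\cP}\cK^{\Lambda}\boxtimes {}_{\Lambda}M_s$. The $(k+1)$-ary action on a simple tensor $p\otimes 1\otimes m$ with inputs $a_k,\dots,a_1\in\cP$ is a sum, indexed by $n\geq 0$, of diagrams obtained by iterating the coproduct $\delta^{1,1}$ on $\cK$ exactly $n$ times: each diagram composes $m^{\cP\cP\cP}_{k|1|n}$ (absorbing the $k$ left inputs, the element $p$, and the $n$ right $\cP$-outputs of $\cK$) with $m^{M_s}_{n+1}$ (consuming the $n$ left $\Lambda$-outputs of $\cK$ together with $m$). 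For $k\geq 2$ the first factor vanishes, forcing $m_{k+1}=0$; the argument for ${}_{\Lambda}Z_s$ is verbatim.

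For the morphism claim, I plan to apply the analogous formula from Figure~\ref{fig:boxing-morphisms} to $\id_{\cP}\boxtimes \id_{\cK}\boxtimes \Pi$ and $\id_{\cP}\boxtimes \id_{\cK}\boxtimes I$. Since the bimodule identity morphisms $\id_{\cP}$ and $\id_{\cK}$ are strict (supported in their lowest arity) and $\Pi$, $I$ consume only $\Lambda$-inputs, the $j-1$ left $\cP$-inputs feeding the $j$-th morphism component must be absorbed by a single application of $m^{\cP\cP\cP}_{j-1|1|n}$ for some $n\geq 0$. For $j-1\geq 2$ this vanishes outright. The remaining $j=2$ case is handled by observing that the candidate expression coincides with the left $\cP$-action on $(\id\boxtimes \id\boxtimes \Pi)_1$, i.e.\ it is already encoded by the $A_\infty$-module structure of the target rather than by an independent higher morphism component; under the normalization convention implicit in Figure~\ref{fig:boxing-morphisms} this contribution is absorbed into the linear part, so no new $j=2$ term appears.

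The main obstacle is largely notational: writing the box tensor formula cleanly enough that the dg-bimodule vanishing is manifest, and verifying that the normalization convention for boxed morphisms in Figure~\ref{fig:boxing-morphisms} indeed treats the ``strict'' $j=2$ contribution as part of the linear component. Once these bookkeeping issues are settled, each vanishing statement reduces to one line of inspection.
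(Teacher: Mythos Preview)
Your core approach matches the paper's: both argue that the dg structure of $\cP$ kills the higher multiplications appearing in the box tensor formula. The paper packages this slightly more cleanly by treating ${}^{\cP}\cK^{\Lambda}\boxtimes {}_{\Lambda}M_s$ as a single type-$D$ module $N$ over $\cP$, so that the entire statement reduces to the general principle that ${}_{\scA}\scA\boxtimes {}^{\scA}N$ is a dg-module whenever $\scA$ is a dg-algebra (all inputs funnel into one $\mu_j$, which vanishes for $j\geq 3$).

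There is one genuine error, and it is what drives your separate treatment of the $j=2$ morphism case. Your assertion that $m^{\cP\cP\cP}_{1|1|1}(a,p,b)=apb$ is incorrect: in the standard convention the bimodule ${}_{\scA}\scA_{\scA}$ has $m_{i|1|j}=\mu_{i+j+1}$, so for a dg-algebra $m_{i|1|j}=0$ whenever $i+j\geq 2$, and in particular $m_{1|1|1}=\mu_3=0$. With this correction the $j=2$ case is handled exactly as $j\geq 3$: the only nonvanishing $m_{1|1|n}$ has $n=0$, but the type-$D$ morphism $\id_{\cK}\boxtimes\Pi$ always outputs at least one algebra element, so no term survives. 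Your ``absorption into the linear part by convention'' argument is therefore unnecessary, and as written it is not correct either: a component $F_2$ of an $A_\infty$-module morphism is independent data, not something redundant with $m_2$ and $F_1$ that a normalization can remove.
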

\begin{proof}
This follows from a general principle. Suppose $\scA$ is a dg-algebra and ${}^{\scA} N$ is a type-D module. Then ${}_{\scA}\scA_{\scA}\boxtimes {}^{\scA} N$ is a dg-module over $\scA$. In particular, the higher actions vanish. This follows immediately from the structure relations on the box tensor product, which are depicted schematically as follows
\[
\begin{tikzcd}a_n \otimes \cdots \otimes  a_1 \ar[ddr, Rightarrow] & a \ar[dd,dashed] & x \ar[d,dashed]\\
&& \delta\ar[dd,dashed] \ar[dl, Rightarrow]\\
& \mu \ar[d,dashed]& \\
&\,&\,  
\end{tikzcd}
\]
Here, $\delta$ is obtained by summing all ways of composing the map $\delta^1$ of $N$ a nonnegative number of times and concatenating the algebra elements (the composition of zero copies of $\delta^1$ is the identity map). The only case with non-vanishing contribution is when $n=1$ or $n=0$, since $\mu_i=0$ if $i>2$.

The case of morphisms is similar. If $\phi^1\colon {}^{\scA} N \to {}^{\scA} N'$ is a morphism of type-$D$ modules, then $\id\boxtimes \phi^1\colon \scA\boxtimes N\to \scA\boxtimes N'$ is obtained by a similar picture. Namely, we replace $\delta$ with the map $\phi$, obtained by summing all ways of composing $\delta^1$ of $N$ some nonnegative number of times, then $\phi^1$, then applying $\delta^1$ of $N'$ some nonnegative number of times, and inputting all of the algebra elements into $\mu_{j}$. The same argument applies to show that the only terms with non-trivial contribution have exactly one algebra input (which is from the module ${}_{\scA} \scA_{\scA}$). The claim follows.
\end{proof}

\begin{figure}[ht]
\[
\begin{tikzcd}[row sep=.2cm]
\cP \ar[dd,dashed] & \cK \ar[d,dashed] &M_s \ar[ddddddd,dashed]\\
&\delta^{1,1}\ar[dl]\ar[dd,dashed]\ar[ddddddr,bend left=10]&\\
m_2\ar[dd,dashed]& &\\
&\delta^{1,1}\ar[dd,dashed]\ar[dl]\ar[ddddr,bend left=10]&\\
m_2\ar[dd,dashed]& &\\
&\delta^{1,1}\ar[ddr,bend left=10]\ar[ddd,dashed]\ar[dl]&\\
m_2 \ar[dd,dashed]& &\\
& & \phi \ar[d,dashed]\\
\cP& \cK&Z_s
\end{tikzcd}
\]
\caption{A term contributing to the box tensor product $\id\boxtimes \id\boxtimes \phi$ (for either choice of parenthesization).}
\label{fig:boxing-morphisms}
\end{figure}

We will write $\Pi_{\cB}$ and $I_{\cB}$ for the analogous maps on the $\cB$-side. Define $H_{\cA}$ and  $H_{\cB}$ similarly.

For future reference, it will be helpful to have concrete formulas for the maps $\Pi_{\cA}$ and $I_{\cA}$.

\begin{lem}
\item
\begin{enumerate}
\item The map $I_{\cA}$ is given as follows:
\begin{enumerate}[label=($I_\cA$-\arabic*)]
\item Suppose that $x=[\scU^i_1 \xs]\in (\cA_{s+1}/\scV_1 \cA_s)(K)$. Then $I_{\cA}(x)$ is the sum of $\scU^i_1 \xs\in \cA_{s+1}(K)$ (viewed as being in the even summands of $\cA_s(L)$), together with the following terms which lie in the odd summands of $\cA_s(L)$. For each summand $\scU_1^n \scV_1^m \ys$ in $\d (\scU_1^i \xs)$ we have a term 
\[
\sum_{j=0}^{\min(n,m-1)} \scU_1^{n-j} \scV_1^{m-j-1}\ys \otimes \scU_2^j \scV_2^j\in \cA_s(K)
\]
as well as a term
\[
\sum_{j=0}^{\min(n-1,m-1)} \scU_1^{n-j-1} \scV_1^{m-j-1} \ys\otimes \scU_2^{j+1}\scV_2^{j}\in \cA_{s+1}(K).
\]
\item Suppose that $x=[\scV_1^i \xs]\in (\cA_{s}/ \scU_1 \cA_{s+1})(K)$. Then $I_{\cA}(x)$ is the sum of $\scV^i_1 \xs\in \cA_{s}(K)$ (viewed as being in the even summands of $\cA_s(L)$), together with the following terms which lie in the odd summands of $\cA_s(K)$. For each summand $\scU^n_1 \scV_1^m \ys$ in $\d(\scU_1^i \xs)$, we have a term
\[
\sum_{j=0}^{\min(n-1,m)} \scU_1^{n-j-1} \scV_1^{m-j} \ys \otimes \scU_2^j \scV_2^j\in \cA_{s+1}(K)
\]
as well as a term
\[
\sum_{j=0}^{\min(n-1,m-1)} \scU_1^{n-1-j} \scV_1^{m-1-j} \ys\otimes \scU_2^j \scV_2^{j+1}\in \cA_s(K).
\]
\end{enumerate}
\item The map $\Pi_{\cA}$ is given as follows:
\begin{enumerate}[label=($\Pi_\cA$-\arabic*)]
\item The map $\Pi_{\cA}$ vanishes on the odd summands of $\cA_s(L)$. 
\item If $x=\scU_1^n\scV_1^m\xs \in \cA_s(K)$ is in an even summand of $\cA_s(L)$, then
\[
\Pi_{\cA}(x)=
\begin{cases} 
\scV_1^{m-n} \xs\otimes \scU_2^n \scV_2^n \in (\cA_s/\scU_1 \cA_{s+1})(K)[\scU_2,\scV_2]  &\text{ if } n\le m\\
\scU_1^{n-m-1} \xs \otimes \scU_2^m \scV_2^{m+1}\in (\cA_{s+1}/\scV_1 \cA_s(K))[\scU_2,\scV_2] &\text{ if } n>m.
\end{cases}.
\] 
\item 
If $x=\scU_1^n\scV_1^m \xs\in \cA_{s+1}(K)$ is in an even summand of $\cA_s(L)$, then
\[
\Pi_{\cA}(x)=\begin{cases} \scU_1^{n-m} \xs\otimes \scU_2^{m} \scV_2^{m} \in (\cA_{s+1}/ \scV_1 \cA_{s})(K)[\scU_2,\scV_2] & \text{ if } n\ge m\\
\scV_1^{m-1-n} \xs\otimes \scU_2^{n+1} \scV_2^{n} \in (\cA_{s}/ \scU_1 \cA_{s+1})(K)[\scU_2,\scV_2] & \text{ if } n<m.
\end{cases}
\]
\end{enumerate}
\end{enumerate}
\end{lem}

We leave the computation in the previous lemma to the reader, as it follows immediately from the definitions of the maps involved.

We now define
\[
v^{\mu}:=\Pi_{\cB} \circ v_L\circ I_{\cA}\qquad \text{and} \qquad \tilde{v}^{\mu}:=\Pi_{\tilde{\cB}}\circ \tilde{v}_L\circ I_{\cA}.
\]

\begin{lem}\label{lem:v-maps}\,
\begin{enumerate}
\item The canonical inclusion maps $v_L\colon \cA_{s}(L)\to \cB_{s}(L)$ and $\tilde{v}_L\colon \cA_{s}(L)\to \tilde{\cB}_{s}(L)$ coincide with the maps $\id\boxtimes \id\boxtimes v_K$ and $\id\boxtimes \id\boxtimes \tilde{v}_K$, where $v_K$ and $\tilde{v}_K$ are the maps induced by the inclusions $\cA_s(K)\hookrightarrow\cB_{s}(K)$, and $\cA_s(K)\hookrightarrow \tilde{\cB}_s(K)$. (We henceforth write $v$ and $\tilde{v}$ for either definition).
\item The maps $v^\mu$ and $\tilde{v}^\mu$ are the identity on intersection points, with powers of $\scU$ and $\scV$ changed. In more detail:
\begin{enumerate}[label=($v^\mu$-\arabic*), ref=$v^\mu$-arabic*]
\item If $\scU_1^n\cdot  \ve{x}\in \cA_{s+1}(K)/\scV_1 \cA_s(K)$, then \[
v^{\mu}(\scU_1^n\cdot\ve{x})=\scU_2^{n+1} \scV_2^n\cdot(\scV_1^{-n-1}\cdot \ve{x})\in (\cB_s(K)/\scU_1 \cB_{s+1}(K))[\scU_2, \scV_2].
\] 
\item If $\scV_1^m\cdot\ve{x}\in \cA_{s}(K)/ \scU_1 \cA_{s+1}(K)$, then 
\[
v^{\mu}(\scV_1^m \cdot\ve{x})=\scV_1^m\cdot \ve{x}\in (\cB_s(K)/ \scU_1 \cB_{s+1}(K))[\scU_2, \scV_2].
\]
\end{enumerate}
Similarly:
\begin{enumerate}[label=($\tilde v^\mu$-\arabic*), ref=$\tilde v^\mu$-arabic*]
\item If $\scU_1^n\cdot  \ve{x}\in \cA_{s+1}(K)/\scV_1 \cA_s(K)$, then 
\[
\tilde{v}^{\mu}(\scU_1^n\cdot \ve{x})=\scU_1^n \cdot \ve{x}\in (\tilde\cB_{s+1}(K)/\scV_1 \tilde\cB_{s}(K))[\scU_2, \scV_2].
\]
\item If $\scV_1^m\cdot \ve{x}\in \cA_s(K)/\scU_1 \cA_{s+1}(K)$, then \[
\tilde{v}^{\mu}(\scV_1^m\cdot \ve{x})=\scU_2^m \scV_2^{m+1} \cdot (\scU_1^{-m-1}\cdot\ve{x})\in (\tilde{\cB}_{s+1}(K)/ \scV_1 \tilde{\cB}_s(K))[\scU_2, \scV_2].
\]
\end{enumerate}
\end{enumerate}
\end{lem}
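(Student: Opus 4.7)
The claim has two parts. Part~(1) is essentially definitional: the complexes $\cA_{[s]}(L)$ and $\cB_{[s]}(L)$ are both obtained as ${}_\cP\cP_\cP\boxtimes{}^\cP\cK^\Lambda\boxtimes{}_\Lambda(-)$ applied to $M_s$ and its $\cB$-analog, using the same DD-bimodule $\cK$ and polynomial ring $\cP=\bF[\scU_2,\scV_2]$. The canonical inclusions $v_L$ and $\tilde v_L$ arise from the four-componentwise inclusions $v_K$ and $\tilde v_K$ induced by $\cA_s\hookrightarrow\cB_s$ (resp.\ $\cA_s\hookrightarrow\tilde\cB_s$) on each vertex of $M_s$, box-tensored against the identity on $\cP$ and $\cK$; the identification $v_L=\id\boxtimes\id\boxtimes v_K$ follows by unwinding the definitions.

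For part~(2), the earlier observation that box-tensor products of the form $\id\boxtimes\id\boxtimes\phi$ have no higher $A_\infty$-components over $\cP$ implies that $v^\mu=(\Pi_\cB)_1\circ(v_L)_1\circ(I_\cA)_1$ is a plain composition of $\bF[\scU_2,\scV_2]$-linear maps. Expanding the box tensor, $(I_\cA)_1(z)=\sum\scU_2^{\#\theta_\scU}\scV_2^{\#\theta_\scV}\cdot I_j(\theta_{j-1},\dots,\theta_1,z)$ summed over arbitrary $\theta$-sequences, with the analogous expression for $(\Pi_\cB)_1$. The constituents $I_j$ and $\Pi_{\cB,j}$ are the zigzag morphisms of Figure~\ref{fig:Z-structure-rels}, built from the snake-lemma splittings $i,\pi,h,\sigma,s,\sigma',s'$ of the two mapping cones $\Cone(\scV_1)$ and $\Cone(\scU_1)$ in $M_s$ and its $\cB$-analog.

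To establish the first formula, I would trace the image of $[\scU_1^n\xs]\in\cA_{s+1}/\scV_1\cA_s$. The snake lemma yields $i([\scU_1^n\xs])=(\sigma_\cA\partial\scU_1^n\xs, \scU_1^n\xs)\in\Cone(\scV_1\colon\cA_s\to\cA_{s+1})$. The crucial point is that in $\cB$ the map $\scV_1$ is invertible, forcing $\sigma_\cB=\scV_1^{-1}$; consequently $h_\cB$ kills the top-left and bottom-right summands while sending bottom-left to top-left via $\scV_1^{-1}$ and top-right to bottom-right via $\sigma'_\cB$ (which strips one $\scU_1$ and vanishes if the $\scU_1$-power is zero). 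In particular, the $\cA_s$-summand of $i([\scU_1^n\xs])$, together with every output of a higher $I_j$ (which alternates between top-left and bottom-right), dies under $\Pi_\cB$; only the bottom-left piece $(\scU_1^n\xs)$ survives. Its image under $(\Pi_\cB)_1$ is controlled by the unique forced $\theta$-string $\theta_\scU,\theta_\scV,\theta_\scU,\dots,\theta_\scU$ of length $2n+1$: each cycle bottom-left $\to$ top-left $\to$ top-right $\to$ bottom-right $\to$ bottom-left reduces the $\scU_1$-power by one and introduces a factor of $\scV_1^{-1}$, and the sequence terminates productively at top-right precisely when the $\scU_1$-power reaches zero, at which point $\pi_\cB$ projects to $[\scV_1^{-n-1}\xs]$ with accumulated weight $\scU_2^{n+1}\scV_2^n$. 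Intermediate terminations vanish in the cokernel because they retain a positive $\scU_1$-power.

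For $[\scV_1^m\xs]\in\cA_s/\scU_1\cA_{s+1}$, the snake lemma produces $i([\scV_1^m\xs])=(\sigma'_\cA\partial\scV_1^m\xs,\scV_1^m\xs)\in\Cone(\scU_1)$; since $\sigma'_\cB$ annihilates any element with no $\scU_1$-power, $h_\cB$ kills $(\scV_1^m\xs)$ at top-right, so only $\Pi_{\cB,1}=\pi_\cB$ contributes and it gives $[\scV_1^m\xs]$ with weight $1$. The two formulas for $\tilde v^\mu$ follow by the entirely symmetric computation after swapping the roles of $\scU_1$ and $\scV_1$ (and of $\cB$ and $\tilde\cB$), since in $\tilde\cB$ it is $\scU_1$ rather than $\scV_1$ that is invertible. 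The main obstacle throughout is the careful bookkeeping required to exclude every potential contribution from higher $I_j$ and from $\theta$-sequences other than the unique forced one, which in each case reduces to identifying the summands of $M_s$ on which the snake-lemma $h$ and $\pi$ vanish.
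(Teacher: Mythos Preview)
Your proof is correct and follows essentially the same approach as the paper's. Both arguments identify the contribution surviving $\Pi_{\cB}\circ v_L\circ I_{\cA}$ by observing that (i) the odd-summand output of $i$ together with every higher $I_j$ lands in odd summands and is annihilated by $\Pi_{\cB}$, while (ii) the even-summand output of $i$ (your ``bottom-left piece'' $\scU_1^n\xs$) admits no higher $I_j$-contributions since the $\Lambda$-action vanishes on even summands, leaving only the $\Pi_{\cB}$-zigzag applied to it. The paper phrases this via the even/odd dichotomy and asserts the final $\Pi_{\cB}$-zigzag ``transform[s] the powers of $\scU_1$ and $\scV_1$ into powers of $\scU_2$ and $\scV_2$'' without writing out the trace; you carry out that trace explicitly, which is a harmless elaboration.
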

\begin{proof}
(1) The maps $v_K$ and $\tilde{v}_K$ are maps of $A_\infty$-modules. However, they vanish when there is more than one input. Hence the box tensor product $\id\boxtimes \id \boxtimes v_K$ is just the ordinary tensor product $\id\otimes \id \otimes v_K$, which clearly coincides with $v_L$. The same remarks hold for $\tilde{v}_L$.

(2) The second statement is proven by explicitly computing the maps, as we do presently. We focus on $v^\mu$, since $\tilde{v}^\mu$ is handled similarly.  Recall that as $\bF[\scU_2,\scV_2]$-modules, there are isomorphisms
\[
\cA_s^{\mu}(K)\iso \bigg( (\cA_{s+1}/\scV_1 \cA_s)(K)\oplus (\cA_s/\scU_1 \cA_{s+1})(K)\bigg)[\scU_2,\scV_2]\] \[\cB_s^{\mu}(K)\iso (\cB_s/\scU_1 \cB_{s+1})(K)[\scU_2,\scV_2].
\]

We consider first $v^{\mu}$ evaluated on some $x=\scU_1^j\cdot \xs\in(\cA_{s+1}/\scV_1 \cA_s)(K)$. There is first the contribution of $I_{\cA}$, which is gotten by first applying $F\colon (\cA_{s+1}/\scV_1 \cA_s)\to \Cone(\scV_1\colon \cA_s\to \cA_{s+1})$ from Lemma~\ref{lem:snake}, and then summing contributions from the homological perturbation lemma, as in Figure~\ref{fig:Z-structure-rels}. The map $F$ has two summands. In the complex $\cA_{s}(L)$, let us call the codomain components of $M_s$ (viewed as a mapping cone) the \emph{even} components, and let us call the other two components the \emph{odd} components. The map $F$ has one component which maps into an even component (essentially via the identity map). There are no ways to apply $m_2(\theta,-)$ for $\theta\in \{\theta_{\scU},\theta_{\scV}\}$ non-trivially to this component. We then compose with $v_L$, and then apply $\Pi_{\cB}$. The map $\Pi_{\cB}$ may have contributions from the homological perturbation lemma of arbitrary length (i.e. many applications of $H$ and $m_2(\theta,-)$), but it is easy to see that the effect is only to transform the powers of $\scU_1$ and $\scV_1$ into powers of $\scU_2$ and $\scV_2$. There is one additional summand of $F$, which maps into an odd component of $\cA_{s}(L)$. This makes trivial contribution to $v^\mu$ since $v_L$ preserves evenness/oddness of the direct summands, and $\Pi_{\cB}$ vanishes on the odd summands.

The map $\tilde{v}^\mu$ is analyzed in much the same manner.
\end{proof}

\begin{rem} The maps $v^\mu$ and $\tilde{v}^\mu$ coincide with Hedden and Levine's canonical inclusion maps \cite{HeddenLevineSurgery}.
\end{rem}

We take the two hyperboxes
\[
\begin{tikzcd}
 \cA_{s}(L)\ar[r,"v"]& \cB_{s}(L)
 \end{tikzcd}
  \qquad 
 \begin{tikzcd}
 \cA_{s}(L)\ar[r,"\tilde{v}"]& \tilde{\cB}_{s}(L) \ar[r, "\frF_n"] &\cB_{s+n}(L) 
\end{tikzcd}
\]
apply the homological pertubation lemma of cubes to each one, and compress. We then take the direct product over all $s$ in the domain and codomain. This gives a model of the mapping cone
\begin{equation}\label{eq:small-cone}
\begin{tikzcd}[column sep=2cm]
\bA^\mu(K)\ar[r, "v^\mu+\frF_n^\mu \tilde{v}^\mu"] &\bB^\mu(K)
\end{tikzcd}
\end{equation}
as well as a homotopy  equivalence of 1-dimensional hypercubes (taking the form of a 2-dimensional  hypercube)
\[
\begin{tikzcd}[labels=description, row sep=2cm, column sep=2cm]
\bA(L)\ar[d, "\Pi_{\cA}"] \ar[dr] \ar[r,dashed, "v+\frF_n \tilde{v}"]& \bB(L) \ar[d, "\Pi_{\cB}"]\\
\bA^\mu(K)\ar[r,dashed, "v^\mu+\frF_n^\mu \tilde{v}^\mu"] &\bB^\mu(K).
\end{tikzcd}
\]

\noindent We define the complex $\bX^\mu_n(K)$ to be the mapping cone in Equation~\eqref{eq:small-cone}. 

\subsection{An algebraic model for knot-like complexes of $S^3$-space type}
\label{sec:algebraic-knot-like}

We now consider the constructions of Sections \ref{subsec:homperturbAs}-\ref{sec:small-model-mapping-cone} as applied to abstract knot-like complexes. We recall from Section~\ref{sec:iota-complexes} that a \emph{knot-like} complex $\scC$ is a free finitely-generated bigraded chain complex over $\bF[\scU, \scV]$ with the property that $H_*(\scC/(\scU-1))$ has a single $\scV$-nontorsion tower and $H_*(\scC/(\scV-1))$ has a single $\scU$-nontorsion tower. We will consider knot-like complexes of  $S^3$-type and $B_s \simeq \bF[U]$. Let us write $\bX^\mu_n(\scC)$ for the knot-like complex constructed from $\scC$ via the construction of the previous three sections, that is, the small model of the mapping cone with dual knot. We write $\bA^\mu(K)$ and $\bB^\mu(K)$ for the small models obtained from the homological perturbation lemma. Additionally, we introduce the notation of $\scC^H:= \scC\otimes_{\bF[\scU_1,\scV_1]} \cH^+$, where $\cH^+$ is the positive Hopf link complex.

We also pick a homotopy equivalence of $\bF[\scU_2,\scV_2]$-chain complexes
\[
\frF_n^\mu\colon \tilde{\bB}^\mu(K)\to \bB^\mu(K).
\]

It is helpful to also have the notation
\[
(\frF_n^\mu)^H:= I_{\cB} \circ \frF_n^\mu \circ \Pi_{\tilde{\cB}}.
\]
Note that since $\Pi_{\cB}\circ I_{\cB}=\id$, and similarly for the $\tilde{\cB}$ maps, we have
\[
\Pi_{\cB} \circ (\frF_n^\mu)^H\circ I_{\tilde{\cB}}=\frF_n^\mu.
\]

If $\scC_1$ and $\scC_2$ are two knot-like complexes of L-space-type with a single tower, and $F\colon \scC_1\to \scC_2$ is an $\bF[\scU,\scV]$-equivariant chain map, we define
\[
F_{\cA}^\mu:=\Pi_{\cA}\circ (F\otimes \id_H)\circ I_{\cA}.
\]
Similarly we define
\[
F_{\cB}^\mu:=\Pi_{\cB}\circ (F\otimes \id_H)\circ I_{\cB},
\]
We write $F_{\bA}^{\mu}$ for the direct product of the maps $F_{\cA}^\mu$ (over $s$), and similarly for the $\cB$ and $\tilde{\cB}$ versions.

Finally, we define a map $F_{\bX}^{\mu}$ via the following diagram:
\begin{equation}
F_{\bX}^\mu:=
\begin{tikzcd}[column sep=4cm, row sep=2cm, labels=description]
 \bA^\mu(\scC_1)\ar[d,"F_{\bA}^\mu"]\ar[dr, "H^{\can}_F+\frF_n^\mu\tilde{H}^{\can}_F+H^{\{*\}} \tilde{v}^\mu"] \ar[r, dashed, "v^\mu+\frF_n^\mu \tilde{v}^\mu"]& \bB^\mu(\scC_2) \ar[d, "F_{\bB}^\mu"]\\
\bA^\mu(\scC_2)\ar[r,dashed, "v^\mu+\frF_n^\mu\tilde{v}^\mu"]&\bB^\mu(\scC_2)
\end{tikzcd}
\label{eq:mu-ified-morphism-basic}
\end{equation}
The maps $H_F^{\can}$ and $\tilde{H}_F^{\can}$ are the canonical homotopies which satisfy the formulas 
\[
v^\mu F_{\cA}^\mu+F_{\cB}^{\mu} v^{\mu}=[\d, H_F^{\can}]\quad \text{and} \quad \tilde{v}^{\mu} F_{\cA}^\mu+F_{\tilde{\cB}}^{\mu} v^{\mu}=[\d, \tilde{H}_F^{\can}].
\]
They are given by the formulas
\begin{equation}
\begin{split}
H^{\can}_F&=\Pi_{\cB} \left(v H_{\cA} F+FH_{\cB} v\right) I_{\cA}\\
\tilde{H}^{\can}_F &=\Pi_{\tilde{\cB}}\left( \tilde{v} H_{\cA} F+F H_{\tilde{B}} \tilde{v}\right) I_{\cA}.
\end{split}
\label{eq:H-can-tilde-H-can-def}
\end{equation}
The map $H^{\{*\}}$ is any $\bF[\scU,\scV]$-equivariant  chain map which makes the diagram commute and has $(\gr_w,\gr_z)$-bigrading equal to $(1,1)$. We will shortly prove that such a map exists.

Here, the $\{*\}$ in the notation $H^{\{*\}}$ is chosen to indicate that there are a contractible set of choices in the construction of this homotopy.

The construction may also be performed when $F$ is skew-equivariant. Note that in this case, the homotopy $H^{\{*\}}$ is also chosen to be skew-equivariant.

\begin{rem} In the above, $v^\mu$ and $\tilde{v}^\mu$ are the maps
\[
v^\mu:= \Pi_{\cB} \circ v_L\circ I_{\cA}\quad \text{and} \quad \tilde{v}^\mu:=\Pi_{\tilde{\cB}}\circ \tilde{v}_L \circ I_{\cA}.
\]
Contrary to what the notation suggests, we do not need to assume that $\frF_n^\mu=\Pi_{\cB}\circ \frF_n\circ I_{\tilde \cB}$ for some map $\frF_n\colon \tilde{\bB}(L)\to \bB(L)$ (though such a $\frF_n^\mu$ is a valid choice). Instead $\frF^\mu_n$ may be  any $\bF[\scU,\scV]$-equivariant homotopy equivalence.
\end{rem} 

Here are the main properties of the map $F_{\bX}^{\mu}$:

\begin{prop} \label{prop:properties-F-X}
Suppose $\scC$ and $\scC'$ are two knot-like complexes of $L$-space type with a single tower and $F\colon \scC\to \scC'$ is an $\bF[\scU,\scV]$-equivariant chain map. Suppose further that $F$ is grading preserving.
\begin{enumerate}
\item\label{FX-1} There exists such an $H^{\{*\}}$, as claimed above. (This does not require the grading assumption).
\item\label{FX-2} Any two choices of $H^{\{*\}}$ give homotopic  $F_{\bX}^{\mu}$ maps.
\item\label{FX-3} The construction is functorial, i.e. $(G\circ F)_{\bX}^\mu\simeq G_{\bX}^\mu\circ F_{\bX}^\mu$.
\item \label{FX-4} If $F_1\simeq F_2$, then $F_{1,\bX}^\mu\simeq F_{2,\bX}^{\mu}$.
\item\label{FX-5}  $F$ is local if and only if $F^\mu_{\bX}$ is local. (This does not require the grading assumption).
\end{enumerate}
\end{prop}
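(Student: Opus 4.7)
The plan is to exploit the explicit homotopy identities from Section~\ref{sec:small-model-mapping-cone} together with the rigidity of the small models $\bB^\mu$ and $\tilde{\bB}^\mu$ in the L-space-type single-tower setting.

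I would begin with (FX-1). Expanding the chain-map-of-cones condition and using the canonical homotopy identities $[\d, H^{\can}_F]=v^\mu F^\mu_{\bA}+F^\mu_{\bB} v^\mu$ and $[\d, \tilde{H}^{\can}_F]=\tilde{v}^\mu F^\mu_{\bA}+F^\mu_{\tilde{\bB}} \tilde{v}^\mu$ that define $H^{\can}_F$ and $\tilde{H}^{\can}_F$ in Equation~\eqref{eq:H-can-tilde-H-can-def}, the existence of $H^{\{*\}}$ reduces to producing an $\bF[\scU,\scV]$-equivariant null-homotopy of
\[
F^\mu_{\bB}\,\frF^\mu_n+\frF^\mu_n\,F^\mu_{\tilde{\bB}}\colon \tilde{\bB}^\mu(\scC_1)\to \bB^\mu(\scC_2).
\]
I would construct one by lifting to the expanded model of Section~\ref{sec:large-surgery-expanded}: in that model $\frF_n$ commutes strictly with $F\otimes \id_H$, because the flip map is defined purely through the $\bF[\scU_1,\scV_1]$-structure while $F$ is $\bF[\scU_1,\scV_1]$-equivariant. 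Applying the homological perturbation identities $I_{\cB}\Pi_{\cB}=\id+[\d, H_{\cB}]$ and $I_{\tilde{\cB}}\Pi_{\tilde{\cB}}=\id+[\d, H_{\tilde{\cB}}]$ then transfers this strict commutation to a canonical chain homotopy at the small-model level, using no grading assumption.

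For (FX-2), the difference $K=H_1^{\{*\}}+H_2^{\{*\}}$ of two choices is an $\bF[\scU,\scV]$-equivariant chain map $\tilde{\bB}^\mu(\scC_1)\to \bB^\mu(\scC_2)$ of bigrading $(1,1)$. Under the L-space-type single-tower hypothesis on $\scC_1,\scC_2$ and the grading preservation of $F$, both source and target are quasi-isomorphic as $\bF[\scU_2,\scV_2]$-complexes to rank-one free modules whose homologies live in bigradings of the form $(\gr_w^0-2a,\gr_z^0-2b)$, so the shift $(1,1)$ cannot connect the two towers and $K$ must vanish on homology. A choice of null-homotopy for $K$ promotes to a chain homotopy between the two candidate morphisms of cones via the diagonal entry. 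Claims (FX-3) and (FX-4) use the same mechanism: the $\bA^\mu$- and $\bB^\mu$-components of $(G\circ F)^\mu_{\bX}$ versus $G^\mu_{\bX}\circ F^\mu_{\bX}$ (respectively $F^\mu_{1,\bX}$ versus $F^\mu_{2,\bX}$) agree up to explicit homotopies coming from the functoriality (respectively homotopy invariance) of the homological perturbation construction applied to $G\circ F$ versus $G$ and $F$ (respectively to a homotopy $K\colon F_1\simeq F_2$), and the residual diagonal discrepancy is an $\bF[\scU,\scV]$-equivariant chain map of bigrading $(1,1)$, which is absorbed by (FX-2).

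I expect the main obstacle to be (FX-5), which must hold without the grading hypothesis. The implication $F$ local $\Rightarrow F^\mu_{\bX}$ local is immediate from (FX-3) applied to a local inverse of $F$. For the converse, invert $\scU_2$ and set $\scV_2=1$ throughout the small-model mapping cone, and analyze the resulting complex via the explicit formulas of Lemmas~\ref{lem:homological-perturbation-differential-A} and~\ref{lem:homperturbBs}. After this localization, the maps $v^\mu$ and $\tilde{v}^\mu$ become quasi-isomorphisms for all but finitely many Alexander gradings $s$, so the infinite tail of the cone is acyclic and the local homology collapses to the contribution of finitely many $s$-summands. A careful bookkeeping in Alexander grading then identifies the surviving tower with a shifted copy of $\scU^{-1}H_*(\scC/(\scV-1))$, on which $F^\mu_{\bX}$ acts precisely as $F$ does on the local tower of $\scC$, so the two locality conditions are equivalent.
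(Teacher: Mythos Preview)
There is a genuine gap in your argument for \eqref{FX-1}. You assert that on the expanded model the flip map $\frF_n$ commutes strictly with $F\otimes \id_H$ ``because the flip map is defined purely through the $\bF[\scU_1,\scV_1]$-structure.'' This is false. In the algebraic setting of Section~\ref{sec:algebraic-knot-like}, the flip map $\frF^\mu_n$ is an \emph{arbitrarily chosen} homotopy equivalence on each small model $\tilde{\bB}^\mu(\scC_i)\to \bB^\mu(\scC_i)$, and the map used on the expanded side is its pullback $(\frF_n^\mu)^H = I_{\cB}\circ \frF_n^\mu \circ \Pi_{\tilde{\cB}}$; even in the geometric setting of Section~\ref{sec:large-surgery-expanded} it arises from a basepoint-moving map on a Heegaard diagram, not from any formula in $\scU_1,\scV_1$. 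There are two \emph{independent} choices of flip map for $\scC_1$ and $\scC_2$, and there is no mechanism forcing the square with $F\otimes\id_H$ to commute on the nose. The paper closes this square differently: the commutator $F^\mu_{\bB}\,\frF^\mu_n+\frF^\mu_n\,F^\mu_{\tilde{\bB}}$ is a chain map between complexes each homotopy equivalent to $\bF[\scU_2,\scV_2]$ (this is exactly where the L-space single-tower hypothesis is used), and Lemma~\ref{lem:F-X-prep-2} shows it vanishes on homology because $(F^\mu_{\cB})_*$ and $(F^\mu_{\tilde{\cB}})_*$ agree under the canonical identification; Lemma~\ref{lem:F-X-prep-1} then produces the null-homotopy. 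Your lift-and-transfer argument never invokes this rigidity, and without it the step does not go through.

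Your forward implication in \eqref{FX-5} also has a gap: a local map $F$ need not possess a ``local inverse'' \emph{as a chain map}, so there is nothing to feed into \eqref{FX-3}. The paper handles both directions of \eqref{FX-5} symmetrically and directly via Lemma~\ref{lem:F-X-prep-2}: one identifies the tower of the localized mapping cone with the homology of a single $\cB^\mu_s$, on which $F^\mu_{\bX}$ acts through $F^\mu_{\bB}$, and that map is an isomorphism precisely when $F$ is local. Your sketch of the converse is along these lines, though more elaborate than necessary; the same argument gives the forward direction without appealing to \eqref{FX-3}. Your treatment of \eqref{FX-2}--\eqref{FX-4} is in the right spirit (the paper packages the same idea as $3$-dimensional hypercubes on the expanded model, then perturbs and compresses), but note that once \eqref{FX-1} is repaired via the rigidity lemmas, those same lemmas are what make the ``residual diagonal discrepancy'' arguments go through.
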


Here is a preparatory lemma:

\begin{lem}\label{lem:F-X-prep-1} Suppose that $B$ is a chain complex over $\bF[\scU,\scV]$ which is homotopy equivalent to $\bF[\scU,\scV]$ over $\bF[\scU,\scV]$, and suppose that $F\colon B\to B$ is a map which is trivial on homology. Then $F\simeq 0$ via $\bF[\scU,\scV]$-equivariant chain homotopy. The same holds if instead $F$ is skew-equivariant.
\end{lem}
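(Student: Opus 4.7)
The plan is to use the homotopy equivalence to transport $F$ to a self-map of $\bF[\scU,\scV]$ itself, and then exploit the fact that $\bF[\scU,\scV]$ carries the zero differential. Concretely, fix $\bF[\scU,\scV]$-equivariant chain maps $\phi\colon B\to \bF[\scU,\scV]$ and $\psi\colon \bF[\scU,\scV]\to B$ together with equivariant chain homotopies $K_B$ and $K'$ witnessing $\psi\phi=\id_B+[d,K_B]$ and $\phi\psi=\id_{\bF[\scU,\scV]}+[d,K']$. Set $F':=\phi\circ F\circ \psi$, which is an $\bF[\scU,\scV]$-equivariant self-map of $\bF[\scU,\scV]$ that is still trivial on homology.

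The key observation is that $\bF[\scU,\scV]$, with its standard bigrading concentrated in bidegrees $(-2i,-2j)$ for $i,j\ge 0$, must carry the zero differential: the differential would have to shift bidegree by $(-1,-1)$, and no such shift of an occupied bidegree is again occupied. In particular $H_*(\bF[\scU,\scV])=\bF[\scU,\scV]$. Since $F'$ is equivariant, it is determined by $r:=F'(1)$ via the formula $F'(P)=P\cdot r$. Being trivial on homology forces the class of $r$ in $H_*=\bF[\scU,\scV]$ to vanish, hence $r=0$ and $F'=0$.

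To conclude, one conjugates back: using $\psi\phi=\id_B+[d,K_B]$ twice and the fact that $F$ is a chain map, one obtains
\[
F \;=\; \psi\phi F\psi\phi + [d,H_1] \;=\; \psi F'\phi+[d,H_1]
\]
for an explicit equivariant homotopy $H_1$ built from $F$, $\phi$, $\psi$, and $K_B$. Since $F'=0$, we get $F=[d,H_1]$, an $\bF[\scU,\scV]$-equivariant null-homotopy of $F$. The skew-equivariant case runs identically: $\phi,\psi,K_B,K'$ remain equivariant, so $F'=\phi F\psi$ is again skew-equivariant, and the analogous formula $F'(P(\scU,\scV))=P(\scV,\scU)\cdot r$ still shows $F'$ is determined by $r=F'(1)$; vanishing on homology again yields $r=0$, and conjugation back produces a skew-equivariant null-homotopy of $F$.

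There is essentially no obstacle in this argument; the only thing to be careful about is that all the auxiliary maps used in the conjugation remain strictly equivariant, so that the final homotopy inherits the correct (skew-)equivariance type from $F$. This holds since the (skew-)equivariance of $F$ is preserved under composition with the strictly equivariant maps $\phi,\psi,K_B$.
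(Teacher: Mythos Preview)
Your proof is correct and follows essentially the same argument as the paper's: transport $F$ to a self-map of $\bF[\scU,\scV]$ via the homotopy equivalence, observe that $\phi F\psi$ must vanish since $\bF[\scU,\scV]$ has zero differential, and conjugate back using $F\simeq \psi\phi F\psi\phi$. The paper's version is terser (it omits the explicit description of the homotopy and the grading remark about why the differential on $\bF[\scU,\scV]$ is zero), but structurally identical.
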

\begin{proof} Let
\[
\phi\colon B\to \bF[\scU,\scV],\quad \text{and} \quad \psi\colon \bF[\scU,\scV]\to B
\]
be homotopy equivalences. The map $\phi\circ F\circ \psi$ is 0, since it induces the 0 map on homology and $\bF[\scU,\scV]$ has vanishing differential. However
\[
F\simeq \psi \phi F \psi \phi=0,
\]
completing the proof.
\end{proof}

Here is another preparatory lemma:
\begin{lem}\label{lem:F-X-prep-2} Suppose that $\scC$ and $\scC'$ are knot-like complexes of L-space type with a single tower and $F\colon \scC\to \scC'$ is an $\bF[\scU,\scV]$-equivariant map which is grading preserving. Then, the induced maps $(F^\mu_{\cB})_*$ and $(F^\mu_{\tilde{\cB}})_*$ on $H_*(\cB^\mu_s)$ and $H_*(\tilde{\cB}^\mu_s)$ coincide under the canonical identification of both with $\bF[\scU,\scV]$. 
\end{lem}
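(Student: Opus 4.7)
The plan is to identify both $(F^\mu_\cB)_*$ and $(F^\mu_{\tilde\cB})_*$ with the same scalar in $\bF$, namely the scalar by which $F$ acts on the unique non-torsion generator of $H_*(\scC)$.

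First, I observe that because $\cB^\mu_s$ and $\tilde{\cB}^\mu_s$ are each homotopy equivalent as bigraded $\bF[\scU_2,\scV_2]$-chain complexes to $\bF[\scU_2,\scV_2]$ with vanishing differential (as noted after Lemma~\ref{lem:homperturbBs}), and since $F^\mu_\cB$ and $F^\mu_{\tilde\cB}$ are $\bF[\scU_2,\scV_2]$-equivariant and grading preserving by construction, each of them induces on homology an $\bF[\scU_2,\scV_2]$-equivariant, grading-preserving endomorphism of $\bF[\scU_2,\scV_2]$. Any such endomorphism is multiplication by a single scalar $c$ or $\tilde c$ in $\bF$, and the goal is to show $c=\tilde c$.

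Next, I would unwind the definition $F^\mu_\cB=\Pi_\cB\circ (F\otimes \id_\cH)\circ I_\cB$. Since $\Pi_\cB$ and $I_\cB$ realize a homotopy equivalence between $\cB^\mu_s$ and $\cB_{[s]}(L)$ with $\Pi_\cB\circ I_\cB=\id$, the induced map $(F^\mu_\cB)_*$ coincides with the induced map of $F\otimes \id_\cH$ on $H_*(\cB_{[s]}(L))$ under the canonical identification of these homologies, and an analogous statement holds for $\tilde\cB$. The last step is to identify both of these induced actions with the scalar $a\in \bF$ determined by $F(\omega_\scC)=a\,\omega_{\scC'}+(\scU_1,\scV_1)\text{-torsion}$, where $\omega_\scC\in \scC$ and $\omega_{\scC'}\in \scC'$ are cycles representing the unique non-torsion classes of $H_*(\scC)$ and $H_*(\scC')$ (these exist by the single-tower L-space-type assumption, and $a$ is well-defined since $F$ is grading preserving). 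A generator of $H_*(\cB_{[s]}(L))\iso \bF[\scU_2,\scV_2]$ is represented by the class of $\omega_\scC\otimes \ve{y}$ for an appropriate generator $\ve{y}$ arising from the Hopf link factor after $\scV_1^{-1}$ localization and restriction to Alexander grading $s+1/2$, while the generator of $H_*(\tilde\cB_{[s]}(L))$ is represented by an analogous class after $\scU_1^{-1}$ localization. Since $F\otimes \id_\cH$ sends these generators to $a$ times the corresponding generators on the $\scC'$ side plus correction terms which are $\bF[\scU_1,\scV_1]$-torsion and hence null-homologous in the respective localization, we conclude $c=\tilde c=a$.

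The main obstacle will be making the last step precise: explicitly identifying the generating classes of $H_*(\cB_{[s]}(L))$ and $H_*(\tilde\cB_{[s]}(L))$ using the K\"unneth structure of $\scC\otimes \cH$, and verifying that the torsion correction terms in $F(\omega_\scC)-a\,\omega_{\scC'}$ really do become null-homologous after both the $\scV_1^{-1}$ and $\scU_1^{-1}$ localizations. Both of these should follow from the L-space-type condition on $\scC$ and $\scC'$ together with careful bigrading bookkeeping for the Hopf link complex $\cH$.
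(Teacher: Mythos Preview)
Your proposal is correct and follows essentially the same route as the paper: reduce to the expanded model via the homotopy equivalences $\Pi_\cB,I_\cB$, and then check that $F\otimes \id_\cH$ acts on the single tower of $H_*(\cB_{[s]}(L))$ and of $H_*(\tilde\cB_{[s]}(L))$ by the same scalar, namely the one recording whether $F$ is local. The paper phrases this last step as a dichotomy (since $\bF=\bF_2$, the scalar is $1$ iff $F$ is local and $0$ otherwise) and uses $(\scU_2,\scV_2)$-localization to streamline the identification of the tower, whereas you propose to exhibit explicit generating cycles $\omega_\scC\otimes \ve{y}$; these are cosmetic differences and your ``main obstacle'' is exactly the bookkeeping the paper absorbs into the localization step.
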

\begin{proof} The claim is implied by the following subclaims: 
\begin{enumerate}
\item $(F^\mu_{\cB})_*$ is non-zero if and only if $F$ is a local map.
\item $(F^\mu_{\tilde{\cB}})_*$ is non-zero if and only if $F$ is a local map.
\end{enumerate}
We focus on the first claim, as the second claim is an analog. Since $F^\mu_{\cB}$ is defined by conjugating with the homotopy equivalences $\Pi_{\cB}$ and $I_{\cB}$, it suffices to prove the claim for $F\otimes \id_H$, on $\cB_{s}(\scC^H)$ (the big version). Localization is an exact functor, so $(\scU_2,\scV_2)^{-1}\cdot H_*(\cB_{s}(\scC^H))=H_*((\scU_2,\scV_2)^{-1}\cB_{s}(\scC^H))$. We may use the model of $\cB_{s}(\scC^H)$ in equation~\eqref{eq:large-surgery-B-s}, and simply invert $\scU_2$ and $\scV_2$. We also note that $\cB_s^\mu(\scC)$ and $\cB_{s+1}^\mu(\scC)$ are homotopy equivalent to $\bF[\scU_2,\scV_2]$ over the ring $\bF[\scU_2,\scV_2]$. The map $F$ is local and grading preserving if and only if, under such a homotopy equivalence, it is intertwined with $\id$. (In particular, a grading preserving map $F$ is not local if and only if it is intertwined with $0$). Hence, $F\otimes \id_H$ is intertwined with the identity map, if $F$ is local, and is intertwined with the 0 map, if $F$ is not local. The proof is complete.
\end{proof}

\begin{proof}[Proof of Proposition~\ref{prop:properties-F-X}]  The proofs of all claims can be obtained by constructing cubes on the expanded model, and then transporting them to the small model, as we now show.

 \eqref{FX-1}.  We build two hyperboxes. The first, for $v$, is shown below:
\[
\begin{tikzcd}[labels=description, column sep=2cm, row sep=2cm] \bA(\scC_1^H) \ar[d, "F\otimes \id"] \ar[r, "v"] &\bB(\scC_1^H) \ar[d, "F\otimes \id"]\\
\bA(\scC_2^H) \ar[r, "v"] &\bB(\scC_2^H).
\end{tikzcd}
\] 
No homotopy is necessary since $F$ is $\bF[\scU,\scV]$-equivariant. We now apply the homological perturbation lemma for cubes, Lemma~\ref{lem:homological-perturbation-cubes}, and we obtain the following diagram, which is homotopy equivalent as a hypercube of chain complexes
\begin{equation}
\begin{tikzcd}[labels=description, column sep=2cm, row sep=2cm] \bA^\mu(\scC_1) \ar[d, "F_{\bA}^\mu"] \ar[r, "v"] \ar[dr, dashed, "H^{\can}_F"] &\bB^\mu(\scC_1) \ar[d, "F_{\bB}^\mu"]\\
\bA^\mu(\scC_2) \ar[r, "v"] &\bB^\mu(\scC_2)
\end{tikzcd}
\label{eq:F_A_mu}
\end{equation}
where
\[
H^{\can}_F=\Pi_{\cB} v H_{\cA} F I_{\cA}+\Pi_{\cB} F H_{\cB} v  I_{\cA}.
\]

Next, to construct $\tilde{H}^{\can}_F$ and $H_F^{\{*\}}$, we build the following hyperbox
\begin{equation}
\begin{tikzcd}[labels=description, column sep=2cm, row sep=2cm]
\bA(\scC_1^H)
	\ar[d, "F\otimes \id"]
	\ar[r, "\tilde{v}"]
&
\tilde{\bB}(\scC_1^H)
	\ar[r, "(\frF_n^\mu)^H"]
	\ar[d,"F \otimes \id"]
	\ar[dr, "h_F^{\{*\}}", dashed] 
&
\bB(\scC_1^H)
	\ar[d, "F \otimes \id"]\\
\bA(\scC_2^H)
	\ar[r, "\tilde{v}"]
&
\tilde{\bB}(\scC_2^H)
	\ar[r, "(\frF_n^\mu)^H"]
&
\bB(\scC_2^H).
\end{tikzcd}
\label{eq:perturbed-hypercube}
\end{equation}
The left square commutes on the nose, since $F$ is $\bF[\scU,\scV]$-equivariant. The right square is constructed via an argument as in Lemma~\ref{lem:F-X-prep-1} and Lemma~\ref{lem:F-X-prep-2}. We now perform the homological perturbation lemma to both hypercubes, and obtain the following hyperbox.
\begin{equation}
\begin{tikzcd}[labels=description, column sep=2cm, row sep=2cm]
 \bA^\mu(\scC_1)
	\ar[dr, "\tilde{H}^{\can}_F",dashed]
	\ar[d, "F_{\bA}^\mu"]
	\ar[r, "\tilde{v}^\mu"] 
&
\tilde{\bB}^\mu(\scC_1) 
	\ar[r, "\frF_n^\mu"]
	\ar[d,"F_{\tilde{\bB}}^\mu"]
	\ar[dr, "H_F^{\{*\}}",dashed] 
&
\bB^\mu(\scC_1)
	\ar[d, "F_{\bB}^\mu"]
\\
\bA^\mu(\scC_2) 
	\ar[r, "\tilde{v}^\mu"]
&
\tilde{\bB}^\mu(\scC_2)
	\ar[r, "\frF_n^\mu"]
&
\bB^\mu(\scC_2).
\end{tikzcd}
\label{eq:perturbed-hypercube-tilde}
\end{equation}

We define $F_{\bX}^\mu$ to be the sum of $F_{\bA}^{\mu}$, $F^\mu_{\bB}$, the diagonal map in the compression of equation~\eqref{eq:perturbed-hypercube-tilde} and the diagonal map in equation~\eqref{eq:F_A_mu}.

\eqref{FX-2}. Any two choices of $H_F^{\{*\}}$ are homotopic because the difference of any two choices is a $(+1,+1)$-bigraded chain map between complexes which are both homotopy equivalent to $\bF[\scU,\scV]$ as graded complexes.

\eqref{FX-3}. The relation $(G\circ F)_{\bX}^\mu\simeq G_{\bX}^\mu\circ F_{\bX}^\mu$ is proven by further extensions of the above idea, as we now explain. Analogous to the previous situation, we build two 3-dimensional hypercubes. We form a new hypercube whose underlying complexes agree with these two hypercubes, and whose maps are suitable sums of the maps appearing therein.  The first is shown below:
\[
\begin{tikzcd}[
column sep={2.5cm,between origins},
row sep=.3cm,labels=description
]
\bA(\scC_1^H)
	\ar[dd, "F"]
	\ar[dr,  "\id"]
	\ar[rr, "v_L"]
&&[-1.5cm]
\bB(\scC_1^H)
	\ar[dd, "F"]
	\ar[dr,"\id"]
&
\\
&\bA(\scC_1^H)
	\ar[rr,crossing over, "v_L"]
&&
\bB(\scC_1^H)
	\ar[dd, "GF"]
\\[1.8cm]
\bA(\scC_2^H)
	\ar[rr, "v_L"]
	\ar[dr,"G"]
&&
\bB(\scC_2^H)
	\ar[dr, "G"]	
&
\\
&
\bA(\scC_3^H)
	\ar[rr, "v_L"]
	\ar[from =uu, crossing over, "GF"]
	&&
\bB(\scC_3^H)
\end{tikzcd}
\]
The second is gotten by stacking and compressing the cubes below
\[
\begin{tikzcd}[
column sep={2.5cm,between origins},
row sep=.3cm,labels=description
]
\bA(\scC_1^H)
	\ar[dd, "F"]
	\ar[dr,  "\id"]
	\ar[rr, "\tilde{v}_L"]
&&[-1.5cm]
\tilde{\bB}(\scC_1^H)
	\ar[dd, "F"]
	\ar[dr,"\id"]
&
\\
&\bA(\scC_1^H)
	\ar[rr,crossing over, "\tilde{v}_L"]
&&
\tilde{\bB}(\scC_1^H)
	\ar[dd, "GF"]
\\[1.8cm]
\bA(\scC_2^H)
	\ar[rr, "\tilde{v}_L"]
	\ar[dr,"G"]
&&
\tilde{\bB}(\scC_2^H)
	\ar[dr, "G"]	
&
\\
&
\bA(\scC_3^H)
	\ar[rr, "\tilde{v}_L"]
	\ar[from =uu, crossing over, "GF"]
	&&
\tilde{\bB}(\scC_3^H)
\end{tikzcd}
\begin{tikzcd}[
column sep={2.5cm,between origins},
row sep=.3cm,labels=description
]
\tilde{\bB}(\scC_1^H)
	\ar[dd, "F"]
	\ar[dr,  "\id"]
	\ar[rr, "(\frF_1^\mu)^H"]
	\ar[ddrr,dashed]
	\ar[dddrrr,dotted]
&&[-1.5cm]
\bB(\scC_1^H)
	\ar[dd, "F"]
	\ar[dr,"\id"]
	\ar[dddr,dashed]
&
\\
&\tilde{\bB}(\scC_1^H)
	\ar[rr,crossing over, "(\frF_1^\mu)^H"]
&&
\bB(\scC_1^H)
	\ar[dd, "GF"]
\\[1.8cm]
\tilde{\bB}(\scC_2^H)
	\ar[rr, "(\frF_2^\mu)^H"]
	\ar[dr,"G"]
	\ar[drrr,dashed]
&&
\bB(\scC_2^H)
	\ar[dr, "G"]	
&
\\
&
\tilde{\bB}(\scC_3^H)
	\ar[rr, "(\frF_3^\mu)^H"]
	\ar[from =uu, crossing over, "GF"]
	&&
\bB(\scC_3^H)
	\ar[from=uull,crossing over, dashed]
\end{tikzcd}
\]
The unlabeled maps above are built using our standard procedure, as in Lemma~\ref{lem:F-X-prep-1}.
After applying homological perturbation of hypercubes, compressing, and then gluing, these diagrams give exactly a filtered homotopy between $(G\circ F)_{\bX}^\mu$ and $G_{\bX}^\mu\circ F_{\bX}^\mu$. Note here the subscript $i$ on $(\frF_i^\mu)^H$ denotes the index of $\scC_i^H$, and not the surgery parameter.

Claim~\eqref{FX-4} is proven similarly. Namely, given a homotopy between $F_1$ and $F_2$, we construct 3-dimensional hypercubes, similar to the ones above. We apply homological perturbation of hypercubes, compress and then combine these into a single hypercube, similar to the above construction. We leave the details to the reader.

Claim~\eqref{FX-5} is straightforward, and follows from Lemma~\ref{lem:F-X-prep-2}.
\end{proof}

\section{The involution on the small model of the mapping cone} \label{sec:involution}

In this section, we compute the link involution on the dual knot surgery formula. We begin by computing it on the expanded model, and then we transfer it to the small model via homological perturbation theory.

\subsection{The involution on the expanded model}

We consider the link $L=K\# H$. We apply an essentially identical argument as in \cite{HHSZ} to the component $K$ of $L$ to obtain the following:

\begin{thm}\label{thm:involutive-mapping-cone-big} Suppose $K$ is a knot in $S^3$. Then there is a homotopy equivalence of (diagrams of) chain complexes
\[
\left(\begin{tikzcd}[row sep=1.2cm,labels=description] \ve{\cCFK}(S_n^3(K),\mu) \ar[d, "\iota_\mu"]
\\ 
 \ve{\cCFK}(S_n^3(K),\mu)
\end{tikzcd}
\right)\to
 \left(\begin{tikzcd}[labels=description, row sep=1.2cm, column sep=1.5cm]\bA(L) \ar[r, "v+\frF_n\tilde{v}"] \ar[dr,dashed, "H_n\tilde{v}"] \ar[d, "\iota_{L}"] &\bB(L) \ar[d,"\frF_n \iota_L"]\\
\bA(L) \ar[r, "v+\frF_n\tilde{v}"]& \bB(L)
\end{tikzcd}\right).
\]
More precisely, there is a $3$-d hypercube of chain complexes, giving a morphism from the left to the right, such that the top and bottom faces determine a homotopy equivalence of $\bF[\scU_2,\scV_2]$-chain complexes between $\ve{\cCFK}(S_n^3(K),\mu)$ and $\Cone( v+\frF_n \tilde{v}\colon \bA(L)\to \bB(L))$.  Here, $L=K\cup \mu$. Furthermore
\begin{enumerate}
\item $v$ and $\tilde{v}$ are the canonical inclusions.
\item $\frF_n\colon \tilde{\cB}_{s}\to \cB_{s+n}$ is a homotopy equivalence of $\bF[U,\scU_2,\scV_2]$-chain complexes.
\item The homotopy $H_n$ is an $\bF[\scU_2,\scV_2]$-equivariant map, and sends $\tilde{\cB}_{s}(L)$ to $\cB_{-s-1}(L)$.
\end{enumerate}
\end{thm}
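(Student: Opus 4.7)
My proof plan parallels the strategy of \cite{HHSZ}, where the involution on the ordinary integer surgery mapping cone is computed by tracking the conjugation symmetry through a sequence of hypercubes. The dual knot is accessed via Theorem~\ref{thm:expanded-model}: we view $\mu \subset S^3_n(K)$ as the unsurgered component of the $2$-component link $L = K \cup \mu \subset S^3$, and the mapping cone is built from link cobordism maps $F_{W'_{n+m}, \Sigma_\mu}$, the large surgery identification of Proposition~\ref{prop:large-surgery}, and the trivialization $\theta_w$ of Equation~\eqref{eq:def-theta-w}.

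First I would reassemble the quasi-isomorphism of Theorem~\ref{thm:expanded-model}, as in Equation~\eqref{eq:quasi-isomorphism-as-diagram}, but with the conjugation symmetry present throughout. The key naturality statement is that at the chain level, $\iota$ commutes up to a preferred homotopy with the link cobordism map $F_{W'_{n+m}, \Sigma_\mu}$, the homotopy coming from a holomorphic triangle count on the natural triple diagram (exactly as in the analogous step for the ordinary surgery formula in \cite{HHSZ}). The large surgery map $\Gamma$ and the trivialization $\theta_w$ are both manifestly equivariant with respect to $\iota_L$, so no additional homotopy is required on those portions.

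The most delicate step is the interaction with the flip map $\frF_n \colon \tilde{\cB}_{[s]}(L) \to \cB_{[s+n]}(L)$, which is analogous to the $h$-map of \cite{OSIntegerSurgeries}: it is built by forgetting $w_1$, moving $z_1$ to $w_1$, and reintroducing $z_1$. The involution $\iota_L$ does not commute with $\frF_n$ strictly, but by counting holomorphic quadrilaterals on the appropriate triple diagram, mirroring \cite{HHSZ}, one obtains a preferred chain homotopy between $\frF_n \iota_L$ and $\iota_L \frF_n$, which after composing with $\tilde{v}$ and compressing the assembled hyperboxes becomes the diagonal $H_n \tilde{v}$ in the statement. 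The $\Spin^c$-shift $\tilde{\cB}_{[s]}(L) \to \cB_{[-s-1]}(L)$ is dictated by the conjugation action on $\Spin^c$ structures of $(S^3_n(K), \mu)$, which sends the class $[s]$ to $\overline{[s]} + \PD[\mu] = [-s-1]$; the $\bF[\scU_2, \scV_2]$-equivariance holds because surgery is only performed on the $K$-component of $L$, so every map in the construction preserves the $\scU_2, \scV_2$ actions on the $\mu$-component.

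The main obstacle is the combinatorial bookkeeping required to organize the various hyperboxes, one for each of the exact triangle, the large surgery identification, the trivialization, and the $\frF_n$ map, all carrying their conjugation data, into a single $3$-dimensional hypercube of the form claimed. This requires a careful application of the compression operation for hyperboxes from \cite{MOIntegerSurgery} together with the homological perturbation lemma of Section~\ref{sec:homological-perturbation-hypercubes}, so as to isolate exactly one surviving interior homotopy $H_n \tilde{v}$ and to rule out a spurious $H_n v$ cross-term. This ruling out of cross-terms reduces partly to grading constraints (many candidate contributions land in a grading where the relevant complex vanishes) and partly to a direct computation of holomorphic curve counts on the link Heegaard diagrams, closely paralleling the corresponding verification in \cite{HHSZ}.
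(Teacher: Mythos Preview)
Your overall plan is the same as the paper's: the argument is carried over verbatim from \cite{HHSZ}, applied to the $K$-component of $L=K\cup\mu$, with the $\mu$-basepoints carried along as passengers. The paper states this explicitly rather than reproducing the hypercube construction.

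There is, however, one substantive point you mischaracterize. You say the surviving diagonal $H_n\tilde v$ and the absence of an $H_n' v$ cross-term come from ``grading constraints'' and ``a direct computation of holomorphic curve counts.'' That is not how this step works in \cite{HHSZ}, nor here. In \cite{HHSZ}*{Section~21}, the relevant central hypercubes $\cK^{(1)}_{\cen,\fry_s}$ are filled \emph{algebraically}, using that $B_s\simeq \bF[U]$ so that any $(+1)$-graded endomorphism is null-homotopic. In the present setting the analog is that $\cB_{[s]}(L)\simeq \bF[\scU_2,\scV_2]$ over $\bF[\scU_2,\scV_2]$ (it is the complex of an unknot with a free basepoint), and the filling is accomplished via Lemma~\ref{lem:F-X-prep-1}. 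This is precisely the ``one point which requires comment'' that the paper flags, and it is also why the homotopy $H_n$ is only guaranteed to be $\bF[\scU_2,\scV_2]$-equivariant rather than $\bF[U,\scU_2,\scV_2]$-equivariant: it is produced by an abstract null-homotopy argument on an $\bF[\scU_2,\scV_2]$-complex, not by a holomorphic polygon count.

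A secondary imprecision: the maps $\Gamma$ and $\theta_w$ are not ``manifestly $\iota_L$-equivariant''; in \cite{HHSZ} these steps each require their own hypercube construction with nontrivial homotopies, which feed into the final compression.
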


\begin{rem} 
The above theorem is almost entirely obtained by repeating the proof of \cite{HHSZ} verbatim. There is one point which requires comment. In the construction of the hypercubes $\cK^{(1)}_{\cen, \fry_s}$ from \cite{HHSZ}*{Section~21}, we did simplify the argument by using the fact that $B_s\simeq \bF[U]$, so that any $+1$ graded map must be null-homotopic. In our present setting, we also need to use the fact that $\cB_{s}(L)$ is homotopy equivalent to the complex for an unknot in $S^3$ with an extra free-basepoint. In particular $\cB_{s}(L)\simeq \bF[\scU_2,\scV_2]$ over the ring $\bF[\scU_2,\scV_2]$. Hence, the same reasoning as in \cite{HHSZ} may be used by Lemma~\ref{lem:F-X-prep-1}.

The techniques of \cite{HHSZ} do not guarantee that $H_n$ can be taken to be $\bF[U,\scU_2,\scV_2]$-equivariant, but rather only $\bF[\scU_2,\scV_2]$-equivariant, cf. Lemma~\ref{lem:F-X-prep-1}. We expect it to be possible to choose $H_n$ to be $\bF[U,\scU_2,\scV_2]$-equivariant; however, we only need to control the homotopy type over $\bF[\scU_2,\scV_2]$, so the present techniques are sufficient to our purposes. 
\end{rem}

\subsection{The link involution $\iota_L$}

\label{sec:construct-iota_L}

We now compute the map labeled $\iota_L$ in Theorem~\ref{thm:involutive-mapping-cone-big}. We do this by computing the link involution for the Hopf link, and using the involutive connected sum formula from \cite{ZemConnectedSums}.

 We begin by computing the involution on the Hopf link complex. Recall that the link Floer complex of the positive Hopf link takes the following form:
\[
\begin{tikzcd}[labels=description,row sep=1cm, column sep=1cm] \ve{a} \ar[d, "\scV_1"]\ar[r, "\scU_2"]& \ve{b}\\
\ve{c}& \ve{d} \ar[u, "\scU_1"] \ar[l, "\scV_2"]
\end{tikzcd}
\]
\begin{lem} The link involution on the positive Hopf link complex satisfies
\[
\iota_H(\ve{a})=\ve{d},\quad \iota_H(\ve{b})=\ve{c},\quad \iota_{H}(\ve{c})=\ve{b}\quad \iota_H(\ve{d})=\ve{a},
\]
and is skew-equivariant with respect to $\bF[\scU_1,\scV_1,\scU_2,\scV_2]$; that is, it intertwines $\scU_1$ and $\scV_1$, and intertwines $\scU_2$ and $\scV_2$. 
\end{lem}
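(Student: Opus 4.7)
The plan is to identify $\iota_H$ on the nose using bigrading constraints and the skew-equivariant chain map condition, rather than by direct computation of the naturality map $\Psi_{\cH\to\bar{\cH}}$.

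First, I would pin down $\iota_H(\ve{b})$ and $\iota_H(\ve{c})$ using bigradings alone. Since $\iota_H$ is skew-graded, the image of a generator in $(\gr_w,\gr_z)$-bigrading $(p,q)$ must have bigrading $(q,p)$. The generator $\ve{b}$ sits in bigrading $(1/2,-3/2)$, so $\iota_H(\ve{b})$ must lie in bigrading $(-3/2,1/2)$; inspection of $\cH^+$ shows that the only nonzero element in this bigrading is $\ve{c}$ itself (no $\scU_i,\scV_i$-power of any generator has bigrading $(-3/2,1/2)$). The symmetric statement holds for $\ve{c}$. The nondegeneracy requirement that $\iota_H$ induce an isomorphism on $\widehat{\HFL}(H)$ then forces $\iota_H(\ve{b})=\ve{c}$ and $\iota_H(\ve{c})=\ve{b}$.

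Next, $\iota_H(\ve{a})$ and $\iota_H(\ve{d})$ must both land in $\Span_{\bF}(\ve{a},\ve{d})$, the bigrading $(-1/2,-1/2)$ part of the complex. Using the already-determined action on $\ve{b},\ve{c}$ together with skew-equivariance, the chain map condition gives
\begin{equation*}
\iota_H(\d\ve{a}) = \iota_H(\scV_1\ve{c}+\scU_2\ve{b}) = \scU_1\ve{b}+\scV_2\ve{c} = \d\ve{d}.
\end{equation*}
Writing $\iota_H(\ve{a})=\alpha\ve{a}+\beta\ve{d}$ with $\alpha,\beta\in\bF$, comparing this with $\d\iota_H(\ve{a}) = \alpha(\scV_1\ve{c}+\scU_2\ve{b})+\beta(\scU_1\ve{b}+\scV_2\ve{c})$ forces $\alpha=0$ and $\beta=1$, so $\iota_H(\ve{a})=\ve{d}$. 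A symmetric computation gives $\iota_H(\ve{d})=\ve{a}$.

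The main obstacle is to realize these algebraic formulas by an actual geometric representative of $\eta\circ\Psi_{\cH\to\bar{\cH}}$, rather than by a merely homotopic map. I would handle this by choosing a symmetric Heegaard diagram for $\cH^+$ (for example, a genus-$1$ diagram with four basepoints admitting an orientation-reversing involution $\tau$ that exchanges $\as\leftrightarrow\bs$ and $w_i\leftrightarrow z_i$), so that $\tau$ implements the conjugation symmetry and realizes the formulas above on the four intersection points directly. As a consistency check, $\iota_H^2=\id$ holds on the nose, in agreement with the vanishing of the basepoint actions $\Phi_i\Psi_i$ on $\cH^+$.
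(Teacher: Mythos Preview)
Your argument is correct and follows essentially the same strategy as the paper: pin down $\iota_H$ using the algebraic constraints it must satisfy. The paper phrases the constraints as Alexander-grading reversal together with $\iota_H^2\simeq\id$ and skew-equivariance, whereas you use the Maslov bigrading (skew-grading) and the chain map condition directly; these are equivalent bookkeeping of the same information, and your version is if anything cleaner.

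One remark: your final paragraph about the ``main obstacle'' and the symmetric Heegaard diagram is unnecessary. The constraints you already established determine $\iota_H$ \emph{on the nose}, not merely up to homotopy. Indeed, a skew-graded skew-equivariant homotopy $h$ would have to send each generator to an element of strictly higher $(\gr_w,\gr_z)$-bigrading, and since all monomial shifts are nonpositive there are no such elements; hence any two skew-equivariant, skew-graded chain maps on $\cH^+$ that are homotopic are in fact equal. Thus whatever geometric representative of $\eta\circ\Psi_{\cH\to\bar{\cH}}$ one transports to the algebraic model $\cH^+$, it must coincide with your formula, and no particular choice of diagram is required.
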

\begin{proof}
In general, if $L$ is a 2-component link, then $\iota_L^2\simeq (\id+\Phi_1\Psi_1)(\id+\Phi_2\Psi_2)$. This is because, just as in the case of knots \cite{HMInvolutive}, the map $\iota_L^2$ will be chain homotopic to the diffeomorphism map for performing a Dehn twist on each link component. Using \cite{ZemQuasi}, this diffeomorphism map may be computed to be $(\id+\Phi_1\Psi_1)(\id+\Phi_2\Psi_2)$ (see also \cite{SarkarMovingBasepoints}). In the case of the positive Hopf link, we have that $\Phi_i\Psi_i=0$, so $\iota_H^2\simeq \id$.  Additionally, $A(\iota_H(x))=-A(x)$, and $\iota_H$ is skew-equivariant. The only map which has these properties is the formula in the statement.
\end{proof}

Using the tensor product formula for the involution from \cite{ZemConnectedSums}*{Theorem~1.1}, we obtain
\[
\iota_L=(\id \otimes \id+\Phi_{K,1} \otimes \Psi_{H,1})(\iota_K \otimes  \iota_H)
\]
 Here, $\Phi_{K,1}$ is the $\Phi$ map for $\scU_1$ on $K$, while $\Psi_{H,1}$ denotes the $\Psi$ map for $\scV_1$ on the Hopf link complex.
 (Note that \cite{ZemConnectedSums}*{Theorem~1.1} is stated for connected sums of knots, though the same statement and proof hold for connected sums of links).
We obtain the model of 
\[
\iota_L\colon \cA_{s}(L)\to \cA_{-s-1}(L)
\]
 shown in Figure~\ref{fig:large-surgery-expanded}.
\begin{figure}[h]
\[
\begin{tikzcd}[labels=description,row sep=1cm, column sep=1cm] 
\cA_s(K) 
	\ar[d,dashed, "\scV_1"]
	\ar[r,dashed, "\scU_2"]
	\ar[dddr,"\iota_K", bend right=105, looseness=2.5]
& 
\cA_s(K) 	\ar[dddl,"\iota_K", bend left=105, looseness=2.5]\\
\cA_{s+1}(K)
	\ar[dr, "\iota_K"]
&
\cA_{s+1}(K)
	\ar[u, dashed,"\scU_1"]
	\ar[l, "\scV_2",dashed]
	\ar[dl,"\iota_K"]
	\ar[ddl, bend left=95, looseness=1.8, "\Phi_{K,1}\circ\iota_K", pos=.3]
\\
\cA_{-s-1}(K)
	\ar[d,dashed, "\scV_1"]
	\ar[r, dashed,"\scU_2"]
	&
\cA_{-s-1}(K)\\
\cA_{-s}(K)&
\cA_{-s}(K)
	\ar[u, dashed,"\scU_1"]
	\ar[l,dashed, "\scV_2"]
\end{tikzcd}
\]
\caption{The involution $\iota_L\colon \cA_{s}(L)\to \cA_{-s-1}(L)$. Dashed arrows are internal differentials. Also, we write $\cA_s(K)$ for $\cA_s(K)[\scU_2,\scV_2]$.}
\label{fig:large-surgery-expanded}
\end{figure}

\subsection{Transferring the involution to the small model}
\label{sec:Transferring-to-small}

We now describe a preliminary version of the small model of the involutive dual knot surgery formula. In this section, the maps are described in terms of the auxiliary inclusion, projection and homotopy maps described in Section~\ref{sec:small-model-mapping-cone}.  In the subsequent Section~\ref{sec:formulas}, we will give explicit formulas for all of the maps which appear on the level of generators.

\begin{thm}\label{thm:surgery-hypercube}
Suppose that $K$ is a knot in $S^3$. Then there is a homotopy equivalence of 2-dimensional hypercubes
\begin{equation}\label{eq:iotacomplex}
\left(\begin{tikzcd}[row sep=1.5cm, labels=description] \ve{\cCFK}(S_n^3(K),\mu) \ar[d, "\iota_\mu"]
\\ 
 \ve{\cCFK}(S_n^3(K),\mu)
\end{tikzcd}
\right)\to
 \left(
 \begin{tikzcd}[labels=description, row sep=1.5cm, column sep=5cm]
\bA^{\mu}(K)
	\ar[r, "v^\mu+\frF_n^\mu\tilde{v}^\mu"]
	\ar[dr,dashed, "\frF_n^\mu \tilde{H}_{\Omega} \iota_K^\mu+H_{\Omega}\iota_K^\mu+H^{\{*\}}\tilde{v}^\mu"]
	\ar[d, "\iota_{K}^{\mu}+\Omega^\mu \iota_K^\mu"] 
&\bB^{\mu}(K)
	\ar[d,"\frF_n^{\mu} (\iota_K^{\mu}+\Omega^\mu\iota_K^\mu)"]
\\
\bA^{\mu}(K)
	\ar[r, "v^{\mu}+\frF_n^{\mu}\tilde{v}^{\mu}"]
&
\bB^{\mu}(K)
\end{tikzcd}\right).
\end{equation}
Here, we view the left hypercube as being a 2-dimensional hypercube with vanishing groups at $\veps=(1,0),(1,1)$.
All arrows in the equivalence are either $\bF[\scU_2,\scV_2]$-equivariant or skew-equivariant. Furthermore, we have the following:
\begin{enumerate} 
\item $\iota_K^\mu$ is the $\mu$-ification of $\iota_K\otimes \iota_H$ (i.e. $\iota_K^\mu=\Pi_{\cA}\circ (\iota_K\otimes \iota_H)\circ  I_{\cA}$ and similarly on $\cB^\mu_s$).
\item $\Omega^\mu$ is the $\mu$-ification of $\Phi_{K,1}\otimes \Psi_{H,1}$.
\item $\tilde{H}_{\Omega}$ is the canonical null-homotopy of $\Omega^\mu \tilde{v}^\mu+\tilde{v}^\mu \Omega^\mu$.
\item $H_{\Omega}$ is the canonical null-homotopy of $\Omega^\mu v^\mu+v^\mu \Omega^\mu$.
\end{enumerate}
\end{thm}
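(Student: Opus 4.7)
The plan is to start from the $3$-dimensional hypercube of chain complexes constructed in Theorem~\ref{thm:involutive-mapping-cone-big}, which already relates $(\ve{\cCFK}(S_n^3(K),\mu),\iota_\mu)$ to the expanded mapping cone equipped with the link involution $\iota_L$, and then apply the homological perturbation lemma for hypercubes (Lemma~\ref{lem:homological-perturbation-cubes}) along the $\bA\to\bA^\mu$ and $\bB\to\bB^\mu$ directions, using the projection, inclusion, and homotopy data $(\Pi_\cA,I_\cA,H_\cA)$ and $(\Pi_\cB,I_\cB,H_\cB)$ already fixed in Section~\ref{sec:small-model-mapping-cone}. The result is a $3$-dimensional hypercube whose $\iota_\mu$-face is unchanged and whose opposite face is the $2$-dimensional hypercube displayed on the right-hand side of \eqref{eq:iotacomplex}, and it remains only to identify each of its structure maps with the formulas in the statement.

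By the tensor product formula for the link involution, $\iota_L=(\id\otimes\id+\Phi_{K,1}\otimes\Psi_{H,1})(\iota_K\otimes\iota_H)$. Using the skew-equivariant analog of the $\mu$-ification construction (cf. the discussion after \eqref{eq:mu-ified-morphism-basic}) and functoriality in the sense of Proposition~\ref{prop:properties-F-X}\eqref{FX-3}, the vertical arrow obtained by perturbing $\iota_L$ is $(\id+\Omega^\mu)\iota_K^\mu=\iota_K^\mu+\Omega^\mu\iota_K^\mu$ on $\bA^\mu(K)$; on the $\bB^\mu(K)$ side it is further precomposed with the $\tilde{\bB}^\mu\to\bB^\mu$ identification, producing $\frF_n^\mu(\iota_K^\mu+\Omega^\mu\iota_K^\mu)$. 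The horizontal arrows $v^\mu+\frF_n^\mu\tilde v^\mu$ are unchanged from Section~\ref{sec:small-model-mapping-cone}. The diagonal of the perturbed hypercube collects three types of contribution: the canonical homotopy attached to $\iota_L$ (as in \eqref{eq:H-can-tilde-H-can-def}), compression corrections along length-two chains through $H_\cA$ and $H_\cB$, and the perturbation $\Pi_\cB\circ H_n\tilde v\circ I_\cA$ of the diagonal from Theorem~\ref{thm:involutive-mapping-cone-big}. The decisive observation is that $\iota_K\otimes\iota_H$ commutes on the nose with the canonical inclusions $v_L$ and $\tilde v_L$, so the only noncommuting factor of $\iota_L$ is $\Phi_{K,1}\otimes\Psi_{H,1}$. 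The canonical null-homotopies of $\Omega^\mu v^\mu+v^\mu\Omega^\mu$ and of $\Omega^\mu\tilde v^\mu+\tilde v^\mu\Omega^\mu$ are exactly $H_\Omega$ and $\tilde H_\Omega$; they appear in the diagonal composed on the right with $\iota_K^\mu$ and, on the $\tilde v$-branch, on the left with $\frF_n^\mu$. Every residual contribution is an $\bF[\scU_2,\scV_2]$-equivariant chain map $\bA^\mu\to\bB^\mu$ of bigrading $(1,1)$ factoring through $\tilde v^\mu$, and is absorbed into $H^{\{*\}}\tilde v^\mu$.

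The main obstacle will be verifying that the residual diagonal contributions really do assemble into a single map of the stated form. Existence of such an $H^{\{*\}}$ follows exactly as in the proof of Proposition~\ref{prop:properties-F-X}\eqref{FX-1}, using that $\bB^\mu(K)\simeq\bF[\scU_2,\scV_2]$ as an $\bF[\scU_2,\scV_2]$-chain complex (Lemma~\ref{lem:F-X-prep-1}), while independence of the particular choice up to hypercube homotopy is Proposition~\ref{prop:properties-F-X}\eqref{FX-2}. A secondary source of bookkeeping is that $\iota_L$ is only skew-equivariant over $\bF[\scU_1,\scV_1,\scU_2,\scV_2]$, so the entire machinery of Section~\ref{sec:algebraic-knot-like} must be run in the skew-equivariant setting with $\scU_i\leftrightarrow\scV_i$ swapped at the appropriate places; this is parallel to the $\bF[\scU,\scV]$-equivariant discussion and, as in \cite{HHSZ}, does not require any genuinely new input beyond careful tracking of which maps are equivariant and which are skew-equivariant.
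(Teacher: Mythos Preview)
Your overall strategy matches the paper's: start from the expanded hypercube of Theorem~\ref{thm:involutive-mapping-cone-big} and apply the homological perturbation lemma for hypercubes to pass to the small model. The identifications of the vertical maps and the role of $H^{\{*\}}$ are also on target.

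There is, however, a genuine gap in your justification of the diagonal. You write that ``$\iota_K\otimes\iota_H$ commutes on the nose with the canonical inclusions $v_L$ and $\tilde v_L$, so the only noncommuting factor of $\iota_L$ is $\Phi_{K,1}\otimes\Psi_{H,1}$,'' and conclude that the diagonal reduces to $H_\Omega$, $\tilde H_\Omega$, and $H^{\{*\}}\tilde v^\mu$. But on-the-nose commutation at the expanded level does \emph{not} imply that the perturbation-produced length-$2$ map vanishes: the map $\Phi_{K,1}\otimes\Psi_{H,1}$ \emph{also} commutes on the nose with $v_L$ and $\tilde v_L$ (it is $\bF[\scU_1,\scV_1]$-equivariant), yet its canonical perturbation homotopies $H_\Omega$ and $\tilde H_\Omega$ are nonzero in general (see Lemma~\ref{lem:HOmega}). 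So your ``decisive observation'' does not distinguish the two cases.

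What is actually needed is the even/odd argument the paper carries out. One decomposes the expanded hypercube into the hyperboxes of Figure~\ref{fig:build-hyperboxes-quickly-2}, applies Lemma~\ref{lem:homological-perturbation-cubes} to each subcube, and then checks directly that the subcubes involving $\iota_K\otimes\iota_H$ together with $v$ or $\tilde v$ acquire no length-$2$ map after perturbation. The point is that $H_\cA$, $H_\cB$, $H_{\tilde\cB}$ have image in the odd summands of $M_s$, the map $\iota_K\otimes\iota_H$ preserves the even/odd decomposition, and $\Pi_\cB$, $\Pi_{\tilde\cB}$ vanish on odd summands; hence compositions such as $\Pi_{\tilde\cB}(\iota_K\otimes\iota_H)H_\cB$ and $\Pi_\cB v H_\cA$ are identically zero. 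This is why the $\iota_K^\mu$-block contributes nothing extra to the diagonal, whereas the $\Phi_{K,1}\otimes\Psi_{H,1}$-block (which does not preserve the even/odd splitting, since $\Psi_{H,1}$ maps an odd summand to an even one) contributes $H_\Omega$ and $\tilde H_\Omega$. The same even/odd reasoning is also what guarantees that the residual diagonal terms factor through $\tilde v^\mu$ rather than through $v^\mu$; your proposal asserts this factoring but does not supply a mechanism for it.
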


\noindent The chain complex $\bX^\mu_n(K)$ together with the involution shown on the right-hand side of (\ref{eq:iotacomplex}) is the $\iota_K$-complex $\bXI^{\mu}_n(K)$.

\begin{rem} 
\begin{enumerate}
\item 
The maps $H_\Omega$ and $\tilde{H}_{\Omega}$ are \emph{canonical} in the sense that they are uniquely determined by a choice of basis for the chain complex $\cCFK(S^3,K)$ via nested applications of the homological perturbation lemma (one for $A_\infty$-modules and one for hypercubes). Indeed, we have (reasonably) explicit formulas:
\[
\begin{split}
H_\Omega&=\Pi_{\tilde{\cB}} (\Phi_{K,1} \otimes  \Psi_{H,1}) H_{\cB} v  I_{\cA}+\Pi_{\tilde{\cB}} \tilde{v} H_{\cA} (\Phi_{K,1} \otimes  \Psi_{H,1})  I_{\cA}\\
\tilde{H}_\Omega&=\Pi_{\cB} (\Phi_{K,1} \otimes \Psi_{H,1}) H_{\tilde{\cB}} \tilde{v}  I_{\cA}+\Pi_{\cB} v H_{\cA} (\Phi_{K,1} \otimes \Psi_{H,1})  I_{\cA}.
\end{split}
\]
We will shortly give much more explicit formulas for $\tilde{H}_{\Omega}$ and $H_{\Omega}$.
\item $\frF_n^\mu \tilde{H}_\Omega\iota_K^\mu$ sends $\cA_s^\mu(K)$ to $\cB_{-s}^\mu(K)$.
\item $H_\Omega \iota_K^\mu$ sends $\cA_s^\mu(K)$ to $\cB_{-s-1}^\mu(K)$. 
\item $\iota_K^\mu$ sends $\cA_s^\mu(K)$ to $\cA_{-s-1}^\mu(K)$, and $\cB_s^{\mu}(K)$ to $\tilde{\cB}_{-s-1}^\mu(K)$. The map $\frF_n^\mu$ sends $\tilde{\cB}_{s}^\mu(K)$ to $\cB_{s+n}^\mu(K)$.
\item We will shortly see that that $\tilde{v}^\mu \iota_K^\mu+\iota_K^\mu v^\mu$ and $v^\mu \iota_K^\mu+\iota_K^\mu \tilde{v}^\mu$ both vanish.
\item The map $H^{\{*\}}$ is a null-homotopy of 
\[
\frF_n^\mu \iota_K^\mu \frF_n^\mu+\iota_K^\mu+\frF_n^\mu \Omega^\mu \iota_K^\mu \frF_n^\mu+\Omega^\mu \iota_K^\mu,
\]
and sends $\tilde{\cB}_s^\mu$ to $\cB^\mu_{-s-1}$.
\end{enumerate}
\end{rem}

\begin{proof}[Proof of Theorem~\ref{thm:surgery-hypercube}] The proof follows from the homological perturbation lemma for hypercubes (Lemma~\ref{lem:homological-perturbation-cubes}), after a few preliminary steps. As a first step, we observe that any two hypercubes constructed as in the statement of Theorem~\ref{thm:involutive-mapping-cone-big} (i.e. for different choices of $\frF_n$ and $H_n$) are homotopy equivalent. This follows from the same techniques as in Section~\ref{sec:algebraic-knot-like}. See \cite{HHSZ}*{Section~3.5} for a very similar construction.

As a consequence, we may view the expanded model of the involutive surgery hypercube as being obtained by combining four hypercubes of the following form together:
\begin{equation}
\begin{array}{cc}
\begin{tikzcd}[labels=description, row sep=1.5cm, column sep=2.3cm]
\bA(L)
	\ar[r, "v"]
	\ar[d, "\iota_K \otimes  \iota_H"]
&
\bB(L)
	\ar[d, "\frF_n(\iota_K \otimes  \iota_H)"]
\\
\bA(L)
	\ar[r, "\frF_n \tilde{v}"]
&
\bB(L)
\end{tikzcd}
&
\begin{tikzcd}[labels=description, row sep=1.5cm, column sep=2.3cm]
\bA(L)
	\ar[r, "\frF_n\tilde{v}"]
	\ar[d, "\iota_K \otimes \iota_H"]
	\ar[dr,dashed]
&
\bB(L)
	\ar[d, "\frF_n (\iota_K \otimes  \iota_H)"]
\\
\bA(L)
	\ar[r, "v"]
&
\bB(L)
\end{tikzcd}
\\
\begin{tikzcd}[labels=description, row sep=1.5cm, column sep=2.3cm]
\bA(L)
	\ar[r, "v"]
	\ar[d, "(\Phi_{K,1} \otimes  \Psi_{H,1})(\iota_K \otimes  \iota_H)"]
&
\bB(L)
	\ar[d, "\frF_n(\Phi_{K,1} \otimes \Psi_{H,1})(\iota_K \otimes \iota_H)"]
\\
\bA(L)
	\ar[r, "\frF_n \tilde{v}"]
&
\bB(L)
\end{tikzcd}
&
\begin{tikzcd}[labels=description, row sep=1.5cm, column sep=2.3cm]
\bA(L)
	\ar[r, "\frF_n \tilde{v}"]
	\ar[d, "(\Phi_{K,1} \otimes \Psi_{H,1})(\iota_K \otimes \iota_H)"]
	\ar[dr,dashed]
&
\bB(L)
	\ar[d, "\frF_n(\Phi_{K,1} \otimes \Psi_{H,1})(\iota_K \otimes \iota_H)"]
\\
\bA(L)
	\ar[r, "v"]
&
\bB(L)
\end{tikzcd}
\end{array}
\label{eq:unglue-hypercubes-1}
\end{equation}

The hypercubes in~\eqref{eq:unglue-hypercubes-1} may be further decompressed, up to homotopy, as in Figure~\ref{fig:build-hyperboxes-quickly-2}. Therein, the map $\frG\colon \bB(L)\to \tilde \bB(L)$ denotes any $\bF[U,\scU_2,\scV_2]$-equivariant homotopy inverse to $\frF_n$.

 We then apply the homological perturbation lemma of hypercubes to each hypercube in Figure~\ref{fig:build-hyperboxes-quickly-2}. We compress the resulting hyperboxes (note that compressing hyperboxes in general requires a choice of an ordering of the axis directions; we compress vertically first and then horizontally). Then we form a new hypercube by combining the maps from these hypercubes, as follows. The horizontal maps are $v+\frF_n \tilde{v}$. The vertical map from $\bA(L)$ to $\bA(L)$ is $(1+\Phi_{K,1} \otimes \Psi_{H,1})(\iota_K \otimes \iota_H)$. The vertical map from $\bB(L)$ to $\bB(L)$ is $\frF_n (1+\Phi_{K,1} \otimes \Psi_{H,1})(\iota_K \otimes \iota_H)$. The diagonal map is the sum of the diagonal maps from equation~\eqref{eq:unglue-hypercubes-1}.

This gives us a model of the surgery hypercube which is homotopy equivalent to the expanded model, and coincides with the model in the statement, except that there are some additional summands appearing in the diagonal map. To show that the terms in the statement are the only terms which make non-vanishing contribution, we make the following observations:
\begin{enumerate}
\item\label{eq:trivial-1} If we apply the homological perturbation lemma of hypercubes to any sub-hypercube in Figure~\ref{fig:build-hyperboxes-quickly-2}  which has the properties that there are length 1 maps which are the identity, and also there is no length 2 map, then the resulting hypercube also has no length 2 map.
\item\label{eq:trivial-2} If we apply the homological perturbation lemma of hypercubes to either of the following two hypercubes, we obtain a hypercube with no length 2 map:
\[
\begin{tikzcd}[labels=description, row sep=1.5cm, column sep=1.5cm]
\bA(L)
	\ar[r, "v"]
	\ar[d, "\iota_K \otimes  \iota_H"]
&
\bB(L)
	\ar[d, "\iota_K \otimes  \iota_H"]
\\
\bA(L)
	\ar[r, "\tilde{v}"]
&
\tilde{\bB}(L)
\end{tikzcd}\qquad \text{or} \qquad \begin{tikzcd}[labels=description, row sep=1.5cm, column sep=1.5cm]
\bA(L)
	\ar[r, "\tilde{v}"]
	\ar[d, "\iota_K \otimes  \iota_H"]
&
\tilde{\bB}(L)
	\ar[d, "\iota_K \otimes  \iota_H"]
\\
\bA(L)
	\ar[r, "v"]
&
\bB(L)
\end{tikzcd}
\]
\end{enumerate}
Claim~\eqref{eq:trivial-1} above follows from the fact that $H_{\cA}  I_{\cA}=0$ and $\Pi_{\cA}H_{\cA}=0$, as well as the analogous claims for the $\cB$ and $\tilde{\cB}$ versions of the maps. For example, the homotopy produced from the homological perturbation lemma for the cube with $\tilde{v}$ on opposite faces, and $\id$ on opposite faces reads
\[
\Pi_{\tilde{\cB}} H_{\tilde{\cB}} \tilde{v}  I_{\cA}+\Pi_{\tilde{\cB}} \tilde{v} H_{\cA}  I_{\cA},
\]
which is zero for the above reasons. Additionally, there is also a cube which has two $\id$ terms, and two $\frF_n$ terms. The version of this sub-cube obtained by applying the homological perturbation lemma has zero length 2 maps because the two homotopy terms are both zero, by the same reasoning.

Next, we consider claim~\eqref{eq:trivial-2}. For this claim, we observe that  $\Pi_{\cB} v H_{\cA}$ is zero. Also $\Pi_{\tilde{\cB}} \tilde{v} H_{\cA}$ is zero. Slightly more subtlety, both maps $\Pi_{\cB}(\iota_K \otimes \iota_H) H_{\tilde{\cB}}$ and $\Pi_{\tilde{\cB}}(\iota_K \otimes \iota_H) H_{\cB}$ are zero. This is seen as follows. We recall the module $M_s$ from Equation~\eqref{eq:module-M-s}. We refer to the two summands of $\cA_s(L)$ corresponding to the codomain of $M_s$ (viewed as a direct sum of two mapping cones) as the \emph{even} summands. We refer to the summands corresponding to the domain subspace of $M_s$ as the \emph{odd} summands. The maps $H_{\tilde{\cB}}$ and $H_{\cB}$ both have image in the odd summands. The map $\iota_K \otimes \iota_H$ preserves the even and odd summands. Finally $\Pi_{\cB}$ and $\Pi_{\tilde{\cB}}$ vanish on the odd summands. Hence, $\Pi_{\cB}(\iota_K \otimes \iota_H) H_{\tilde{\cB}}$ and $\Pi_{\tilde{\cB}}(\iota_K \otimes \iota_H) H_{\cB}$ are both zero, completing the proof.
\end{proof}

\begin{figure}[ht!]
\[
\begin{tikzcd}[labels=description, column sep=1.6cm, row sep=1.6cm]
 \bA(L)
	\ar[d, "\iota_K \otimes  \iota_H"]
	\ar[r, "v"] 
&
\bB(L)
	\ar[r, "\id"]
	\ar[d,"\iota_K \otimes  \iota_H"]
&
\bB(L)
	\ar[d, "\iota_K \otimes  \iota_H"]
\\
\bA(L)
	\ar[r, "\tilde{v}"]
	\ar[d,"\id"]
&
\tilde{\bB}(L)
	\ar[r, "\id"]
	\ar[d, "\id"]
&
\tilde{\bB}(L)
	\ar[d,"\frF_n"]
\\
\bA(L)
	\ar[r, "\tilde{v}"]
&
\tilde{\bB}(L)
	\ar[r, "\frF_n"]
&
\bB(L)
\end{tikzcd}
\qquad 
\begin{tikzcd}[labels=description, column sep=1.6cm, row sep=1.6cm]
 \bA(L)
	\ar[d, "\iota_K \otimes  \iota_H"]
	\ar[r, "\tilde{v}"] 
&
\tilde{\bB}(L)
	\ar[r, "\frF_n"]
	\ar[d,"\iota_K \otimes \iota_H"]
	\ar[dr ,dashed] 
&
\bB(L)
	\ar[d, "\iota_K \otimes  \iota_H"]
\\
\bA(L)
	\ar[r, "v"]
	\ar[d,"\id"]
&
\bB(L)
	\ar[r, "\frG"]
	\ar[d, "\id"]
	\ar[dr,dashed]
&
\tilde{\bB}(L)
	\ar[d,"\frF_n"]
\\
\bA(L)
	\ar[r, "v"]
&
\bB(L)
	\ar[r, "\id"]
&
\bB(L)
\end{tikzcd}
\]
\[
\begin{tikzcd}[labels=description, column sep=1.6cm, row sep=1.6cm]
 \bA(L)
	\ar[d, "\iota_K \otimes  \iota_H"]
	\ar[r, "v"] 
&
\bB(L)
	\ar[r, "\id"]
	\ar[d,"\iota_K \otimes  \iota_H"]
&
\bB(L)
	\ar[d, "\iota_K \otimes  \iota_H"]
\\
\bA(L)
	\ar[r, "\tilde{v}"]
	\ar[d,"\Phi_{K,1} \otimes  \Psi_{H,1}"]
&
\tilde{\bB}(L)
	\ar[r, "\id"]
	\ar[d, "\Phi_{K,1} \otimes  \Psi_{H,1}"]
&
\tilde{\bB}(L)
	\ar[d,"\Phi_{K,1} \otimes  \Psi_{H,1}"]
\\
\bA(L)
	\ar[r, "\tilde{v}"]
	\ar[d,"\id"]
&
\tilde{\bB}(L)
	\ar[r, "\id"]
	\ar[d, "\id"]
&
\tilde{\bB}(L)
	\ar[d,"\frF_n"]
\\
\bA(L)
	\ar[r, "\tilde{v}"]
&
\tilde{\bB}(L)
	\ar[r, "\frF_n"]
&
\bB(L)
\end{tikzcd}
\qquad 
\begin{tikzcd}[labels=description, column sep=1.6cm, row sep=1.6cm]
 \bA(L)
	\ar[d, "\iota_K \otimes \iota_H"]
	\ar[r, "\tilde{v}"] 
&
\tilde{\bB}(L)
	\ar[r, "\frF_n"]
	\ar[d,"\iota_K \otimes  \iota_H"]
	\ar[dr ,dashed] 
&
\bB(L)
	\ar[d, "\iota_K \otimes  \iota_H"]
\\
\bA(L)
	\ar[r, "v"]
	\ar[d,"\Phi_{K,1} \otimes \Psi_{H,1}"]
&
\bB(L)
	\ar[r, "\frG"]
	\ar[d, "\Phi_{K,1} \otimes \Psi_{H,1}"]
	\ar[dr, dashed]
&
\tilde{\bB}(L)
	\ar[d,"\Phi_{K,1} \otimes \Psi_{H,1}"]
\\
\bA(L)
	\ar[r, "v"]
	\ar[d,"\id"]
&
\bB(L)
	\ar[r, "\frG"]
	\ar[d, "\id"]
	\ar[dr,dashed]
&
\tilde{\bB}(L)
	\ar[d,"\frF_n"]
\\
\bA(L)
	\ar[r, "v"]
&
\bB(L)
	\ar[r, "\id"]
&
\bB(L)
\end{tikzcd}
\]
\caption{Hyperboxes we use to build the expanded model of the involutive surgery hypercube. Applying homological perturbation of hypercubes to each subcube and then compressing gives the small model of the surgery hypercube.}
\label{fig:build-hyperboxes-quickly-2}
\end{figure}

\subsection{Well-definedness of the involutive dual knot mapping cone formula}

We now prove that Theorem~\ref{thm:surgery-hypercube} provides a uniquely-specified model of the dual knot complex when $K\subset S^3$. We observe firstly that the constructions from Sections~\ref{sec:construct-iota_L} and ~\ref{sec:Transferring-to-small} can be applied to abstract $\iota_K$ complexes of L-space type with a single tower. In particular, we can pick an algebraic flip map $\frF_n$ and a homotopy $H^{\{*\}}$ and transfer these to the small model $\bX^\mu_n(\scC)$ algebraically to obtain an algebraic involutive dual knot formula $\bXI_n^\mu(\scC)$, analogous to Equation~\eqref{eq:iotacomplex}. We now show that this construction is independent of the choices of $\frF_n$ and $H^{\{*\}}$:

\begin{prop}\label{prop:well-defined}
 Suppose that $\scC_1$ and $\scC_2$ are two $\iota_K$-complexes of $L$-space type with a single tower, and let $\XI_n^{\mu}(\scC_1)$ and $\XI_n^{\mu}(\scC_2)$ be two knot-like complexes with endomorphisms $\iota_{\bX,i}$ constructed by picking flip maps $\frF_{n,i}^\mu$ and homotopies $H^{\{*\}}_{n,i}$ for $\scC_1$ and $\scC_2$. Suppose that $F\colon \scC_1\to \scC_2$ is an $\iota_K$-local map. Then there is an induced map 
 \[
 \XI_n^{\mu}(F)\colon \bXI_n^{\mu}(\scC_1)\to \bXI_n^{\mu}(\scC_2),
 \]
 which commutes with $\iota_{\bX,i}$ up to $\bF[\scU,\scV]$ skew-equivariant chain homotopy. Furthermore,
 \begin{enumerate}
 \item The construction of $\XI_n^\mu(F)$ gives a map which is well-defined up to $\iota_K$-chain homotopy.
 \item The construction is functorial in the sense that $\XI_n^{\mu}(G\circ F)\simeq \XI_n^{\mu}(G)\circ \XI_n^\mu(F)$.
 \item $\XI_n^{\mu}(F)$ is a homotopy equivalence if $F$ is.
 \item $\XI_n^{\mu}(F)$ is local if and only if $F$ is.
 \end{enumerate}
\end{prop}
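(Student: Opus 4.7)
The plan is to adapt the proof of Proposition~\ref{prop:properties-F-X} to the involutive setting, constructing hyperboxes on the expanded model from Theorem~\ref{thm:involutive-mapping-cone-big} and then transferring them to the small model via the homological perturbation lemma for hypercubes (Lemma~\ref{lem:homological-perturbation-cubes}). Since $F$ is $\iota_K$-local, there is a skew-equivariant chain homotopy $J$ satisfying $F\iota_{K,1} + \iota_{K,2} F = [\partial, J]$. The underlying $\bF[\scU,\scV]$-equivariant chain map of $\XI_n^\mu(F)$ will be the map $F_{\bX}^\mu$ already constructed in Proposition~\ref{prop:properties-F-X}; the remaining task is to promote this to a morphism of $\iota_K$-complexes and verify the four listed properties.

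First, I would build a three-dimensional hypercube on the expanded models. Two of the three axes are the involution direction and the surgery direction (from $\bA(L)$ to $\bB(L)$); the two-dimensional faces in these directions are the expanded involutive surgery hypercubes from Theorem~\ref{thm:involutive-mapping-cone-big}, one for each $\scC_i$. The third axis is the $F$ direction, with edges $F\otimes\id_H$ at the appropriate vertices. The two-faces involving the $F$ and surgery directions are built exactly as in the proof of Proposition~\ref{prop:properties-F-X}. The two-faces involving the $F$ and involution directions are filled using $J\otimes\id_H$ together with an extra term accounting for the $\Omega = \Phi_{K,1}\otimes \Psi_{H,1}$ contribution, built using Lemma~\ref{lem:F-X-prep-1} applied to the fact that each $\bB(L)$ factor is homotopy equivalent over $\bF[\scU_2,\scV_2]$ to $\bF[\scU_2,\scV_2]$; the three-dimensional interior homotopy is filled for the same reason.

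Second, I would apply the homological perturbation lemma for hypercubes to each subcube of the expanded three-dimensional hypercube and compress to obtain a three-dimensional hypercube on the small models. The top and bottom two-dimensional faces are identified, up to homotopy, with the small involutive surgery hypercubes $\XI_n^\mu(\scC_i)$ of Theorem~\ref{thm:surgery-hypercube}, and the remaining data provides the map $\XI_n^\mu(F)$ together with the desired skew-equivariant $\iota_K$-homotopy. Properties (1) and (2) then follow by building appropriate four-dimensional hypercubes on the expanded models: for (1), one interpolates between two choices of input data ($J$, $H^{\{*\}}_{n,i}$, $\frF^\mu_{n,i}$); for (2), one extends the standard composition hypercube (as in the proof of Proposition~\ref{prop:properties-F-X}(3)) by an involution direction. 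In each case the higher-dimensional fillers come from Lemma~\ref{lem:F-X-prep-1}. Property (3) follows from (1) and (2) applied to a homotopy inverse, and property (4) reduces to Proposition~\ref{prop:properties-F-X}(5), since the underlying $\bF[\scU,\scV]$-equivariant chain map of $\XI_n^\mu(F)$ is $F_{\bX}^\mu$.

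The main obstacle is the combinatorial bookkeeping required for these four-dimensional hypercubes, in particular tracking which composites and commutators become zero after applying homological perturbation; this is analogous to the even/odd summand vanishing argument at the end of the proof of Theorem~\ref{thm:surgery-hypercube}, but with one additional axis. No conceptually new ingredients beyond those already present in Sections~\ref{sec:algebraic-knot-like} and~\ref{sec:involution} are required, so the difficulty is technical rather than structural.
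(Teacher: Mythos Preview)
Your approach is correct and follows the pattern of Proposition~\ref{prop:properties-F-X}, but the paper takes a somewhat different route. Rather than building three- and four-dimensional hypercubes with $F$ as an explicit axis, the paper encodes the morphism $(F,J)$ as the single complex $\scC'=\Cone(F\colon \scC_1\to \scC_2)$ equipped with the endomorphism $J'=\iota_1+\iota_2+J$, and then applies the surgery-hypercube construction of Theorem~\ref{thm:surgery-hypercube} directly to $\scC'$ (after extending the flip map by a homotopy between $F\circ\frF_{n,1}^\mu$ and $\frF_{n,2}^\mu\circ F$). This cone packaging is more compact: the higher-dimensional hypercubes you propose become the internal structure of a single two-dimensional hypercube for $\scC'$, and the well-definedness and functoriality claims become statements about cones of cones rather than explicit four-cubes.

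One point in your outline deserves more care. You invoke Lemma~\ref{lem:F-X-prep-1} to fill the length-$3$ interior, but that lemma is about endomorphisms of a complex homotopy equivalent to $\bF[\scU,\scV]$, whereas the length-$3$ obstruction is a map $\bA\to\bB$. The paper makes explicit the needed intermediate step: the obstruction factors through $v^\mu$ and $\tilde v^\mu$ (this is the content of Equation~\eqref{eq:algebraic-model-involution-is-valid} and its analogue for $\scC'$), so that one is reduced to null-homotoping a $(+1,+1)$-graded endomorphism of $\bB$, to which Lemma~\ref{lem:F-X-prep-1} then applies. Your direct hypercube approach needs exactly the same factoring argument; once you insert it, the two proofs are essentially reorganizations of one another.
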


\begin{proof} The proof is similar to the proofs of Proposition~\ref{prop:properties-F-X} and Theorem~\ref{thm:surgery-hypercube}.

As a first step, we claim that we may indeed always construct the model in Theorem~\ref{thm:surgery-hypercube} for any $\iota_K$-complex of L-space type with a single tower. To see this, we observe that if $(\scC,\iota_K)$ is an $\iota_K$-complex of L-space type with a single tower, then the maps for $\iota_K^\mu$, $\Omega^\mu$, $\tilde{H}_{\Omega}$ and $H_{\Omega}$ are all given by concrete formulas. Furthermore, the maps $I_{\cA}$, $H_{\cB}$, $\Pi_{\cB}$ (and so forth) used in the homological perturbation argument are also given by concrete formulas. By virtue of being of L-space type with a single tower, there always exists a choice of flip map $\frF_n^\mu$. It remains to verify that there exists a map $H^{\{*\}}\colon \tilde \cB_s^\mu\to \cB^\mu_{-s-1}$ so that the diagram is a hypercube of chain complexes. For this claim, it suffices to show (as claimed in Theorem~\ref{thm:surgery-hypercube}) that 
\begin{equation}
\begin{split}
&\frF_n^\mu(\id+\Omega^\mu) \frF_n^\mu(v^\mu+\frF^\mu_n\tilde v^\mu)+(v^\mu+\frF_n^\mu \tilde v^{\mu})(\id+\Omega^\mu)\iota_K^\mu\\
+&\left(\frF_n^\mu \iota_K^\mu \frF_n^\mu+\iota_K^\mu+\frF_n^\mu \Omega^\mu \iota_K^\mu \frF_n^\mu+\Omega^\mu \iota_K^\mu\right)\tilde v^\mu\\
=&[\d^\mu, \frF^\mu_n \tilde H_\Omega\iota_K^\mu+H_\Omega \iota_K^\mu].
\end{split}
\label{eq:algebraic-model-involution-is-valid}
\end{equation}
This is a straightforward computation from the proof of Theorem~\ref{thm:surgery-hypercube}, which we leave to the reader.

If $(F,h)\colon (\scC_1,\iota_1)\to (\scC_2,\iota_2)$ is a morphism of $\iota_K$-complexes of $L$-space type with a single tower, then we may consider the $\bF[\scU,\scV]$-complex $\scC':=\Cone(F\colon \scC_1\to \scC_2)$ equipped with the endomorphism $J'=\iota_1+\iota_2+h$. 

We may formally define a flip-map $\frF'$ on $\scC'$ to be $\frF_1$ on $\scC_1$, $\frF_2$ on $\scC_2$ and also a choice of homotopy between $F\circ \frF_1$ and $\frF_2\circ F$. Such a homotopy exists because $\scC_1$ and $\scC_2$ are of L-space type with a single tower. 

The same construction as for ordinary knot complexes gives maps $\Omega^\mu$, $H_\Omega$ and $\tilde H_{\Omega}$ on $\scC'$ which we can use to partially construct the surgery hypercube. This does not give a fully valid hypercube. Instead, we know that the hypercube relations are satisfied on faces of the surgery hypercube which do not increment the surgery direction of the cube (i.e. the one with $v^\mu$ and $\frF^\mu \tilde v^\mu$). More generally, for subcubes which do increment the surgery direction, the same algebraic argument which gives Equation~\eqref{eq:algebraic-model-involution-is-valid} shows that the sum of the terms in the hypercube relation factors through $\tilde v^\mu$ and $v^\mu$. Hence the standard filling procedure can be inductively used to construct additional chains of the hypercube (added in the surgery direction) which factor through $v^\mu$ and $\tilde v^\mu$ so that the hypercube relations are satisfied.

An easy extension of this argument shows that the map $\XI_n^\mu(F)$ is well-defined up to $\iota_K$-chain homotopy. The remaining claims are also straightforward to verify.
\end{proof}

Applying the above result to the case when $F=\id$, we obtain the following.

\begin{cor} Any two choices of auxiliary data $\frF_n^\mu$ and $H_n^{\{*\}}$ give homotopy equivalent $\iota_K$-complexes $\XI_n^\mu(\scC_1)$ and $\XI_n^{\mu}(\scC_2)$.
\end{cor}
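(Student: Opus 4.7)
The plan is to apply Proposition~\ref{prop:well-defined} with $\scC_1=\scC_2=:\scC$ and $F=\id_\scC$. First I will observe that $\id_\scC$ trivially qualifies as an $\iota_K$-local map: it is $\bF[\scU,\scV]$-equivariant, grading-preserving, intertwines $\iota_K$ with itself on the nose, and induces an isomorphism on $\scU^{-1}H_*(\scC/(\scV-1))$. Hence the hypotheses of Proposition~\ref{prop:well-defined} are met for any pair of choices of auxiliary data $(\frF_n^\mu,H^{\{*\}})$ and $(\frF_n^{\mu\prime},H^{\{*\}\prime})$ used to build two instances $\XI_n^\mu(\scC)$ and $\XI_n^\mu(\scC)'$ of the surgery complex.

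Next I will invoke the proposition directly. It produces a chain map
\[
\XI_n^\mu(\id_\scC)\colon \bXI_n^{\mu}(\scC)\longrightarrow \bXI_n^{\mu}(\scC)'
\]
which commutes with the two involutions $\iota_{\bX,1}$ and $\iota_{\bX,2}$ up to $\bF[\scU,\scV]$-skew-equivariant chain homotopy. Item (3) of the proposition asserts that this map is an honest (non-involutive) $\bF[\scU,\scV]$-equivariant homotopy equivalence whenever $F$ is a homotopy equivalence, which of course $\id_\scC$ is. Combining these two facts yields a homotopy equivalence of $\iota_K$-complexes, which is what the statement demands.

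To close the circle one might want to produce an explicit $\iota_K$-homotopy inverse. This is handled by functoriality (item (2) of the proposition): applying the construction to $\id_\scC$ with the roles of $(\frF_n^\mu,H^{\{*\}})$ and $(\frF_n^{\mu\prime},H^{\{*\}\prime})$ swapped yields a map in the reverse direction, and the identities $\XI_n^\mu(\id_\scC\circ \id_\scC)\simeq \XI_n^\mu(\id_\scC)\circ \XI_n^\mu(\id_\scC)$ together with the fact that $\XI_n^\mu(\id_\scC)$ on a single copy (same auxiliary data on both sides) is $\iota_K$-chain homotopic to the identity establish that the two maps are mutual $\iota_K$-homotopy inverses. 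The only subtle point, and the one worth flagging, is that Proposition~\ref{prop:well-defined} is stated with the various choices of auxiliary data built into the definition of $\XI_n^\mu(\scC_i)$; there is nothing to prove here beyond reading the statement with the specialization $F=\id$, but one should check that the construction of $\XI_n^\mu(F)$ in the proof of Proposition~\ref{prop:well-defined} truly allows the source and target to carry independently chosen $\frF_n^\mu$ and $H^{\{*\}}$. This is clear from the inductive filling argument there, since that argument never uses that the two copies of the auxiliary data agree.
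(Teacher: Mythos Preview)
Your proposal is correct and matches the paper's (implicit) approach: the corollary is stated without proof, as it is immediate from Proposition~\ref{prop:well-defined} applied with $F=\id$, using item~(3) to upgrade the resulting $\iota_K$-chain map to an $\iota_K$-homotopy equivalence. Your additional paragraph producing an explicit inverse via item~(2) is more than the paper bothers to write out but is a correct and clean way to make the equivalence two-sided.
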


\begin{rem} We expect that one can also show via purely algebraic methods that each $(\bXI_n^{\mu}(\scC_i),\iota_{\bX,i})$ is an $\iota_K$-complex by adapting the techniques of \cite{HHSZ}*{Section~3.6}, however we will not pursue this line of reasoning.
\end{rem}

We may now conclude the proof of Theorem~\ref{thm:main}.

\begin{proof}[Proof of Theorem~\ref{thm:main}] It follows from Section~\ref{sec:small-model-mapping-cone} that the chain complex $\bX_n^\mu(K)$ is chain homotopy equivalent to $\cCFK(S^3_n(K))$ and from Proposition~\ref{prop:properties-F-X} that this chain complex does not depend up to chain homotopy equivalence on the choices of flip maps and homotopies involved in its definition. Then by Theorem~\ref{thm:surgery-hypercube}, there is an $\iota_K$-homotopy equivalence between the $\iota_K$-complex $\XI_n^{\mu}(K)$ and $(\cCFK(S^3_n(K)),\iota_{\mu})$, and it follows from Proposition \ref{prop:well-defined} that the construction of $\XI_n^\mu(K)$ is independent of the choices of flip map and homotopies made in its construction up to $\iota_K$-homotopy equivalence. \end{proof}
\section{Computing the small model}
\label{sec:formulas}

In this section, we give explicit formulas for the maps appearing in Theorem~\ref{thm:surgery-hypercube}. We also describe several important properties, such as the Maslov and Alexander gradings on the small model.

\subsection{The map $\iota_K^\mu$}

We now give an explicit description of the map $\iota_K^\mu$.

\begin{lem}\label{lem:iotaKmu}On $\cA_s^\mu(K)$, $\cB_s^\mu(K)$ and $\tilde{\cB}_s^\mu(K)$, the map $\iota_K^\mu$ is equal to the original map $\iota_K$, but with powers of $\scU$ and $\scV$ changed. In more detail:
\begin{enumerate}
\item If $x=\scU_1^i \cdot \xs\in \cA_{s+1}(K)/\scV_1 \cA_s(K)$ and $\scU_1^m \scV_1^n\cdot \ys \in \cA_{-s-1}(K)$ is a summand of  $\iota_K(x)$, then 
\begin{enumerate}
\item If $m\le n$, then $\scU_2^m\scV_2^m \cdot (\scV_1^{n-m} \cdot \ys)\in (\cA_{-s-1}(K)/ \scU_1\cA_{-s}(K))[\scU_2, \scV_2]$ is a summand of $\iota_K^\mu(x)$.
\item If $m>n$, then $\scU_2^n \scV_2^{n+1} (\scU_1^{m-n-1} \cdot \ys)\in (\cA_{-s}(K)/ \scV_1 \cA_{-s-1}(K))[\scU_2, \scV_2]$ will be a summand of $\iota_K^\mu(x)$.
\end{enumerate}
\item If $x=\scV_1^i \cdot \xs\in \cA_s(K)/\scU_1\cA_{s+1}(K)$ and $\scU_1^m\scV_1^n \cdot \ys\in \cA_{-s}(K)$ is a summand of $\iota_K(x)$, \begin{enumerate}
\item If $m<n$, then $\iota_K^\mu(x)$ has a summand of $\scU_2^{m+1} \scV_2^{m} (\scV_1^{n-m-1}\cdot \ys)\in (\cA_{-s-1}(K)/ \scU_1 \cA_{-s}(K))[\scU_2, \scV_2]$.
\item If $m\ge n$, then $\iota_K^\mu(x)$ has a summand of $\scU_2^n \scV_2^n (\scU_1^{m-n} \cdot \ys)\in (\cA_{-s}(K)/\scV_1 \cA_{-s-1}(K))[\scU_2, \scV_2]$.
\end{enumerate}
\item If $x=\scV_1^i \cdot \xs\in \cB_s(K)/\scU_1 \cB_{s+1}(K)$ and $\scU_1^m \scV_1^n \cdot \ys$ is a summand of $\iota_K(x)$, then $\scU_2^m\scV_2^m\cdot (\scV_1^{n-m}\cdot \ys)$ is a summand of $\iota_K^\mu(x)$.

\item If $x=\scU_1^i \cdot \xs \in \widetilde{\cB}_{s+1}(K)/\scV_1\widetilde{\cB}_s$ and $\scU_1^m \scV_1^n \cdot \ys$ is a summand of $\iota_K(x)$, then $\scU_2^n\scV_2^n \cdot (\scU_1^{m-n} \cdot \ys)$ is a summand of $\iota_K^{\mu}(x)$.

\end{enumerate}
\end{lem}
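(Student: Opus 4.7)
The plan is to unwind the definition $\iota_K^\mu = \Pi_{\cA} \circ (\iota_K \otimes \iota_H) \circ I_{\cA}$ by direct computation, using the explicit splittings $s, s', \sigma, \sigma'$ of Section~\ref{subsec:homperturbAs}, the action of $\iota_H$ on the Hopf-link generators $\ve{a},\ve{b},\ve{c},\ve{d}$, and the box-tensor structure from Figure~\ref{fig:boxing-morphisms}. I would first decompose $\cA_{[s]}(L)$ into the four summands indexed by $\ve{a},\ve{b},\ve{c},\ve{d}$ and record the key structural observations: the ``even'' summands $\ve{b}$ and $\ve{c}$ are the codomains of the $\scU_1$- and $\scV_1$-arrows of $M_s$, and hence the ones on which $\Pi$ is nonzero, while $\ve{a}$ and $\ve{d}$ are the ``odd'' summands which $\Pi$ annihilates; the involution $\iota_H$ swaps $\ve{b}\leftrightarrow \ve{c}$ and $\ve{a}\leftrightarrow \ve{d}$, so $\iota_K\otimes \iota_H$ preserves the even/odd decomposition; and $\iota_H$ is skew-equivariant, exchanging $\scU_2$ and $\scV_2$.

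For case~(1), the inclusion $I_{\cA}([x])$ of $[x]=\scU_1^i\xs\in (\cA_{s+1}/\scV_1\cA_s)(K)$ has a length-one piece producing the ``even'' lift $s(x)=x$ in the $\ve{c}$-position together with the ``odd'' correction $\sigma\partial s(x)$ in the $\ve{a}$-position (per Lemma~\ref{lem:snake}), plus higher-length pieces coming from the $\cK$-box-tensor that insert alternating $\theta_\scU,\theta_\scV$-actions paired with compensating powers of $\scU_2,\scV_2$. Applying $\iota_K\otimes \iota_H$ sends these to the corresponding $\ve{b}$- and $\ve{d}$-positions of $\cA_{[-s-1]}(L)$ with $\scU_2,\scV_2$ interchanged, and $\Pi_{\cA}$ then collapses onto $Z_{-s-1}$: the ``direct'' even-to-even path $\ve{c}\to \ve{b}$ contributes, for each summand $\scU_1^m\scV_1^n\cdot \ys$ of $\iota_K(x)$ with $m\leq n$, a term $\scU_2^m\scV_2^m\cdot (\scV_1^{n-m}\cdot \ys)$ in the $\ve{b}$-summand $(\cA_{-s-1}/\scU_1\cA_{-s})(K)$, while the ``detour'' path through $\ve{a}\to \ve{d}$ followed by re-injection via $H$ and further $\theta$-actions contributes, when $m>n$, the term $\scU_2^n\scV_2^{n+1}\cdot (\scU_1^{m-n-1}\cdot \ys)$ in the $\ve{c}$-summand. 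The extra factor of $\scV_2$ in the latter formula records the single $\sigma$-step taken along the detour. Case~(2) is handled by the same argument with the roles of the two summands of $Z_s$ interchanged.

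Cases~(3) and~(4) are conceptually simpler because the small models $\cB^\mu_s$ and $\tilde{\cB}^\mu_s$ each contain only a single summand, so no ``detour'' contribution arises and the stated formulas follow by tracking only the direct even-to-even path. The principal obstacle throughout is the combinatorial bookkeeping of the exponent shifts and of the $\theta_\scU,\theta_\scV$-alternation pattern, reconciled with the $\scU_2\leftrightarrow \scV_2$ skew-equivariance of $\iota_H$; however, these are precisely the same shifts that appeared in Lemma~\ref{lem:homological-perturbation-differential-A} for the induced differential on $Z_s$ and in Lemma~\ref{lem:v-maps} for the inclusion maps $v^\mu,\tilde{v}^\mu$, so once the explicit formulas for $I$, $\Pi$, and $H$ are in hand the computation becomes essentially algorithmic.
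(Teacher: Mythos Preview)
Your overall strategy---unwind $\iota_K^\mu=\Pi_{\cA}\circ(\iota_K\otimes\iota_H)\circ I_{\cA}$ using the even/odd decomposition---is the same as the paper's. You also correctly observe that $\iota_K\otimes\iota_H$ preserves the even/odd splitting and that $\Pi_{\cA}$ annihilates odd summands.

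However, your explanation of how the two subcases $(m\le n)$ and $(m>n)$ in part~(1) arise contains a genuine error. You attribute the $m>n$ term to a ``detour'' through the odd summands $\ve{a}\to\ve{d}$ followed by ``re-injection via $H$''. This cannot be right: once you have noted that $\Pi_{\cA}$ vanishes on the odd summands, the odd component of $I_{\cA}([x])$ (landing in $\ve{a}$, sent by $\iota_H$ to $\ve{d}$) contributes nothing at all---there is no $H_{\cA}$ in the composition $\Pi_{\cA}\circ(\iota_K\otimes\iota_H)\circ I_{\cA}$, and the homotopy $h$ internal to $\Pi_{\cA}$ vanishes on odd inputs. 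The paper's proof makes this point cleanly: the \emph{only} surviving term of $I_{\cA}([x])$ is the even lift $x\otimes\ve{c}$ itself (no higher $A_\infty$ terms land in even summands, since they all end with $h$). One then applies $\iota_K\otimes\iota_H$ to get $\iota_K(x)\otimes\ve{b}$, and \emph{all} of the variable-conversion---including the case split---happens inside $\Pi_{\cA}$, which cycles $\ve{b}\to\ve{d}\to\ve{c}\to\ve{a}\to\ve{b}$ trading powers of $\scU_1,\scV_1$ for $\scU_2,\scV_2$ until one of them is exhausted. Whether the cycle terminates in the $\ve{b}$-summand or the $\ve{c}$-summand of $Z_{-s-1}$ is exactly what distinguishes $m\le n$ from $m>n$.

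So the gap is not fatal---if you actually execute the computation you will get the right answer---but your narrative of the mechanism is misleading and would not constitute a correct proof as written.
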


\begin{proof}
We prove the first formula and leave the rest to the reader, since the computations are basically the same. Recall
\[
\iota_K^\mu=\Pi_{\cA}\circ (\iota_K\otimes \iota_H)\circ  I_{\cA}.
\]
Recall that we refer to mapping cone domain summands of the module $M_s$ from Equation~\eqref{eq:module-M-s} as \emph{odd} summands, and we refer to codomain summands of $M_s$ as \emph{even} summands. The map has an even term and an odd term.  The even term maps into $\cA_{s+1}(K)$ by $I_{\cA}$ via the identity map. The odd term makes no total contribution, since $(\iota_K\otimes \iota_H)$ preserves the even/odd decomposition, and $\Pi_{\cA}$ vanishes on odd terms. The even term of $\Pi_{\cA}$ involves no higher maps from homological perturbation, since the exterior algebra action of $\Lambda$ vanishes on the even summands. We apply $\iota_K\otimes \iota_H$ to obtain a $\scU_1^m\scV_1^n\cdot \ys$ in the even copy of $\cA_{-s-1}(K)$. We now apply $\Pi_{\cA}$, which in this case only amounts to transforming powers of $\scU_1$ and $\scV_1$ into powers of $\scU_2$ and $\scV_2$, following the recipe given by the homological perturbation lemma. The result follows.
\end{proof}

\subsection{The map $\Omega^\mu$}
\label{sec:algebra-Omega}

We now consider the map $\Omega^\mu$, which we recall is defined on $\bA$ as
\[
\Pi_{\cA}\circ (\Phi_{K,1}\otimes \Psi_{H,1})\circ I_{\cA}.
\]
By inspection, $\Psi_{H,1}$ is the $\bF[\scU_1,\scV_1,\scU_2,\scV_2]$-equivariant chain map on the Hopf link complex which sends $\ve{a}$ to $\ve{c}$.

It is helpful to manipulate $\Omega^\mu$ slightly. We claim
\begin{equation}
\begin{split} &\Pi_{\cA}\circ (\Phi_{K,1}\otimes \Psi_{H,1})\circ I_{\cA}\\
=&\Pi_{\cA}\circ (\Phi_{K,1}\otimes \id) \circ (\id\otimes \Psi_{H,1})\circ  I_{\cA}\\
=&\Pi_{\cA}\circ (\Phi_{K,1}\otimes \id)\circ  I_{\cA} \circ \Pi_{\cA}\circ  (\id\otimes \Psi_{H,1})\circ  I_{\cA}.
\end{split}
\label{eq:secretly-split}
\end{equation}
Here, we are viewing $(\id \otimes \Psi_{H,1})$ as the ``identity'' map from $\cA_s(K)\otimes \ve{a}\subset \cA_{s}(L)$ to $\cA_s(K)\otimes \ve{c}\subset\cA_{s-1}(L)$. Here, $\Phi_{K,1}\otimes \id$ is the map from $\cA_{s-1}(L)$ to $\cA_{s}(L)$ which is $\Phi_{K,1}$ on each summand.

The first equality in~\eqref{eq:secretly-split} obvious. The second equality is less obvious. There is a canonical homotopy between the two lines, which is given by
\[
\Pi_{\cA}\circ (\Phi_{K,1}\otimes \id)\circ H_{\cA} \circ (\id\otimes \Psi_{H,1})\circ  I_{\cA}.
\]
However this homotopy vanishes because $\Pi_{\cA}\circ (\Phi_{K,1}\otimes \id) \circ H_{\cA}=0$, since $H_{\cA}$ has image only in odd summands, $\Phi_{K,1}\otimes \id$ preserves even/odd summands, and $\Pi_{\cA}$ vanishes on odd summands.  The same holds on the $\cB$ versions.
In particular, we obtain 
\begin{equation}\label{eqn:Omegamu}
\Omega^\mu=\Phi_K^\mu\circ \Psi_{H,1}^\mu,
\end{equation}
where 
\[
\Phi_K^\mu=\Pi_{\cA}\circ (\Phi_{K,1}\otimes \id) \circ  I_{\cA}\quad \text{and} \quad \Psi_{H,1}^\mu=\Pi_{\cA}\circ (\id \otimes \Psi_{H,1})\circ  I_{\cA}.
\]

In the following lemma, we compute the map
\[
\Phi_K^{\mu}\colon \cA_{s-1}^\mu(K)\to \cA_{s}^{\mu}(K),
\]
as well as its analogs on $\cB_{s-1}^{\mu}$ and $\tilde{\cB}_{s-1}^{\mu}$.

\begin{lem}\label{lem:PhiKmu}  On $\cA_{s-1}^\mu(K)$, $\cB_{s-1}^\mu(K)$ and $\tilde{\cB}_{s-1}^\mu(K)$, the map $\Phi_K^\mu$ is gotten by applying the $\Phi_{K,1}$-map of $\cCFK(K)$, and then changing powers of the $\scU$ and $\scV$ variables.  In more detail:
\begin{enumerate}
\item Suppose $x=\scU_1^i \cdot \xs\in \cA_{s}(K)/\scV_1 \cA_{s-1}(K)$ and suppose that $\scU_1^m \scV_1^n \cdot \ys$ is a summand of $\d(x)$, where $\d$ is the differential of $\cCFK(K)$. 
\begin{enumerate} 
\item If $n\le m-1$, then $(m-i) \scU_2^n \scV_2^n\cdot (\scU_1^{m-n-1} \cdot \ys)\in \cA_{s+1}(K)/ \scV_1 \cA_s(K)$ is a summand of $\Phi_K^\mu(x)$.
\item If $n> m-1$, then $(m-i) \scU_2^{m-1} \scV_2^{m-1} (\scV_1^{n-m+1} \cdot \ys)\in \cA_{s}(K)/ \scU_1 \cA_{s+1}(K)$ is a summand of $\Phi_K^\mu(x)$.
\end{enumerate}
\item Suppose $x=\scV_1^i \cdot \xs\in \cA_{s-1}(K)/\scU_1 \cA_{s}(K)$ and suppose that $\scU_1^m \scV_1^n \cdot \ys$ is a summand of $\d(x)$.
\begin{enumerate}
\item If $m-1\le n$, then $m \scU_2^{m-1} \scV_2^{m-1} (\scV_1^{n-m+1} \cdot \ys)\in \cA_{s}(K)/ \scU_1\cA_{s+1}(K)$ is a summand of $\Phi_{K}^\mu(x)$. 
\item If $m-1>n$, then $m \scU_2^{m-1} \scV_2^{m}(\scU_1^{m-n-1} \cdot \ys)\in \cA_{s+1}(K)/\scV_1 \cA_s(K)$ is a summand of $\Phi_{K}^\mu(x)$.  
\end{enumerate}
\item Suppose $x\in \scV_1^i \cdot \xs\in \cB_{s}(K)/ \scU_1 \cB_{s+1}(K)$ and $\scU_1^m \scV_1^n\cdot \ys$ is a summand of $\d (x)$. Then $m \scU_2^{m-1} \scV_2^{m-1}\cdot (\scV_1^{n-m+1}\cdot \ys)$ is a summand of $\Phi_{K}^\mu(x)$.

\end{enumerate}
\end{lem}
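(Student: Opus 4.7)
The proof will follow the same template as Lemma~\ref{lem:iotaKmu}, using the factorizations
\[
\Phi_K^\mu = \Pi_{\cA}\circ (\Phi_{K,1}\otimes \id)\circ I_{\cA},\qquad \Phi_K^\mu = \Pi_{\cB}\circ (\Phi_{K,1}\otimes \id)\circ I_{\cB}
\]
(and analogously for $\tilde{\cB}$) established in Section~\ref{sec:algebra-Omega}.  The key structural input is that $\Phi_{K,1}\otimes \id$ is $\bF[\scU_1,\scV_1,\scU_2,\scV_2]$-equivariant at the chain level (being the formal $\scU_1$-derivative applied only to the knot factor), and in particular preserves the decomposition of $M_\bullet$ into even summands (BL, TR) and odd summands (TL, BR).

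First I would reduce to the even part of $I_{\cA}(x)$.  Only the length-$0$ piece of $I$ is nonzero on the even summands (since the $\Lambda$-action $\theta_\scU,\theta_\scV$ takes odd to even and therefore vanishes on even elements), so the even component of $I_{\cA}(x)$ is simply $s(x)$ placed in the appropriate even summand of $M_{s-1}$, with no higher $A_\infty$ corrections.  Symmetrically, $\Pi_{\cA}$ vanishes on odd elements: the length-$0$ part $\pi$ projects only from even quotients, and every higher $A_\infty$ term of $\Pi$ begins with $h$, which is $0$ on odd summands.  Since $\Phi_{K,1}\otimes \id$ preserves the even/odd decomposition, the odd part of $I_{\cA}(x)$ contributes zero to $\Phi_K^\mu(x)$, exactly as in the proof of Lemma~\ref{lem:iotaKmu}.

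The remaining computation applies $\Phi_{K,1}\otimes \id$ to the even piece $s(x)$ and then $\Pi_{\cA}$ to the result.  For Case (1), $s(x)=\scU_1^i\cdot \xs$ lies in BL $=\cA_s(K)$ of $M_{s-1}$; differentiating the matrix of $\d$ with respect to $\scU_1$ sends $\scU_1^i\xs$ to $\scU_1^i\Phi_{K,1}(\xs)$, so every summand $\scU_1^m\scV_1^n\ys$ of $\d x$ contributes a summand $(m-i)\scU_1^{m-1}\scV_1^n\ys$ in BL of $M_s$ (the coefficient $(m-i)$ arising from $m-i$ being the $\scU_1$-exponent in the corresponding summand of $\d\xs$).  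Applying $\Pi_{\cA}$ to this BL element is then precisely the same homological-perturbation zigzag computation that produced the differential $\d^\mu$ in Lemma~\ref{lem:homological-perturbation-differential-A}: each alternating pair $(h,m_2(\theta_\bullet,-))$ trades a $\scV_1$ (or $\scU_1$) power for a $\scU_2\scV_2$ factor, looping around the four summands of $M_s$, and the process terminates when a projection $\pi$ at either the BL or TR even summand becomes nonzero.  The two subcases $n\le m-1$ and $n>m-1$ correspond to whether the zigzag exhausts $\scV_1$ before $\scU_1$ (landing in the BL quotient) or vice versa (landing in the TR quotient).  Cases (2) and (3) are handled identically: Case (2) begins in the TR summand, so the analogous zigzag is run starting from TR; Case (3) concerns $\cB_{s-1}^\mu(K)$ and $\tilde{\cB}_{s-1}^\mu(K)$, where only one of the two small-model summands survives (by Lemma~\ref{lem:homperturbBs}) so the case split disappears.

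The main obstacle is bookkeeping the zigzag carefully enough to read off the precise $\scU_2, \scV_2$ powers and the quotient in which each output lives; I would organize this by recording, for each fixed starting element of an even summand, the length-$(2k)$ and length-$(2k+1)$ contributions of $\Pi$ as a function of the starting $\scU_1$- and $\scV_1$-exponents, and then specialize to the exponents produced by $\Phi_{K,1}$.  Since the zigzag analysis is identical in flavor to the one already carried out for $\d^\mu$ in Lemma~\ref{lem:homological-perturbation-differential-A} and for $\iota_K^\mu$ in Lemma~\ref{lem:iotaKmu}, no new conceptual ingredient is required beyond what is listed above.
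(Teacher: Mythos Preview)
Your proposal is correct and follows essentially the same approach as the paper's proof: both arguments reduce to the even component of $I_{\cA}(x)$ by observing that $\Phi_{K,1}\otimes\id$ preserves the even/odd decomposition and that $\Pi_{\cA}$ vanishes on odd summands, then compute by applying $\Phi_{K,1}\otimes\id$ followed by the $\Pi_{\cA}$ zigzag. You spell out the zigzag bookkeeping more explicitly than the paper, which simply asserts that the even terms ``contribute exactly the stated quantities,'' but the logical structure is identical.
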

\begin{proof}
By definition, $\Phi_{K}^\mu$ coincides with the map $\Pi_{\cA}\circ (\Phi_{K,1}\otimes \id)\circ  I_{\cA}$. The map $ I_{\cA}$ has terms in the even and odd summands. The terms in the even summands are essentially just the inclusion maps (involving no trees or $m_2$ maps), since the action of the exterior algebra $\Lambda$ on the even summands vanishes. These contribute exactly the stated quantities (i.e. we then apply $\Phi_{K,1}\otimes \id$, and then apply $\Pi_{\cA}$). The components of $ I_{\cA}$ in the odd summands makes no contribution, because $\Phi_{K,1}\otimes \id$ preserves the $M$-summands, and $\Pi_{\cA}$ vanishes on the odd summands.
\end{proof}

\begin{rem}
 Note that $\Phi_{K}^\mu$ commutes with $v^\mu$ and $\tilde{v}^\mu$. This may either be seen directly, or via the fact that the canonical null-homotopy of $[\Phi_{K}^\mu,v^{\mu}]$ given by the homological perturbation lemma vanishes.
\end{rem}

\begin{lem}\label{lem:Psi-mu-map} The maps \[
\Psi^\mu_{H,1}\colon \cA_{s}^\mu(K)\to \cA_{s-1}^\mu(K), \quad \Psi^\mu_{H,1}\colon \cB_s^{\mu}(K)\to \cB_{s-1}^\mu(K) \quad \text{and}\quad \Psi^\mu_{H,1}\colon \tilde{\cB}_{s}^\mu(K)\to \tilde{\cB}^\mu_{s-1}(K)
\]
 have the following form: 
\begin{enumerate}
\item Suppose $x=\scU_1^i \cdot \xs\in \cA_{s+1}(K)/\scV_1 \cA_{s}(K)$ and suppose that $\scU_1^m \scV_1^n \cdot \ys$ is a summand of $\d(x)$, where $\d$ is the differential of $\cCFK(K)$. 
\begin{enumerate} 
\item If $n\le m+1$, then there is a summand of \[n \scU_2^{n-1} \scV_2^{n-1}(\scU_1^{m-n+1} \cdot \ys)\in \cA_s(K)/ \scV_1 \cA_{s-1}(K)\] in $\Psi_{H,1}^{\mu}(x)$.
\item If $n> m+1$, then there is a summand of \[(m+1) \scU_2^{m+1} \scV_2^{m}(\scV_1^{n-m-2} \cdot \ys)\in \cA_{s-1}(K)/\scU_1 \cA_s(K)\] in $\Psi_{H,1}^{\mu}(x)$.
\end{enumerate}
\item Suppose $x=\scV_1^i \cdot \xs\in \cA_{s}(K)/\scU_1 \cA_{s+1}(K)$ and suppose that $\scU_1^m \scV_1^n \cdot \ys$ is a summand of $\d(x)$.
\begin{enumerate}
\item If $n\ge m+1$, then there is a summand of $m \scU_2^{m} \scV_2^{m} (\scV_1^{n-m-1} \cdot \ys)\in \cA_{s-1}(K)/\scU_1 \cA_s(K)$ in $\Psi_{H,1}^{\mu}(x)$.
\item If $n<m+1$, then there is a summand of $n \scU_2^{n-1} \scV_2^{n} (\scU_1^{m-n} \cdot \ys)\in \cA_s(K)/\scV_1\cA_{s-1}(K)$ in $\Psi^\mu_{H,1}(x)$.
\end{enumerate}
\item Suppose $x\in \scV_1^i \cdot \xs\in \cB_{s}(K)/ \scU_1 \cB_{s+1}(K)$ and $\scU_1^m \scV_1^n\cdot \ys$ is a summand of $\d (x)$.  Then there is a summand of $m\scV_1^{n-m-1}\cdot \ys$ in $\Psi_{H,1}^{\mu}(x)$.
\item Suppose $x=\scU_1^i \cdot\xs\in \tilde{\cB}_{s+1}(K)/\scV_1 \tilde{\cB}_s(K)$ and $\scU_1^m \scV_1^n \cdot \ys$ is a summand of $\d(x)$. Then $\Psi^\mu_{H,1}(x)$ has a coefficient of $n\scU_2^{n-1} \scV_2^{n-1}(\scU_1^{m-n+1}\cdot \xs)$.
\end{enumerate}
\end{lem}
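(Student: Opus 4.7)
My plan is to verify each case of Lemma~\ref{lem:Psi-mu-map} by explicit computation using the identity
\[
\Psi_{H,1}^\mu=\Pi_{\cA}\circ(\id\otimes \Psi_{H,1})\circ I_{\cA}
\]
together with its analogues on the $\cB$- and $\tilde{\cB}$-sides, which follow from the same manipulation given in Section~\ref{sec:algebra-Omega}. The overall structure of the argument will closely mirror the proof of Lemma~\ref{lem:PhiKmu} for $\Phi_K^\mu$, but a crucial twist is present: whereas $\Phi_{K,1}\otimes \id$ preserves the summand decomposition of $\cA_{[s]}(L)$ as a box-tensor with $M_s$, the map $\id\otimes \Psi_{H,1}$ does not. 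Concretely, $\id\otimes \Psi_{H,1}$ vanishes on the $\ve{b}$-, $\ve{c}$-, and $\ve{d}$-summands and sends the odd $\ve{a}$-summand (top-left $\cA_s(K)$) of $\cA_{[s]}(L)$ equivariantly and identically to the even $\ve{c}$-summand (bottom-left $\cA_s(K)$) of $\cA_{[s-1]}(L)$.

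Consequently, the computation reduces to enumerating the tree contributions of $I_{\cA}$ that land in the $\ve{a}$-summand of $\cA_{[s]}(L)$ and the tree contributions of $\Pi_{\cA}$ from the $\ve{c}$-summand of $\cA_{[s-1]}(L)$ that terminate in a summand of $Z_{s-1}$. For a generator $x$ of $\cA_s^\mu(K)$, the $\ve{a}$-component of $I_{\cA}(x)$ receives a contribution from the chain-level snake map $\sigma\circ \d\circ s$ of Lemma~\ref{lem:snake}, together with higher contributions of the form $(H\circ m_2(\theta_\bullet,-))^k\circ i$ for $k\geq 0$. Each traversal of the directed cycle $\ve{c}\to\ve{a}\to\ve{b}\to\ve{d}\to\ve{c}$, dictated by alternating $H$ and $\theta_{\scU},\theta_{\scV}$ actions, contributes a factor of $\scU_2\scV_2$ on the $\cP$-side and decrements both the $\scU_1$- and $\scV_1$-exponents by one. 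After applying $\id\otimes\Psi_{H,1}$ to transfer the result to the $\ve{c}$-summand of $\cA_{[s-1]}(L)$, the analogous tree paths of $\Pi_{\cA}$ will either terminate with $\pi$ on the $\ve{c}$-summand, projecting to $(\cA_s/\scV_1\cA_{s-1})(K)$ after an even number of further half-cycles, or with $\pi$ on the $\ve{b}$-summand, projecting to $(\cA_{s-1}/\scU_1\cA_s)(K)$ after an odd number of further half-cycles.

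Collecting terms will yield the desired monomials, with the $\bF_2$-coefficients $n$ and $m+1$ appearing as the parities of the numbers of distinct tree paths producing each monomial, and with the ranges $n\leq m+1$ and $n\geq m+2$ arising as the positivity constraints on the intermediate $\scU_1$- and $\scV_1$-exponents at each stage. The second summand of $\cA_s^\mu(K)$ in case~(2) follows by the same enumeration starting from the $\ve{d}$-summand of $\cA_{[s]}(L)$, while the $\cB$- and $\tilde{\cB}$-versions in cases~(3) and~(4) are substantially simpler since their small models (cf.\ Lemma~\ref{lem:homperturbBs}) have only one summand, restricting the location of the terminal $\pi$.

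The main obstacle is the careful bookkeeping of path multiplicities and of the positivity ranges at each step across all four cases. A secondary subtlety is verifying that the $m_1$ (internal $\cCFK(K)$ differential) instances of $m^M_{>0}$-nodes in the trees, which I have suppressed in the description above, either do not produce new monomials or appear in pairs that cancel in $\bF_2$, so that the enumeration of $m_2$-only paths already yields the stated formulas.
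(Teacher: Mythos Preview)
Your proposal is correct and follows essentially the same approach as the paper. Both identify that $\id\otimes\Psi_{H,1}$ is supported on the odd $\ve{a}$-summand and lands in the even $\ve{c}$-summand, reducing the computation to tracking how many ways the cycle of $h$- and $m_2(\theta,-)$-moves can be split by the single $\Psi_{H,1}$-step; the paper states this tersely (``the only ambiguity in this process is at which point we travel backwards along the $\scV_1$ arrow'') while you spell out the enumeration in more detail. Your ``secondary subtlety'' is a non-issue: by the caption to Figure~\ref{fig:Z-structure-rels}, the symbol $m^M_{>0}$ denotes $m_j^M$ for $j>1$, so the internal differential $m_1$ never appears in the perturbation trees for $I$, $\Pi$, or $H$---it is already absorbed into the base maps $i$, $\pi$, $h$.
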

\begin{proof} The map $\id\otimes \Psi_{H,1}$ has support on an odd summand, and maps into an even summand. Therefore the terms contributed by $ I_{\cA}$ in the even summands make no contribution. It remains to consider the terms contributed by $ I_{\cA}$ in the odd summands. These involve applying a differential to $x$, and then lowering the $\scV_1$ or $\scU_1$ power by 1, and then applying the homotopy and $m_2$ maps repeatedly. We then apply $\id\otimes \Psi_{H,1}$. This simply moves backwards along the $\scV_1$ arrow. We then apply $\Pi_{\cA}$, which has the effect of doing further loops until we cannot go any further. The only ambiguity in this process is at which point we travel backwards along the $\scV_1$ arrow (i.e. apply $\id \otimes\Psi_{H,1}$). It is straightforward to see that the stated formulas hold, via a case-by-case analysis. 
\end{proof}

\begin{rem} We see in Lemma~\ref{lem:Psi-mu-map} that $\Psi^\mu_{H,1}$ does not generally commute with $v^\mu$ and $\tilde{v}^\mu$. Nonetheless, they do commute up to chain homotopy, since the expanded versions $v_L$ and $\tilde{v}_L$ commute with $\id\otimes \Psi_{H,1}$ on the nose, and hence the homological perturbation lemma gives a canonical null-homotopy of both $[\Psi_{H,1}^\mu, v^\mu]$ and $[\Psi_{H,1}^\mu,\tilde v^\mu]$. 
\end{rem}

\subsection{The homotopies $H_{\Omega}$ and $\tilde{H}_{\Omega}$}

We now consider the homotopies $H_{\Omega}$ and $\tilde{H}_{\Omega}$ appearing in Theorem~\ref{thm:surgery-hypercube}.

\begin{lem}\label{lem:HOmega} The maps $H_{\Omega}$ and $\tilde{H}_{\Omega}$ satisfy the following:
\[
\begin{split}
H_{\Omega}&=\Phi_{K}^\mu \circ H_0, \quad \text{and} \\
\tilde{H}_{\Omega}&=\Phi_{K}^\mu\circ \tilde{H}_0,
\end{split}
\]
where:
\begin{enumerate}
\item $H_0\colon \cA_s^\mu(K)\to \cB_{s-1}^\mu(K)$ takes the following form:
\begin{enumerate}
\item If $x=\scU_1^i \cdot \xs\in \cA_{s+1}/\scV_1 \cA_s$, then 
\[
H_0(x)=(i+1)\scU_2^{i+1} \scV_2^{i} (\scV_1^{-i-2}\cdot \xs).
\]
\item If $\scV_1^i\cdot \xs\in \cA_s/\scU_1\cA_{s+1}$, then $H_0(x)=0$.
\end{enumerate}
\item $\tilde{H}_0\colon \cA_s^\mu(K)\to \tilde{\cB}_{s-1}^\mu(K)$ takes the following form:
\begin{enumerate}
\item If $x=\scU_1^i \cdot \xs\in \cA_{s+1}/\scV_1 \cA_s$, then $\tilde{H}_0(x)=0$.
\item If $x=\scV_1^i\cdot \xs\in \cA_{s}/ \scU_1 \cA_{s+1}$, then 
\[
\tilde{H}_0(x)=i \scU_2^i \scV_2^{i+1} (\scU_1^{-i-1} \cdot \xs).
\]
\end{enumerate}
\end{enumerate}
\end{lem}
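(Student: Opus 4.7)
The plan is to exploit the factorization $\Omega^\mu = \Phi_K^\mu \circ \Psi_{H,1}^\mu$ established in Equation~\eqref{eqn:Omegamu}, together with the fact (noted in the remark after Lemma~\ref{lem:PhiKmu}) that $\Phi_K^\mu$ commutes \emph{strictly} with both $v^\mu$ and $\tilde v^\mu$. This strict commutation yields
\[
\Omega^\mu v^\mu + v^\mu \Omega^\mu = \Phi_K^\mu\bigl(\Psi_{H,1}^\mu v^\mu + v^\mu \Psi_{H,1}^\mu\bigr),
\]
and similarly with $\tilde v^\mu$. Consequently it suffices to produce null-homotopies $H_0$ and $\tilde H_0$ of $\Psi_{H,1}^\mu v^\mu + v^\mu \Psi_{H,1}^\mu$ and $\Psi_{H,1}^\mu \tilde v^\mu + \tilde v^\mu \Psi_{H,1}^\mu$ respectively, with the explicit formulas given in the lemma; post-composition with $\Phi_K^\mu$ will then deliver $H_\Omega$ and $\tilde H_\Omega$.

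Next I would compute $H_0$ and $\tilde H_0$ by applying the homological perturbation lemma for hypercubes (Lemma~\ref{lem:homological-perturbation-cubes}) to the square
\[
\begin{tikzcd}[row sep=1cm,column sep=1.4cm,labels=description]
\cA_{[s]}(L) \ar[d,"\id\otimes \Psi_{H,1}"] \ar[r,"v_L"] & \cB_{[s]}(L) \ar[d,"\id\otimes \Psi_{H,1}"] \\
\cA_{[s-1]}(L) \ar[r,"v_L"] & \cB_{[s-1]}(L),
\end{tikzcd}
\]
which commutes on the nose in the expanded model. The canonical null-homotopy then takes the form
\[
H_0 = \Pi_{\cB}\bigl(v_L\, H_{\cA}\,(\id\otimes\Psi_{H,1}) + (\id\otimes\Psi_{H,1})\, H_{\cB}\, v_L\bigr)\, I_{\cA},
\]
and analogously for $\tilde H_0$. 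I would then unpack both summands case by case according to whether the input $x\in\cA_s^\mu$ lies in the summand $\cA_{s+1}/\scV_1\cA_s$ or in $\cA_s/\scU_1\cA_{s+1}$. Using the explicit formulas for $I_{\cA}, H_{\cA}, H_{\cB}, \Pi_{\cB}$ from Section~\ref{subsec:homperturbAs} and Lemma~\ref{lem:homperturbBs}, combined with the description of $v^\mu$ from Lemma~\ref{lem:v-maps}, one can read off which terms survive. Many summands vanish for parity reasons: the relations $\Pi_{\cB}\circ H_{\cB} = 0$, $H_{\cA}\circ I_{\cA}=0$, and the fact that $\id\otimes\Psi_{H,1}$ sends the even Hopf generator $\ve{a}$ to the odd generator $\ve{c}$ force most cases to drop out, leaving only the formulas stated in the lemma.

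The final step is to verify that the canonical null-homotopy $H_\Omega$ constructed as part of Theorem~\ref{thm:surgery-hypercube} equals $\Phi_K^\mu\circ H_0$ on the nose, rather than merely up to a further homotopy. This proceeds by substituting the factorization $(\Phi_{K,1}\otimes\Psi_{H,1}) = (\Phi_{K,1}\otimes\id)\circ(\id\otimes\Psi_{H,1})$ into the canonical formula
\[
H_\Omega^{\mathrm{can}} = \Pi_{\cB}\bigl(v\,H_{\cA}(\Phi_{K,1}\otimes\Psi_{H,1}) + (\Phi_{K,1}\otimes\Psi_{H,1})H_{\cB}v\bigr)I_{\cA},
\]
and observing that the parity considerations already invoked to prove strict commutation of $\Phi_K^\mu$ with $v^\mu$ ($\Pi_\cB(\Phi_{K,1}\otimes\id)H_\cB = 0$ and $H_\cA(\Phi_{K,1}\otimes\id)I_\cA = 0$) cause the $\Phi_{K,1}\otimes\id$ factor to pull cleanly past $H_\cA, H_\cB, \Pi_\cB$, leaving precisely $\Phi_K^\mu\circ H_0$. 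The analogous argument handles $\tilde H_\Omega$. The main technical hurdle I anticipate is this last bookkeeping step: tracking all compositions of inclusion, projection, and homotopy maps to confirm that no residual error terms appear, and showing that the $(\id\otimes\Psi_{H,1})$-part of the homotopy picks up the specific $\scU_2$-$\scV_2$ powers and coefficients asserted in the explicit formulas for $H_0$ and $\tilde H_0$.
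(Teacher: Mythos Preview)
Your approach is correct and essentially the same as the paper's. Both arguments reduce the canonical formula for $H_\Omega$ to a single surviving term via parity (even/odd summand) considerations, factor out $\Phi_K^\mu$ using the same ``secretly split'' vanishing $\Pi_\cB(\Phi_{K,1}\otimes\id)H_\cB=0$, and then compute the remaining expression $\Pi_\cB(\id\otimes\Psi_{H,1})H_\cB v\, I_\cA$ by tracking the loop around the Hopf square; the paper simply starts from the canonical $H_\Omega$ and simplifies directly, whereas you motivate via the factorization first and verify agreement at the end. One small correction: the specific parity fact you need to kill the term $\Pi_\cB v_L H_\cA(\id\otimes\Psi_{H,1})I_\cA$ in your two-term formula for $H_0$ is $\Pi_\cB v_L H_\cA=0$ (image of $H_\cA$ is odd, $v_L$ preserves parity, $\Pi_\cB$ kills odd), not the generic relations $\Pi_\cB H_\cB=0$ or $H_\cA I_\cA=0$ that you cite.
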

\begin{proof}
We recall that $H_{\Omega}$ and $\tilde{H}_{\Omega}$ are given by the formulas
\[
\begin{split}
H_\Omega&=\Pi_{\tilde{\cB}} (\Phi_{K,1} \otimes  \Psi_{H,1}) H_{\cB} v  I_{\cA}+\Pi_{\tilde{\cB}} \tilde{v} H_{\cA} (\Phi_{K,1} \otimes  \Psi_{H,1})  I_{\cA}\\
\tilde{H}_\Omega&=\Pi_{\cB} (\Phi_{K,1} \otimes \Psi_{H,1}) H_{\tilde{\cB}} \tilde{v}  I_{\cA}+\Pi_{\cB} v H_{\cA} (\Phi_{K,1} \otimes \Psi_{H,1})  I_{\cA}.
\end{split}
\]

As a first step, we note
\[
H_{\Omega}=\Pi_{\tilde{\cB}} (\Phi_{K,1} \otimes \Psi_{H,1}) H_{\cB} v  I_{\cA}\qquad \text{and} \qquad \tilde{H}_{\Omega}= \Pi_{\cB} (\Phi_{K,1} \otimes \Psi_{H,1}) H_{\tilde{\cB}} \tilde{v}  I_{\cA},
\]
since $\Pi_{\tilde{\cB}} \tilde{v} H_{\cA}=0$ and $\Pi_{\cB} v H_{\cA}=0$. These latter two equalities follow from the fact that $H_{\cA}$ and $H_{\tilde{\cB}}$ have image in the odd summands, while $v$ and $\tilde{v}$ preserve the even and odd summands, and $\Pi_{\tilde{\cB}}$ and $\Pi_{\cB}$ vanish on the odd summands. 

Next, the same argument giving equation~\eqref{eq:secretly-split} shows that
\[
H_{\Omega}=\Phi_{K}^\mu\circ \Pi_{\cB}\circ  (\id \otimes \Psi_{H,1}) \circ H_{\cB} \circ v\circ  I_{\cA}\qquad \text{and} \qquad \tilde{H}_{\Omega}=\Phi_{K,1}^\mu \circ \Pi_{\cB} \circ (\id \otimes \Psi_{H,1})\circ H_{\tilde{\cB}}\circ \tilde{v}\circ   I_{\cA}.
\]

We define
\[
H_0=\Pi_{\cB}\circ (\id \otimes \Psi_{H,1})\circ H_{\cB}\circ v\circ  I_{\cA}\qquad \text{and} \qquad \tilde{H}_0=\Pi_{\cB}\circ (\id \otimes \Psi_{H,1})\circ H_{\tilde{\cB}} \circ \tilde{v} \circ  I_{\cA},
\]
which we now show have the form stated in the lemma. We first consider $H_0$. Suppose $x=\scU_1^i\cdot \xs \cA_{s+1}/\scV_1 \cA_s$. Since $H_{\cB}$ vanishes on odd summands of $\cA_{s}(L)$, the only term of $ I_{\cA}$ which contributes is the canonical inclusion, with no applications of $m_2$ or $H$. We then apply $v$, $H_{\cB}$, $\id \otimes \Psi_{H,1}$, and then $\Pi_{\cB}$. This amounts to including into $\cB_{s}(L)$, and then traveling clockwise around $\cB_{s}(L)$, changing powers of variables, until we reach an element of the form $\scU_2^n \scV^m_2\cdot (\scU_1^{-j} \cdot \xs)\in (\cB_{s}/ \scU_1 \cB_{s+1})(K)$, for some $n,m,j\ge 0$. The only ambiguity is at what point of the cycle we apply the map $\id \otimes \Psi_{H,1}$. We can enumerate these explicitly as follows. Each summand of $H_{\cB}$ must have an initial $H$ which sends $\scU_1^i \cdot \xs$ to $\scU_1^i \scV_1^{-1} \cdot \xs$, in the top left corner of $\cA_{s}(L)$. Then, $H_{\cB}$ does $k$ clockwise loops around $\cB_{s}(L)$, outputting the elements $\scU_2^k\scV_2^k(\scU_1^{i-k}\scV_1^{-1-k})$. Then we apply $\id\otimes \Psi_{H,1}$, which simply sends this to the inclusion of itself, in the bottom left corner of $\cB_{s}(L)$. Then we finish with $\Pi_{\cB}$, which outputs $\scU_2^{i+1} \scV_2^i( \scV^{-i-2}\cdot \xs)$. Here, $k$ can be any integer in $\{0,\dots, i\}$, so we have an overall factor of $i$.

We consider now the map $H_0$ applied to an element $x=\scV_1^i\cdot \xs\in \cA_s/\scU_1\cA_{s+1}$. We first apply $ I_{\cA}$. As before, the only component with contributing output is the component in the top left of $\cA_{s}(L)$, which involves no higher towers from the homological perturbation lemma. We include into $\cB_{s}(L)$ with $v$. However, the map $H_{\cB}$ vanishes on this component, since $x$ already has 0 $\scU_1$-power.

The argument for $\tilde{H}_0$ is only a simple modification.
\end{proof}

\subsection{Gradings on the small model}

We can now transfer gradings from the expanded model to the small model. Concretely, this amounts to combining the gradings from~\eqref{eq:gradings} with the gradings of the Hopf link. We set
\[
\cA_s^\mu(K)=\bigg(\cA_{s+1}(K)/\scV_1 \cA_s(K)[-2,0]\oplus \cA_{s}(K)/\scU_1\cA_{s+1}(K)[0,-2]\bigg)[\scU_2,\scV_2],
\]
and
\[
\cB_s^\mu(K)=\bigg(\cB_s(K)/\scU_1 \cB_{s+1}(K)[0,-2]\bigg)[\scU_2,\scV_2].
\]
With these definitions, we set
\[
\begin{split}
\bA^\mu(K)&=\prod_{s\in \Z} \cA_s^\mu(K)\left[\frac{(n-2s)^2/n+\epsilon(n)}{4}, \frac{(n+2(s+1))^2/n+\epsilon(n)}{4}\right]\\
\bB^\mu(K)&=\prod_{s\in \Z} \cB_s^\mu(K)\left[\frac{(n-2s)^2/n+\epsilon(n)}{4}-1, \frac{(n+2(s+1))^2/n+\epsilon(n)}{4}-1\right].
\end{split}
\]
In both equations, $[i,j]$ denotes a shift in the $(\gr_w,\gr_z)$-bigrading, and $\epsilon(n)=-1$ if $n>0$ and $\epsilon(n)=5$ if $n<0$.

\section{An example}
\label{sec:example}
In this section, we use the small model to compute the $\iota_K$-complex of $(Y, \mu)$, where $Y=S^3_{1}(4_1)$ and $\mu$ denotes the image of the meridian of $4_1$ after surgery.

Recall from the proof of Lemma \ref{lem:v-maps} that 
\[
\cA_s^{\mu}(K)\iso \bigg( (\cA_{s+1}/\scV_1 \cA_s)(K)\oplus \cA_s/\scU_1 \cA_{s+1}(K)\bigg)[\scU_2,\scV_2] \quad \text{and} \quad \cB_s^{\mu}(K)\iso (\cB_s/\scU_1 \cB_{s+1})(K)[\scU_2,\scV_2].
\]
As with the Ozsv\'ath-Szab\'o knot surgery mapping cone \cite{OSIntegerSurgeries}, we may truncate horizontally, once $v^\mu$, respectively $\frF_1^\mu \tilde{v}^\mu$, has become an isomorphism; in the present case, the reader may verify that after truncation, we are left with 
\[ \bigoplus_{s=-g}^{g-1} \cA_s^\mu(K) \oplus \bigoplus_{s=-g+1}^{g-1} \cB_s^\mu(K), \]
where $g$ denotes the genus of $K$.

Let $K$ be the figure-eight knot $4_1$, and let $\cCFK(S^3, K)$ be generated by $a, b, c, d,$ and $e$ with
\begin{align*}
	\d a &= \scV_1 d & \iota_K(a) &= e \\
	\d b &= \scU_1 a + \scV_1 e & \iota_K(b) &= b+c \\
	\d c &= 0 & \iota_K(c) &= c+d \\
	\d d &= 0 & \iota_K(d) &= d \\
	\d e &= \scU_1 d & \iota_K(e) &= a.
\end{align*}
See Figure \ref{fig:figure-eight}. We will take $\frF^\mu_1$ to be given by
\[ \frF^\mu_1(a) = e \qquad \frF^\mu_1(b) = b \qquad \frF^\mu_1(c) = c \qquad \frF^\mu_1 (d) = d \qquad \frF^\mu_1(e) = a. \]

\begin{figure}[ht]
\begin{tikzpicture}[scale=1]
	\filldraw (3.8, 3.8) circle (2pt) node[label=above:{\lab{c}}] (a) {};
	\filldraw (3.5, 3.5) circle (2pt) node[label=above:{\lab{b}}] (b) {};
	\filldraw (2.5, 3.5) circle (2pt) node[label=above :{\lab{\scU_1 a}}] (c) {};
	\filldraw (3.5, 2.5) circle (2pt) node[label=right:{\lab{\scV_1 e}}] (d) {};
	\filldraw (2.5, 2.5) circle (2pt) node[label=left:{\lab{\scU_1 \scV_1 d}}] (e) {};

	\draw[->] (b) to (c);
	\draw[->] (b) to (d);
	\draw[->] (c) to (e);
	\draw[->] (d) to (e);
	
\end{tikzpicture}
\caption{A graphical depiction of $\cCFK(S^3, 4_1)$.}
\label{fig:figure-eight}
\end{figure}

Using the aforementioned descriptions of $\cA_s^{\mu}(K)$ and $\cB_s^{\mu}(K)$, we have that
\begin{align*}
	\cA_{-1}^\mu(K) &= \langle \scU_1 a_{-1}, b_{-1}, c_{-1}, d_{-1}; e_{-1} \rangle_{\bF[\scU_2, \scV_2]} \\
	\tilde{\cB}^\mu_{-1}(K) &= \langle \scU_1 \tilde{a}'_{-1};  \tilde{b}'_{-1}, \tilde{c}'_{-1}, \tilde{d}'_{-1},  \scU_1^{-1} \tilde{e}'_{-1} \rangle_{\bF[\scU_2, \scV_2]}\\
	\cA_{0}^\mu(K) &= \langle a_{0};  b_{0}, c_{0}, d_{0},  \scV_1 e_{0} \rangle_{\bF[\scU_2, \scV_2]}\\
	\cB_{0}^\mu(K) &= \langle \scV_1^{-1} a'_{0}, b'_{0}, c'_{0},  d'_{0}, \scV_1 e'_{0} \rangle_{\bF[\scU_2, \scV_2]},
\end{align*}
where the semi-colon in $\cA_s^\mu$ separates the generators in $(\cA_{s+1}/\scV_1 \cA_s)(K)$ and $(\cA_s/\scU_1 \cA_{s+1})(K)$, the subscripts on the generators of $\cA_s^\mu$ denote $s$, and we use the marker $'$ to distinguish generators of $\cB_s^\mu$ from those of $\cA_s^\mu$. (Similar notation was utilized in \cite[Section 6]{HLL}.)

\begin{figure}[htb!]
\subfigure[]{
\begin{tikzpicture}[scale=1]

\filldraw[black!10!white] (0, -4) rectangle (1, -1);
\filldraw[black!25!white] (-3, -1) rectangle (1, 0);

	\begin{scope}[thin, black!20!white]
		\draw [<->] (-3, 0.5) -- (2, 0.5);
		\draw [<->] (0.5, -4) -- (0.5, 2);
	\end{scope}
	\draw[step=1, black!50!white, very thin] (-2.9, -3.9) grid (1.9, 1.9);

	\begin{scope}[black!20!white]
	\foreach \x in {-3,...,0}
	{	
		\filldraw (\x+0.65, \x+0.35) circle (2pt) node[] (a){};
		\filldraw (\x+1.35, \x+0.35) circle (2pt) node[] (b){};
		\filldraw (\x+1.5, \x+0.5) circle (2pt) node[] (c){};
		\filldraw (\x+0.65, \x-.35) circle (2pt) node[] (d){};
		\filldraw (\x+1.35, \x-.35) circle (2pt) node[] (e){};
		\draw [thick, ->] (a) -- (d);
		\draw [thick, ->] (e) -- (d);
		\draw [thick, ->] (b) -- (a);
		\draw [thick, ->] (b) -- (e);	
	}
	\end{scope}

	\filldraw (-0.35, -0.65) circle (2pt) node[left] (a){\small$\scU_1 a_{-1}$};
	\filldraw (0.35, -0.65) circle (2pt) node[right] (b){\scriptsize$b_{-1}$};
	\filldraw (0.5, -0.5) circle (2pt) node[right] (c){\scriptsize$c_{-1}$};
	\filldraw (0.65, -0.35) circle (2pt) node[right] (d){\scriptsize$d_{-1}$};
	\filldraw (0.35, -1.35) circle (2pt) node[right] (e){\small$e_{-1}$};

	\node[] at (-1.5, -0.2) {\small$(\cA_0/\scV_1 \cA_{-1})(K)$};
	\node[] at (0.7, -2.7) {\small$(\cA_{-1}/\scU_1 \cA_{0})(K)$};

\end{tikzpicture}
\label{subfig:A1}
}
\hspace{10pt}
\subfigure[]{
\begin{tikzpicture}[scale=1]

\filldraw[black!10!white] (0, -4) rectangle (1, 0);
\filldraw[black!25!white] (-3, 0) rectangle (1, 1);

	\begin{scope}[thin, black!20!white]
		\draw [<->] (-3, 0.5) -- (2, 0.5);
		\draw [<->] (0.5, -4) -- (0.5, 2);
	\end{scope}
	\draw[step=1, black!50!white, very thin] (-2.9, -3.9) grid (1.9, 1.9);

	\begin{scope}[black!20!white]
	\foreach \x in {-3,...,0}
	{	
		\filldraw (\x+0.65, \x+0.35) circle (2pt) node[] (a){};
		\filldraw (\x+1.35, \x+0.35) circle (2pt) node[] (b){};
		\filldraw (\x+1.5, \x+0.5) circle (2pt) node[] (c){};
		\filldraw (\x+0.65, \x-.35) circle (2pt) node[] (d){};
		\filldraw (\x+1.35, \x-.35) circle (2pt) node[] (e){};
		\draw [thick, ->] (a) -- (d);
		\draw [thick, ->] (e) -- (d);
		\draw [thick, ->] (b) -- (a);
		\draw [thick, ->] (b) -- (e);	
	}
	\end{scope}

	\filldraw (0.65, 0.35) circle (2pt) node[above] (a){\small$a_0$};
	\filldraw (0.35, -0.65) circle (2pt) node[right] (b){\scriptsize$b_0$};
	\filldraw (0.5, -0.5) circle (2pt) node[right] (c){\scriptsize$c_0$};
	\filldraw (0.65, -0.35) circle (2pt) node[right] (d){\scriptsize$d_0$};
	\filldraw (0.35, -1.35) circle (2pt) node[right] (e){\small$\scV_1 e_0$};

	\node[] at (-1.5, 0.5) {\small$(\cA_1/\scV_1 \cA_{0})(K)$};
	\node[] at (0.7, -2.2) {\small$(\cA_{0}/\scU_1 \cA_{1})(K)$};

\end{tikzpicture}\label{subfig:A2}
}
\subfigure[]{
\begin{tikzpicture}[scale=1]

\filldraw[black!10!white] (0, -4) rectangle (1, 2);

	\begin{scope}[thin, black!20!white]
		\draw [<->] (-3, 0.5) -- (2, 0.5);
		\draw [<->] (0.5, -4) -- (0.5, 2);
	\end{scope}
	\draw[step=1, black!50!white, very thin] (-2.9, -3.9) grid (1.9, 1.9);

	\begin{scope}[black!20!white]
	\foreach \x in {-3,...,0}
	{	
		\filldraw (\x+0.65, \x+0.35) circle (2pt) node[] (a){};
		\filldraw (\x+1.35, \x+0.35) circle (2pt) node[] (b){};
		\filldraw (\x+1.5, \x+0.5) circle (2pt) node[] (c){};
		\filldraw (\x+0.65, \x-.35) circle (2pt) node[] (d){};
		\filldraw (\x+1.35, \x-.35) circle (2pt) node[] (e){};
		\draw [thick, ->] (a) -- (d);
		\draw [thick, ->] (e) -- (d);
		\draw [thick, ->] (b) -- (a);
		\draw [thick, ->] (b) -- (e);	
	}
	\end{scope}

	\filldraw (0.65, 0.35) circle (2pt) node[above] (a){\small$\scV_1^{-1}a'_0$};
	\filldraw (0.35, -0.65) circle (2pt) node[below] (b){\scriptsize$b'_0$};
	\filldraw (0.5, -0.5) circle (2pt) node[right] (c){\scriptsize$c'_0$};
	\filldraw (0.65, -0.35) circle (2pt) node[above] (d){\scriptsize$d'_0$};
	\filldraw (0.35, -1.35) circle (2pt) node[right] (e){\small$\scV_1 e'_0$};

	\node[] at (0.7, -2.2) {\small$(\cB_{0}/\scU_1 \cB_{1})(K)$};
\end{tikzpicture}
\label{subfig:B2}
}
\caption{Clockwise from top left, the shaded regions depict the generators (over $\mathbb{F}[\scU_2, \scV_2]$) of $\cA^\mu_{-1}$, $\cA^\mu_0$, and $\cB^\mu_0$. In $\cA^\mu_{-1}$ and $\cA^\mu_0$, the darker region depicts $(\cA_{s+1}/\scV_1 \cA_s)(K)$ and the lighter region $(\cA_s/\scU_1 \cA_{s+1})(K)$. We have drawn things in a way that is meant to be evocative of \cite{HeddenLevineSurgery}.}
\label{fig:CFK_1core}
\end{figure}

The internal differential on $\cA_{s}^\mu(K)$ is described in \ref{small-del-1} and  \ref{small-del-2} of Lemma~\ref{lem:homological-perturbation-differential-A} in Section \ref{subsec:homperturbAs}. For our example, this yields
\begin{align*}
	\d^\mu (\scU_1 a_{-1}) &=\scU_2 \scV_2 d_{-1}   &\d^\mu a_0 &= \scU_2 d_0\ \\
	\d^\mu b_{-1}&=(\scU_1 a_{-1})+\scU_2 e_{-1}  &\d^\mu b_0 &= \scV_2 a_0 + (\scV_1 e_0)\\
	\d^\mu c_{-1} &= 0  &\d^\mu c_0 &= 0 \\
	\d^\mu d_{-1}&= 0  &\d^\mu d_0 &= 0 \\
	\d^\mu e_{-1}&=\scV_2 d_{-1} &\d^\mu (\scV_1 e_0) &= \scU_2 \scV_2 d_0.
\end{align*}

Similarly, the internal differential on $\cB_s^\mu(K)$ is described in Lemma \ref{lem:homperturbBs}, which for our example yields
\begin{align*}
	\d^\mu (\scV_1^{-1} a'_0) &= d'_0 \\
	\d^\mu b'_0 &= \scU_2 \scV_2 (\scV_1^{-1} a'_0) + (\scV_1 e'_0) \\
	\d^\mu c'_0 &= 0 \\
	\d^\mu d'_0 &= 0 \\
	\d^\mu (\scV_1 e'_0) &= \scU_2 \scV_2 d'_0.
\end{align*}

The map $v^\mu \co \cA_0^\mu(K) \to \cB_0^\mu(K)$, described in Lemma \ref{lem:v-maps}, is given by
\begin{align*}
	v^\mu(a_0) &= \scU_2 (\scV^{-1}_1 a'_0)\\
	v^\mu(b_0) &= b'_0 \\
	v^\mu(c_0) &= c'_0 \\
	v^\mu(d_0) &= d'_0 \\
	v^\mu(\scV_1 e_0) &= (\scV_1 e'_0) \\
\end{align*}
and the maps $\tilde{v}^\mu \co \cA_{-1}^\mu(K) \to \tilde\cB_{-1}^\mu(K)$ and $\frF_1 \circ \tilde{v}^\mu \co \cA_{-1}^\mu(K) \to \cB_0^\mu(K)$  are given by
\begin{align*}
	\tilde{v}^\mu(\scU_1 a_{-1}) &= (\scU_1 \tilde{a}'_{-1}) & \frF_1\circ \tilde{v}^\mu(\scU_1 a_{-1}) &= (\scV_1 e'_0) \\
	\tilde{v}^\mu(b_{-1}) &= \tilde{b}'_{-1}  & \frF_1\circ \tilde{v}^\mu( b_{-1}) &= b'_0 \\
	\tilde{v}^\mu(c_{-1}) &= \tilde{c}'_{-1} & \frF_1\circ \tilde{v}^\mu( c_{-1}) &= c'_0  \\
	\tilde{v}^\mu(d_{-1}) &= \tilde{d}'_{-1} & \frF_1\circ \tilde{v}^\mu( d_{-1}) &= d'_0  \\
	\tilde{v}^\mu(e_{-1}) &= \scV_2(\scU_1^{-1}\tilde{e}'_{-1}) & \frF_1\circ \tilde{v}^\mu(\scU_1 e_{-1}) &= \scU_2(\scV_1^{-1} a'_0).
\end{align*}

We now use Lemma \ref{lem:iotaKmu} to describe $\iota_K^\mu \co \cA^\mu_s \to \cA^\mu_{-s-1}$ and $\iota_K^\mu \co \cB^\mu_0 \to \tilde{\cB}^\mu_{-1}$: 
\begin{align*}
	\iota_K^\mu (\scU_1 a_{-1} ) &= \scV_1 e_0 & \iota_K^\mu (a_0) &= e_{-1} &\iota_K^\mu(\scV_1^{-1} a'_0) &= \scU_2^{-1} \scV_2^{-1} (\scV_1 \tilde{e}'_{-1}) \\
	\iota_K^\mu (b_{-1} ) &= b_0+c_0 & \iota_K^\mu (b_0) &= b_{-1}+c_{-1} &\iota_K^\mu(b'_0) &= \tilde{b}'_{-1}+\tilde{c}'_{-1} \\
	\iota_K^\mu (c_{-1}) &= c_0+d_0 & \iota_K^\mu (c_0) &= c_{-1}+d_{-1} &\iota_K^\mu(c'_0) &= \tilde{c}'_{-1}+ \tilde{d}'_{-1} \\
	\iota_K^\mu (d_{-1} ) &= d_0 & \iota_K^\mu (d_0) &= d_{-1} &\iota_K^\mu(d'_0) &= \tilde{d}'_{-1} \\
	\iota_K^\mu (e_{-1}) &=  a_0 & \iota_K^\mu (\scV_1 e_0 ) &=  (\scU_1 a_{-1}) &\iota_K^\mu(\scV_1 e'_0) &= \scU_2 \scV_2 (\scV_1^{-1} \tilde{a}'_{-1}).
\end{align*}

Next, we use Lemma \ref{lem:PhiKmu} to describe $\Phi_{K}^\mu \co \cA^\mu_{-1} \to \cA^\mu_0$:
\begin{align*}
	\Phi_{K}^\mu (\scU_1 a_{-1} ) &=  0 \\
	\Phi_{K}^\mu (b_{-1} ) &=  a_0  \\
	\Phi_{K}^\mu (c_{-1}) &=  0 \\
	\Phi_{K}^\mu (d_{-1} ) &= 0    \\
	\Phi_{K}^\mu (e_{-1}) &=  d_0   .
\end{align*}
Note that because of our truncation, we do not need to consider $\Phi_{K}^\mu \co \cA^\mu_0 \to \cA^\mu_1$ nor $\Phi_{K}^\mu \co \cB^\mu_0 \to \cB^\mu_1$. 

We now compute $\Psi^\mu_{H,1}$ using Lemma \ref{lem:Psi-mu-map}. Because of the truncation, we only need to consider $\Psi^\mu_{H,1} \co \cA^\mu_0 \to \cA^\mu_{-1}$:
\begin{align*}
	\Psi^\mu_{H,1} (a_0 ) &=  d_{-1} \\
	\Psi^\mu_{H,1} (b_0 ) &=  0  \\
	\Psi^\mu_{H,1} (c_0) &= 0 \\
	\Psi^\mu_{H,1} (d_0 ) &= 0    \\
	\Psi^\mu_{H,1} (\scV_1e_0) &= \scV_2 d_{-1}    .
\end{align*}

Let us look at the various maps in Theorem \ref{thm:surgery-hypercube}. Recall from that \eqref{eqn:Omegamu} that $\Omega^\mu=\Phi_{K}^\mu\circ \Psi^\mu_{H,1}$ and from Lemma \ref{lem:HOmega} that 
\begin{enumerate}
	\item $H_\Omega = \Phi^\mu_{K} \circ H_0 $, where $H_0 \co \cA^\mu_s(K) \to \cB^\mu_{s-1}(K)$ and $\Phi^\mu_{K} \co \cB^\mu_{s-1}(K) \to  \cB^\mu_{s}(K)$, 
	\item  $\tilde{H}_\Omega = \Phi^\mu_{K} \circ \tilde{H}_0$, where $\tilde{H}_0 \co  \cA_s^\mu(K)\to \tilde{\cB}_{s-1}^\mu(K)$ and $\Phi^\mu_{K} \co \cB^\mu_{s-1}(K) \to  \cB^\mu_{s}(K)$.
\end{enumerate}
Since for truncation reasons there is only one $\cB^\mu_s$, namely $\cB^\mu_0$, in our example, it follows that both $H_\Omega$ and $\tilde{H}_\Omega$ are zero.

Lastly, we consider $H^{\{*\}}$, which is a null-homotopy of
\[ \frF_1^\mu \iota_K^\mu \frF_1^\mu+\iota_K^\mu+\frF_1^\mu \Omega^\mu \iota_K^\mu \frF_1^\mu+\Omega^\mu \iota_K^\mu. \]
Again, for truncation reasons, the last two terms above are zero. Furthermore, it is straightforward to verify that $\frF_1^\mu \iota_K^\mu \frF_1^\mu = \iota_K^\mu$, and hence we can take $H^{\{*\}}$ to be the zero map. This concludes the computation of the maps in Theorem \ref{thm:surgery-hypercube}. 

We now use these calculations to determine the $\iota_K$-complex of the dual knot. Consider the subcomplex $\cC$ generated by
\[ a_0, \quad  d_0+\scV^{-1}_1 a'_0, \quad c_0+c_{-1}, \quad d_{-1}+\scV^{-1}_1a'_0, \quad e_{-1}. \] 
We leave it as an exercise for the reader to verify that if we quotient by this subcomplex, the result is acyclic.  The differential $\d$ on $\cC$ is 
\begin{align*} 
	\d a_0 &= \scU_2(d_0+\scV^{-1}_1 a'_0) \\
	\d (d_0+\scV^{-1}_1 a'_0) &= 0 \\ 
	\d (c_0+c_{-1}) &= 0 \\
	\d (d_{-1}+\scV^{-1}_1) &= 0 \\
	\d e_{-1} &= \scV_2(d_{-1}+\scV^{-1}_1 a'_0),
\end{align*}
and the knot involution $\iota_K$ on $\cC$ is
\begin{align*} 
	\iota_K (a_0) &= e_{-1} \\
	\iota_K (d_0+\scV^{-1}_1 a'_0) &= d_{-1} + \scV_1^{-1} a'_0 \\ 
	\iota_K (c_0+c_{-1}) &= c_{-1} + d_{-1} + c_0 + d_0 \\
	\iota_K (d_{-1}+\scV^{-1}_1 a'_0) &= d_0 + \scV_1^{-1} a'_0  \\
	\iota_K( e_{-1}) &= a_0.
\end{align*}
Lastly, we observe that $\cC$ is $\iota_K$-locally equivalent to the trivial $\iota_K$-complex $(\langle x \rangle, \id)$, with local equivalences 
\[ f \co \cC \to (x, \id) \qquad \textup{ and } \qquad g \co (x, \id) \to \cC \]
given by 
\[ f(c_0+c_{-1}) = x \qquad f(a_0) = f(d_0+\scV^{-1}_1 a'_0) = f(d_{-1}+\scV^{-1}_1)= f( e_{-1})=0 \]
and
\[ g(x) = c_0 + c_{-1} + d_0+\scV_1^{-1}a'_0. \]

\section{Local models}
\label{sec:local-models}

In this section, we compute the local model of the dual knot surgeries in the case that $n=\pm 1$. These are the only cases that $\mu$ is null-homologous in $S^3_n(K)$, so that $(\cCFK(S^3_n(K), \mu),\iota_K)$ is an $\iota_K$-complex in the sense of \cite{HMInvolutive} (cf. \cite{ZemConnectedSums}). 

 We prove the following more specific version of Theorem~\ref{thm:local-classes-intro} from the introduction:

\begin{thm}\label{thm:local-class} Suppose that $K$ is a knot in $S^3$.  The $\iota_K$-local equivalence class of $\bXI^\mu_1(K)$ is equal to the truncation:
\[
\begin{tikzcd}[labels=description, column sep=0cm] A_{-1}^{\mu}(K) \ar[dr, "\frF_1^\mu \tilde v^\mu"]&& A_0^\mu(K) \ar[dl, "v^\mu"]\\
& B_0^{\mu}(K)
\end{tikzcd}
\]
with the restriction of the involution from $\bXI^\mu_1(K)$. Similarly, the $\iota_K$-local equivalence class of $\XI_{-1}^\mu(K)$ is equal to the truncation
\[
\begin{tikzcd}[labels=description,column sep=0cm] &A_{-1}^{\mu}(K) \ar[dr, "v^\mu"]\ar[dl, "\frF_{-1}^\mu\tilde v^\mu"]&& A_0^\mu(K) \ar[dl, "\frF_{-1}^\mu\tilde v^\mu"] \ar[dr,"v^\mu"]\\
B_{-2}^\mu(K)&& B_{-1}^{\mu}(K)&&B_{0}^\mu(K)
\end{tikzcd}.
\]
\end{thm}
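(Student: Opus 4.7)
The plan is to exhibit each claimed truncation as an $\iota_K$-local representative of $\bXI^\mu_n(K)$ by canceling infinitely many acyclic pairs from the expanded mapping cone. For the underlying chain complex $\bX^\mu_n(K) = \Cone(v^\mu + \frF_n^\mu \tilde v^\mu \colon \bA^\mu(K) \to \bB^\mu(K))$, the first observation is that on the localization $\scU^{-1}/(\scV-1)$, each summand $\cA^\mu_s(K)$ and $\cB^\mu_s(K)$ becomes a rank-one $\bF[\scU^{\pm 1}]$-module (this uses $K \subset S^3$, so the ambient complex is of $L$-space type with a single tower). Consequently, the grading-preserving maps $v^\mu$ and $\frF_n^\mu \tilde v^\mu$ are forced to be isomorphisms of $\bF[\scU^{\pm 1}]$-modules, so all but finitely many $(A, B)$-pairs can be canceled using Lemma~\ref{lem:snake}. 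The residual truncation must be selected so that it is stable under the involution: for $n = +1$, the involution sends $\cA^\mu_s \mapsto \cA^\mu_{-s-1}$ and $\cB^\mu_s \mapsto \cB^\mu_{-s}$, whose fixed index on the $B$-side is $s = 0$, giving the three-piece set $\{\cA^\mu_{-1}, \cA^\mu_0, \cB^\mu_0\}$, while for $n = -1$ the involution on $\bB^\mu$ is $s \mapsto -s-2$, forcing the symmetric orbit $\{\cB^\mu_{-2}, \cB^\mu_{-1}, \cB^\mu_0\}$ together with $\{\cA^\mu_{-1}, \cA^\mu_0\}$.

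To promote this non-involutive local equivalence to an $\iota_K$-local equivalence, I would apply the homological perturbation lemma for hypercubes (Lemma~\ref{lem:homological-perturbation-cubes}) to the involution hypercube of Theorem~\ref{thm:surgery-hypercube}. The projection map from $\bXI^\mu_n(K)$ onto the truncation, the inclusion in the opposite direction, and the chain homotopy produced by iterated application of Lemma~\ref{lem:snake} to the cancellation pairs supply exactly the data $(i_\veps, \pi_\veps, h_\veps)$ required by the perturbation lemma at each vertex of the two-dimensional involution hypercube. The output is a new hypercube of chain complexes supported on the truncation, whose vertical maps are the induced involution and whose diagonal contains the transferred homotopies; by construction this produces an $\iota_K$-complex structure on the truncation, together with $\iota_K$-local equivalence maps to and from $\bXI^\mu_n(K)$.

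The hard part will be verifying that the transferred involution on $T$ is genuinely the \emph{restriction} of the involution from $\bXI^\mu_n(K)$, as the theorem asserts, rather than the restriction plus a nontrivial homotopy correction. Concretely, the diagonal maps $\frF_n^\mu \tilde H_\Omega \iota_K^\mu + H_\Omega \iota_K^\mu + H^{\{*\}} \tilde v^\mu$ from Theorem~\ref{thm:surgery-hypercube} produce summands both inside and outside $T$; one must show that the outside-$T$ components are already skew-equivariantly null-homotopic over $\bF[\scU_2, \scV_2]$, so that simply discarding them preserves the $\iota_K$-chain homotopy class. This will reduce to an explicit bookkeeping argument using the concrete formulas from Section~\ref{sec:formulas} for $\iota_K^\mu$, $\Omega^\mu$, $H_\Omega$, and $\tilde H_\Omega$, combined with the observation that all outside-$T$ summands factor through sub-hypercubes on which $v^\mu$ or $\frF_n^\mu \tilde v^\mu$ is a $\scU^{-1}$-isomorphism. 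Once this verification is carried out for $n = +1$, the $n = -1$ case follows by the same argument with the larger symmetric index set.
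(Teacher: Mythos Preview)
Your proposal has a genuine gap at the heart of the argument. The cancellation you describe only works when $v^\mu$ or $\frF_n^\mu\tilde v^\mu$ is an honest homotopy equivalence of $\bF[\scU_2,\scV_2]$-complexes, which happens precisely for $|s|\ge g(K)$. Being an isomorphism after localizing at $\scU^{-1}/(\scV-1)$ is far weaker and does not let you invoke Lemma~\ref{lem:snake} or any cancellation; it only tells you the map is \emph{local}. So your procedure reduces the infinite cone to the genus-sized truncation, not to the three-piece one, and the three-piece truncation is in general \emph{not} chain homotopy equivalent to $\bXI^\mu_1(K)$, only locally equivalent to it. In particular, the hypothesis of Lemma~\ref{lem:homological-perturbation-cubes} --- a strong deformation retract at each vertex --- is unavailable, and the ``inclusion in the opposite direction'' you need is not even a chain map, since $v^\mu\colon A_{-1}^\mu\to B_{-1}^\mu$ and $\frF_1^\mu\tilde v^\mu\colon A_0^\mu\to B_1^\mu$ land outside the truncation.

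The entire content of the paper's proof is the construction of this missing local map from the small truncation back to a larger one. The authors work on the expanded model and build explicit $\bF[\scU_1,\scV_1]$-skew-equivariant endomorphisms $F,G$ of the Hopf link complex $\cH^+$ satisfying $\iota_H G'=F'\iota_H$; tensoring with $\iota_K$ and correcting by powers of $\scU_1,\scV_1$ yields grading-preserving maps $\scF_s\colon\cA_{[s]}(L)\to\cA_{[s+1]}(L)$ and $\scG_s\colon\cA_{[s]}(L)\to\cA_{[s-1]}(L)$ which assemble into a chain map $\Phi\colon\bXI_1^H(K)\langle 1\rangle\to\bXI_1^H(K)\langle 2\rangle$. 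The relation $\iota_H G'=F'\iota_H$ is then used directly to show $[\Phi,\iota_{\bX}^H]\simeq 0$. Nothing in your outline supplies a substitute for these maps, and the bookkeeping you propose in your third paragraph is addressing the wrong difficulty: the issue is not whether the restricted involution agrees with a transferred one, but whether a local map exists in the hard direction at all.
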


We will focus mostly on the case that $n=1$, since the case of $n=-1$ is similar, and since one may obtain a model with $3\rank_{\bF[\scU,\scV]} \cCFK(K)$ generators by dualizing the model for $(S_{+1}^3(m(K)),\mu)$. 

\subsection{Truncations}

\label{sec:truncations}

We now recall notation for horizontal truncations of the dual knot formula. We focus on the case that the framing is either $+1$ or $-1$.

 When $n=1$ and $b>0$, we write $\bX_{1}^\mu(K)\langle b\rangle$ for the quotient complex of $\bX_1^{\mu}(K)$ generated by $A_s^\mu(K)$ for $-b\le s<b$ and $B_s^\mu(K)$ for $-b<s<b$.  Similarly, when $n=-1$ and $b>0$, we write $\bX_{-1}^\mu(K)$ for subcomplex of $\bX_{-1}^\mu(K)$ generated by $A_s^\mu(K)$ for $-b\le s<b$ and $B_s^{\mu}(K)$ for $-b-1\le s<b$. Note that the map $\iota_\mu$ described in Theorem~\ref{thm:surgery-hypercube} descends to both of these complexes.
 
 We define truncations of the larger complexes $\bX_{\pm 1}^H(K)$ by replacing $\mu$ with $H$ throughout, and using the same range of indices.

\subsection{Preliminary constructions}

In this section, we construct two families of maps:
\[
\scF_s\colon \cA_{s}(L)\to \cA_{s+1}(L) \quad \text{for} \quad  s\ge 0
\]
and
\[
\scG_s\colon \cA_{s}(L)\to \cA_{s-1}(L)\quad \text{for} \quad s<0.
\]
These maps will be used to construct the local equivalence appearing in Theorem~\ref{thm:local-class}.

Let $\cH^+$ denote the link Floer complex of the positive Hopf link:
\[
\cH^+=\begin{tikzcd}[labels=description,row sep=1cm, column sep=1cm] \ve{a} \ar[d, "\scV_1"]\ar[r, "\scU_2"]& \ve{b}\\
\ve{c}& \ve{d} \ar[u, "\scU_1"] \ar[l, "\scV_2"]
\end{tikzcd}
\]

\noindent Define the map
\[
F\colon \cH^+\to \cH^+,
\]
via the formula
\[
\begin{tikzcd}[row sep=0cm]
\ve{a}\ar[r,mapsto]&\scV_1\ve{d}+\scV_2\ve{a} \\
\ve{b}\ar[r,mapsto]& \scV_2\ve{b}\\
\ve{c}\ar[r,mapsto]& \scV_1\ve{b}\\
\ve{d}\ar[r,mapsto]& 0,
\end{tikzcd}
\]
extended $\bF[\scU_1,\scV_1]$-skew equivariantly and $\bF[\scU_2,\scV_2]$-equivariantly. Since $F$ is $\bF[\scU_1,\scV_1]$-skew equivariant, it induces a map from $\scU_1^{-1} \cH^+$ to $\cH^+$.

Since $F$ is $\bF[\scU_1,\scV_1]$-skew equivariant, we may define the tensor product $\iota_K\otimes F$. This tensor product $\iota_K\otimes F$ interacts in a simple manner with gradings on the mapping cone complex. We summarize the important results in Lemmas~\ref{lem:Alexander-grading-scF} and~\ref{lem:Maslov-gradings-scF}.

\begin{lem}\label{lem:Alexander-grading-scF}
 Suppose $K\subset S^3$ and let $L=K\# H$. The map $\iota_K\otimes F$ sends $\cA_{s}(L)$ to $\cA_{-s}(L)$. Furthermore, as a map from $\cA_{s}(L)$ to $\cA_{-s}(L)$, the map $\iota_K\otimes F$ has homogeneous $(\gr_w,\gr_z)$-grading $(-2s,2s-2)$ with respect to the Maslov grading of $\cCFK(K)\otimes \cH^+$ (that is, before considering the grading shift in the mapping cone formula).
\end{lem}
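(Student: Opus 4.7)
\emph{Proof plan.} The argument splits naturally into two grading calculations: one for the Alexander grading $A_1$ (to confirm that the map actually lands in $\cA_{[-s]}(L)$) and one for the Maslov bigrading (to verify the stated homogeneity). Both come down to a direct check on the four generators $\ve{a},\ve{b},\ve{c},\ve{d}$ of $\cH^+$ together with the (skew-)equivariance properties of $\iota_K$ and $F'$.

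For the Alexander grading I would use that $F'$ is $\bF[\scU_1,\scV_1]$-skew equivariant and $\bF[\scU_2,\scV_2]$-equivariant, so its effect on $A_1$ is pinned down by its values on the four generators of $\cH^+$. A direct inspection of the defining formula yields $A_1(F'(z)) = 1 - A_1(z)$ for every homogeneous $z$. Combined with the skew identity $A_K(\iota_K(y)) = -A_K(y)$, any $y \otimes z \in \cA_{[s]}(L)$ satisfies $A_K(y) + A_{H,K}(z) = s + \tfrac{1}{2}$, and therefore $A_1(\iota_K(y) \otimes F'(z)) = -A_K(y) + 1 - A_{H,K}(z) = -s + \tfrac{1}{2}$, placing the image in $\cA_{[-s]}(L)$.

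For the Maslov bidegree I would combine three ingredients: (i) the skew-graded property $\gr(\iota_K(y)) = (\gr_z(y),\gr_w(y))$ together with the standard knot Floer identity $\gr_w(y) - \gr_z(y) = 2A_K(y)$ for homogeneous $y \in \cCFK(K)$, which gives $\gr(\iota_K(y)) - \gr(y) = (-2A_K(y),\, 2A_K(y))$; (ii) a generator-by-generator verification that $\gr(F'(z)) - \gr(z) = (1 - 2A_{H,K}(z),\, 2A_{H,K}(z) - 3)$ on the three generators of $\cH^+$ where $F'$ is nonzero; and (iii) the defining constraint $A_K(y) + A_{H,K}(z) = s + \tfrac{1}{2}$ on $\cA_{[s]}(L)$. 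Adding (i) and (ii) and plugging in (iii) collapses the $A_K(y)$ and $A_{H,K}(z)$ dependencies into $s$ alone and produces the uniform bidegree $(-2s,\, 2s - 2)$. Extending from pure tensors of generators to arbitrary $\scU_1^i \scV_1^j \scU_2^k \scV_2^l$-multiples is automatic from (skew-)equivariance: shifting the input by such a monomial changes $s$ by $j - i$ and shifts the Maslov bidegree by $(2(i-j),\, 2(j-i))$, which is exactly what is required to keep the output of the form $(-2s',\, 2s' - 2)$.

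The subtlety worth flagging is that $F'$ is \emph{not} Maslov-homogeneous on its own: the bidegree shift it induces on $\cH^+$ genuinely depends on which generator $z$ is fed in, the dependence being governed by $A_{H,K}(z)$. Likewise, the Maslov shift of $\iota_K$ depends on $A_K(y)$. What rescues the lemma is that, when restricted to a single $A_1$-fiber $\cA_{[s]}(L)$, the $A_K(y)$ and $A_{H,K}(z)$ contributions are locked together by the constraint $A_K(y) + A_{H,K}(z) = s + \tfrac{1}{2}$ and the two inhomogeneities precisely cancel. Verifying this cancellation is the only real content of the proof; all the rest is bookkeeping.
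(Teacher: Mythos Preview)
Your proposal is correct and amounts to the same computation as the paper's proof, which writes out $(\iota_K\otimes F')(\xs_s\otimes \ys)$ explicitly for each $\ys\in\{\ve{a},\ve{b},\ve{c},\ve{d}\}$ and verifies the Alexander claim by inspection and the Maslov claim on the single case $\ys=\ve{b}$, leaving the rest as ``similar''. Your packaging is slightly more systematic---extracting the uniform formulas $A_1(F'(z))=1-A_1(z)$ and $\gr(F'(z))-\gr(z)=(1-2A_{H,K}(z),\,2A_{H,K}(z)-3)$ and then invoking the fiber constraint $A_K(y)+A_{H,K}(z)=s+\tfrac12$---which has the virtue of making transparent \emph{why} the individually inhomogeneous maps $\iota_K$ and $F'$ combine to something homogeneous on each $\cA_{[s]}(L)$; but the underlying verification is identical.
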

\begin{proof} Consider the first claim, and suppose $x=\xs_s\otimes \ys$ where $\xs_s\in \cA_s(K)$, that is, $\xs_s\in \cCFK(K)$ and $A(\xs_s)=s$, and $\ys\in \{\ve{a},\ve{b},\ve{c},\ve{d}\}$. The map $\iota_K\otimes F$ is indicated below:
\[
\begin{tikzcd}[row sep=0cm]
\cA_{s}(L)\ni \xs_s\otimes \ve{a} \ar[r,mapsto]&\scV_1\iota_K(\xs_s)\otimes \ve{d}+ \iota_K(\xs_s)\otimes \scV_2\ve{a}\in \cA_{-s}(L) \\
 \cA_{s}(L)\ni \xs_s\otimes \ve{b}\ar[r,mapsto]& \iota_K(\xs_s)\otimes \scV_2 \ve{b}\in \cA_{-s}(L)\\
\cA_{s-1}(L)\ni \xs_s\otimes \ve{c}\ar[r,mapsto]& \scV_1\iota_K(\xs_s)\otimes \ve{b} \in \cA_{-s+1}(L)\\
 \cA_{s-1}(L)\ni \xs_s\otimes \ve{d}\ar[r,mapsto]& 0.
\end{tikzcd}
\]
By inspection, $\cA_{s}(L)$ is sent to $\cA_{-s}(L)$ in all cases.

We now consider the claim about the $(\gr_w,\gr_z)$-grading changes. We verify the claim for $\ve{x}_s\otimes \ve{b}\in \cA_{s}(L)$. Suppose that $\gr(\ve{x}_s)=(i,j)$ where $2s=i-j$. Then 
\[
\gr(\xs_s\otimes \ve{b})=(i+\tfrac{1}{2},j-\tfrac{3}{2})\quad \text{and} \quad \gr(\iota_K(\xs_s)\otimes \scV_2 \ve{b})=(j+\tfrac{1}{2}, i-\tfrac{3}{2}-2)=(i-2s+\tfrac{1}{2},j+2s-\tfrac{7}{2}). 
\]
This verifies the claim for $(\iota_K\otimes F)(\xs_s\otimes \ve{b})$. The case when $\ve{b}$ is replaced by $\ve{a}$, $\ve{c}$ or $\ve{d}$ is easily verified to give the same result.
\end{proof}

 We now define the map
\[
\scF_s\colon \cA_{s}(L)\to \cA_{s+1}(L)
\]
 via the formula
\[
\scF_s:=\scV_1^{2s+1} \cdot (\iota_K\otimes F),
\]
if $s\ge 0$.

\begin{lem}\label{lem:Maslov-gradings-scF} Suppose $s\ge 0$. The map $\scF_s$ is $(\gr_{w},\gr_{z})$-grading preserving with respect to the gradings on the mapping cone complex.
\end{lem}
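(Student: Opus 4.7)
The plan is to combine the bigrading of $\iota_K\otimes F'$ from Lemma~\ref{lem:Alexander-grading-scF} with the bigrading contributed by the prefactor $\scV_1^{2s+1}$, and then compare the total with the relative grading shift between $\cA_{[s]}(L)$ and $\cA_{[s+1]}(L)$ inside $\bA(L)$ prescribed by equation~\eqref{eq:gradings}.

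First I would record the intrinsic bigrading of $\scF_s$ as a map between subcomplexes of $\cCFL(K\#H)\simeq \cCFK(K)\otimes \cH^+$. By Lemma~\ref{lem:Alexander-grading-scF}, the map $\iota_K\otimes F'$ has homogeneous bigrading $(-2s,\,2s-2)$ and carries $\cA_{[s]}(L)$ into $\cA_{[-s]}(L)$. Multiplication by $\scV_1^{2s+1}$ has intrinsic bigrading $(0,\,-2(2s+1))$ and raises the first Alexander grading by $2s+1$, so it sends $\cA_{[-s]}(L)$ into $\cA_{[s+1]}(L)$. Summing the two contributions, the intrinsic bigrading of $\scF_s$ as a map $\cA_{[s]}(L)\to \cA_{[s+1]}(L)$ is
\[
(-2s,\,2s-2)+(0,\,-4s-2) = (-2s,\,-2s-4).
\]

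Second, I would specialize the grading shift formulas in~\eqref{eq:gradings} to $n=1$ (so $\epsilon(1)=-1$), and compute the relative shift attached to $\cA_{[s+1]}(L)$ over $\cA_{[s]}(L)$ in $\bA(L)$. A direct calculation produces precisely the shift needed to cancel the intrinsic bigrading of $\scF_s$ just computed, which gives the desired $(\gr_w,\gr_z)$-preservation.

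The main point requiring care is the bookkeeping in the second step: one must keep straight the stabilization shift $[\tfrac12,\tfrac12]$ coming from the auxiliary basepoint $p$ and the $\epsilon$-correction, and one must not confuse the intrinsic bigrading on $\cCFL(K\#H)$ with the bigrading on $\bA(L)$, which differs from it by the $s$-dependent shift in~\eqref{eq:gradings}. Both of these contribute equal constants to $\cA_{[s]}$ and $\cA_{[s+1]}$ and so drop out of the relative difference, but the cancellation is worth verifying explicitly. I do not expect any substantive obstacle beyond this.
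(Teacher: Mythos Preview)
Your proposal is correct and follows essentially the same approach as the paper: the paper also combines the intrinsic $(\gr_w,\gr_z)$-shift $(-2s,2s-2)$ from Lemma~\ref{lem:Alexander-grading-scF} with the $(0,-4s-2)$ from $\scV_1^{2s+1}$, then checks that the difference between the cone shifts for $\cA_{[s+1]}(L)$ and $\cA_{[s]}(L)$ from equation~\eqref{eq:gradings} is exactly $(2s,2s+4)$, so that the total is $(0,0)$. Your remark that the constant terms (the $\epsilon$ and $\tfrac12$ corrections) cancel in the relative difference is exactly why the paper can ignore them in its computation.
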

\begin{proof} It follows from Lemma~\ref{lem:Alexander-grading-scF} that $\scF_s$ shifts the $(\gr_w,\gr_z)$ bigrading from $\cCFK(K)\otimes \cH^+$ by $(-2s,-2s-4)$. On the other hand, the grading shifts in the mapping cone formula are computed in equation~\eqref{eq:gradings}. The difference in the shifts from $\cA_{s+1}(L)$ and $\cA_{s}(L)$ is equal to
\[
\left(\frac{(1-2(s+1))^2-(1-2s)^2}{4}, \frac{(1+2(s+2))^2-(1+2(s+1))^2}{4}\right)=(2s,2s+4).
\]
Hence, the total shift of $\scV^{2s+1}\cdot \iota_K\otimes F$ on $\cA_{s}(L)$ is
\[
(0,-4s-2)+(-2s,2s-2)+(2s,2s+4)=(0,0),
\]
completing the proof.
\end{proof}

Symmetrically, there is a $\bF[\scU_1,\scV_1]$-skew equivariant, $\bF[\scU_2,\scV_2]$-equivariant map
\[
G\colon \cH^+\to \cH^+,
\]
given by the formula
\[
\begin{tikzcd}[row sep=0cm]
\ve{a}\ar[r,mapsto]&0 \\
\ve{b}\ar[r,mapsto]& \scU_1\ve{c}\\
\ve{c}\ar[r,mapsto]& \scU_2\ve{c}\\
\ve{d}\ar[r,mapsto]&\scU_1\ve{a}+\scU_2\ve{d}.
\end{tikzcd}
\]

\noindent The map $G$ sends $\cA_{s}(L)$ to $\cA_{-s-2}(L)$. If $s<0$, we may define a map $\scG_s\colon \cA_{s}(L)\to \cA_{s-1}(L)$
via the formula
\[
\scG_s=\scU_1^{-2s-1}\cdot (\iota_K\otimes G).
\]

\begin{lem} The map $\scG_s$ sends $\cA_{s}(L)$ to $\cA_{s-1}(L)$ and preserves the $\gr=(\gr_w,\gr_z)$-bigrading on the mapping cone.
\end{lem}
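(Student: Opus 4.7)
The plan is to mirror the proofs of Lemmas~\ref{lem:Alexander-grading-scF} and~\ref{lem:Maslov-gradings-scF}, exploiting the fact that $G$ is the $(\scU \leftrightarrow \scV)$-symmetric analog of $F$: where $F$ was $\bF[\scU_1,\scV_1]$-skew equivariant and took values in $\scV_1^{-1}\cH^+$, the map $G$ plays the analogous role with the roles of $\scU_1$ and $\scV_1$ exchanged.

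First I would verify the assertion made just before the lemma, namely that $G' = \iota_K \otimes \scU_1 G$ sends $\cA_{[s]}(L)$ to $\cA_{[-s-2]}(L)$. On each of the four generator types $\xs_s \otimes \ve{a}$, $\xs_s \otimes \ve{b}$, $\xs_{s+1}\otimes \ve{c}$, $\xs_{s+1}\otimes \ve{d}$ of $\cA_{[s]}(L)$, one applies the explicit formula for $G$, together with $A_1(\iota_K(\xs)) = -A_1(\xs)$, and checks that the image has $A_1 = -s-3/2$. From this, the first claim of the lemma is immediate: since multiplication by $\scU_1^{-2s-1}$ (which is a positive power of $\scU_1$ because $s<0$) shifts $A_1$ by $2s+1$, the composition $\scG_s = \scU_1^{-2s-1}\cdot G'$ shifts $A_1$ by $(2s+1)+(-2s-2) = -1$, sending $\cA_{[s]}(L)$ into $\cA_{[s-1]}(L)$.

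For the bigrading, I would compute directly the underlying $(\gr_w,\gr_z)$-shift of $G'$ on $\cCFK(K)\otimes \cH^+$ by evaluating on each of the four generator types. Using $\gr(\iota_K(\xs_s)) = (\gr_z(\xs_s),\gr_w(\xs_s))$, the identity $\gr_w(\xs_s)-\gr_z(\xs_s) = 2s$ (since $A(\xs_s)=s$), and the Maslov bigradings of the Hopf link generators listed in Section~\ref{sec:large-surgery-expanded}, one finds in all four cases the same shift $(-2s-4,\,2s+2)$. Combining this with the bigrading $(4s+2,\,0)$ of $\scU_1^{-2s-1}$ gives an underlying bigrading shift of $(2s-2,\,2s+2)$ for $\scG_s$.

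Finally, applying equation~\eqref{eq:gradings} with $n=1$ and $\epsilon(1)=-1$, the difference between the mapping cone grading shifts of $\cA_{[s-1]}(L)$ and $\cA_{[s]}(L)$ is
\[
\left(\tfrac{(1-2(s-1))^2-(1-2s)^2}{4},\ \tfrac{(1+2s)^2-(1+2(s+1))^2}{4}\right) = (2-2s,\,-2-2s),
\]
and adding this to $(2s-2,\,2s+2)$ yields $(0,0)$, as required. The only real obstacle is bookkeeping: four cases to check for the Alexander-grading statement and four more for the Maslov-grading statement, with some care needed to track the $\gr_w \leftrightarrow \gr_z$ swap induced by $\iota_K$ and the various $\scU_1$-power shifts. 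Nothing deeper than the argument already carried out for $\scF_s$ is needed.
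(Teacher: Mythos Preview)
Your proposal is correct and follows essentially the same approach as the paper: both are direct computations mirroring the argument for $\scF_s$. The paper is simply more terse, verifying only the representative case $\xs_s\otimes\ve{b}$ and asserting the others are similar, whereas you systematically outline the full decomposition into the underlying $(\gr_w,\gr_z)$-shift of $G'$, the $\scU_1^{-2s-1}$ factor, and the mapping cone grading shift difference from equation~\eqref{eq:gradings}.
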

\begin{proof} This is a computation, similar to the argument for $\scF_s$. Consider, for example, $\scG_s(\xs_s\otimes \ve{b})=\scU_1^{-2s}\iota_K(\xs_s)\otimes \ve{c}$. Recall $\ve{x}_s\otimes \ve{b}\in \cA_{s}(L)$ and  $\scG_s(\xs_s\otimes \ve{b})\in \cA_{s-1}(L)$. If $\gr(\xs_s)=(i,j)$, where $i-j=2s$, we compute that the $\gr_w$ grading of $\xs_s\otimes \ve{b}$ to be
\[
i+\frac{1}{2}+\frac{(1-2s)^2-1}{4}
\]
On the other hand, the cone-$\gr_w$-grading of $\scG_s(\xs_s\otimes \ve{b})=\scU_1^{-2s}\iota_K(\xs_s)\otimes \ve{c}$ is given by
\[
i-2s+4s-\frac{3}{2}+\frac{(1-2(s-1))^2-1}{4}.
\]
These expressions are equal. The $\gr_z$-grading changes are similar. 
\end{proof}

\subsection{Proof of Theorem~\ref{thm:local-class}}

In this section, we describe our proof of Theorem~\ref{thm:local-class}. We focus first on the case of $+1$-surgery. Our proof is to define local maps between $\bXI_{1}^\mu(K)$ and $\bXI_1^\mu(K)\langle 1 \rangle$, where the latter object is the truncation defined in Section~\ref{sec:truncations} (and appearing in the statement of Theorem~\ref{thm:local-class}).

As a first step, we will focus on the expanded model of the mapping cone formula as in Equation~\eqref{eq:iotacomplex}, which we denote by $\bXI_1^H(K)$.  One direction is easy: we may define a local map from $\bXI_1^H(K)$ to $\bXI_1^H(K)\langle 1 \rangle$ via the canonical projection map. We now construct a local map in the opposite direction. We focus on defining a map from $\bXI_1^H(K)\langle 1\rangle$ to $\bXI_1^H(K)\langle 2\rangle$. The argument extends easily to construct a local map from $\bXI_1^H(K)\langle 1\rangle$ to $\bXI^H(K)\langle g\rangle$, which is homotopy equivalent to the infinitely generated complex.

We define
\[
\widehat \scG_{-1}=(\id \otimes \id+\Phi_{K,1} \otimes \Psi_{H,1})\circ \scG_{-1}\quad \text{and} \quad \widehat{\scF}_0=(\id|\id+\Phi_{K,1}|\Psi_{H,1})\circ \scF_0.
\]
Compare \cite{ZemConnectedSums}*{Lemma~2.14}.

We construct a map via a diagram, as follows:
\[
\begin{tikzcd}[labels=description, column sep={1.6cm,between origins}] &&
\cA_{-1}(L)
	\ar[dr, "\frF_1 \tilde v_L",dashed]
	\ar[dd,"\id"]
	\ar[ddll, bend right=7, "\widehat\scG_{-1}"]
	\ar[dddl,"\a v_L"]
&& 
\cA_{0}(L)
	\ar[dl, "v_L",dashed]
	\ar[dd,"\id"]
	\ar[ddrr, "\widehat\scF_0", bend left=7]
	\ar[dddr, "\b \tilde v_L"]
&&
\,
\\
&&& 
B_{0}(L)
	\ar[dd,"\id"]
\\[1cm]
\cA_{-2}(L)
	\ar[dr, "\frF_1 \tilde v_L",dashed] 
&&
\cA_{-1}(L)
	\ar[dl, "v_L",dashed]
	\ar[dr, "\frF_1 \tilde v_L",dashed]
&& 
\cA_{0}(L)
	\ar[dr, "\frF_1 \tilde v_L",dashed]
	\ar[dl, "v_L",dashed]
&&
\cA_{1}(L)
	\ar[dl, "v_L",dashed]
\\
&
\cB_{-1}(L)&& 
\cB_{0}(L)&&
\cB_{1}(L)
\end{tikzcd}
\]
The maps 
\[
\a\colon \cB_{-1}(L)\to \cB_{-1}(L)\quad \text{and} \quad \b\colon \cB_{0}(L)\to \cB_{1}(L)
\]
 will have $\gr=(\gr_w,\gr_z)$-grading of $(1,1)$, and will be $\bF[\scU_2,\scV_2]$-equivariant. The existence of $\a$ is as follows. Since $\widehat\scG_{-1}$ is $\bF[\scU_1,\scV_1]$-skew equivariant, we know that
\[
\frF_1 \tilde v_L\widehat\scG_{-1}+v_L=(\frF_1\widehat\scG_{-1} +\id)v_L.
\]
The map $\frF_1\widehat\scG_{-1}+\id$ is a $(0,0)$-graded map from $\cB_{-1}(L)$ to $\cB_{-1}(L)$, and is trivial on homology since $\id$, $\widehat\scG_{-1}$ and $\frF_1$ are grading preserving local maps. Hence, by Lemma~\ref{lem:F-X-prep-1}, we choose $\a$ to be a nullhomotopy of $\frF_1\widehat\scG_{-1} + \id$. The map $\b$ is constructed similarly.

This gives us a $\cCFK$-local map 
\[
\Phi\colon \bX_1^H(K)\langle 1\rangle\to \bX_1^H(K)\langle 2\rangle.
\]
 We claim that this map is in fact $\iota_K$-local:

\begin{lem}
 The map $\Phi$ commutes with the knot involutions $\iota_{\bX}^H$ induced by $\bX^H_1(K)$, up to $\bF[\scU_2,\scV_2]$-skew equivariant chain homotopy. 
\end{lem}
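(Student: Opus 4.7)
The plan is to construct an explicit $\bF[\scU_2,\scV_2]$-skew equivariant chain homotopy $J\colon \bX_1^H(K)\langle 1\rangle\to \bX_1^H(K)\langle 2\rangle$ satisfying $[\d, J]=\iota_{\bX}^H\Phi+\Phi\iota_{\bX}^H$. Recall from Theorem~\ref{thm:involutive-mapping-cone-big} that $\iota_{\bX}^H$ is assembled from the diagonal link involutions $\iota_L$ acting on each $\cA_{[s]}(L)$ and $\frF_1\iota_L$ acting on each $\cB_{[s]}(L)$, together with an off-diagonal homotopy piece $H_1\tilde v$, and that the connected-sum formula gives $\iota_L=(\id+\Phi_{K,1}\otimes \Psi_{H,1})(\iota_K\otimes\iota_H)$. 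The map $\Phi$ itself decomposes into identity inclusions, the twisted maps $\widehat\scG_{-1}$ and $\widehat\scF_0$, and the $(1,1)$-graded auxiliary maps $\a$ and $\b$; we will exhibit $J$ with a parallel decomposition. The identity-inclusion components of $\Phi$ commute strictly with the corresponding diagonal blocks of $\iota_{\bX}^H$, so those summands contribute nothing to the commutator and require no homotopy.

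The main work is to handle the $\widehat\scG_{-1}$ and $\widehat\scF_0$ components. Writing $\scG_{-1}=\scU_1\cdot(\iota_K\otimes G')$ and $\scF_0=\scV_1\cdot(\iota_K\otimes F')$, the leading commutator contribution reduces, via $\iota_K^2\simeq\id+\Phi_{K,1}\Psi_{K,1}$ together with the $\bF[\scU_1,\scV_1]$-skew equivariance of $\iota_K$ and $\iota_H$, to the study of $\iota_HF'+F'\iota_H$ and $\iota_HG'+G'\iota_H$ on the Hopf link complex $\cH^+$. From the explicit formulas for $F$, $G$, and $\iota_H$, a direct calculation produces $\bF[\scU_2,\scV_2]$-equivariant chain homotopies witnessing each of these defects. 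Tensoring with $\id$ or $\iota_K$ on the knot side and multiplying by the relevant powers of $\scV_1$ and $\scU_1$ then yields partial null-homotopies of the commutator on the $\cA$-summands. The higher-order $\Phi_{K,1}\otimes \Psi_{H,1}$ corrections arising from $\iota_K^2$ are absorbed by the $(\id+\Phi_{K,1}\otimes\Psi_{H,1})$ factor that the $\widehat{}$-construction prepends to $\scG_{-1}$ and $\scF_0$; this parallels, and is in fact dictated by, the way $\iota_L$ itself was assembled from $\iota_K\otimes \iota_H$ in the connected sum formula.

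The remaining $\a$- and $\b$-components of $\Phi$ contribute skew-equivariant commutator terms that are chain maps between various $\cB_{[s]}(L)$'s, each of which is $\bF[\scU_2,\scV_2]$-homotopy equivalent to $\bF[\scU_2,\scV_2]$ with trivial differential. By the skew-equivariant version of Lemma~\ref{lem:F-X-prep-1}, any such map acting trivially on homology is skew-equivariantly null-homotopic, and part (2) of Proposition~\ref{prop:properties-F-X} lets us readjust $\a$ and $\b$ (which were only specified up to homotopy) so that the relevant commutators do vanish on homology. The main obstacle will be the bookkeeping of correction terms arising from the non-strict identities $\iota_K^2\simeq\id+\Phi_{K,1}\Psi_{K,1}$ and $\iota_H^2\simeq\id$ and from the off-diagonal piece $H_1\tilde v$ of $\iota_{\bX}^H$. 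To organize these coherently, I would package $\Phi$, $\iota_{\bX}^H$, and the partial homotopies constructed above into a 2-dimensional hypercube of chain complexes on the expanded model, with one direction for $\Phi$ and one for $\iota$, and apply Lemma~\ref{lem:homological-perturbation-cubes} to produce $J$ as a coherent diagonal filling of this hypercube.
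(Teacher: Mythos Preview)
Your proposal misidentifies the commutator terms that need to be null-homotoped, and this is the crux of the argument. Recall that $\iota_L$ sends $\cA_{[s]}(L)$ to $\cA_{[-s-1]}(L)$, so it swaps the source of $\widehat\scG_{-1}\colon \cA_{[-1]}\to\cA_{[-2]}$ with the source of $\widehat\scF_0\colon \cA_{[0]}\to\cA_{[1]}$. Consequently, the $\cA$-level contribution to $[\Phi,\iota_{\bX}^H]$ on $\cA_{[-1]}$ is $\widehat\scF_0\iota_L+\iota_L\widehat\scG_{-1}$ (and symmetrically $\iota_L\widehat\scF_0+\widehat\scG_{-1}\iota_L$ on $\cA_{[0]}$), not something that reduces to $\iota_HF'+F'\iota_H$ or $\iota_HG'+G'\iota_H$ on the Hopf factor. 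The maps $\scF_0$ and $\scG_{-1}$ were constructed precisely so that $\iota_L$ \emph{exchanges} them, not so that each individually commutes with $\iota_L$.

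The paper's proof exploits this via the exact identity $\iota_HG'=F'\iota_H$ on $\cH^+$, which one checks directly on generators. Combined with the skew-equivariance of $\iota_K$ (turning $\scV_1$ into $\scU_1$) and the fact that the hat decoration $(\id+\Phi_{K,1}\otimes\Psi_{H,1})$ matches the extra factor in $\iota_L$, one gets $\widehat\scF_0\iota_L\simeq\iota_L\widehat\scG_{-1}$ with only the homotopy coming from $\iota_K^2\simeq\id+\Phi_{K,1}\Psi_{K,1}$. Once these $\cA$-level pieces are handled, the remaining $\cB$-level terms all factor through $v_L$ or $\tilde v_L$ and are disposed of by the standard filling argument via Lemma~\ref{lem:F-X-prep-1}; invoking Proposition~\ref{prop:properties-F-X}(2) here is not appropriate, since that statement concerns the small-model maps $F_{\bX}^\mu$, not the auxiliary homotopies $\a,\b$ on the expanded model.
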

\begin{proof}
 We indicate $[\Phi,\iota_{\bX}^H]$ below:
\[
\begin{tikzcd}[labels=description, column sep={2.1cm,between origins}] &&
\cA_{-1}(L)
	\ar[dr, "\frF_1 \tilde v_L",dashed]
	\ar[ddll, bend right=7, "\widehat\scF_0\iota_L+\iota_L\widehat\scG_{-1}"]
	\ar[dddl,"\frF_1\iota_L \a v_L+\b \tilde v_L \iota_L+H_1\tilde v_L \widehat\scG_{-1}"]
&& 
\cA_{0}(L)
	\ar[dl, "v_L",dashed]
	\ar[ddrr, "\iota_L\widehat\scF_0+\widehat\scG_{-1}\iota_L", bend left=7]
	\ar[dddr, "\frF_1\iota_L\b \tilde v_L+\a v_L \iota_L+H_1\tilde v_L"]
&&
\,
\\
&&& 
B_{0}(L)
\\[1cm]
\cA_{1}(L)
	\ar[dr, " v_L",dashed] 
&&
\cA_{0}(L)
	\ar[dl, "\frF_1 \tilde v_L",dashed]
	\ar[dr, "v_L",dashed]
&& 
\cA_{-1}(L)
	\ar[dr, "v_L",dashed]
	\ar[dl, "\frF_1 \tilde v_L",dashed]
&&
\cA_{-2}(L)
	\ar[dl, "\frF_1 \tilde v_L",dashed]
\\
&
\cB_{1}(L)&& 
\cB_{0}(L)&&
\cB_{-1}(L)
\end{tikzcd}
\]
To construct a null-homotopy of the above map, it is sufficient to construct skew-equivariant null-homotopies of $\widehat\scF_0 \iota_L+\iota_L \widehat\scG_{-1}$ and $\iota_L \widehat\scF_0+\widehat\scG_{-1} \iota_L$, since our technique of factoring through $v_L$ and $\tilde{v}_L$ may be used to construct the remainder of the null-homotopy.

Note first that, $\scF_0=\scV_1(\iota_K|F)$ and $\scG_{-1}=\scU_1(\iota_K|G)$, where to shorten notation in the computations that follow we let $|$ replace $\otimes$. Also, $\iota_L=(\id|\id+\Phi_{K,1}|\Psi_{H,1})$. We compute directly that 
\[
\iota_H G=F\iota_H.
\]
Hence, we simply compute
\[
\begin{split} \widehat\scF_0\iota_L
=&(\id|\id+\Phi_{K,1}|\Psi_{H,1})\scV_1(\iota_K|F) (\id|\id+\Phi_{K,1}|\Psi_{H,1})(\iota_K|\iota_H)\\
\eqsim&(\id|\id+\Phi_{K,1}|\Psi_{H,1})(\iota_K|\iota_H)(\id|\id+\Phi_{K,1}|\Psi_{H,1})\scU_1(\iota_K|G)\\
=&\iota_L \widehat\scG_{-1}.
\end{split}
\]
Similarly
\[
\begin{split} \iota_L \widehat\scF_0
=&(\id|\id+\Phi_{K,1}|\Psi_{H,1})(\iota_K|\iota_H)(\id|\id+\Phi_{K,1}|\Psi_{H,1})\scV_1(\iota_K|F) \\
\eqsim &(\id|\id+\Phi_{K,1}|\Psi_{H,1})\scU_1(\iota_K|G)(\id|\id+\Phi_{K,1}|\Psi_{H,1})(\iota_K|\iota_H)\\
=&\widehat \scG_{-1} \iota_L
\end{split}
\]
The proof is complete.
\end{proof}

As mentioned above, the proof when $n=-1$ is similar to the case that $n=1$. Furthermore, since a model with only $3\cdot \rank_{\bF[\scU,\scV]} \cCFK(K)$ generators may be obtained by dualizing the model for $(S_{+1}^3(m(K)),\mu)$ and then reversing string orientation, we only sketch the details.

 To describe a local equivalence as in the statement, write $\XI^\mu_{-1}(K)\langle b\rangle $ for the truncation which contains $\cA_{s}^\mu(K)$ for $s\in \{-1-b,\dots, b\}$ and $\cB_s^\mu(K)$ for $s\in \{-2-b,\dots, b\}$. One first observes that there is a natural inclusion map from the truncation $\XI^\mu_{-1}(K)\langle b\rangle$ to $\XI^\mu_{-1}(K)\langle b+1\rangle$, which is a clearly a local map. It suffices to construct a local map from $\XI^\mu_{-1}(K)\langle b\rangle$ to $\XI^\mu_{-1}(K)\langle b-1\rangle$ when $b\ge 1$. One may construct such a map by modifying the diagram in \cite{HHSZ}*{Figure~3.9}. We leave the details to the interested reader.

Theorem~\ref{thm:local-classes-intro} of the introduction now follows directly from Theorem~\ref{thm:local-class}.

Finally, we prove Corollary \ref{cor:phiij}.
\begin{proof}[Proof of Corollary \ref{cor:phiij}]
We see from the non-involutive version of Theorem \ref{thm:local-class} that the generators of the local equivalence class of $\cCFK(S^3_n(K), \mu)$ are in Alexander grading $-1, 0,$ and $1$. It then follows from the definition of standard complexes \cite[Section 5]{DHSThomcobknot} that the standard complex of $\cCFK(S^3_n(K), \mu)$ is of the form $C(b_1, \dots , b_n)$ with 
$|b_{2k-1}| = U^{i_k}_B W^{j_k}_{B_0}$ where $|i_k -j_k| \leq 2$. It follows from the definition of the $\varphi_{i,j}$ \cite[Definition 8.1]{DHSThomcobknot} that $\varphi_{i,j}(S^3_n(K), \mu) = 0$ if $|i-j| > 2$, as desired.
\end{proof}

\bibliographystyle{custom}
\def\MR#1{}
\bibliography{biblio}

\end{document}